\documentclass[12pt, reqno]{amsart}

\usepackage{amsmath, amsthm, amssymb}
\usepackage{enumerate}
\tolerance=500
\setlength{\emergencystretch}{3em}
\usepackage[margin=1.0in]{geometry}
\usepackage{xcolor}
\definecolor{cite}{rgb}{0.30,0.60,1.00}
\definecolor{url}{rgb}{0.00,0.00,0.80}
\definecolor{link}{rgb}{0.40,0.10,0.20}

\usepackage{zref-clever}

\AddToHook{env/theorem/begin}{%
	\zcsetup{countertype={theorem=theorem}}}
\AddToHook{env/claim/begin}{%
	\zcsetup{countertype={theorem=theorem}}}  
\AddToHook{env/proposition/begin}{%
	\zcsetup{countertype={theorem=proposition}}}
\AddToHook{env/lemma/begin}{%
	\zcsetup{countertype={theorem=lemma}}}
\AddToHook{env/conjecture/begin}{%
	\zcsetup{countertype={theorem=theorem}}}  
\AddToHook{env/corollary/begin}{%
	\zcsetup{countertype={theorem=corollary}}}
\AddToHook{env/definition/begin}{%
	\zcsetup{countertype={theorem=definition}}}
\AddToHook{env/remark/begin}{%
	\zcsetup{countertype={theorem=remark}}}
\AddToHook{env/example/begin}{%
	\zcsetup{countertype={theorem=example}}}

\zcsetup{nameinlink=false}

\usepackage[pdfusetitle,colorlinks,linkcolor=link,urlcolor=url,citecolor=cite,breaklinks]{hyperref}
\usepackage{graphicx}
\usepackage{subcaption}
\usepackage{mathdots}
\usepackage{tikz-cd}
\usepackage{comment}
\usepackage{xypic}
\usepackage{mathtools}
\usepackage{physics}
\usepackage{float}


\newcommand{\etale}{\'etale }
\newtheorem{theorem}{Theorem}[section]

\newtheorem{proposition}[theorem]{Proposition}

\newtheorem{corollary}[theorem]{Corollary}

\theoremstyle{definition}

\theoremstyle{definition}
\newtheorem{remark}[theorem]{Remark}
\theoremstyle{definition}
\newtheorem{example}[theorem]{Example}

\newcommand{\cref}[1]{\zcref{#1}}
\newcommand{\Cref}[1]{\zcref[S]{#1}}

\newcommand{\zIntegers}{\mathbb{Z}}

\newcommand{\cComplex}{\mathbb{C}}
\newcommand{\multiplicativegroup}[1]{#1^{\times}}

\newcommand{\Hom}{\mathrm{Hom}}

\newcommand{\Supp}{\mathrm{supp}}

\newcommand{\idmap}{\mathrm{id}}
\newcommand{\conjugate}[1]{\overline{#1}}

\newcommand{\lengthof}{\ell}
\renewcommand{\abs}[1]{\left|#1\right|}
\newcommand{\sizeof}[1]{\left|#1\right|}
\newcommand{\lcm}{\operatorname{lcm}}

\renewcommand{\innerproduct}[2]{\left\langle#1,#2\right\rangle}

\newcommand{\fieldCharacter}{\psi}
\newcommand{\centralCharacter}[1]{\omega_{#1}}
\newcommand{\Ind}[3]{\mathrm{Ind}_{#1}^{#2}\left(#3\right)}

\newcommand{\Whittaker}{\mathcal{W}}
\newcommand{\Contragradient}[1]{#1^{\vee}}

\newcommand{\besselFunction}{\mathcal{J}}
\newcommand{\besselFunctionOfFiniteFieldRepresentation}[1]{\besselFunction_{#1, \fieldCharacter}}
\newcommand{\SpehRepresentation}[2]{\Delta\left(#1, #2\right)}

\newcommand{\besselSpehFunction}[2]{\mathcal{BS}_{\SpehRepresentation{#1}{#2}, \fieldCharacter}}
\newcommand{\specialBesselSpeh}[2][\fieldCharacter]{\mathcal{K}_{{#2}, {#1}}}
\newcommand{\specialBesselSpehNormalized}[2][\fieldCharacter]{\mathcal{K}^{\ast}_{{#2}, {#1}}}

\newcommand{\GKGammaFactor}[3]{\gamma^{\mathrm{GK}}\left(#1 \times #2, #3\right)}

\newcommand{\LocalGKGammaFactor}[4]{\gamma^{\mathrm{GK}}\left(#1, #2 \times #3, #4\right)}

\newcommand{\GKPreGammaFactor}[3]{\Gamma^{\mathrm{GK}}\left(#1 \times #2, #3\right)}

\newcommand{\Irr}{\mathrm{Irr}}


\newcommand{\grpIndex}[2]{\left[#1:#2\right]}

\newcommand{\pHallLittlewood}{\mathrm{P}}
\newcommand{\qHallLittlewood}{\mathrm{Q}}
\newcommand{\hHallLittlewood}{\mathrm{H}}

\newcommand{\htHallLittlewood}{\tilde{\mathrm{H}}}
\newcommand{\ptHallLittlewood}{\tilde{\mathrm{P}}}


\newcommand{\IdentityMatrix}[1]{I_{#1}}
\newcommand{\diag}{\mathrm{diag}}

\renewcommand{\trace}{\operatorname{tr}}
\newcommand{\GL}{\mathrm{GL}}
\newcommand{\UnipotentSubgroup}{U}
\newcommand{\UnipotentRadicalForWss}[2]{N_{\qty(#2^{#1})}}

\newcommand{\UnipotentRadical}{N}

\newcommand{\ParabolicSubgroup}{P}
\newcommand{\weylElement}[1]{w_{({#1})}}

\makeatletter
\newcommand{\ostar}{\!{\mathpalette\make@circled *}\!}
\newcommand{\make@circled}[2]{%
	\ooalign{$\m@th#1\smallbigcirc{#1}$\cr\hidewidth$\m@th#1#2$\hidewidth\cr}%
} 
\newcommand{\smallbigcirc}[1]{%
	\vcenter{\hbox{\scalebox{0.77778}{$\m@th#1\bigcirc$}}}%
}
\makeatother

\newcommand{\Erdelyi}{Erd{\'e}lyi}
\newcommand{\Toth}{T{\'o}th}

\newcommand{\FieldNorm}[2]{\mathrm{N}_{#1:#2}}
\newcommand{\aFieldNorm}{\mathrm{N}}

\newcommand{\finiteField}{\mathbb{F}}
\newcommand{\finiteFieldExtension}[1]{\finiteField_{#1}}

\newcommand{\algebraicClosure}[1]{\overline{#1}}
\newcommand{\charactergroup}[1]{\widehat{\multiplicativegroup{\finiteFieldExtension{#1}}}}

\newcommand{\Galois}{\operatorname{Gal}}
\newcommand{\Frobenius}{\operatorname{Fr}}
\newcommand{\restrictionOfScalars}[3]{\operatorname{Res}_{#1 \slash #2}{#3}}
\newcommand{\multiplcativeScheme}{\mathbb{G}_m}
\newcommand{\affineLine}{\mathbb{A}^1}
\newcommand{\squareMatrix}{\operatorname{Mat}}

\newcommand{\ProjectionOperator}{\operatorname{pr}}
\newcommand{\SymmetricGroup}{\mathfrak{S}}

\newcommand{\ParabolicForSpeh}[2]{P_{\left({#1}^{#2}\right)}}


\newcommand{\Partitions}{\mathrm{Par}}
\newcommand{\IrrCuspidal}{\mathrm{Irr}_{\mathrm{cusp}}}
\newcommand{\IrrCuspidalAll}{\IrrCuspidal \left(\GL_{\bullet}\left(\finiteField\right)\right)}
\newcommand{\ClassFunctionsRing}{\mathcal{C}}
\newcommand{\ClassFunctionsGLRing}{\ClassFunctionsRing\left(\GL_{\bullet}\left(\finiteField\right)\right)}
\newcommand{\CompactlyClassFunctionsGLRing}{\ClassFunctionsRing_c\left(\GL_{\bullet}\left(\finiteField\right)\right)}
\newcommand{\FullClassFunctionsGL}{\ClassFunctionsRing_{\ast}\left(\GL_{\bullet}\left(\finiteField\right)\right)}

\newcommand{\DualClassFunctionsGLRing}{\ClassFunctionsRing^{\ast}\left(\GL_{\bullet}\left(\finiteField\right)\right)}
\newcommand{\CharacteristicMapF}{\mathrm{ch^{\finiteField}}}
\newcommand{\CharacteristicMapFHat}{\mathrm{ch^{\widehat{\finiteField}}}}
\newcommand{\nOfPartition}{\mathfrak{n}}

\newcommand{\localField}{F}
\newcommand{\ringOfIntegers}{\mathfrak{o}}

\newcommand{\maximalIdeal}{\mathfrak{p}}
\newcommand{\depthZeroRepresentation}{\mathcal{T}}
\newcommand{\depthZeroCharacter}{\mathcal{X}}

\newcommand{\quotientMap}{\nu}
\newcommand{\Lift}{\mathcal{L}}
\newcommand{\uniformizer}{\varpi}

\newcommand{\intertwiningOperator}{M^{\left(z_1, \dots, z_c\right)}}
\newcommand{\holomorphicRepresentation}{\depthZeroRepresentation^{\left(z_1, \dots, z_c\right)}}

\newcommand{\GKGaussSum}[3]{\mathcal{G}^{\mathrm{GK}}\left(#1 \times #2, #3\right)}
\newcommand{\TwistedGaussSum}[3]{\mathcal{G}\left(#1 \times #2, #3\right)}

\newcommand{\GKGaussSumScalar}[3]{\mathrm{G}\left(#1 \times #2, #3\right)}
\newcommand{\GaussSumScalar}[2]{\mathrm{G}\left(#1, #2\right)}
\newcommand{\GaussSumCharacter}[4]{\tau_{#1}\left(#2 \times #3, #4\right)}
\newcommand{\kcNotation}[3]{(#1,#2)_{#3}}
\newcommand{\fieldCharacterkc}[2]{\fieldCharacter_{\qty(#2^{#1})}}

\newcommand{\ExoticKloosterman}{\mathrm{Kl}}
\newcommand{\ExoticKloostermanNormalized}{\mathrm{Kl}^{\ast}}

\newcommand{\SymmetricPower}{\mathrm{Sym}}
\newcommand{\TensorProductNormOne}[1]{\mathrm{N}_{1}^{#1}}
\newcommand{\TensorProductNormTwo}[1]{\mathrm{N}_{2}^{#1}}
\newcommand{\KloostermanGlobalClassFunction}{K^{\ast}_{\alpha, \fieldCharacter}}

\newcommand{\ladicnumbers}{\algebraicClosure{\mathbb{Q}_{\ell}}}
\newcommand{\artinScrier}[1]{\operatorname{AS}_{{\fieldCharacter_{#1}}}}
\newcommand{\convolutionWithCompactSupport}{\boldsymbol{\mathrm{R}}}

\newcommand{\conjugacyClass}[1]{\mathrm{Cl}\left(#1\right)}

\newcommand{\jordanSupportedCharacter}[1]{\mathcal{X}_{#1}}

\hypersetup{pdfauthor={Elad Zelingher},
	pdfsubject={Representation theory},
	pdfkeywords={Bessel functions, Kloosterman sums, Gauss sums}}

\title[Exotic matrix exponential sums]{On exotic matrix exponential sums and Bessel--Speh functions}

\author{Elad Zelingher}
\address{Department of Mathematics, University of Michigan, 1844 East Hall, 530 Church Street, Ann Arbor, MI 48109-1043 USA}
\email{eladz@umich.edu}

\keywords{Matrix Kloosterman sums, Bessel functions}
\subjclass[2020]{20C33, 11L05, 11T24}


\begin{document}

\begin{abstract}
	In a previous work with Carmon, we defined Bessel--Speh functions. These are matrix coefficients of irreducible Speh representations of $\GL_{kc}(\finiteField)$, where $\finiteField$ is a finite field. They arise from $(k,c)$ models, which are models that generalize the Whittaker model to Speh representations attached to irreducible generic representations. These constructions are finite field analogs of objects arising naturally in the generalized doubling method over $p$-adic fields, a recently active area of the Langlands program. In this article we study special values of Bessel--Speh functions which were used in our previous work with Carmon to define Ginzburg--Kaplan gamma factors. Our main result computes the special values of interest explicitly in terms of new arithmetic objects we introduce, called exotic matrix Kloosterman sums, which generalize both Katz's exotic Kloosterman sums and twisted matrix Kloosterman sums. We then show that exotic matrix Kloosterman sums can be expressed as products of modified Hall--Littlewood polynomials evaluated at roots of the characteristic polynomial of the Frobenius acting on Katz's exotic Kloosterman sheaf. As an application of our results, we establish new identities for Bessel functions of irreducible generic representations.
\end{abstract}

\maketitle

\section{Introduction}\label{sec:introduction}

This work establishes a relation between two objects from different areas in mathematics. On the number theory (or more precisely, exponential sums) side, the objects of interest are (exotic) matrix Kloosterman sums. On the representation theory side, the objects of interest are special matrix coefficients attached to Speh representations of finite general linear groups. Matrix coefficients of this sort are often called ``Bessel functions'', since they can be expressed as finite field sums that resemble the contour integral representation of the Bessel function. We show that certain special values of the matrix coefficients of interest are given by the matrix Kloosterman sums of interest.

This is not the first time a relation between Kloosterman sums and representation theory of general linear groups has been established. This was done before by Curtis and Shinoda~\cite{curtis2004zeta}, where they showed that certain special values of special matrix coefficients, called Bessel functions, attached to irreducible generic representations of finite general linear groups are given by exotic Kloosterman sums. This relation was later reproved by Soudry and the author in~\cite{SoudryZelingher2023} using the theory of Shahidi gamma factors. A vast generalization of the latter method was used by the author in~\cite{zelingher2022values} in order to establish a relation between other special values of Bessel functions and traces of the action of Frobenius on exterior powers of exotic Kloosterman sheaves.

In this work we use a similar method to establish the relation between (exotic) matrix Kloosterman sums and the special matrix coefficients of interest. In our case, we use the theory of Ginzburg--Kaplan gamma factors developed by Carmon and the author in~\cite{CarmonZelingher2025} and the results of the author from~\cite{Zelingher2024b}.

We move to discuss the objects involved in more detail.

Let $\finiteField$ be a finite field with cardinality $q$ and let $\fieldCharacter \colon \finiteField \to \multiplicativegroup{\cComplex}$ be a non-trivial character.

\subsection{Matrix Kloosterman sums}\label{subsec:matrix-kloosterman-sums}

Kloosterman sums are central objects in number theory. They are used in analytic number theory to bound exponential sums and appear in the Kuznetsov trace formula~\cite{KowalskiMichelSawin2017}. Kloosterman sheaves are sheaf-theoretic incarnations of Kloosterman sums. They were defined by Deligne~\cite{deligne569cohomologie} and studied extensively by Katz~\cite{katz2016gauss}. Kloosterman sheaves are important objects in the geometric Langlands program and afford a source of automorphic representations in the function field case~\cite{HeinlothNgoYun2013, Yun2016}.

Recently, \Erdelyi{}--\Toth{} and their collaborators started studying matrix variants of Kloosterman sums, called \emph{matrix Kloosterman sums}~\cite{erdelyi2021matrix, erdelyi2022purity, ErdelyiTothZabradi2024}. These sums were defined in a slightly more general context by Hodges~\cite{Hodges1956} and studied mostly for the special case of scalar matrices by various authors~\cite{Kim1998, ChaeKim2003, Fulman2001}. Matrix Kloosterman sums show up in the study of expanding horospheres on $\GL_n$~\cite{Marklof2010, el2022effective}.

In~\cite{erdelyi2021matrix}, \Erdelyi{}--\Toth{} studied the matrix Kloosterman sum $$\ExoticKloosterman\left(\fieldCharacter, h\right) = \sum_{x \in \GL_c\left(\finiteField\right)} \fieldCharacter\left( \trace\left(x + h x^{-1}\right) \right),$$
where $h \in \squareMatrix_c\left(\finiteField\right)$. Note that $\ExoticKloosterman\left(\fieldCharacter, h\right)$ is a class function, i.e., it is constant on conjugacy classes of $\squareMatrix_c\left(\finiteField\right)$. \Erdelyi{}--\Toth{} proved that this sum satisfies a multiplicative property: if $h_1 \in \GL_{c_1}\left(\finiteField\right)$ and $h_2 \in \GL_{c_2}\left(\finiteField\right)$ are such that $h_1$ and $h_2$ do not have common eigenvalues over the algebraic closure $\algebraicClosure{\finiteField}$ then $$\ExoticKloosterman\left(\fieldCharacter, \diag\left(h_1, h_2\right)\right) = q^{c_1 c_2} \ExoticKloosterman \left(\fieldCharacter, h_1\right) \ExoticKloosterman \left(\fieldCharacter, h_2\right).$$
In view of this multiplicativity property and the classification of conjugacy classes of $\squareMatrix_c\left(\finiteField\right)$, it suffices to know how to compute matrix Kloosterman sums for generalized Jordan matrices. \Erdelyi{}--\Toth{} computed explicitly the matrix Kloosterman sums corresponding to singular Jordan matrices, and gave a recursive formula/algorithm that allows one to compute matrix Kloosterman sums of Jordan matrices with a single eigenvalue in terms of classical Kloosterman sums. They also gave effective bounds for these matrix Kloosterman sums. In~\cite{erdelyi2022purity}, together with Sawin, they studied these matrix Kloosterman sums using geometric tools.

In~\cite{Zelingher2023}, the author studied twisted matrix Kloosterman sums: for $h \in \GL_c\left(\finiteField\right)$ and $\alpha = \alpha_1 \times \dots \times \alpha_k$ where $\alpha_1,\dots,\alpha_k \colon \multiplicativegroup{\finiteField} \to \multiplicativegroup{\cComplex}$ are characters, the twisted matrix Kloosterman sum is $$\ExoticKloosterman\left(\alpha, \fieldCharacter, h\right) = \sum_{\substack{x_1, \dots, x_k \in \GL_c\left(\finiteField\right) \\
		x_1 \dots x_k = h}} \left(\prod_{j=1}^k{\alpha_j\left(\det x_j\right)}\right)\fieldCharacter\left(\sum_{j=1}^k \trace x_j\right).$$
Note that this is a class function of $\GL_c\left(\finiteField\right)$. In~\cite{Zelingher2023}, the author showed that if $h$ is a generalized Jordan matrix, then the twisted matrix Kloosterman sum attached to it can be expressed in terms of a modified Hall--Littlewood polynomial evaluated at the inverses of the roots of the $L$-function associated with the family of twisted Kloosterman sums $\ExoticKloosterman\left(\alpha, \fieldCharacter, \xi\right)$, where $\xi$ is any eigenvalue of $h$ over the algebraic closure. He used representation theoretic methods, which were inspired by the theory of gamma factors developed by Carmon and him in~\cite{CarmonZelingher2025}.

\subsection{Speh representations and Bessel--Speh functions}
In the representation theory of $\GL_n\left(\finiteField\right)$, a fruitful way to study irreducible representations is by attaching invariants to them. Families of gamma factors form a useful source of invariants. These invariants can be thought of as finite field analogs of $L$-functions; Similarly to global $L$-functions, gamma factors often fit into functional equations. Analogously to global $L$-functions, ``poles'' of gamma factors (inputs for which the absolute value is not $1$) capture interesting properties of the representations in question. For example, the tensor product gamma factor allows one to recover the cuspidal support of any irreducible representation, see \Cref{cor:cuspidal-support-and-absolute-value}. As another example, the exterior square gamma factor allows one to determine whether an irreducible cuspidal representation is self-dual or, equivalently, if it admits a non-zero Shalika vector, see~\cite[Section 2.3.1, Remark 2.17]{YeZeligher18} and~\cite[Proposition 4.2]{Jo2024}.

One commonly studied family of gamma factors is the one associated to the tensor product representation attached to a pair of representations $\pi$ and $\tau$ of a classical group $G\left(\finiteField\right)$ and of $\GL_k\left(\finiteField\right)$, respectively. Let us refer to this family as ``tensor product gamma factors'' for short. Many constructions of gamma factors require the irreducible representations in question to be generic. This is some genericity condition which we will explain below.

In the past few years, new constructions for tensor product gamma factors emerged. In these constructions, which we call ``constructions of doubling type'', the irreducible representation $\tau$ is required to be generic, but no assumption is required for the irreducible representation $\pi$. 

Constructions of doubling type make use of $\kcNotation{k}{c}{\fieldCharacter}$ models of Speh representations attached to $\tau$. Let us explain this briefly. Let $c \ge 1$ and $k \ge 1$. Let $\UnipotentRadicalForWss{k}{c}$ be the unipotent radical of the parabolic subgroup of $\GL_{kc}\left(\finiteField\right)$ corresponding to the composition $\left(c^k\right)$. We define a character $\fieldCharacterkc{k}{c} \colon \UnipotentRadicalForWss{k}{c} \to \multiplicativegroup{\cComplex}$ by the formula
$$\fieldCharacterkc{k}{c} \begin{pmatrix}
	\IdentityMatrix{c} & X_1 & \ast & \ast  & \ast \\
	& \IdentityMatrix{c} & X_2 & \ast & \ast \\
	& & \ddots & \ddots & \ast  \\
	& & & \IdentityMatrix{c} &  X_{k-1} \\
	& & & & \IdentityMatrix{c}
\end{pmatrix} = \fieldCharacter\left( \sum_{j=1}^{k-1} \trace X_j \right).$$ A \emph{$\kcNotation{k}{c}{\fieldCharacter}$ vector} of a representation $\rho$ of $\GL_{kc}\left(\finiteField\right)$ is a vector $0 \ne v \in \rho$ such that $\rho\left(u\right)v = \fieldCharacterkc{k}{c}\left(u\right)v$ for every $u \in \UnipotentRadicalForWss{k}{c}$. An irreducible representation $\tau$ of $\GL_k\left(\finiteField\right)$ is called \emph{generic} if it admits a $\kcNotation{k}{1}{\fieldCharacter}$ vector, in which case, by the uniqueness of Whittaker vectors~\cite[Corollary 5.6]{SilbergerZink00} due to Gelfand--Graev~\cite{GelfandGraev1962}, this vector is unique (up to scalar multiplication). For any $c \ge 1$, and any irreducible generic representation $\tau$, one may attach an irreducible representation $\SpehRepresentation{\tau}{c}$ of $\GL_{kc}\left(\finiteField\right)$. These representations are called \emph{Speh representations}. They were defined in the context of finite fields by Carmon in his master's thesis~\cite{Carmon2023}. Carmon showed that for any irreducible generic representation $\tau$ and any $c \ge 1$, the Speh representation $\SpehRepresentation{\tau}{c}$ admits a $\kcNotation{k}{c}{\fieldCharacter}$ vector, and that this vector is unique (up to scalar multiplication). By Frobenius reciprocity, it follows that there exists a unique subspace of $\Ind{\UnipotentRadicalForWss{k}{c}}{\GL_{kc}\left(\finiteField\right)}{\fieldCharacterkc{k}{c}}$ which is isomorphic to $\SpehRepresentation{\tau}{c}$. We call this subspace the \emph{$\kcNotation{k}{c}{\fieldCharacter}$ model of $\SpehRepresentation{\tau}{c}$}.

Using the uniqueness of the $\kcNotation{k}{c}{\fieldCharacter}$ vector, we may define a distinguished matrix coefficient of $\SpehRepresentation{\tau}{c}$, as follows. Fix a $\kcNotation{k}{c}{\fieldCharacter}$ vector $v \in \SpehRepresentation{\tau}{c}$ and an inner product $\innerproduct{\cdot}{\cdot}$ on $\SpehRepresentation{\tau}{c}$ which is invariant under the $\GL_{kc}\left(\finiteField\right)$ action. Consider the following matrix coefficient of $\SpehRepresentation{\tau}{c}$, given by $$\besselSpehFunction{\tau}{c}\left(g\right) = \frac{\innerproduct{\SpehRepresentation{\tau}{c}\left(g\right)v}{v}}{\innerproduct{v}{v}},$$
where $g \in \GL_{kc}\left(\finiteField\right)$. 
This definition does not depend on the choice of the $\kcNotation{k}{c}{\fieldCharacter}$ vector $v$, nor on the choice of the inner product $\innerproduct{\cdot}{\cdot}$, since both are unique up to scalar multiplication. We mention that $\besselSpehFunction{\tau}{c}$ is also an element of the $\kcNotation{k}{c}{\fieldCharacter}$ model of $\SpehRepresentation{\tau}{c}$. Consider the special values given by $$\specialBesselSpeh{\tau}\left(h\right) = \begin{cases}
	\besselSpehFunction{\tau}{c}\begin{pmatrix}
		& \IdentityMatrix{\left(k-1\right)c}\\
		h
	\end{pmatrix} & k \ge 2,\\
	\tau\left(\det h\right) \fieldCharacter\left(\trace h^{-1}\right) & k = 1,
\end{cases}$$
where $h \in \GL_c\left(\finiteField\right)$. Note that the notation $\specialBesselSpeh{\tau}\left(h\right)$ does not include $k$ nor $c$, and these should be inferred from $\tau$ and $h$, respectively.

In~\cite{CarmonZelingher2025}, Carmon and the author showed that $\specialBesselSpeh{\tau}$ can be used to define a tensor product gamma factor attached to $\pi$ and $\tau$, where $\pi$ is an irreducible representation of $\GL_c\left(\finiteField\right)$. They showed that this gamma factor is multiplicative in both arguments and fits into a functional equation. 
Moreover, they showed that if $\tau$ is a generic principal series representation, that is, $\tau$ is the unique irreducible generic subrepresentation of the parabolically induced representation $\alpha_1 \circ \dots \circ \alpha_k$, where $\alpha_1,\dots,\alpha_k \colon \multiplicativegroup{\finiteField} \to \multiplicativegroup{\cComplex}$ are characters, then the special values $\specialBesselSpeh{\tau}$ are closely related to the twisted matrix Kloosterman sums discussed in \Cref{subsec:matrix-kloosterman-sums}:
\begin{equation}\label{eq:identity-for-principal-series}
	\specialBesselSpeh{\tau}\left(h\right) = q^{-\left(k-1\right)c^2} \ExoticKloosterman\left(\alpha^{-1},\fieldCharacter, \left(-1\right)^{k-1} h^{-1}\right),
\end{equation}
where $\alpha^{-1} = \alpha_1^{-1} \times \dots \times \alpha_k^{-1}$ and $h \in \GL_c\left(\finiteField\right)$.
It is now natural to ask whether a similar relation holds for an arbitrary irreducible generic representation $\tau$. We discuss this in the next section.

\subsection{Exotic matrix Kloosterman sums}
Suppose that $k \ge 2$. For $c=1$, denote $\besselSpehFunction{\tau}{1} = \besselFunction_{\tau, \fieldCharacter}$. Curtis--Shinoda computed the values $$\specialBesselSpeh{\tau}\left(h\right) = \besselFunction_{\tau, \fieldCharacter}\begin{pmatrix}
	& \IdentityMatrix{k-1}\\
	h
\end{pmatrix},$$
where $h \in \multiplicativegroup{\finiteField}$. Let us explain their formula first for the special case where $\tau$ is an irreducible cuspidal representation of $\GL_k\left(\finiteField\right)$. In this case, $\tau$ corresponds to a Frobenius orbit of a regular character $\alpha \colon \multiplicativegroup{\finiteFieldExtension{k}} \to \multiplicativegroup{\cComplex}$, where $\finiteFieldExtension{k} \slash \finiteField$ is a field extension of degree $k$. Then Curtis--Shinoda showed that~\cite{curtis2004zeta}
\begin{equation}\label{eq:curtis-shinoda-cuspidal}
	\besselFunction_{\tau, \fieldCharacter}\begin{pmatrix}
		& \IdentityMatrix{k-1}\\
		h
	\end{pmatrix} = \left(-1\right)^{k-1} q^{-\left(k-1\right)} \sum_{\substack{x \in \multiplicativegroup{\finiteFieldExtension{k}}\\
			\FieldNorm{k}{1}\left(x\right) = \left(-1\right)^{k-1} h^{-1}}} \alpha^{-1}\left(x\right) \fieldCharacter_k\left(x\right),
\end{equation}
where $\fieldCharacter_k = \fieldCharacter \circ \trace_{\finiteFieldExtension{k} \slash \finiteField}$ and where $\FieldNorm{k}{1} \colon \multiplicativegroup{\finiteFieldExtension{k}} \to \multiplicativegroup{\finiteField}$ is the norm map.

More generally, for any choice of irreducible cuspidal representation $\tau_1$, $\dots$, $\tau_s$ of $\GL_{k_1}\left(\finiteField\right)$, $\dots$, $\GL_{k_s}\left(\finiteField\right)$, respectively, with $k_1 + \dots + k_s = k$, the parabolically induced representation $\tau_1 \circ \dots \circ \tau_s$ admits a unique irreducible generic subrepresentation $\tau$ of $\GL_k\left(\finiteField\right)$, and every irreducible generic representation can be realized in this way. If $\tau$ is given as above and $\tau_j$ corresponds to the Frobenius orbit of a regular character $\alpha_j \colon \multiplicativegroup{\finiteFieldExtension{k_j}} \to \multiplicativegroup{\cComplex}$ for every $j$, then Curtis--Shinoda showed that
\begin{equation}\label{eq:curtis-shinoda-general}
\besselFunction_{\tau, \fieldCharacter}\begin{pmatrix}
		& \IdentityMatrix{k-1}\\
		h
	\end{pmatrix} = \left(-1\right)^{k+s} q^{-\left(k-1\right)} \sum_{\substack{x_1 \in \multiplicativegroup{\finiteFieldExtension{k_1}}, \dots, x_s \in \multiplicativegroup{\finiteFieldExtension{k_s}}\\
			\FieldNorm{k_1}{1}\left(x_1\right) \dots \FieldNorm{k_s}{1}\left(x_s\right) = \left(-1\right)^{k-1} h^{-1}}} \prod_{j=1}^s \alpha_j^{-1}\left(x_j\right) \fieldCharacter_{k_j}\left(x_j\right).
\end{equation}

We call the sums appearing on the right hand side of \eqref{eq:curtis-shinoda-cuspidal} and \eqref{eq:curtis-shinoda-general} \emph{exotic Kloosterman sums}. These were introduced by Deligne in~\cite[Sommes trig. 7.18]{deligne569cohomologie}, and studied by Katz in~\cite[8.8.5]{katz2016gauss}. The term was coined by Katz in~\cite[Pages 152 and 160]{katz1993estimates}. Note that the sum \eqref{eq:curtis-shinoda-general} is a convolution of sums of the form \eqref{eq:curtis-shinoda-cuspidal}. This suggests that in order to generalize \eqref{eq:identity-for-principal-series} to arbitrary irreducible generic representations, we need to first define a matrix analog of the exotic Kloosterman sum in \eqref{eq:curtis-shinoda-cuspidal}, and then define a matrix analog of the exotic Kloosterman sum in \eqref{eq:curtis-shinoda-general} using convolution.

The exotic Kloosterman sum appearing in \eqref{eq:curtis-shinoda-cuspidal} relies on the notion of the norm map. In order to define its matrix analog, we will need a matrix version of the norm map. Fortunately, such map was defined by Shintani~\cite{shintani1976two}. The definition is subtle, since the norm map is not a map  $\GL_c\left(\finiteFieldExtension{k}\right) \to \GL_c\left(\finiteField\right)$ but rather a map between the conjugacy classes of these groups. However, this is compatible with the fact that twisted matrix Kloosterman sums are class functions.

\subsection{Main results}
For any character $\chi \colon \multiplicativegroup{\finiteFieldExtension{k}} \to \multiplicativegroup{\cComplex}$ we define for $h \in \GL_c\left(\finiteField\right)$ an exotic matrix Kloosterman sum via the formula
$$\ExoticKloosterman\left(\chi, \fieldCharacter, h\right) = \sum_{\substack{x \in \GL_c\left(\finiteFieldExtension{k}\right)\\
h \in \conjugacyClass{\FieldNorm{k}{1}\left(x\right)}}} \frac{1}{\#\conjugacyClass{\FieldNorm{k}{1}\left(x\right)}} \chi\left(\det x\right) \fieldCharacter_{k}\left(\trace x\right).$$
Here $\conjugacyClass{\FieldNorm{k}{1}\left(x\right)}$ is the conjugacy class of $\GL_c\left(\finiteField\right)$ associated to the norm of $x \in \GL_c\left(\finiteFieldExtension{k}\right)$ via the Shintani norm map~\cite{shintani1976two}.

Given a character $\alpha = \alpha_1 \times \dots \times \alpha_s \to \cComplex$ where $\alpha_j \colon \multiplicativegroup{\finiteFieldExtension{k_j}} \to \multiplicativegroup{\cComplex}$ we define for $h \in \GL_c\left(\finiteField\right)$ $$\ExoticKloosterman\left(\alpha, \fieldCharacter, h\right) = \sum_{\substack{x_1,\dots,x_s \in \GL_c\left(\finiteField\right)\\
x_1 \dots x_s = h}} \ExoticKloosterman\left(\alpha_1, x_1\right) \dots \ExoticKloosterman\left(\alpha_s, x_s\right).$$

We show that the values $\specialBesselSpeh{\tau}$ are given by these exotic matrix Kloosterman sums (\Cref{thm:bessel-speh-is-an-exotic-kloosterman-sum-generic}):
\begin{theorem}\label{thm:introduction-bessel-speh-is-an-exotic-kloosterman-sum-generic}
	Let $\tau$ be an irreducible generic representation of $\GL_k\left(\finiteField\right)$ with cuspidal support $\left\{\tau_1,\dots,\tau_s\right\}$ where $\tau_j$ is an irreducible cuspidal representation of $\GL_{k_j}\left(\finiteField\right)$ for every $j$, such that $k_1 + \dots + k_s = k$. Suppose that for every $j$, $\tau_j$ corresponds to the Frobenius orbit of a regular character $\alpha_j \colon \multiplicativegroup{\finiteFieldExtension{k_j}} \to \multiplicativegroup{\cComplex}$. Then for any $h \in \GL_c\left(\finiteField\right)$,
	$$\specialBesselSpeh{\tau}\left(h\right) = \left(-1\right)^{c\left(k+s\right)} q^{-\left(k-1\right)c^2} \ExoticKloosterman\left(\alpha^{-1}, \fieldCharacter, \left(-1\right)^{k-1} h^{-1}\right),$$
	where $\alpha^{-1} = \alpha_1^{-1} \times \dots \times \alpha_s^{-1}$.
\end{theorem}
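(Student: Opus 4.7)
The plan is to reduce the identity first to the case where $\tau$ is cuspidal, and then handle the cuspidal case via the Ginzburg--Kaplan gamma factor framework developed in CarmonZelingher2025. For the reduction step, the key observation is that when $\tau$ is the unique irreducible generic constituent of $\tau_1 \circ \dots \circ \tau_s$, the $\kcNotation{k}{c}{\fieldCharacter}$ vector of $\SpehRepresentation{\tau}{c}$ can be realized through a natural embedding into the parabolically induced representation $\SpehRepresentation{\tau_1}{c} \circ \dots \circ \SpehRepresentation{\tau_s}{c}$, where it factors as a tensor product of the $\kcNotation{k_j}{c}{\fieldCharacter}$ vectors of the constituent Speh representations. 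Unwinding this at the matrix-coefficient level should produce a convolution identity expressing $\specialBesselSpeh{\tau}$ as a weighted convolution of the $\specialBesselSpeh{\tau_j}$, parallel to the convolution structure built into the definition of $\ExoticKloosterman(\alpha, \fieldCharacter, h)$. Matching the resulting normalization factors reduces the problem to the cuspidal case $s=1$.

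For cuspidal $\tau$ corresponding to a regular character $\alpha$ of $\multiplicativegroup{\finiteFieldExtension{k}}$, the strategy is to characterize both sides by pairing against Bessel functions. Both class functions
\[
h \mapsto \specialBesselSpeh{\tau}(h) \quad\text{and}\quad h \mapsto (-1)^{c(k+1)} q^{-(k-1)c^2} \ExoticKloosterman(\alpha^{-1}, \fieldCharacter, (-1)^{k-1} h^{-1})
\]
on $\GL_c(\finiteField)$ should be paired, via a weighted sum over $h$, against the Bessel function $\besselFunctionOfFiniteFieldRepresentation{\pi}$ of an arbitrary irreducible generic representation $\pi$ of $\GL_c(\finiteField)$. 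On the Bessel--Speh side, this pairing is, by design, essentially the Ginzburg--Kaplan gamma factor $\GKGammaFactor{\pi}{\tau}{\fieldCharacter}$. On the exotic Kloosterman side, unfolding via the Shintani norm rewrites it as an extension-field character sum of the form $\sum_{x \in \GL_c(\finiteFieldExtension{k})} \alpha^{-1}(\det x)\, \fieldCharacter_k(\trace x)\, \besselFunctionOfFiniteFieldRepresentation{\pi}$ evaluated on a Shintani representative of $\FieldNorm{k}{1}(x)$. If these two expressions agree for every generic $\pi$, and can be extended to non-generic $\pi$ via multiplicativity of $\gamma^{\mathrm{GK}}$ in the first argument, then the two class functions must coincide.

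The hard part will be the explicit identification of these two pairings. On the gamma-factor side, multiplicativity of $\GKGammaFactor{\pi}{\tau}{\fieldCharacter}$ in $\pi$, combined with Shintani base change and Kondo's Gauss sum, should factor it into a product of classical exotic Gauss sums indexed by the cuspidal support of $\pi$; this is the non-abelian exotic Gauss sum reduction advertised in the abstract. On the Kloosterman side, the extension-field sum should reduce to the same product by applying the Curtis--Shinoda formula \eqref{eq:curtis-shinoda-general} over $\GL_c(\finiteFieldExtension{k})$, together with the Hasse--Davenport relation for Shintani norms. The bookkeeping of the sign $(-1)^{c(k+s)}$, the normalization $q^{-(k-1)c^2}$, and the twist $(-1)^{k-1} h^{-1}$ should follow by analogy with the principal series identity \eqref{eq:identity-for-principal-series}, applied to the Shintani lift of $\tau$ over $\finiteFieldExtension{k}$; specializing to $c=1$ recovers \eqref{eq:curtis-shinoda-general} as a consistency check.
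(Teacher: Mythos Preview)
Your proposal has the right ingredients --- the Ginzburg--Kaplan gamma factor, Shintani base change, and Kondo's Gauss sum are exactly the tools in play --- but your route is more circuitous than the paper's, and one step is imprecise.

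The paper does \emph{not} reduce to cuspidal $\tau$, and it does \emph{not} restrict to generic $\pi$. Instead, it pairs both class functions directly against $\trace\pi$ for \emph{every} irreducible $\pi$ of $\GL_c(\finiteField)$. On the Bessel--Speh side this pairing is (up to normalization) $\GKPreGammaFactor{\pi^\vee}{\tau^\vee}{\fieldCharacter}$, which by the already-established equality $\GKGammaFactor{\pi}{\tau}{\fieldCharacter}=\varepsilon_0(\pi\times\tau,\fieldCharacter)$ from \cite{CarmonZelingher2025} (valid for all irreducible $\pi$ and all generic $\tau$) equals $\varepsilon_0(\pi^\vee\times\tau^\vee,\fieldCharacter)$. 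On the Kloosterman side the pairing is by definition the non-abelian exotic Gauss sum $\GKGaussSumScalar{\pi}{\alpha}{\fieldCharacter}$, which \Cref{prop:non-abelian-exotic-gauss-sum-equals-epsilon-factor} (a consequence of Kondo's computation plus the Shintani-lift analysis packaged in \Cref{thm:exotic-gauss-sum-of-composite-character}) identifies with $(-1)^{(k+s)c}\varepsilon_0(\pi^\vee\times\tau^\vee,\fieldCharacter)$. Equality for all $\pi$ then forces the two class functions to coincide. No convolution reduction, no special treatment of the cuspidal case.

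Your convolution reduction to cuspidal $\tau$ is valid (it is essentially \Cref{thm:multiplicativitiy-of-special-values-of-bessel-functions-with-respect-to-tau}), but unnecessary given that the gamma-factor identity already holds for general $\tau$. Your pairing against Bessel functions $\besselFunctionOfFiniteFieldRepresentation{\pi}$ also works --- for a class function $f$ one has $\sum_h f(h)\besselFunctionOfFiniteFieldRepresentation{\pi}(h)=\tfrac{1}{\dim\pi}\sum_h f(h)\trace\pi(h)$ by Schur --- but the phrase ``$\besselFunctionOfFiniteFieldRepresentation{\pi}$ evaluated on a Shintani representative of $\FieldNorm{k}{1}(x)$'' is ill-posed as written, since $\besselFunctionOfFiniteFieldRepresentation{\pi}$ is not a class function; what you actually obtain after unfolding is the conjugacy-class average, i.e.\ $\tfrac{1}{\dim\pi}\trace\pi(\conjugacyClass{\FieldNorm{k}{1}(x)})$, and then the Shintani relation applies. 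Once you fix that, your ``extension to non-generic $\pi$ via multiplicativity'' is really just the observation that both $\GKGammaFactor{\pi}{\tau}{\fieldCharacter}$ and $\GKGaussSumScalar{\pi}{\alpha}{\fieldCharacter}$ depend only on the cuspidal support of $\pi$ --- which is precisely what the paper's direct approach exploits from the start.
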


In addition, we show that these exotic matrix Kloosterman sums can be expressed as products of modified Hall--Littlewood polynomials evaluated at the roots of $\ExoticKloosterman\left(\alpha, \fieldCharacter\right)$ (\Cref{thm:general-formula-for-exotic-kloosterman-sum-of-conjugacy-class}). This generalizes the author's previous results~\cite{Zelingher2023}.

\begin{theorem}\label{thm:introduction-general-formula-for-exotic-kloosterman-sum-of-conjugacy-class}
	Suppose that $h$ is conjugate to $\diag\left(J_{\mu_1}\left(h_{\xi_1}\right), \dots, J_{\mu_r}\left(h_{\xi_r}\right)\right)$ where
	\begin{enumerate}
		\item For every $i$, $h_{\xi_i} \in \GL_{a_i}\left(\finiteField\right)$ is a regular elliptic matrix whose eigenvalues multiset is the Frobenius orbit $\left[\xi_i\right]$ where $\xi_i \in \multiplicativegroup{\finiteFieldExtension{a_i}}$ is of degree $a_i$.
		\item For $i \ne j$, the Frobenius orbits $\left[\xi_i\right]$ and $\left[\xi_j\right]$ are different.
		\item For every $i$, $\mu_i \vdash b_i > 0$ and $J_{\mu_i}\left(h_{\xi_i}\right) \in \GL_{a_i b_i}\left(\finiteField\right)$ is the generalized Jordan matrix corresponding to $h_{\xi_i}$ and the partition $\mu_i$ (see \Cref{subsec:conjugacy-classes-of-gl-n}).
		\item $c = \sum_{i=1}^r a_i b_i$.
	\end{enumerate}
	Then for any character $\alpha = \alpha_1 \times \dots \times \alpha_s$ as above
	$$ \ExoticKloosterman\left(\alpha, \fieldCharacter, h\right) = \left(-1\right)^{\left(k-1\right)c} q^{\left(k-1\right) \binom{c}{2}} \prod_{j=1}^r \htHallLittlewood_{\mu_j}\left(\omega_{1, \left[\xi_j\right]}, \dots, \omega_{k, \left[\xi_j\right]}; q^{a_j}\right),$$
	where $\htHallLittlewood_{\mu_j}$ is the modified Hall--Littlewood polynomial corresponding to the partition $\mu_j$, and where $\omega_{1, \left[\xi_j\right]}$, $\dots$, $\omega_{k, \left[\xi_j\right]}$ are the eigenvalues of the geometric Frobenius action on the geometric stalk at $\xi_j$ of the exotic Kloosterman sheaf $\ExoticKloosterman_{\finiteFieldExtension{a_j}}\left(\prod_{i=1}^s \finiteFieldExtension{k_i}, \alpha, \fieldCharacter\right)$.
\end{theorem}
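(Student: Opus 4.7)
The plan is to adapt the Fourier-theoretic representation-theoretic strategy from \cite{Zelingher2023}, now allowing genuine cuspidal pieces $\alpha_j \colon \multiplicativegroup{\finiteFieldExtension{k_j}} \to \multiplicativegroup{\cComplex}$ rather than only characters of $\multiplicativegroup{\finiteField}$. By its very definition, $\ExoticKloosterman(\alpha,\fieldCharacter,\cdot)$ on $\GL_c(\finiteField)$ is the convolution product (in the group algebra) of the class functions $\ExoticKloosterman(\alpha_j,\fieldCharacter,\cdot)$. Fourier analysis on $\GL_c(\finiteField)$ then yields, for each $\pi \in \Irr(\GL_c(\finiteField))$,
$$\pi\bigl(\ExoticKloosterman(\alpha,\fieldCharacter,\cdot)\bigr) \;=\; \prod_{j=1}^{s} \pi\bigl(\ExoticKloosterman(\alpha_j,\fieldCharacter,\cdot)\bigr),$$
each operator on the right being a scalar by Schur's lemma. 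I would identify these scalars with the non-abelian exotic Gauss sums introduced earlier in the paper, and reduce them to products of classical (exotic) Gauss sums via Kondo's formula and the theory of Shintani lifts.

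Next, I would invert the Fourier transform to obtain
$$\ExoticKloosterman(\alpha,\fieldCharacter,h) \;=\; \frac{1}{|\GL_c(\finiteField)|} \sum_{\pi} (\dim\pi)\, \chi_\pi(h^{-1}) \prod_{j=1}^{s} G_j(\pi),$$
where $G_j(\pi)$ denotes the scalar above, and apply Green's character formula to evaluate $\chi_\pi$ at the generalized Jordan matrix $\diag(J_{\mu_1}(h_{\xi_1}),\dots,J_{\mu_r}(h_{\xi_r}))$. Green's formula expresses this value as a product, indexed by the Frobenius orbits $[\xi_i]$ of eigenvalues, of (modified) Hall--Littlewood polynomials $\htHallLittlewood_{\mu_i}$ evaluated at the power sums in the parameters of the cuspidal support of $\pi$ associated to $[\xi_i]$. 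Because Green's parametrization of $\Irr(\GL_c(\finiteField))$ is itself indexed by multi-partitions supported on Frobenius orbits of characters, and because the $G_j(\pi)$ depend on the cuspidal support in a multiplicative way, the double sum over $\pi$ and orbits factors, yielding a product over the $[\xi_i]$ of a single $\htHallLittlewood_{\mu_i}(\, \cdot\,; q^{a_i})$.

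Finally, the evaluation points are identified with the Frobenius eigenvalues $\omega_{1,[\xi_i]},\dots,\omega_{k,[\xi_i]}$: by Grothendieck's trace formula, their power sums $\sum_{t=1}^{k} \omega_{t,[\xi_i]}^m$ equal the values of the classical exotic Kloosterman sums $\ExoticKloosterman(\alpha,\fieldCharacter_m,\xi_i)$ over suitable extensions, which will match the power sums produced in the previous step. The normalization $(-1)^{(k-1)c} q^{(k-1)\binom{c}{2}}$ should drop out of careful bookkeeping of the constants in Green's formula, the Shintani norm weights $\#\conjugacyClass{\FieldNorm{k_j}{1}(x)}^{-1}$, and the standard twist $\htHallLittlewood$ vs.\ $\pHallLittlewood$. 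I expect the main obstacle to be in the second step: the explicit computation of the scalars $G_j(\pi)$ (the non-abelian Gauss sums) in a form compatible with Green's parametrization, and the subsequent combinatorial regrouping of the sum over $\pi$ so that the product structure over the Frobenius orbits $[\xi_i]$ emerges cleanly with the correct Hall--Littlewood evaluation points.
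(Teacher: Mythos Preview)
Your overall strategy matches the paper's: both start from the defining property that the Fourier coefficients of $\ExoticKloosterman(\alpha,\fieldCharacter,\cdot)$ at irreducible $\pi$ are the non-abelian exotic Gauss sums $\GKGaussSumScalar{\pi}{\alpha}{\fieldCharacter}$, both reduce these to products of classical exotic Gauss sums via Kondo's formula and the Silberger--Zink description of Shintani lifts (the paper's \Cref{thm:exotic-gauss-sum-of-composite-character}), and both then pass back to conjugacy classes through Green--Macdonald theory.

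The main divergence is in how the final step is organized. You propose to write the Fourier inversion formula, plug in Green's character formula for $\chi_\pi(h)$, and regroup the sum over $\pi$. The paper instead works dually: it defines the candidate function $\KloostermanGlobalClassFunction$ (the right-hand side, assembled over all $c$ at once), computes its image under Macdonald's characteristic maps $\CharacteristicMapF$ and $\CharacteristicMapFHat$, and then obtains $\innerproduct{\KloostermanGlobalClassFunction}{\trace\pi^{\vee}}$ directly in the Schur basis of $\hat{\Lambda}^{\widehat{\finiteField}}$ via orthonormality. The key technical device replacing your ``combinatorial regrouping'' is the Cauchy identity $\sum_\mu \ptHallLittlewood_\mu(X;t)\htHallLittlewood_\mu(Y;t)=\prod_{i,j}(1-X_iY_j)^{-1}$ together with the Hasse--Davenport relation (\Cref{thm:hasse-davenport-for-exotic-gauss-sums}), which converts the product of $L$-functions in $\CharacteristicMapF\KloostermanGlobalClassFunction$ into a clean product over Frobenius orbits $[\beta]$ in $\Gamma$; this is exactly the factorization you anticipate as the main obstacle.

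One caution: your description of Green's formula in step~5 is not quite right. The character value $\chi_\pi(h)$ is not itself a product of modified Hall--Littlewood polynomials at parameters of $\pi$; rather, $\chi_\pi(h)$ is obtained by expanding $\prod_{[\theta]}s^{[\theta]}_{\transpose{\phi([\theta])}}$ in power sums, pushing through the isomorphism \eqref{eq:homomomorphism-from-character-group-to-to-multiplicative-group}, and pairing with $\prod_j\htHallLittlewood_{\mu_j}^{[\xi_j]}$. The product structure over $[\xi_i]$ only emerges \emph{after} summing over all $\pi$ with the Gauss-sum weights, and that is precisely where the Cauchy identity does the work that would otherwise be an unpleasant direct manipulation. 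Your plan is viable, but without the symmetric-function packaging you would essentially be rediscovering Macdonald's characteristic map computation by hand.
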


\subsubsection{Relation to the representation theory of $p$-adic groups}
Since our objects and constructions are finite field analogs of similar ones originating in the representation theory of $p$-adic groups, it is natural to ask how they are related.

In~\cite{Zelingher2024b} the author established such relation using the mechanism of level zero representations. We summarize these results now.

Let $\localField$ be a non-archimedean local field with ring of integers $\ringOfIntegers$ and maximal ideal $\maximalIdeal$. Suppose that the residue field $\ringOfIntegers \slash \maximalIdeal$ of $\localField$ is isomorphic to $\finiteField$ and let $\quotientMap \colon \ringOfIntegers \slash \maximalIdeal \to \finiteField$ be the quotient map. Given an irreducible cuspidal representation $\tau$ of $\GL_k\left(\finiteField\right)$ and a character $\depthZeroCharacter \colon \multiplicativegroup{\localField} \to \multiplicativegroup{\cComplex}$ such that $\depthZeroCharacter \restriction_{\multiplicativegroup{\ringOfIntegers}} = \centralCharacter{\pi} \circ \quotientMap\restriction_{\multiplicativegroup{\ringOfIntegers}}$ we can construct an irreducible supercuspidal representation $\depthZeroRepresentation = \depthZeroRepresentation_{\tau, \depthZeroCharacter}$ of $\GL_k\left(\localField\right)$ via compact induction. The Speh representation $\SpehRepresentation{\tau}{c}$ of $\GL_{kc}\left(\finiteField\right)$ is a subrepresentation of the parabolic induction $\tau^{\circ c} = \tau \circ \dots \circ \tau$, while the Speh representation $\SpehRepresentation{\depthZeroRepresentation}{c}$ is a quotient of the parabolically induced representation $\depthZeroRepresentation^{\left(\frac{c-1}{2}, \frac{c-3}{2}, \dots, \frac{-\left(c-1\right)}{2}\right)}$, where for complex numbers $z_1,\dots,z_c \in \cComplex$,
$$\depthZeroRepresentation^{\left(z_1,\dots,z_c\right)} = \Ind{\ParabolicForSpeh{k}{c}\left(\localField\right)}{\GL_{kc}\left(\localField\right)}{\abs{\det}^{z_1}\depthZeroRepresentation \boxtimes \dots \boxtimes \abs{\det}^{z_c} \depthZeroRepresentation}.$$
In more detail, there exists a standard intertwining operator $\intertwiningOperator_{\weylElement{1^c}} \colon \depthZeroRepresentation^{\left(z_1,\dots,z_c\right)} \to \depthZeroRepresentation^{\left(z_c,\dots,z_2,z_1\right)}$. It converges for $\left(z_1,\dots,z_c\right)$ in a positive cone and has a meromorphic continuation to all $\left(z_1,\dots,z_c\right)$. This operator is holomorphic at the point $\left(z_1,\dots,z_c\right) = \left(\frac{c-1}{2}, \frac{c-3}{2}, \dots, -\frac{c-1}{2}\right)$. The image of the operator at that point is $\SpehRepresentation{\depthZeroRepresentation}{c}$. Using the Iwasawa decomposition, given $f \in \tau^{\circ c}$ we can define a flat section $\Lift^{\left(z_1,\dots,z_c\right)} f \in \holomorphicRepresentation$.

In~\cite{Zelingher2025} the author showed that the following diagram commutes
		$$\xymatrix{
	\tau^{\circ c} \ar[d]_{\Lift^{\left(\frac{c-1}{2},\dots,-\frac{c-1}{2}\right)}} \ar[rr]^{\ProjectionOperator_{\SpehRepresentation{\tau}{c}}}  & & \SpehRepresentation{\tau}{c} \ar[d]^{\Lift^{\left(-\frac{c-1}{2},\dots,\frac{c-1}{2}\right)}}\\
	\depthZeroRepresentation^{\left(\frac{c-1}{2},\dots,-\frac{c-1}{2}\right)} \ar[rr]^{M^{\ast}_{\weylElement{1^c}}} & & \depthZeroRepresentation^{\left(-\frac{c-1}{2},\dots,\frac{c-1}{2}\right)},}$$
	where $M^{\ast}_{\weylElement{1^c}}$ is an explicit scalar multiple of $M^{\left(\frac{c-1}{2}, \frac{c-3}{2}, \dots, -\frac{c-1}{2}\right)}_{\weylElement{1^c}}$ and $\ProjectionOperator_{\SpehRepresentation{\tau}{c}} \colon \tau^{\circ c} \to \SpehRepresentation{\tau}{c}$ is the projection operator. This shows a relation between the two versions of Speh representations. In particular, we have that $f \mapsto \Lift f \coloneq \Lift^{\left(-\frac{c-1}{2},\dots,\frac{c-1}{2}\right)} f$ is a map $\SpehRepresentation{\tau}{c} \to \SpehRepresentation{\depthZeroRepresentation}{c}$.
	
	The author also showed a relation between the $\kcNotation{k}{c}{\fieldCharacter}$ models of these Speh representations. If $\fieldCharacter \colon \ringOfIntegers \to \multiplicativegroup{\cComplex}$ is a non-trivial character with conductor $\maximalIdeal$, then $\fieldCharacter$ can be regarded as a character of $\finiteField = \ringOfIntegers \slash \maximalIdeal$. The author showed that one can choose $\kcNotation{k}{c}{\fieldCharacter}$ functionals $\ell_{\SpehRepresentation{\tau}{c}}$ and $\ell_{\SpehRepresentation{\depthZeroRepresentation}{c}}$ of $\SpehRepresentation{\tau}{c}$ and $\SpehRepresentation{\depthZeroRepresentation}{c}$, respectively, such that for every $f \in \SpehRepresentation{\tau}{c}$ and every $k_0 \in \GL_{kc}\left(\ringOfIntegers\right)$ the following equality holds:
	$$W_{\Lift f}\left(k_0\right) = W_f\left(\quotientMap\left(k_0\right)\right).$$
	Here $W_f \in \Whittaker\left(\SpehRepresentation{\tau}{c}, \fieldCharacterkc{k}{c}\right)$ is the $\kcNotation{k}{c}{\fieldCharacter}$ function corresponding to $f$ and $\ell_{\SpehRepresentation{\tau}{c}}$ via Frobenius reciprocity and $W_{\Lift f} \in \Whittaker\left(\SpehRepresentation{\depthZeroRepresentation}{c}, \fieldCharacterkc{k}{c} \right)$ is defined similarly.
	
	Finally, the author showed that for irreducible level zero supercuspidal representations $\Pi$ and $\depthZeroRepresentation$ of $\GL_c\left(\localField\right)$ and $\GL_k\left(\localField\right)$, respectively, constructed from irreducible cuspidal representations $\pi$ and $\tau$ of $\GL_c\left(\finiteField\right)$ and $\GL_k\left(\finiteField\right)$, respectively, we have that the Ginzburg--Kaplan gamma factors are related via the relation
	$$\LocalGKGammaFactor{s}{\Pi}{\depthZeroRepresentation}{\fieldCharacter} = \begin{dcases}
		\GKGammaFactor{\pi}{\tau}{\fieldCharacter} & \pi \ncong \Contragradient{\tau},\\
		\centralCharacter{\pi}\left(-1\right)^{c-1} \frac{q^{c\left(s-\frac{1}{2}\right)}}{\left(\centralCharacter{\Pi} \cdot \centralCharacter{\depthZeroRepresentation}\right)\left(\uniformizer\right) } \frac{1 - \left(\centralCharacter{\Pi} \cdot \centralCharacter{\depthZeroRepresentation}\right)\left(\uniformizer\right) q^{-cs}}{1 - \left(\centralCharacter{\Pi} \cdot \centralCharacter{\depthZeroRepresentation}\right)^{-1}\left(\uniformizer\right) q^{-c\left(1-s\right)}} & \pi \cong \Contragradient{\tau}.
	\end{dcases}$$
	
	It is interesting to ask if there exists an analog of the relation between Bessel--Speh functions and matrix Kloosterman sums in the local field case for more general representations of general linear groups. For this one would need to define the notions of Bessel--Speh functions and matrix Kloosterman sums in the local field case. For $c=1$, Bessel functions were defined for irreducible supercuspidal representations by Paskunas--Stevens in~\cite{PaskunasStevens2008}, but even in this case we are not aware of explicit formulas expressing their special values as Kloosterman sums. The closest formulas we are aware of are those related to the work of Ichino--Templier on the Voronoi summation formula~\cite{IchinoTemplier2013}, see for example~\cite[Section 4]{Assing2024}. 
\subsection{Outline of the paper}
In this paper we define the notion of exotic matrix Kloosterman sums, study their properties, and relate them to the special values  $\specialBesselSpeh{\tau}$ of Bessel--Speh functions. It turns out that it is convenient to study exotic matrix Kloosterman sums with their counterpart non-abelian exotic Gauss sums.

In \Cref{sec:classical-exotic-exponential-sums}, we define the notion of what we call exotic Gauss sums. These sums appear when one computes the tensor product $\varepsilon_0$-factor attached to a pair of irreducible representations of $\GL_c\left(\finiteField\right)$ and $\GL_k\left(\finiteField\right)$~\cite{ye2021epsilon}. We prove a Hasse--Davenport lifting relation for these exotic Gauss sums (\Cref{thm:hasse-davenport-for-exotic-gauss-sums}). We then recall the definition and properties of exotic Kloosterman sums and their corresponding sheaves. Most importantly, in \Cref{subsec:roots-of-kloosterman-sum-at-xi} we define the notion of the roots of $\ExoticKloosterman_{\finiteFieldExtension{a}}\left(\alpha, \fieldCharacter\right)$ at $\xi \in \multiplicativegroup{\finiteFieldExtension{a}}$, which is needed for the explicit formula for exotic matrix Kloosterman sums in terms of modified Hall--Littlewood polynomials that we give later (\Cref{thm:general-formula-for-exotic-kloosterman-sum-of-conjugacy-class}).

In \Cref{sec:exotic-matrix-exponential-sums}, we define the notions of what we call non-abelian exotic Gauss sums and exotic Kloosterman sums. As discussed above, these definitions make use of Shintani's norm map. We begin with recalling the representation theory of $\GL_n\left(\finiteField\right)$. Then we recall Kondo's non-abelian Gauss sums and their explicit computation. We then explain the Shintani norm map and Shintani lifts and state results of Silberger--Zink~\cite{SilbergerZink08} regarding an explicit description of the cuspidal support of the Shintani lift of a given irreducible representation. Next in \Cref{subsec:non-abelian-exotic-gauss-sums}, we define the notion of non-abelian exotic Gauss sums and use the results above to show that they can be expressed as products of exotic Gauss sums (\Cref{thm:exotic-gauss-sum-of-composite-character}). In \Cref{subsec:exotic-matrix-kloosterman-sum}, we use these non-abelian exotic Gauss sums to define the notion of exotic matrix Kloosterman sums.

Sections \ref{sec:relation-to-bessel-speh-functions} and \ref{sec:relation-to-hall-littlewood-polynomials} are independent. In \Cref{sec:relation-to-bessel-speh-functions}, we prove a relation similar to \eqref{eq:identity-for-principal-series} for arbitrary irreducible generic representations $\tau$ of $\GL_k\left(\finiteField\right)$. We begin with recalling the notions of $\varepsilon_0$-factors, $\kcNotation{k}{c}{\fieldCharacter}$-vectors, generic representations, Speh representations and their corresponding Bessel--Speh functions. We then use our results with Carmon from~\cite{CarmonZelingher2025} and our results from~\cite{Zelingher2024b} to deduce \Cref{thm:introduction-bessel-speh-is-an-exotic-kloosterman-sum-generic} (\Cref{thm:bessel-speh-is-an-exotic-kloosterman-sum-generic}), which generalizes \eqref{eq:identity-for-principal-series}. We then deduce the multiplicativity property (\Cref{thm:multiplicativity-of-exotic-kloosterman-sums-without-proof}) of exotic matrix Kloosterman sums, analogous to the one proved in~\cite[Theorem 4.1]{CarmonZelingher2025} for twisted matrix Kloosterman sums.

In \Cref{sec:relation-to-hall-littlewood-polynomials}, we generalize our previous results from~\cite{Zelingher2023} and prove \Cref{thm:introduction-general-formula-for-exotic-kloosterman-sum-of-conjugacy-class} which shows that exotic matrix Kloosterman sums can be expressed as products of modified Hall--Littlewood polynomials evaluated at the roots of $\ExoticKloosterman\left(\alpha, \fieldCharacter\right)$ at the eigenvalues of the matrix in question (\Cref{thm:general-formula-for-exotic-kloosterman-sum-of-conjugacy-class}). Our method of proof is different from the method we used in~\cite{Zelingher2023}. We use Macdonald's characteristic maps and compute the element that the ``global exotic matrix Kloosterman sum function'' corresponds to under these maps (Theorems \ref{thm:macdonald-exotic-kloosterman-sum-geometric-basis} and \ref{thm:global-matrix-kloosterman-sum-in-character-basis}). We would like to mention that Theorems \ref{thm:macdonald-exotic-kloosterman-sum-geometric-basis} and \ref{thm:global-matrix-kloosterman-sum-in-character-basis} are the first results we are aware of that explain how theories of tensor product gamma factors are related to the classical treatment of representation theory of finite general linear groups due to Green~\cite{Green55} and Macdonald~\cite{macdonald1998symmetric}.

In \Cref{sec:applications} we show some applications. In \Cref{sec:identities-for-bessel-functions} we combine our results with our previous results from~\cite{zelingher2022values} to find formulas expressing Bessel functions for irreducible generic representations of general linear groups in terms of exotic matrix Kloosterman sums. In \Cref{example:curtis-shinoda-trick} we give an interesting explicit formula for values of Bessel functions at elements of the form $\left(\begin{smallmatrix}
	& & \IdentityMatrix{k - c} \\
	& t_2 \IdentityMatrix{c-1} \\	
	t_1
\end{smallmatrix}\right)$ in terms of effective zero-cycles and symmetric powers of exotic Kloosterman sheaves. In \Cref{subsec:inequalities} we use Deligne's Weil bound to deduce upper bounds for the special values $\specialBesselSpeh{\tau}$ and for exotic matrix Kloosterman sums. In \Cref{subsec:formulas-for-spherical-elements} we show that the formulas from this present paper are very similar to a recent Casselman--Shalika type formula we found in~\cite{Zelingher2025} for spherical $\kcNotation{k}{c}{\fieldCharacter}$ functions attached to unramified Speh representations.

Most of our proofs are very short and simple. This is because we rely on heavy results from other papers such as Shintani's correspondence~\cite{shintani1976two}, the classification of Shintani lifts by Silberger--Zink~\cite{SilbergerZink08}, Kondo's Gauss sum computation~\cite{Kondo1963}, Macdonald's characteristic maps~\cite{macdonald1998symmetric}, and our previous results with Carmon~\cite{CarmonZelingher2025}. Direct proofs of the results of this present paper seem difficult without these established powerful tools.

\subsection*{Acknowledgments}
I would like to thank Oded Carmon for many conversations about Speh representations and $\kcNotation{k}{c}{\fieldCharacter}$ vectors and for his comments on earlier versions of this paper. I would also like to thank Eyal Kaplan for his helpful comments on an earlier version of this paper. Finally, I would like to thank the anonymous referee for their suggestions which improved the mathematical exposition of this paper.

\tableofcontents

\section{Exotic exponential sums}\label{sec:classical-exotic-exponential-sums}
In this section we recall the notions of what we call exotic Gauss sums and of exotic Kloosterman sums and sheaves and their properties.

Let $\finiteField$ be a finite field with $q$ elements. Fix an algebraic closure $\algebraicClosure{\finiteField}$ of $\finiteField$. For any $k \ge 1$, let $\finiteFieldExtension{k} \slash \finiteField$ be the unique field extension of degree $k$ in $\algebraicClosure{\finiteField}$. If $m \mid n$, let $\FieldNorm{n}{m} \colon \multiplicativegroup{\finiteFieldExtension{n}} \to \multiplicativegroup{\finiteFieldExtension{m}}$ be the norm map.

Let $\fieldCharacter \colon \finiteField \to \multiplicativegroup{\cComplex}$ be a non-trivial character. For any $k \ge 1$, let $\fieldCharacter_{k} \colon \finiteFieldExtension{k} \to \multiplicativegroup{\cComplex}$ be the character $\fieldCharacter_k = \fieldCharacter \circ \trace_{\finiteFieldExtension{k} \slash \finiteField}$.

Recall that a partition $\lambda = \left(n_1,\dots,n_r\right)$ of a non-negative integer $n$ is a weakly decreasing sequence of positive integers such that $n_1 + \dots + n_r = n$. We denote by $\sizeof{\lambda} = n$ the \emph{size} of the partition $\lambda$, and by $\lengthof\left(\lambda\right) = r$ the \emph{length} of the partition $\lambda$. We will write $\lambda \vdash n$ to indicate that $\lambda$ is a partition of $n$. We will denote by $\left(\right)$ the empty partition of $0$. We denote by $\Partitions$ the set consisting of all partitions of all non-negative integers.

\subsection{Tensor product of finite fields}\label{subsec:tensor-product-of-finite-fields}

Let $R$ and $S$ be commutative $\finiteField$-algebras. Suppose that $R$ and $S$ are of finite dimension over $\finiteField$. Consider the tensor product $R \otimes_{\finiteField} S$. It is a finitely generated $R$-algebra equipped with the action $r \cdot_1 x = \left(r \otimes 1\right)x$, and similarly a finitely generated $S$-algebra equipped with the action $s \cdot_2 x = \left(1 \otimes s\right)x$, where $r \in R$, $s \in S$ and $x \in R \otimes_{\finiteField} S$. Given an element $z \in R \otimes_{\finiteField} S$, consider the multiplication by $z$ map, $T_z \colon R \otimes_{\finiteField} S \to R \otimes_{\finiteField} S$, defined by $T_z\left(x\right) = zx$. We define two norm maps $\TensorProductNormOne{R \otimes_{\finiteField} S} \colon R \otimes_{\finiteField} S \to R$ and $\TensorProductNormTwo{R \otimes_{\finiteField} S} \colon R \otimes_{\finiteField} S \to S$ as follows. Given $z \in R \otimes_{\finiteField} S$, we define $\TensorProductNormOne{R \otimes_{\finiteField} S}\left(z\right)$ to be the determinant of $T_z$, regarded as an $R$-linear map. Similarly, we define $\TensorProductNormTwo{R \otimes_{\finiteField} S}\left(z\right)$ to be the determinant of $T_z$, regarded as an $S$-linear map. We also define $\trace \colon R \otimes_{\finiteField} S \to \finiteField$ as follows. For $z \in R \otimes_{\finiteField} S$, let $\trace z = \trace_{\slash \finiteField}z$ be the trace of the map $T_z$, regarded as an $\finiteField$-linear map.

Suppose that $R = \finiteFieldExtension{n}$ and $S = \finiteFieldExtension{m}$. In this case, we denote $\TensorProductNormOne{n,m} = \TensorProductNormOne{\finiteFieldExtension{n} \otimes_{\finiteField} \finiteFieldExtension{m}} \colon \finiteFieldExtension{n} \otimes_{\finiteField} \finiteFieldExtension{m} \to \finiteFieldExtension{n}$ and $\TensorProductNormTwo{n,m} = \TensorProductNormTwo{\finiteFieldExtension{n} \otimes_{\finiteField} \finiteFieldExtension{m}} \colon \finiteFieldExtension{n} \otimes_{\finiteField} \finiteFieldExtension{m} \to \finiteFieldExtension{m}$.

Let $l = \lcm\left(n,m\right)$ and let $d=\gcd\left(n,m\right)$. As explained in~\cite[Section 3.2]{zelingher2022values}, the tensor product $\finiteField$-algebra $\finiteFieldExtension{n} \otimes_{\finiteField} \finiteFieldExtension{m}$ is (non-canonically) isomorphic to the ring $\finiteFieldExtension{l}^{d}$ equipped with the actions $$r \cdot_1 \left(z_1,\dots,z_d\right) = \left(r z_1, \dots, r z_d\right)$$ and $$s \cdot_2 \left(z_1,\dots, z_d\right) = \left(s z_1, s^{1/q} z_2, \dots, s^{1 / q^{d-1}} z_d\right),$$ where $\left(z_1,\dots,z_d\right) \in \finiteFieldExtension{l}^d$, $r \in \finiteFieldExtension{n}$ and $s \in \finiteFieldExtension{m}$. Moreover, under this identification, the norm maps take the form $$ \TensorProductNormOne{n,m}\left(z_1,\dots,z_d\right) = \FieldNorm{l}{n}\left(z_1 z_2 \dots z_d\right),$$ and
$$\TensorProductNormTwo{n,m}\left(z_1,\dots,z_d\right) = \FieldNorm{l}{m}\left(z_1 z_2^q \dots z_d^{q^{d-1}}\right),$$
and the trace map takes the form $$\trace\left(z_1,\dots,z_d\right) = \trace_{\finiteFieldExtension{l} \slash \finiteField}\left(z_1 + \dots + z_d\right).$$ 

Let $k$ be a positive integer and let $\lambda = \left(k_1,\dots,k_s\right) \vdash k$ be a partition of $k$. Let $\finiteFieldExtension{\lambda}$ be the $\finiteField$-\etale algebra $\finiteFieldExtension{\lambda} = \prod_{i=1}^s \finiteFieldExtension{k_i}$. Take $R = \finiteFieldExtension{\lambda}$ and $S = \finiteFieldExtension{m}$. Then in this case for $\left(x_1,\dots,x_s\right) \in \finiteFieldExtension{\lambda} \otimes_{\finiteField} \finiteFieldExtension{m} = \prod_{i=1}^s \finiteFieldExtension{k_i} \otimes_{\finiteField} \finiteFieldExtension{m}$ we have $$\TensorProductNormOne{\finiteFieldExtension{\lambda} \otimes_{\finiteField} \finiteFieldExtension{m}}\left(x_1,\dots,x_s\right) = \left(\TensorProductNormOne{k_1, m}\left(x_1\right),\dots,\TensorProductNormOne{k_s,m}\left(x_s\right)\right),$$
and $$\TensorProductNormTwo{\finiteFieldExtension{\lambda} \otimes_{\finiteField} \finiteFieldExtension{m}}\left(x_1,\dots,x_s\right) = \prod_{i=1}^s \TensorProductNormTwo{k_i, m}\left(x_i\right).$$
Also, the trace is given in this case by $$\trace \left(x_1,\dots,x_s\right) = \sum_{i=1}^s \trace x_i.$$

\subsection{Exotic Gauss sums}\label{subsec:exotic-gauss-sums}

In this section, we define the notion of exotic Gauss sums. These sort of sums appear when one considers the tensor product $\varepsilon_0$-factors attached to a pair of irreducible representations of finite general linear groups, see~\cite[Theorem 4.2]{ye2021epsilon} and~\cite[Section 3.2 and Proposition 4.2]{zelingher2022values}. Our definition involves two steps. We first define an exotic Gauss sum for a pair of characters $\alpha \colon \multiplicativegroup{\finiteFieldExtension{k}} \to \multiplicativegroup{\cComplex}$ and $\chi \colon \multiplicativegroup{\finiteFieldExtension{m}} \to \multiplicativegroup{\cComplex}$. We then use this definition to define an exotic Gauss sum for a pair of characters $\alpha \colon \multiplicativegroup{\finiteFieldExtension{\lambda}} \to \multiplicativegroup{\cComplex}$ and $\chi \colon \multiplicativegroup{\finiteFieldExtension{m}} \to \multiplicativegroup{\cComplex}$, where $\finiteFieldExtension{\lambda}$ is an \etale algebra over $\finiteField$ corresponding to a partition $\lambda$. We establish Hasse--Davenport lifting relations for these exotic Gauss sums. We begin this section with recalling the notions of Frobenius orbits of characters and of Gauss sums.

\subsubsection{Frobenius orbits of characters}
Let $m \ge 1$ and let $\chi \colon \multiplicativegroup{\finiteFieldExtension{m}} \to \multiplicativegroup{\cComplex}$ be a character. The \emph{Frobenius orbit} of $\chi$ is the set $\left[\chi\right] \coloneq \{ \chi^{q^j} \mid j \ge 0 \}$. The \emph{degree} of $\chi$, denoted $\deg \chi$, is the size of its Frobenius orbit. We say that $\chi$ is \emph{regular} if $\deg \chi = m$. If $\chi$ is of degree $d$, then $1 \le d \mid m$ and there exists a character $\chi' \colon \multiplicativegroup{\finiteFieldExtension{d}} \to \multiplicativegroup{\cComplex}$ such that $\chi = \chi' \circ \FieldNorm{m}{d}$. The degree of $\chi$ is the minimal number $d$ with the properties described in the last sentence.

\subsubsection{Gauss sums}
Let $\chi \colon \multiplicativegroup{\finiteFieldExtension{m}} \to \multiplicativegroup{\cComplex}$ be a character. We define the \emph{Gauss sum} associated with $\chi$ and $\fieldCharacter$ by the formula
$$\tau\left(\chi, \fieldCharacter_{m}\right) = -\sum_{x \in \multiplicativegroup{\finiteFieldExtension{m}}} \chi\left(x\right) \fieldCharacter_m\left(x\right).$$
Note that $\tau\left(\chi, \fieldCharacter_m\right)$ is constant on the Frobenius orbit $\left[\chi\right]$.

We have the classical Hasse--Davenport lifting relation~\cite{DavenportHasse1935}, which we recall now. See also~\cite[Theorem 5.14]{LidlNiederreiter1994}. For any $b \ge 1$, the following identity holds:
$$\tau\left(\chi, \fieldCharacter_{m}\right)^b = \tau\left(\chi \circ \FieldNorm{bm}{m}, \fieldCharacter_{bm}\right).$$

\subsubsection{Exotic Gauss sums for $\finiteFieldExtension{k}$}
Let $k$ and $m$ be positive integers. Given  characters $\alpha \colon \multiplicativegroup{\finiteFieldExtension{k}} \to \multiplicativegroup{\cComplex}$ and $
\chi \colon \multiplicativegroup{\finiteFieldExtension{m}} \to \multiplicativegroup{\cComplex}$, we define the \emph{exotic Gauss sum}
$$\GaussSumCharacter{k,m}{\alpha}{\chi}{\fieldCharacter} = \left(-1\right)^{k+m+km} \sum_{t \in \multiplicativegroup{\left(\finiteFieldExtension{k} \otimes_{\finiteField} \finiteFieldExtension{m}\right)}} \alpha\left( \TensorProductNormOne{\finiteFieldExtension{k} \otimes_{\finiteField} \finiteFieldExtension{m}} \left(t\right) \right) \chi\left( \TensorProductNormTwo{\finiteFieldExtension{k} \otimes_{\finiteField} \finiteFieldExtension{m}} \left(t\right)\right) \fieldCharacter\left(\trace t\right).$$
Using the explicit description of the norm and trace maps, and using the fact that $\gcd\left(k,m\right)$ and $k+m+km$ have the same parity, we have the equality
\begin{equation}\label{eq:exotic-gauss-sum-is-a-product-of-exotic-gauss-sums}
	\GaussSumCharacter{k,m}{\alpha}{\chi}{\fieldCharacter} = \prod_{j = 0}^{\gcd\left(k,m\right) - 1} \tau\left(\alpha \circ \FieldNorm{\lcm\left(k,m\right)}{k} \cdot \chi^{q^j} \circ \FieldNorm{\lcm\left(k,m\right)}{m}, \fieldCharacter_{\lcm\left(k, m\right)}\right).
\end{equation}
When $m = 1$, we have \begin{equation}\label{eq:twisted-gauss-sum}
	\GaussSumCharacter{k,1}{\alpha}{\chi}{\fieldCharacter} = \tau\left(\alpha \cdot \chi \circ \FieldNorm{k}{1}, \fieldCharacter_k\right) = -\sum_{x \in \multiplicativegroup{\finiteFieldExtension{k}}} \alpha\left(x\right) \chi\left(\FieldNorm{k}{1}\left(x\right)\right) \fieldCharacter_k\left(x\right),
\end{equation}
and we simply denote $$\tau\left(\alpha \times \chi, \fieldCharacter_k\right) = \GaussSumCharacter{k,1}{\alpha}{\chi}{\fieldCharacter},$$
and refer to $\tau\left(\alpha \times \chi, \fieldCharacter_k\right)$ as a \emph{twisted Gauss sum}.

We have the following Hasse--Davenport lifting relations.
\begin{proposition}\label{prop:hasse-davenport-cuspidal}
	For any $b \ge 1$, we have the equalities
	\begin{equation*}
		\GaussSumCharacter{k,m}{\alpha}{\chi}{\fieldCharacter}^b = \GaussSumCharacter{bk,m}{\alpha \circ \FieldNorm{bk}{k}}{\chi}{\fieldCharacter},
	\end{equation*}
	and
	\begin{equation*}
	\GaussSumCharacter{k,m}{\alpha}{\chi}{\fieldCharacter}^b = \GaussSumCharacter{k,bm}{\alpha}{\chi \circ \FieldNorm{bm}{m}}{\fieldCharacter}.
\end{equation*}
\end{proposition}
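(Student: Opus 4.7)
The plan is to use formula \eqref{eq:exotic-gauss-sum-is-a-product-of-exotic-gauss-sums} to reduce both sides to products of classical Gauss sums, apply the classical Hasse--Davenport relation to move these factors to a common field extension, and then match the two products using the Frobenius-invariance of classical Gauss sums.

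Concretely, for the first identity, set $d=\gcd(k,m)$, $L=\lcm(k,m)$, $d'=\gcd(bk,m)$, $L'=\lcm(bk,m)$, and observe the elementary equalities $d \mid d'$, $L' \mid bL$, and $bL/L' = d'/d =: e$. Applying \eqref{eq:exotic-gauss-sum-is-a-product-of-exotic-gauss-sums} to each side, the left-hand side equals $\prod_{j=0}^{d-1} \tau(\beta_j, \fieldCharacter_L)^b$ with $\beta_j = \alpha\circ\FieldNorm{L}{k}\cdot \chi^{q^j}\circ\FieldNorm{L}{m}$, while the right-hand side equals $\prod_{j=0}^{d'-1} \tau(\gamma_j, \fieldCharacter_{L'})$ with $\gamma_j = \alpha\circ\FieldNorm{L'}{k}\cdot \chi^{q^j}\circ\FieldNorm{L'}{m}$ (the composed norm $\alpha\circ\FieldNorm{bk}{k}\circ\FieldNorm{L'}{bk}$ collapsing to $\alpha\circ\FieldNorm{L'}{k}$ by transitivity). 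The classical Hasse--Davenport relation, applied first with exponent $b$ to each factor on the left-hand side and then with exponent $e$ to recognize the resulting Gauss sum over $\finiteFieldExtension{bL}$ as the $e$-th power of a Gauss sum over $\finiteFieldExtension{L'}$ (using $\beta_j \circ \FieldNorm{bL}{L} = \gamma_j \circ \FieldNorm{bL}{L'}$), rewrites the left-hand side as $\prod_{j=0}^{d-1} \tau(\gamma_j, \fieldCharacter_{L'})^{e}$. The proposition therefore reduces to the combinatorial identity
\[
\prod_{j=0}^{d-1} \tau(\gamma_j, \fieldCharacter_{L'})^{e} \;=\; \prod_{j=0}^{de-1} \tau(\gamma_j, \fieldCharacter_{L'}).
\]

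The main step of the argument is to show that $\tau(\gamma_{j+d},\fieldCharacter_{L'}) = \tau(\gamma_j,\fieldCharacter_{L'})$ for every $j$, so that the $de$ factors on the right group into $e$ blocks of $d$ consecutive indices, each contributing the same product $\prod_{j=0}^{d-1}\tau(\gamma_j,\fieldCharacter_{L'})$. Since $\deg\alpha$ divides $k$, the integer $\gcd(\deg\alpha, m)$ divides $\gcd(k,m)=d$, so the Chinese Remainder Theorem produces $t \in \zIntegers$ with $\deg\alpha \mid t$ and $t \equiv d \pmod{m}$. For this $t$ one has $\alpha^{q^t} = \alpha$ and $\chi^{q^{j+t}} = \chi^{q^{j+d}}$ as characters of $\multiplicativegroup{\finiteFieldExtension{m}}$, whence $\gamma_j^{q^t} = \gamma_{j+d}$ as characters of $\multiplicativegroup{\finiteFieldExtension{L'}}$; Frobenius-invariance of classical Gauss sums then yields the desired equality.

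The second identity is proved by the symmetric argument with the roles of $(k,\alpha)$ and $(m,\chi)$ interchanged; the only adjustment is to invoke $\gcd(\deg\chi, k) \mid d$ (which follows from $\deg\chi \mid m$) when applying the Chinese Remainder Theorem. The principal obstacle is the bookkeeping of how the three sub-extensions $\finiteFieldExtension{L} \subseteq \finiteFieldExtension{L'} \subseteq \finiteFieldExtension{bL}$ interact with the Frobenius-orbit structure on characters of $\multiplicativegroup{\finiteFieldExtension{L'}}$; once that bookkeeping is arranged as above, both identities reduce to the elementary CRT step.
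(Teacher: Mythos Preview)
Your proof is correct and follows essentially the same approach as the paper. The only cosmetic difference is that the paper applies the classical Hasse--Davenport relation once, passing directly from $\finiteFieldExtension{L}$ to $\finiteFieldExtension{L'}$ with exponent $L'/L$ (using $L\mid L'$) and then multiplying $d'/d$ identical copies, whereas you pass through $\finiteFieldExtension{bL}$ and then descend to $\finiteFieldExtension{L'}$; both routes land on the same identity $\prod_{j=0}^{d-1}\tau(\gamma_j,\fieldCharacter_{L'})^{e}=\prod_{j=0}^{de-1}\tau(\gamma_j,\fieldCharacter_{L'})$. For the crucial step $\tau(\gamma_{j+d},\fieldCharacter_{L'})=\tau(\gamma_j,\fieldCharacter_{L'})$, the paper invokes the fact that $d$ is an integral linear combination of $k$ and $m$ (so writing $d=ak+cm$ and taking $t=ak$ gives $\gamma_j^{q^t}=\gamma_{j+d}$), which is exactly your CRT argument with the harmless simplification $k\mid t$ in place of $\deg\alpha\mid t$.
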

\begin{proof}
	Since $\GaussSumCharacter{k,m}{\alpha}{\chi}{\fieldCharacter} = \GaussSumCharacter{m,k}{\chi}{\alpha}{\fieldCharacter}$, it suffices to prove the first equality.
	
	Denote $d = \gcd\left(k,m\right)$ and $l = \lcm\left(k,m\right)$. Also denote $d' = \gcd\left(bk, m\right)$ and $l' = \lcm\left(bk, m\right)$. Then $l \mid l'$ and $d \mid d'$. We have that $bld = l' d'$. Write
	$$ \GaussSumCharacter{k,m}{\alpha}{\chi}{\fieldCharacter}^b = \prod_{j'=1}^{\frac{d'}{d}} \prod_{j = 0}^{d - 1} \tau\left(\alpha \circ \FieldNorm{l}{k} \cdot \chi^{q^j} \circ \FieldNorm{l}{m}, \fieldCharacter_{l}\right)^{\frac{l'}{l}}.$$
	Then by the Hasse--Davenport lifting relation,
	$$ \GaussSumCharacter{k,m}{\alpha}{\chi}{\fieldCharacter}^b = \prod_{j'=1}^{\frac{d'}{d}} \prod_{j = 0}^{d - 1} \tau\left(\alpha \circ \FieldNorm{l'}{k} \cdot \chi^{q^j} \circ \FieldNorm{l'}{m}, \fieldCharacter_{l'}\right). $$
	Note that $\alpha^{q^{j + d}} \circ \FieldNorm{l'}{k}$ lies in the same $\Frobenius^m$-orbit as $\alpha^{q^{j}} \circ \FieldNorm{l'}{k}$ for every $j$ (as $d$ is an integral linear combination of $k$ and $m$), and therefore we may rewrite the last equation as \begin{equation*}
		\GaussSumCharacter{k,m}{\alpha}{\chi}{\fieldCharacter}^b = \prod_{j = 0}^{d' - 1} \tau\left(\alpha \circ \FieldNorm{l'}{k} \cdot \chi^{q^j} \circ \FieldNorm{l'}{m}, \fieldCharacter_{l'}\right),
	\end{equation*}
	which proves the theorem because $\alpha \circ \FieldNorm{l'}{k} = \alpha \circ \FieldNorm{bk}{k} \circ \FieldNorm{l'}{bk}$.
\end{proof}

\subsubsection{Exotic Gauss sums for $\finiteFieldExtension{\lambda}$}\label{subsec:exotic-gauss-sums-for-composite}
Let $k$ be a positive integer and let $\lambda = \left(k_1,\dots,k_s\right) \vdash k$ be a partition of $k$. Let $\alpha_1 \colon \multiplicativegroup{\finiteFieldExtension{k_1}} \to \multiplicativegroup{\cComplex}$, $\dots$, $\alpha_s \colon \multiplicativegroup{\finiteFieldExtension{k_s}} \to \multiplicativegroup{\cComplex}$ be multiplicative characters. Write $\multiplicativegroup{\finiteFieldExtension{\lambda}} = \prod_{i=1}^s \multiplicativegroup{\finiteFieldExtension{k_i}}$, and let $\alpha \colon \multiplicativegroup{\finiteFieldExtension{\lambda}} \to \cComplex$ be the character given by $\alpha = \alpha_1 \times \dots \times \alpha_s$. For any $m \ge 1$ and any character $\chi \colon \multiplicativegroup{\finiteFieldExtension{m}} \to \multiplicativegroup{\cComplex}$, we define the \emph{exotic Gauss sum}
$$\GaussSumCharacter{\lambda,m}{\alpha}{\chi}{\fieldCharacter} = \prod_{i = 1}^s \GaussSumCharacter{k_i,m}{\alpha_i}{\chi}{\fieldCharacter}.$$
By the description of the norm and the trace, it is given by
$$\GaussSumCharacter{\lambda,m}{\alpha}{\chi}{\fieldCharacter} = \left(-1\right)^{k+sm+mk} \sum_{t \in \multiplicativegroup{\left(\finiteFieldExtension{\lambda} \otimes_{\finiteField} \finiteFieldExtension{m}\right)}} \alpha\left( \TensorProductNormOne{\finiteFieldExtension{\lambda} \otimes_{\finiteField} \finiteFieldExtension{m}} \left(t\right) \right) \chi\left( \TensorProductNormTwo{\finiteFieldExtension{\lambda} \otimes_{\finiteField} \finiteFieldExtension{m}} \left(t\right)\right) \fieldCharacter\left(\trace t\right).$$
Using the Hasse--Davenport lifting relation (\Cref{prop:hasse-davenport-cuspidal}), we obtain the following relation.
\begin{theorem}\label{thm:hasse-davenport-for-exotic-gauss-sums}
	For any $b \ge 1$, we have the equality
	$$\GaussSumCharacter{\lambda,m}{\alpha}{\chi}{\fieldCharacter}^b = \GaussSumCharacter{\lambda, bm}{\alpha}{\chi \circ \FieldNorm{bm}{m}}{\fieldCharacter}.$$
\end{theorem}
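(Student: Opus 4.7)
The plan is to reduce the identity immediately to the cuspidal case already handled in Proposition~\ref{prop:hasse-davenport-cuspidal}. By the definition given in Section~\ref{subsec:exotic-gauss-sums-for-composite}, the exotic Gauss sum attached to the partition $\lambda = \left(k_1, \dots, k_s\right)$ factors as
$$\GaussSumCharacter{\lambda,m}{\alpha}{\chi}{\fieldCharacter} = \prod_{i=1}^s \GaussSumCharacter{k_i,m}{\alpha_i}{\chi}{\fieldCharacter}.$$
Taking $b$-th powers, I would pull the exponent inside the product and apply the second identity of Proposition~\ref{prop:hasse-davenport-cuspidal} to each factor to obtain
$$\GaussSumCharacter{k_i,m}{\alpha_i}{\chi}{\fieldCharacter}^b = \GaussSumCharacter{k_i,bm}{\alpha_i}{\chi \circ \FieldNorm{bm}{m}}{\fieldCharacter}.$$
Reassembling the product and invoking the defining formula of $\GaussSumCharacter{\lambda, bm}{\alpha}{\chi \circ \FieldNorm{bm}{m}}{\fieldCharacter}$ then yields the claim.

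The argument is therefore purely formal once Proposition~\ref{prop:hasse-davenport-cuspidal} is available, and I do not anticipate any genuine obstacle. The real content — translating the classical Hasse--Davenport relation on Frobenius orbits through the product formula in \eqref{eq:exotic-gauss-sum-is-a-product-of-exotic-gauss-sums} — has already been absorbed into the cuspidal case. An alternative, more direct route would be to work from the summation formula for $\GaussSumCharacter{\lambda, m}{\alpha}{\chi}{\fieldCharacter}$ together with the decomposition $\finiteFieldExtension{\lambda} \otimes_{\finiteField} \finiteFieldExtension{m} \cong \prod_{i=1}^s \finiteFieldExtension{k_i} \otimes_{\finiteField} \finiteFieldExtension{m}$ described in Section~\ref{subsec:tensor-product-of-finite-fields}, but this would merely rederive the cuspidal case componentwise, so the factoring reduction is the cleanest option.
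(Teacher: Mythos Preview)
Your proposal is correct and matches the paper's approach exactly. The paper itself gives no separate proof beyond the sentence ``Using the Hasse--Davenport relation (\Cref{prop:hasse-davenport-cuspidal}), we obtain the following relation,'' which is precisely the factoring-and-componentwise-application argument you describe.
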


\subsection{Exotic Kloosterman sums and sheaves}
In this section, we review the definition of exotic Kloosterman sums and their associated $L$-functions. We then define the notion of the roots of $\ExoticKloosterman_{\finiteFieldExtension{a}}\left(\alpha, \fieldCharacter\right)$ at $\xi \in \multiplicativegroup{\finiteFieldExtension{a}}$, which will be important for the Hall--Littlewood identity of exotic matrix Kloosterman sums we will give later (\Cref{thm:general-formula-for-exotic-kloosterman-sum-of-conjugacy-class}).

\subsubsection{Exotic Kloosterman sums}\label{subsec:exotic-kloosterman-sums}

Let us keep the notation as in \Cref{subsec:exotic-gauss-sums-for-composite}. For any $m \ge 1$ and any $\xi \in \multiplicativegroup{\finiteFieldExtension{m}}$, we define the \emph{exotic Kloosterman sum}
$$ \ExoticKloosterman_{m}\left(\alpha, \fieldCharacter, \xi\right) = \sum_{\substack{x \in \multiplicativegroup{\left(\finiteFieldExtension{\lambda} \otimes_{\finiteField} \finiteFieldExtension{m}\right)}\\
\TensorProductNormTwo{\finiteFieldExtension{\lambda} \otimes_{\finiteField} \finiteFieldExtension{m}}\left(x\right) = \xi}} \alpha\left(\TensorProductNormOne{\finiteFieldExtension{\lambda} \otimes_{\finiteField} \finiteFieldExtension{m}}\left(x\right)\right) \fieldCharacter\left(\trace x\right).$$

We also define the normalized version
$$ \ExoticKloostermanNormalized_m\left(\alpha, \fieldCharacter, \xi\right) = q^{-\frac{\left(k-1\right)m}{2}} \ExoticKloosterman_{m}\left(\alpha, \fieldCharacter, \xi\right).$$
For $x \in \multiplicativegroup{\finiteField}$, we denote $$\ExoticKloosterman\left(\alpha, \fieldCharacter, x\right) = \ExoticKloosterman_1\left(\alpha, \fieldCharacter, x\right) \,\,\,\, \text{ and }\,\,\,\, \ExoticKloostermanNormalized\left(\alpha, \fieldCharacter, x\right) = \ExoticKloostermanNormalized_1\left(\alpha, \fieldCharacter, x\right).$$

\begin{remark}
	If $\lambda = \left(1,1,\dots,1\right) \vdash k$ and $\alpha_1,\dots,\alpha_k \colon \multiplicativegroup{\finiteField} \to \cComplex$, we have $\finiteFieldExtension{\lambda} = \finiteField^k$ and $\finiteFieldExtension{\lambda} \otimes_{\finiteField} \finiteFieldExtension{m} = \finiteFieldExtension{m}^k$. In this case
	$$ \ExoticKloosterman_m\left(\alpha, \fieldCharacter, \xi\right)  = \sum_{\substack{x_1,\dots,x_k \in \multiplicativegroup{\finiteFieldExtension{m}} \\
	x_1 x_2 \dots x_k = \xi}} \alpha_1\left(\FieldNorm{m}{1}\left(x_1\right)\right) \dots \alpha_k\left(\FieldNorm{m}{1}\left(x_k\right)\right) \fieldCharacter_m \left(x_1 + \dots + x_k\right).$$
This is the twisted Kloosterman sum from~\cite[Section 2.1]{Zelingher2023}.
\end{remark}

For any character $\chi \colon \multiplicativegroup{\finiteFieldExtension{m}} \to \multiplicativegroup{\cComplex}$, we have the relation \begin{equation}\label{eq:gauss-sum-as-kloosterman-sum-transform}
	\GaussSumCharacter{\lambda, m}{\alpha}{\chi}{\fieldCharacter} = \left(-1\right)^{k+sm+mk} \sum_{\xi \in \multiplicativegroup{\finiteFieldExtension{m}}} \ExoticKloosterman_m\left(\alpha, \fieldCharacter, \xi\right) \chi\left(\xi\right).
\end{equation}

We will often be interested in specifying the base field over which an exotic Kloosterman sum is taken. For a positive integer $a$ and $\xi \in \multiplicativegroup{\finiteFieldExtension{am}}$, we denote $$\ExoticKloosterman_{m, \finiteFieldExtension{a}}\left(\alpha, \fieldCharacter, \xi\right) = \sum_{\substack{x \in \multiplicativegroup{\left(\left(\finiteFieldExtension{\lambda} \otimes_{\finiteField} \finiteFieldExtension{a}\right) \otimes_{\finiteFieldExtension{a}} \finiteFieldExtension{am} \right)}\\
\TensorProductNormTwo{\left(\finiteFieldExtension{\lambda} \otimes_{\finiteField} \finiteFieldExtension{a}\right) \otimes_{\finiteFieldExtension{a}} \finiteFieldExtension{am}}\left(x\right) = \xi}} \alpha\left(\TensorProductNormOne{\finiteFieldExtension{\lambda} \otimes_{\finiteField} \finiteFieldExtension{a}}  \left(\TensorProductNormOne{\left(\finiteFieldExtension{\lambda} \otimes_{\finiteField} \finiteFieldExtension{a}\right) \otimes_{\finiteFieldExtension{a}} \finiteFieldExtension{am}}\left(x\right)\right) \right) \fieldCharacter_a\left(\trace_{\slash \finiteFieldExtension{a}}\left(x\right)\right),$$ and the normalized version
$$\ExoticKloostermanNormalized_{m, \finiteFieldExtension{a}}\left(\alpha, \fieldCharacter, \xi\right) = q^{-\frac{\left(k-1\right)am}{2}} \ExoticKloosterman_{m, \finiteFieldExtension{a}}\left(\alpha, \fieldCharacter, \xi\right).$$
Note that these sums are constant on Frobenius orbits,
and that we have the equalities $$\ExoticKloosterman_{m, \finiteFieldExtension{a}}\left(\alpha, \fieldCharacter, \xi\right) = \ExoticKloosterman_{ma}\left(\alpha, \fieldCharacter, \xi\right)\,\,\,\, \text{ and }\,\,\,\, \ExoticKloostermanNormalized_{m, \finiteFieldExtension{a}}\left(\alpha, \fieldCharacter, \xi\right) = \ExoticKloostermanNormalized_{ma}\left(\alpha, \fieldCharacter, \xi\right).$$ 

\subsubsection{The roots of $\ExoticKloosterman\left(\alpha, \fieldCharacter\right)$ at $\xi$}\label{subsec:roots-of-kloosterman-sum-at-xi}

The $L$-function attached to the family of exotic Kloosterman sums $\left(\ExoticKloosterman_{m, \finiteFieldExtension{a}}\left(\alpha,\fieldCharacter,\xi \right)\right)_{m=1}^{\infty}$, where $\xi \in \multiplicativegroup{\finiteFieldExtension{a}}$, is defined as the following generating function:
$$L\left(T, \ExoticKloosterman_{\finiteFieldExtension{a}}\left(\alpha, \fieldCharacter, \xi\right) \right) = \exp\left(\sum_{m = 1}^{\infty}  \frac{\ExoticKloosterman_{m,\finiteFieldExtension{a}}\left(\alpha, \fieldCharacter, \xi\right)}{m} T^m\right).$$
It turns out that $L\left(T, \ExoticKloosterman_{\finiteFieldExtension{a}}\left(\alpha, \fieldCharacter, \xi\right) \right)^{\left(-1\right)^k}$ is a polynomial of degree $k$. See for example the introduction of~\cite{FuWan2005}. The value of this polynomial at $T = 0$ is $1$. Let us write
$$L\left(T, \ExoticKloosterman_{\finiteFieldExtension{a}}\left(\alpha, \fieldCharacter, \xi\right) \right)^{\left(-1\right)^k} = \prod_{j=1}^k \left(1 - \omega_j T\right),$$ where $\omega_1,\dots,\omega_k$ are complex numbers. We call $\omega_1,\dots,\omega_k$ \emph{the roots of $\ExoticKloosterman_{\finiteFieldExtension{a}}\left(\alpha, \fieldCharacter\right)$ at $\xi$}. For any $m \ge 1$, we have the equality $$\omega_1^m + \dots + \omega_k^m = \left(-1\right)^{k-1} \ExoticKloosterman_{m,\finiteFieldExtension{a}}\left(\alpha, \fieldCharacter, \xi\right).$$

We introduce a normalized version of $L\left(T, \ExoticKloosterman_{\finiteFieldExtension{a}}\left(\alpha, \fieldCharacter, \xi\right) \right)$ with respect to which some of our results are cleaner. This normalized version is defined as $$L^{\ast}\left(T, \ExoticKloosterman_{\finiteFieldExtension{a}}\left(\alpha, \fieldCharacter, \xi\right) \right) = L\left(q^{-\frac{a\left(k-1\right)}{2}} T, \ExoticKloosterman_{\finiteFieldExtension{a}}\left(\alpha, \fieldCharacter, \xi\right) \right)^{\left(-1\right)^k}.$$
We remark that this is not exactly the same normalized version as in~\cite{Zelingher2023}. We have that $$L^{\ast}\left(T, \ExoticKloosterman_{\finiteFieldExtension{a}}\left(\alpha, \fieldCharacter, \xi\right) \right) = \prod_{j=1}^k \left(1 - q^{-\frac{a \left(k-1\right)}{2}} \omega_j T\right),$$ and we call $\omega_1^{\ast} = q^{-\frac{a\left(k-1\right)}{2}} \omega_1$, $\dots$, $\omega_k^{\ast} = q^{-\frac{a\left(k-1\right)}{2}} \omega_k$ \emph{the normalized roots of $\ExoticKloosterman_{\finiteFieldExtension{a}}\left(\alpha, \fieldCharacter\right)$ at $\xi$}. For any $m \ge 1$, they satisfy the equality
\begin{equation}\label{eq:power-sums-of-roots-at-xi}
	\left(\omega_1^{\ast}\right)^m + \dots + \left(\omega_k^{\ast}\right)^m = \left(-1\right)^{k-1} \ExoticKloostermanNormalized_{m, \finiteFieldExtension{a}}\left(\alpha,\fieldCharacter,\xi\right).
\end{equation}

For some of our results, it will be convenient to have a linear map from a complex vector space of dimension $k$ to itself, whose eigenvalues are $\omega_1, \dots, \omega_k$. Such linear map is provided using the mechanism of exotic Kloosterman sheaves defined by Katz~\cite[Sections 8.8.4-8.8.7]{katz2016gauss}. See also~\cite[Page 152]{katz1993estimates} and~\cite[Appendix B]{nien2021converse}. We explain this now.

Let $\ell$ be a prime number different than the characteristic of $\finiteField$. Fix an embedding $\ladicnumbers \hookrightarrow \cComplex$ and regard $\alpha \colon \multiplicativegroup{\finiteFieldExtension{\lambda}} \to \multiplicativegroup{\ladicnumbers}$ and $\fieldCharacter \colon \finiteField \to \multiplicativegroup{\ladicnumbers}$. Consider the following diagram of schemes over $\finiteFieldExtension{a}$:
$$ \xymatrix{& \restrictionOfScalars{\left(\finiteFieldExtension{\lambda} \otimes_{\finiteField} \finiteFieldExtension{a}\right)}{\finiteFieldExtension{a}}{\multiplcativeScheme} \ar[ld]_{\operatorname{Norm}} \ar[rd]^{\operatorname{Trace}} &\\
\multiplcativeScheme & & \affineLine},$$
where $\operatorname{Norm}$ and $\operatorname{Trace}$ are the norm and trace maps, respectively. For a commutative $\finiteFieldExtension{a}$-algebra $R$, we have $$\restrictionOfScalars{\left(\finiteFieldExtension{\lambda} \otimes_{\finiteField} \finiteFieldExtension{a}\right)}{\finiteFieldExtension{a}}{\multiplcativeScheme}\left(R\right) = \multiplicativegroup{\left(\left(\finiteFieldExtension{\lambda} \otimes_{\finiteField} \finiteFieldExtension{a}\right) \otimes_{\finiteFieldExtension{a}} R\right)} = \multiplicativegroup{\left(\finiteFieldExtension{\lambda} \otimes_{\finiteField} R\right)},$$ and the maps are given by $$\operatorname{Norm}\left(R\right) = \TensorProductNormTwo{\left(\finiteFieldExtension{\lambda} \otimes_{\finiteField} \finiteFieldExtension{a}\right) \otimes_{\finiteFieldExtension{a}} R } = \TensorProductNormTwo{\finiteFieldExtension{\lambda} \otimes_{\finiteField} R} \colon \multiplicativegroup{\left(\finiteFieldExtension{\lambda} \otimes_{\finiteField} R\right)} \to \multiplicativegroup{R}$$ and $$\operatorname{Trace}\left(R\right) =\trace_{\slash \finiteFieldExtension{a}} \colon \left(\finiteFieldExtension{\lambda} \otimes_{\finiteField} \finiteFieldExtension{a}\right) \otimes_{\finiteFieldExtension{a}} R \to \finiteFieldExtension{a}.$$
The additive character $\fieldCharacter_a$ gives rise to an Artin--Schrier local system $\artinScrier{a}$ on $\affineLine$. The multiplicative character $\alpha_a = \alpha \circ \TensorProductNormOne{\finiteFieldExtension{\lambda} \otimes_{\finiteField} \finiteFieldExtension{a}} \colon \multiplicativegroup{\left(\finiteFieldExtension{\lambda} \otimes_{\finiteField} \finiteFieldExtension{a}\right)} \to \multiplicativegroup{\ladicnumbers}$ gives rise to a Kummer local system $\mathcal{L}_{\alpha_a}$ on $\restrictionOfScalars{\left(\finiteFieldExtension{\lambda} \otimes_{\finiteField} \finiteFieldExtension{a}\right)}{\finiteFieldExtension{a}}{\multiplcativeScheme}$. Consider the following $\ell$-adic complex on $\multiplcativeScheme$:
$$\ExoticKloosterman_{\finiteFieldExtension{a}}\left(\finiteFieldExtension{\lambda}, \alpha, \fieldCharacter\right) = \convolutionWithCompactSupport \operatorname{Norm}_{!} \left( \operatorname{Trace}^{\ast} \artinScrier{a} \otimes \mathcal{L}_{\alpha_a} \right)\left[k-1\right].$$ Denote $\mathcal{K} = \ExoticKloosterman_{\finiteFieldExtension{a}}\left(\finiteFieldExtension{\lambda}, \alpha, \fieldCharacter\right)$. The complex $\mathcal{K}$ admits the following properties.
\begin{enumerate}
	\item $\mathcal{K}$ is a local system of rank $k$ concentrated in degree $0$.
	\item For any $\xi \in \multiplicativegroup{\finiteFieldExtension{a}}$ and any $m \ge 0$, we have \begin{equation}\label{eq:trace-of-frobenius-is-kloosterman-sum}
		\trace\left(\Frobenius^m_{\xi} \mid \mathcal{K}_{\xi} \right) = \left(-1\right)^{k-1} \ExoticKloosterman_{m, \finiteFieldExtension{a}}\left(\alpha, \fieldCharacter, \xi\right),
	\end{equation}
	where $\mathcal{K}_{\xi}$ is the geometric stalk at $\xi$ and $\Frobenius_{\xi}$ is the action of the geometric Frobenius at $\xi$ on $\mathcal{K}_{\xi}$.
	\item $\mathcal{K}$ is pure of weight $k-1$. This means that for every eigenvalue $\lambda$ of $\Frobenius_{\xi} \mid \mathcal{K}_{\xi}$, we have that $\lambda$ and all of its algebraic conjugates have absolute value $q^{\frac{a\left(k-1\right)}{2}}$.
\end{enumerate}
It follows from \eqref{eq:trace-of-frobenius-is-kloosterman-sum} that the characteristic polynomial of $F_{\xi} = \Frobenius_{\xi} \mid \mathcal{K}_{\xi}$ is $$\mathrm{CharPoly}_{F_{\xi}}\left(T\right) = \prod_{j=1}^k \left(T - \omega_j\right),$$
and therefore we have $\abs{\omega_j} = q^{\frac{a \left(k-1\right)}{2}}$ (equivalently, $\abs{\omega_j^{\ast}} = 1$) for every $1 \le j \le k$.

\section{Exotic matrix exponential sums}\label{sec:exotic-matrix-exponential-sums}

The goal of this section is to define matrix versions of the exponential sums from the previous section. Our method for doing so is as follows. We first define non-abelian exotic Gauss sums $\TwistedGaussSum{\pi}{\alpha}{\fieldCharacter}$ associated with a character $\alpha \colon \multiplicativegroup{\finiteFieldExtension{\lambda}} \to \multiplicativegroup{\cComplex}$ and an irreducible representation $\pi$ of $\GL_c\left(\finiteField\right)$. We then define the counterpart exotic matrix Kloosterman sum $\ExoticKloosterman\left(\alpha, \fieldCharacter, h\right)$ as the unique class function satisfying $$\TwistedGaussSum{\pi}{\alpha}{\fieldCharacter} = q^{-\frac{\sizeof{\lambda} c^2}{2}} \sum_{h \in \GL_c\left(\finiteField\right)} \ExoticKloosterman\left(\alpha, \fieldCharacter, h\right) \pi\left(h\right)$$ for any irreducible representation $\pi$ of $\GL_c\left(\finiteField\right)$.

For example, in the basic case where $\lambda = \left(1,1,\dots,1\right) = \left(1^k\right)$ and $\alpha = \alpha_1 \times \dots \times \alpha_k$ where $\alpha_j \colon \multiplicativegroup{\finiteField} \to \multiplicativegroup{\cComplex}$, the corresponding non-abelian Gauss sum is $$\TwistedGaussSum{\pi}{\alpha}{\fieldCharacter} = q^{-\frac{k c^2}{2}} \sum_{h_1,\dots,h_k \in \GL_c\left(\finiteField\right)} \left(\prod_{j=1}^k \alpha_j\left(\det h_j\right) \fieldCharacter\left(\trace h_j\right)\right) \pi\left(h_1 \dots h_k\right),$$
and the corresponding matrix Kloosterman sum is
$$\ExoticKloosterman\left(\alpha, \fieldCharacter ,h\right) = \sum_{\substack{h_1,\dots,h_k \in \GL_c\left(\finiteField\right)\\
h_1 \dots h_k = h}} \prod_{j=1}^k \alpha_j\left(\det h_j\right) \fieldCharacter\left(\trace h_j\right),$$
which was studied in~\cite{Zelingher2023}.

Our method relies on the representation theory of $\GL_n\left(\finiteField\right)$, on Kondo's Gauss sum, and on Shintani's norm map. We begin with reviewing these. We then use them to define the notion of non-abelian exotic Gauss sums, which, as we show, can be reduced to products of exotic Gauss sums expressed in terms of characters corresponding to the cuspidal support of the given irreducible representation. We then use these to define exotic matrix Kloosterman sums using the method described above.

\subsection{An overview of the representation theory of $\GL_n\left(\finiteField\right)$}

In this section, we quickly review the representation theory of $\GL_n\left(\finiteField\right)$. We recall the notions of parabolic induction and of cuspidal representations, and of the cuspidal support of an irreducible representation. We then recall Green's parameterization of irreducible representations of $\GL_n\left(\finiteField\right)$.

We refer the reader to \cite[Section 1]{Macdonald80} and \cite[Chapter IV]{macdonald1998symmetric} for more details. See also \cite[Appendix B]{GurevichHowe2021} for an overview and other references. See also \cite{Carmon2023} for a nice self contained exposition.

Throughout the paper, all representations of $\GL_n\left(\finiteField\right)$ are always assumed to be finite dimensional. Note that this is automatic for irreducible representations.

\subsubsection{Parabolic induction}\label{subsec:parabolic-indudction}

Recall that a composition of a non-negative integer $n$ is a sequence $(n_1,\dots,n_r)$ of positive integers such that $n_1 + \dots + n_r = n$. We denote the standard parabolic subgroup of $\GL_n\left(\finiteField\right)$ corresponding to the composition $(n_1,\dots,n_r)$ of $n$ by $\ParabolicSubgroup_{(n_1,\dots,n_r)}$. It decomposes as the semi-direct product $\ParabolicSubgroup_{(n_1,\dots,n_r)} = D_{(n_1,\dots,n_r)} \ltimes \UnipotentRadical_{(n_1,\dots,n_r)}$, where $$D_{(n_1,\dots,n_r)} = \left\{ \diag\left(g_1,\dots,g_r\right) \mid g_j \in \GL_{n_j}\left(\finiteField\right) \right\}$$
is the \emph{Levi part} of $\ParabolicSubgroup_{(n_1,\dots,n_r)}$ and
$$\UnipotentRadical_{(n_1,\dots,n_r)} = \left\{ \begin{pmatrix}
	\IdentityMatrix{n_1} & \ast & \ast & \ast \\
	& \IdentityMatrix{n_2} & \ast & \ast\\
	& & \ddots & \ast \\
	& & & \IdentityMatrix{n_r} 
\end{pmatrix} \right\}$$
is the \emph{unipotent radical} of $\ParabolicSubgroup_{(n_1,\dots,n_r)}$.

Let $\sigma_1$, $\dots$, $\sigma_r$ be representations of $\GL_{n_1}\left(\finiteField\right)$, $\dots$, $\GL_{n_r}\left(\finiteField\right)$, respectively. We define their inflation $\sigma_1 \overline{\otimes} \dots \overline{\otimes} \sigma_r$, as follows. It is a representation of $\ParabolicSubgroup_{(n_1,\dots,n_r)}$ acting on the space of $\sigma_1 \otimes \dots \otimes \sigma_r$ by the action $$\left(\sigma_1 \overline{\otimes} \dots \overline{\otimes} \sigma_r\right)\left(d u\right) = \sigma_1\left(g_1\right) \otimes \dots \otimes \sigma_r\left(g_r\right),$$
where $d = \diag\left(g_1,\dots,g_r\right) \in D_{(n_1,\dots,n_r)}$ and $u \in \UnipotentRadical_{(n_1,\dots,n_r)}$.

Given $\sigma_1$, $\dots$, $\sigma_r$ as above, we define the \emph{parabolically induced representation} $\sigma_1 \circ \dots \circ \sigma_r$ of $\GL_n\left(\finiteField\right)$ by the formula $$\sigma_1 \circ \dots \circ \sigma_r = \Ind{\ParabolicSubgroup_{(n_1,\dots,n_r)}}{\GL_n\left(\finiteField\right)}{\sigma_1 \overline{\otimes} \dots \overline{\otimes} \sigma_r}.$$

The operation $\circ$ is associative. By this we mean that we have isomorphisms $$\sigma_1 \circ \sigma_2 \circ \sigma_3 \cong \sigma_1 \circ \left(\sigma_2 \circ \sigma_3\right) \cong \left(\sigma_1 \circ \sigma_2\right) \circ \sigma_3,$$
for any representations $\sigma_1, \sigma_2$ and $\sigma_3$ of $\GL_{n_1}\left(\finiteField\right)$, $\GL_{n_2}\left(\finiteField\right)$ and $\GL_{n_3}\left(\finiteField\right)$, respectively. The operation $\circ$ is also commutative. By this we mean that we have an isomorphism
$$\sigma_1 \circ \sigma_2 \cong \sigma_2 \circ \sigma_1,$$
for any representations $\sigma_1$ and $\sigma_2$ of $\GL_{n_1}\left(\finiteField\right)$ and $\GL_{n_2}\left(\finiteField\right)$, respectively.

A representation $\sigma$ of $\GL_n\left(\finiteField\right)$ is called \emph{cuspidal} if for any composition $(n_1,\dots,n_r)$ of $n$ with $r > 1$, $\sigma$ does not admit a non-zero vector that is invariant under the action of $\UnipotentRadical_{(n_1,\dots,n_r)}$. If $\sigma$ is irreducible, $\sigma$ is cuspidal if and only if $\sigma$ is not a subrepresentation of any parabolically induced representation $\sigma_1 \circ \dots \circ \sigma_r$ for any $r > 1$ and any choice of $\sigma_1$, $\dots$, $\sigma_r$.

By~\cite[Theorem 2.4]{Gelfand70}, if $\pi$ is an irreducible representation of $\GL_n\left(\finiteField\right)$, there exist a composition $(n_1,\dots,n_t)$ of $n$ and irreducible cuspidal representations $\pi_1$, $\dots$, $\pi_t$ of $\GL_{n_1}\left(\finiteField\right)$, $\dots$, $\GL_{n_t}\left(\finiteField\right)$, respectively, such that $\pi$ is a subrepresentation of the parabolically induced representation $\pi_1 \circ \dots \circ \pi_t$. Moreover, for any such $\pi$, the multiset consisting of (the equivalence classes of) the irreducible cuspidal representations $\pi_1$, $\dots$, $\pi_t$ is uniquely determined by (the equivalence class of) $\pi$. This multiset is called the \emph{cuspidal support} of $\pi$.

\subsubsection{Parameters}
The notion of the cuspidal support of an irreducible representation of $\GL_n\left(\finiteField\right)$ discussed above has a refinement using the notion of parameters. We discuss this refinement in this section.

Recall that we denote by $\Partitions$ the set consisting of all partitions of all non-negative integers and by $()$ the empty partition of $0$.

For any $m \ge 1$, let $\IrrCuspidal\left(\GL_m\left(\finiteField\right)\right)$ denote the set consisting of (equivalence of classes of) irreducible cuspidal representations of $\GL_m\left(\finiteField\right)$. Let $$\IrrCuspidalAll = \bigcup_{m=1}^{\infty} \IrrCuspidal\left(\GL_m\left(\finiteField\right)\right).$$

A \emph{parameter} is an assignment $\varphi \colon \IrrCuspidalAll \to \Partitions$, such that for all but finitely many $\sigma \in \IrrCuspidalAll$, $\varphi\left(\sigma\right) = \left(\right)$. The support of $\varphi$, denoted $\Supp \varphi$, is the set consisting of $\sigma$ such that $\varphi\left(\sigma\right) \ne \left(\right)$. We define $$\sizeof{\varphi} = \sum_{m = 1}^{\infty} \sum_{\sigma \in \IrrCuspidal\left(\GL_m\left(\finiteField\right)\right)} m \cdot \sizeof{\varphi\left(\sigma\right)}.$$

Green proved in~\cite{Green55} that there exists a bijection between parameters $\varphi$ with $\sizeof{\varphi} = n$ and irreducible representations of $\GL_n\left(\finiteField\right)$. Let us describe this bijection quickly.

First, if $\varphi$ is a parameter supported on a single $\sigma \in \IrrCuspidal\left(\GL_n\left(\finiteField\right)\right)$ with $\varphi\left(\sigma\right) = \left(1\right)$ then $\varphi$ corresponds to the irreducible cuspidal representation $\sigma$. Next, let $\sigma \in \IrrCuspidal\left(\GL_n\left(\finiteField\right)\right)$ and $m \ge 1$, and consider the following parabolically induced representation of $\GL_{nm}\left(\finiteField\right)$: $$\sigma^{\circ m} = \sigma \circ \dots \circ \sigma.$$
By~\cite[Appendix B]{GurevichHowe2021}, the different equivalence classes of irreducible subrepresentations of $\sigma^{\circ m}$ are in bijection with irreducible representations of the symmetric group $\SymmetricGroup_m$, which in turn are in bijection with partitions of $m$. Given a partition $\mu \vdash m$, let us denote by $\sigma^{\mu}$ the irreducible representation of $\sigma^{\circ m}$ corresponding to $\mu$.

Given a parameter $\varphi$ with $\sizeof{\varphi} = n$, let $\Supp \varphi = \left\{\sigma_1, \dots, \sigma_r\right\}$. We define $$\pi^{\varphi} = \sigma_1^{\varphi\left(\sigma_1\right)} \circ \dots \circ \sigma_r^{\varphi\left(\sigma_r\right)}.$$
Note that the cuspidal support of $\pi^{\varphi}$ consists of $\sizeof{\varphi\left(\sigma_1\right)}$ copies of $\sigma_1$, $\sizeof{\varphi\left(\sigma_2\right)}$ copies of $\sigma_2$, $\dots$, $\sizeof{\varphi\left(\sigma_r\right)}$ copies of $\sigma_r$.
Green proved that the map $\varphi \mapsto \pi^{\varphi}$ is a bijection between parameters $\varphi$ with $\sizeof{\varphi} = n$ and irreducible representations of $\GL_n\left(\finiteField\right)$. The contragredient representation $\left(\pi^{\varphi}\right)^{\vee}$ is the representation corresponding to the parameter $\varphi^{\vee}$ defined by the formula
$$\varphi^{\vee}\left(\sigma\right) = \varphi\left(\sigma^{\vee}\right)$$ for every $\sigma \in \IrrCuspidalAll$.

By~\cite[Section 6]{Gelfand70}, irreducible cuspidal representations of $\GL_n\left(\finiteField\right)$ are in bijection with Frobenius orbits of regular characters $\alpha \colon \multiplicativegroup{\finiteFieldExtension{n}} \to \multiplicativegroup{\cComplex}$. If $\pi$ is an irreducible cuspidal representation of $\GL_n\left(\finiteField\right)$ corresponding to the Frobenius orbit $\left[\alpha\right]$ of a regular character $\alpha \colon \multiplicativegroup{\finiteFieldExtension{n}} \to \multiplicativegroup{\cComplex}$, then the contragredient representation $\pi^{\vee}$ corresponds to the Frobenius orbit $\left[\alpha^{-1}\right]$.

\subsection{Kondo's Gauss sum}\label{subsec:kondo-gauss-sum}
In this section, we recall the definition of Kondo's non-abelian Gauss sum and its explicit computation~\cite{Kondo1963}.

\subsubsection{Class functions}\label{subsubsec:class-functions}
Recall that a (complex valued) class function $f$ of a finite group $G$ is a function $f \colon G \to \cComplex$ that is constant on conjugacy classes. Let $\ClassFunctionsRing\left(G\right)$ be the ring consisting of class functions of $G$. We equip $\ClassFunctionsRing\left(G\right)$ with the inner product $$\innerproduct{f_1}{f_2} = \frac{1}{\sizeof{G}} \sum_{g \in G} f_1\left(g\right) \conjugate{f_2\left(g\right)}.$$
The space $\ClassFunctionsRing\left(G\right)$ has two natural bases. The first basis consists of characteristic functions of conjugacy classes of $G$, i.e., it consists of $\left(\delta_C\right)_C$, where $C$ runs over all conjugacy class of $G$ and $\delta_{C}\left(g\right) = 1$ if $g \in C$ and $\delta_{C}\left(g\right) = 0$ otherwise. The second basis consists of characters of irreducible representations of $G$, i.e., it consists of $\left(\trace \pi\right)_{\pi}$, where $\pi$ runs over all equivalence classes of irreducible representations of $G$. Both bases are orthogonal with respect to the inner product $\innerproduct{\cdot}{\cdot}$. Moreover, the basis $\left(\trace \pi\right)_{\pi}$ is an orthonormal basis with respect to $\innerproduct{\cdot}{\cdot}$.

\subsubsection{Kondo's Gauss sum}
Let $\chi \colon \multiplicativegroup{\finiteField} \to \multiplicativegroup{\cComplex}$ be a character. The assignment $\GL_c\left(\finiteField\right) \to \cComplex$ given by $h \mapsto \chi\left(\det h\right)\fieldCharacter\left(\trace h\right)$ is a class function. Therefore, if $\pi$ is a representation of $\GL_n\left(\finiteField\right)$, the operator
$$\TwistedGaussSum{\pi}{\chi}{\fieldCharacter} = q^{-\frac{c^2}{2}} \sum_{h \in \GL_c\left(\finiteField\right)} \pi\left(h\right) \chi\left(\det h\right) \fieldCharacter\left(\trace h\right)$$
defines an element of $\Hom_{\GL_c\left(\finiteField\right)}\left(\pi, \pi\right)$. If $\pi$ is irreducible, then by Schur's lemma there exists a complex number $\GKGaussSumScalar{\pi}{\chi}{\fieldCharacter} \in \cComplex$ such that $$\TwistedGaussSum{\pi}{\chi}{\fieldCharacter} = \GKGaussSumScalar{\pi}{\chi}{\fieldCharacter} \cdot \idmap_{\pi}.$$

This is a twisted version of the non-abelian Gauss sum explicitly computed by Kondo~\cite{Kondo1963}. Let us recall Kondo's explicit computation.

\begin{theorem}\label{thm:kondo-godement-jacquet-computation}
	Let $\pi$ be an irreducible representation of $\GL_c\left(\finiteField\right)$. 
	Suppose that $\pi$ has cuspidal support $\left\{ \pi_1, \dots, \pi_t \right\}$, where for every $1 \le j \le t$, $\pi_j$ is an irreducible cuspidal representation of $\GL_{c_j}\left(\finiteField\right)$ corresponding to a Frobenius orbit of size $c_j$ of the form $\left[\beta_j\right]$, where $\beta_j \colon \multiplicativegroup{\finiteFieldExtension{c_j}} \to \multiplicativegroup{\cComplex}$ is a regular character. 
	Then $$\GKGaussSumScalar{\pi}{\chi}{\fieldCharacter} = \left(-1\right)^c q^{-\frac{c}{2}} \prod_{j=1}^t \tau\left(\beta_j \times \chi, \fieldCharacter_{c_j}\right),$$ where for every $j$, $\tau\left(\beta_j \times \chi, \fieldCharacter_{c_j}\right)$ is the twisted Gauss sum (see \eqref{eq:twisted-gauss-sum}).
\end{theorem}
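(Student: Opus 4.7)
The plan is to reduce the assertion to Kondo's original (untwisted) Gauss sum computation \cite{Kondo1963}, by absorbing the twist $\chi$ into the representation $\pi$ itself. Set $\pi' = \pi \otimes (\chi \circ \det)$. A direct rearrangement of the defining sum shows that the operator $\TwistedGaussSum{\pi'}{1}{\fieldCharacter} = q^{-c^2/2}\sum_{h \in \GL_c(\finiteField)} \pi'(h)\fieldCharacter(\trace h)$ acts on the underlying space of $\pi$ exactly as $\TwistedGaussSum{\pi}{\chi}{\fieldCharacter}$ does, so $\GKGaussSumScalar{\pi'}{1}{\fieldCharacter} = \GKGaussSumScalar{\pi}{\chi}{\fieldCharacter}$ as scalars. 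Thus it suffices to identify the cuspidal support of $\pi'$ and then invoke Kondo's theorem.

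Next I would determine how the cuspidal support transforms under $\pi \mapsto \pi \otimes (\chi \circ \det)$. If $\pi \hookrightarrow \pi_1 \circ \dots \circ \pi_t$, then since the character $\chi \circ \det$ of $\GL_c(\finiteField)$ restricts to $D_{(c_1,\dots,c_t)}$ as $(\chi \circ \det_{\GL_{c_1}}) \overline{\otimes} \dots \overline{\otimes} (\chi \circ \det_{\GL_{c_t}})$ and is trivial on $\UnipotentRadical_{(c_1,\dots,c_t)}$, the projection formula for parabolic induction yields $\pi' \hookrightarrow \pi_1' \circ \dots \circ \pi_t'$ with $\pi_j' = \pi_j \otimes (\chi \circ \det)$. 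Using Green's character formula, the cuspidal $\pi_j$ corresponding to the Frobenius orbit $[\beta_j]$ has character $(-1)^{c_j - 1}\sum_{i=0}^{c_j - 1}\beta_j^{q^i}(\xi)$ on a regular elliptic element with eigenvalue $\xi \in \multiplicativegroup{\finiteFieldExtension{c_j}}$; multiplying by $\chi(\det h) = \chi(\FieldNorm{c_j}{1}(\xi))$ and using that $\chi \circ \FieldNorm{c_j}{1}$ is Frobenius-invariant shows that $\pi_j'$ corresponds to the Frobenius orbit $[\beta_j \cdot (\chi \circ \FieldNorm{c_j}{1})]$, which is still a regular character of $\multiplicativegroup{\finiteFieldExtension{c_j}}$ since twisting by a Frobenius-invariant character preserves the stabilizer.

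Finally, I would apply Kondo's untwisted result to $\pi'$ to obtain
$$\GKGaussSumScalar{\pi'}{1}{\fieldCharacter} = (-1)^c q^{-c/2} \prod_{j=1}^t \tau\!\left(\beta_j \cdot (\chi \circ \FieldNorm{c_j}{1}),\, \fieldCharacter_{c_j}\right),$$
and then recognize via \eqref{eq:twisted-gauss-sum} that each factor equals $\tau(\beta_j \times \chi, \fieldCharacter_{c_j})$. Combining with the identity $\GKGaussSumScalar{\pi}{\chi}{\fieldCharacter} = \GKGaussSumScalar{\pi'}{1}{\fieldCharacter}$ from the first step completes the proof.

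The principal subtlety is the second step: verifying that Green's bijection between irreducible cuspidals of $\GL_{c_j}(\finiteField)$ and Frobenius orbits of regular characters of $\multiplicativegroup{\finiteFieldExtension{c_j}}$ intertwines the tensoring operation $\pi_j \mapsto \pi_j \otimes (\chi \circ \det)$ with the multiplicative operation $\beta_j \mapsto \beta_j \cdot (\chi \circ \FieldNorm{c_j}{1})$. Everything else is a routine rearrangement or a direct citation.
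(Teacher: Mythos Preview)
Your argument is correct, but there is nothing to compare it against: the paper does not prove this theorem. It is stated as a recollection of Kondo's computation and simply cited to~\cite{Kondo1963} (the sentence preceding the statement reads ``This is a twisted version of the non-abelian Gauss sum explicitly computed by Kondo~\cite{Kondo1963}. Let us recall Kondo's explicit computation.'').

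Your reduction---absorbing the twist into $\pi$, tracking the cuspidal support under $\pi_j \mapsto \pi_j \otimes (\chi \circ \det)$, and then invoking the untwisted Kondo formula---is exactly the standard way one deduces the twisted statement from Kondo's original result, and all the steps you outline are valid. The compatibility you flag as the ``principal subtlety'' (that twisting a cuspidal by $\chi \circ \det$ corresponds to multiplying its associated regular character by $\chi \circ \FieldNorm{c_j}{1}$) is indeed well known and your sketch via Green's character formula on regular elliptic elements is a correct justification.
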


Our next goal is to define an exotic version of Kondo's Gauss sum. We will do so using Shintani's norm map, which we recall in the next section.

\subsection{Shintani's correspondence}\label{sec:shintani-correspondence}

In this section, we explain Shintani's correspondence. We first review Shintani's norm map and use it to explain the notion of a Shintani lift of a representation. Then we recall results due to Silberger--Zink~\cite{SilbergerZink08} regarding the cuspidal support of a Shintani lift.

\subsubsection{Shintani's norm map}

Let $k \ge  1$. We say that two elements $x,y \in \GL_c\left(\finiteFieldExtension{k}\right)$ are \emph{$\Frobenius$-twisted conjugate} if there exists $h \in \GL_c\left(\finiteFieldExtension{k}\right)$, such that $$x = h y \Frobenius\left(h^{-1}\right),$$
where $\Frobenius \colon \GL_c\left(\finiteFieldExtension{k}\right) \to \GL_c\left(\finiteFieldExtension{k}\right)$ is the Frobenius automorphism, defined using the Frobenius automorphism $\Frobenius \in \Galois\left(\finiteFieldExtension{k} \slash \finiteField\right)$ ($x \mapsto x^q$) by applying it to every entry of a given matrix. For $x \in \GL_c\left(\finiteFieldExtension{k}\right)$, let us denote the $\Frobenius$-twisted conjugacy class of $x$ by $\conjugacyClass{x}_{\finiteFieldExtension{k} \slash \finiteField}$.

In~\cite{shintani1976two}, Shintani defined a map from $\Frobenius$-twisted conjugacy classes of $\GL_c\left(\finiteFieldExtension{k}\right)$ to conjugacy classes of $\GL_c\left(\finiteField\right)$. This map is called the \emph{norm map}. We move to explain its definition.

First, define for $h \in \GL_c\left(\finiteFieldExtension{k}\right)$ its norm by the formula $$\FieldNorm{k}{1}\left(h\right) =  \Frobenius^{k-1}h \cdot \Frobenius^{k-2}h \cdot \dots \cdot \Frobenius h  \cdot h.$$
If $c = 1$, then $\FieldNorm{k}{1}$ is the usual norm map $\FieldNorm{k}{1} \colon \multiplicativegroup{\finiteFieldExtension{k}} \to \multiplicativegroup{\finiteField}$. Generally speaking, for $c \ge 2$ and $h \in \GL_c\left(\finiteFieldExtension{k}\right)$, the element $\FieldNorm{k}{1}\left(h\right)$ does not necessarily lie in $\GL_c\left(\finiteField\right)$. However, the element $\FieldNorm{k}{1}\left(h\right)$ is conjugate (using elements in $\GL_c\left(\finiteFieldExtension{k}\right)$) to an element in $\GL_c\left(\finiteField\right)$, and thus defines a unique conjugacy class of $\GL_c\left(\finiteField\right)$ (recall that if two elements of $\GL_c\left(\finiteField\right)$ are conjugate in $\GL_c\left(\finiteFieldExtension{k}\right)$, then they are conjugate in $\GL_c\left(\finiteField\right)$, see~\cite[Corollary 18 on Page 477]{DummitFoote2004}). We denote this conjugacy class by $\conjugacyClass{\FieldNorm{k}{1}\left(h\right)}$. It is clear that if two elements in $\GL_c\left(\finiteFieldExtension{k}\right)$ lie in the same $\Frobenius$-twisted conjugacy class, then their norms define the same conjugacy class in $\GL_c\left(\finiteField\right)$. Note that for $h \in \GL_c\left(\finiteFieldExtension{k}\right)$, we have $\FieldNorm{k}{1}\left(\Frobenius h\right) = h \FieldNorm{k}{1}\left(h\right) h^{-1}$, and therefore $\conjugacyClass{\FieldNorm{k}{1}\left(\Frobenius h\right)} = \conjugacyClass{\FieldNorm{k}{1}\left(h\right)}$. Shintani showed that the map $$\conjugacyClass{h}_{\finiteFieldExtension{k} \slash \finiteField} \mapsto \conjugacyClass{\FieldNorm{k}{1}\left(h\right)}$$
is a bijection between $\Frobenius$-twisted conjugacy classes of $\GL_c\left(\finiteFieldExtension{k}\right)$ and conjugacy classes of $\GL_c\left(\finiteField\right)$.

\subsubsection{Shintani lift}
Let $\Pi$ be an irreducible representation of $\GL_c\left(\finiteFieldExtension{k}\right)$. Using the Frobenius automorphism of $\GL_c\left(\finiteFieldExtension{k}\right)$ discussed in the previous section, we can obtain another irreducible representation $\Pi^{\Frobenius}$ given by $\Pi^{\Frobenius}\left(h\right) = \Pi\left(\Frobenius h\right)$. If $\Pi^{\Frobenius}$ is isomorphic to $\Pi$, we say that $\Pi$ is \emph{$\Frobenius$-invariant}.

Suppose that $\Pi$ is $\Frobenius$-invariant and let $I_{\Pi} \colon \Pi \to \Pi^{\Frobenius}$ be an isomorphism. By Schur's lemma, such isomorphism is unique, up to scalar multiplication.

Shintani proved in~\cite{shintani1976two} the following theorem.

\begin{theorem}
	Let $\Pi$ be a $\Frobenius$-invariant representation of $\GL_c\left(\finiteFieldExtension{k}\right)$. Then there exists a unique isomorphism $I_{\Pi} \colon \Pi \to \Pi^{\Frobenius}$ and an irreducible representation $\pi$ of $\GL_c\left(\finiteField\right)$, such that for any $h \in \GL_c\left(\finiteFieldExtension{k}\right)$,
	\begin{equation}\label{eq:shintani-relation}
		\trace \pi\left(\conjugacyClass{\FieldNorm{k}{1}\left(h\right)}\right) = \trace \left( I_{\Pi} \circ \Pi\left(h\right) \right).
	\end{equation}
	
	Moreover, the assignment $\Pi \mapsto \pi$ is a bijection between (equivalence classes of) $\Frobenius$-invariant representations of $\GL_c\left(\finiteFieldExtension{k}\right)$ and (equivalence classes of) irreducible representations of $\GL_c\left(\finiteField\right)$.
\end{theorem}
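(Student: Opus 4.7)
The plan is to mimic classical character theory of finite groups in the $\Frobenius$-twisted setting, with the norm-map bijection stated in the theorem playing the role of the identification between conjugacy classes and class functions. Since $\Pi$ is irreducible and $\Frobenius$-invariant, Schur's lemma produces an intertwiner $I_\Pi \colon \Pi \to \Pi^\Frobenius$ that is unique up to a nonzero scalar; this scalar freedom will be pinned down simultaneously with the construction of $\pi$. For any such choice, define the twisted character
$$\chi_\Pi^\Frobenius(h) = \trace\bigl(I_\Pi \circ \Pi(h)\bigr).$$
Using the defining relation $I_\Pi \circ \Pi(h) = \Pi(\Frobenius h) \circ I_\Pi$ and the cyclicity of the trace, a direct computation gives $\chi_\Pi^\Frobenius\bigl(g h \Frobenius(g)^{-1}\bigr) = \chi_\Pi^\Frobenius(h)$, so $\chi_\Pi^\Frobenius$ descends to a function on $\Frobenius$-twisted conjugacy classes of $\GL_c(\finiteFieldExtension{k})$.

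The core analytic step is a twisted Schur orthogonality: for $\Frobenius$-invariant irreducible $\Pi_1, \Pi_2$ with suitably normalized intertwiners $I_{\Pi_1}, I_{\Pi_2}$,
$$\frac{1}{|\GL_c(\finiteFieldExtension{k})|} \sum_{h \in \GL_c(\finiteFieldExtension{k})} \chi_{\Pi_1}^\Frobenius(h)\, \overline{\chi_{\Pi_2}^\Frobenius(h)} \;=\; \delta_{\Pi_1, \Pi_2}.$$
I would derive this by averaging $\Pi_2(h)^{-1} \circ A \circ \Pi_1(h)$ over $h$ for an arbitrary linear $A \colon \Pi_1 \to \Pi_2$, observing that after conjugation by the $I_{\Pi_i}$ the resulting operator intertwines $\Pi_1$ and $\Pi_2$, and invoking Schur's lemma. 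Letting $A$ range over matrix units and summing yields the displayed identity and simultaneously fixes the normalization of each $I_\Pi$ up to a root of unity.

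A counting argument then promotes $\{\chi_\Pi^\Frobenius\}$ to an orthonormal basis. The space of class functions on $\Frobenius$-twisted conjugacy classes of $\GL_c(\finiteFieldExtension{k})$ has dimension equal, via the norm bijection, to the number of conjugacy classes of $\GL_c(\finiteField)$; the number of $\Frobenius$-invariant irreducible representations of $\GL_c(\finiteFieldExtension{k})$ equals the same number by the standard duality between the $\Frobenius$-actions on conjugacy classes and on $\Irr$. Transferring the basis $\{\chi_\Pi^\Frobenius\}$ along the norm bijection yields an orthonormal basis of class functions on $\GL_c(\finiteField)$, which by Schur orthogonality on $\GL_c(\finiteField)$ must coincide, up to scalars, with the characters of irreducible representations. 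Absorbing each scalar into the remaining freedom of $I_\Pi$ fixes it uniquely and produces the unique irreducible $\pi$ satisfying the character identity \eqref{eq:shintani-relation}, establishing both the character identity and the bijection $\Pi \mapsto \pi$.

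The main obstacle will be executing the twisted orthogonality computation with tight control over scalars, and then verifying that the scalars arising in the comparison with $\Irr(\GL_c(\finiteField))$ can be absorbed into $I_\Pi$ so that the produced $\pi$ is an honest irreducible representation rather than a signed or virtual one. A secondary technical point is justifying the counting claim identifying the number of $\Frobenius$-invariant elements of $\Irr(\GL_c(\finiteFieldExtension{k}))$ with the number of $\Frobenius$-twisted conjugacy classes; this typically requires either a direct trace-formula input or the general fact that for a finite group with an automorphism, the $\sigma$-twisted class functions and the $\sigma$-invariant irreducible characters have matching dimensions.
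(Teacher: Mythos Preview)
The paper does not prove this theorem; it simply cites Shintani's original paper \cite{shintani1976two} and uses the result as a black box. So there is no ``paper's own proof'' to compare against. That said, your outline is broadly Shintani's strategy, and it is worth pointing out where it falls short of a proof.

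The central gap is the sentence ``which by Schur orthogonality on $\GL_c(\finiteField)$ must coincide, up to scalars, with the characters of irreducible representations.'' This is false as stated: Schur orthogonality says that irreducible characters form an orthonormal basis, not that every orthonormal basis consists of scalar multiples of irreducible characters. Any unitary change of basis produces another orthonormal set. What is actually needed is that each transferred $\chi_\Pi^{\Frobenius}$ lies in the $\zIntegers$-lattice of virtual characters of $\GL_c(\finiteField)$; then norm~$1$ forces it to be $\pm$ an honest irreducible character. Establishing this integrality is the hard part of Shintani's argument and is not addressed by anything in your sketch; you flag it as an ``obstacle'' at the end but offer no mechanism for it. In Shintani's paper this step requires genuine work specific to $\GL_c$ (explicit knowledge of characters, or an inductive/parabolic argument).

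A secondary gap: transferring along the norm bijection preserves orthonormality only if the weights $\sizeof{C}/\sizeof{\GL_c(\finiteField)}$ and $\sizeof{C'}/\sizeof{\GL_c(\finiteFieldExtension{k})}$ agree for corresponding classes $C \leftrightarrow C'$. This is true, but it is a nontrivial centralizer computation that you have silently assumed.
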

For $\pi$ and $\Pi$ as in the theorem, we say that $\Pi$ is the \emph{Shintani lift of $\pi$ to $\GL_c\left(\finiteFieldExtension{k}\right)$}. Note that by choosing $h = \IdentityMatrix{c}$, we get that $\trace I_{\Pi} = \dim \pi$ for $I_{\Pi}$ as in the theorem.

\subsubsection{The cuspidal support of a Shintani lift}\label{subsec:cuspidal-suppoort-of-shintani}
Suppose that $\pi$ is an irreducible representation of $\GL_c\left(\finiteField\right)$, and let $\Pi$ be its Shintani lift to $\GL_c\left(\finiteFieldExtension{k}\right)$. In this section, we describe the cuspidal support of $\Pi$ in terms of the (Frobenius orbits associated to the) cuspidal support of $\pi$. This description is given in~\cite[Section 9]{SilbergerZink08}.

Let us first assume that $\pi$ is cuspidal. In this case, $\pi$ corresponds to a Frobenius orbit of a regular character $\beta \colon \multiplicativegroup{\finiteFieldExtension{c}} \to \multiplicativegroup{\cComplex}$. For any $i$, consider the multiplicative character $$\beta^{q^i} \circ \FieldNorm{\lcm\left(c,k\right)}{c} \colon \multiplicativegroup{\finiteFieldExtension{\lcm\left(c, k\right)}} \to \multiplicativegroup{\cComplex}.$$ This is a regular character of $\multiplicativegroup{\finiteFieldExtension{\lcm\left(c,k\right)}}$ of degree $$\frac{c}{\gcd\left(c,k\right)} = \frac{\lcm\left(c,k\right)}{k},$$ with respect to $\Frobenius^k$, the Frobenius automorphism corresponding to the base field $\finiteFieldExtension{k}$. Note that we have that the Frobenius orbit $\left[\beta\right] = \{ \beta, \beta^q, \dots, \beta^{q^{c-1}} \}$ is the disjoint union of the sets $$\left\{ \left(\beta^{q^i}\right)^{q^{jk}} \mid 0 \le j < \frac{\lcm\left(c,k\right)}{k} \right\}_{i=0}^{\gcd\left(c,k\right) - 1}.$$ Since the $\Frobenius^k$-orbit of $\beta^{q^i} \circ \FieldNorm{\lcm\left(c,k\right)}{c}$ is of size $\frac{\lcm\left(c,k\right)}{k}$, this orbit defines an irreducible cuspidal representation of $\GL_{\frac{\lcm\left(c,k\right)}{k}}\left(\finiteFieldExtension{k}\right)$. Let us denote this representation by $\Pi\left(\beta, k, i\right)$. Then the cuspidal support of $\Pi$ is the set $\{ \Pi\left(\beta,k,i\right) \mid 0 \le i \le \gcd\left(c,k\right)-1 \}$.

Suppose now that $\pi$ is an irreducible representation of $\GL_c\left(\finiteField\right)$ with cuspidal support $\left\{\pi_1,\dots,\pi_t\right\}$, where $\pi_j$ is an irreducible cuspidal representation of $\GL_{c_j}\left(\finiteField\right)$ and $c_1 + \dots + c_t = c$. For every $1 \le j \le t$, let $\Pi_j$ be the Shintani lift of $\pi_j$ to $\GL_{c_j}\left(\finiteFieldExtension{k}\right)$. Then the cuspidal support of $\Pi$ is the multiset given by summing the cuspidal supports of $\Pi_1$, $\dots$, $\Pi_t$.

\subsection{Non-abelian exotic Gauss sums}\label{subsec:non-abelian-exotic-gauss-sums}
In this section, we define exotic versions of Kondo's Gauss sum. As in \Cref{subsec:exotic-gauss-sums}, the definition involves two steps. Let $\pi$ be an irreducible representation of $\GL_c\left(\finiteField\right)$. We first define an exotic Gauss sum associated to the pair $\pi$ and $\chi$, where $\chi \colon \multiplicativegroup{\finiteFieldExtension{k}} \to \multiplicativegroup{\cComplex}$ is a character. We then use this definition to define an exotic Gauss sum associated to the pair $\pi$ and $\chi$, where $\chi \colon \multiplicativegroup{\finiteFieldExtension{\lambda}} \to \multiplicativegroup{\cComplex}$ is a character, where $\finiteFieldExtension{\lambda}$ is an \etale algebra over $\finiteField$ associated with the partition $\lambda$. We use the results from the previous sections to reduce these exotic Gauss sums to products of the exotic Gauss sums from \Cref{subsec:exotic-gauss-sums} (\Cref{thm:hasse-davenport-for-hall-littlewood-equality}). We also establish a Hasse--Davenport type lifting relation for these sums (\Cref{thm:hasse-davenport-kondo-gauss-sums}).

\subsubsection{Non-abelian exotic Gauss sums for $\finiteFieldExtension{k}$}\label{subsec:non-abelian-exotic-gauss-sums-for-f-k}

For an irreducible representation $\pi$ of $\GL_c\left(\finiteField\right)$, and a multiplicative character $\chi \colon \multiplicativegroup{\finiteFieldExtension{k}} \to \multiplicativegroup{\cComplex}$, denote
\begin{equation}\label{eq:exotic-gauss-sum-definition}
	\TwistedGaussSum{\pi}{\chi}{\fieldCharacter} = q^{-\frac{k c^2}{2}} \sum_{x \in \GL_c\left(\finiteFieldExtension{k}\right)} \sum_{h \in \conjugacyClass{\FieldNorm{k}{1}\left(x\right)}} \frac{1}{ \#{\conjugacyClass{\FieldNorm{k}{1}\left(x\right)}}} \chi\left(\det x\right) \fieldCharacter_{k}\left(\trace x\right) \pi\left(h\right).
\end{equation}
One can easily show that for $k=1$ this coincides with the definition from \Cref{subsec:kondo-gauss-sum}. It will also follow from the results below. Since $\conjugacyClass{\FieldNorm{k}{1}\left(h\right)} = \conjugacyClass{\FieldNorm{k}{1}\left(\Frobenius h\right)}$, we have that $\TwistedGaussSum{\pi}{\chi}{\fieldCharacter}$ is invariant under replacing $\chi$ with any element from its Frobenius orbit.

For $h \in \GL_c\left(\finiteField\right)$, let us denote $$\ExoticKloosterman\left(\chi, \fieldCharacter, h\right) = \sum_{x} \frac{1}{\#{\conjugacyClass{\FieldNorm{k}{1}\left(x\right)}}} \chi\left(\det x\right) \fieldCharacter_k\left(\trace x\right),$$
where the sum is all over $x \in \GL_c\left(\finiteFieldExtension{k}\right)$, such that $h$ belongs to the conjugacy class $\conjugacyClass{\FieldNorm{k}{1}\left(x\right)}$. We will discuss this sum in more detail in the next section. We have the equality
\begin{equation}\label{eq:exotic-gauss-sum-as-kloosterman-sum}
	\TwistedGaussSum{\pi}{\chi}{\fieldCharacter} = q^{-\frac{k c^2}{2}} \sum_{h \in \GL_c\left(\finiteField\right)} \ExoticKloosterman\left(\chi, \fieldCharacter, h\right) \pi\left(h\right).
\end{equation}
By its definition, the map $h \mapsto \ExoticKloosterman\left(\chi, \fieldCharacter, h\right)$ is a class function of $\GL_c\left(\finiteField\right)$, and therefore $\TwistedGaussSum{\pi}{\chi}{\fieldCharacter} \in \Hom_{\GL_c\left(\finiteField\right)}\left(\pi, \pi\right)$. Since $\pi$ is irreducible, by Schur's lemma that there exists a constant $\GKGaussSumScalar{\pi}{\chi}{\fieldCharacter} \in \cComplex$, such that \begin{equation*}
	\TwistedGaussSum{\pi}{\chi}{\fieldCharacter} = \GKGaussSumScalar{\pi}{\chi}{\fieldCharacter} \cdot \idmap_\pi.
\end{equation*}

The following theorem expresses $\GKGaussSumScalar{\pi}{\chi}{\fieldCharacter}$ as a twisted Kondo Gauss sum corresponding to the Shintani lift of $\pi$.
\begin{theorem}\label{prop:exotic-gauss-sum-is-gauss-sum-of-shintani-lift}
	Let $\Pi$ be the Shintani lift of $\pi$ to $\GL_c\left(\finiteFieldExtension{k}\right)$. Then $$\GKGaussSumScalar{\pi}{\chi}{\fieldCharacter} = \GKGaussSumScalar{\Pi}{\chi}{\fieldCharacter_k}.$$
\end{theorem}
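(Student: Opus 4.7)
The plan is to take traces on both sides of the definition \eqref{eq:exotic-gauss-sum-definition} and then apply the defining property \eqref{eq:shintani-relation} of the Shintani lift to reduce the sum over $\GL_c(\finiteFieldExtension{k})$ to a Kondo-type Gauss sum for $\Pi$.

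First I would note that the inner sum in the definition of $\TwistedGaussSum{\pi}{\chi}{\fieldCharacter}$ averages $\pi(h)$ over a conjugacy class of $\GL_c(\finiteField)$. Taking the trace, since $\trace \pi(h)$ is a class function, this average simply becomes $\trace \pi\bigl(\conjugacyClass{\FieldNorm{k}{1}(x)}\bigr)$. Combined with $\TwistedGaussSum{\pi}{\chi}{\fieldCharacter} = \GKGaussSumScalar{\pi}{\chi}{\fieldCharacter} \cdot \idmap_\pi$, this yields
\begin{equation*}
\GKGaussSumScalar{\pi}{\chi}{\fieldCharacter} \cdot \dim \pi \;=\; q^{-\frac{kc^2}{2}} \sum_{x \in \GL_c(\finiteFieldExtension{k})} \chi(\det x)\,\fieldCharacter_k(\trace x)\, \trace \pi\bigl(\conjugacyClass{\FieldNorm{k}{1}(x)}\bigr).
\end{equation*}

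Next I would apply the Shintani identity \eqref{eq:shintani-relation}, which replaces $\trace \pi\bigl(\conjugacyClass{\FieldNorm{k}{1}(x)}\bigr)$ by $\trace(I_{\Pi} \circ \Pi(x))$. Pulling the scalar sum inside the trace then gives
\begin{equation*}
\GKGaussSumScalar{\pi}{\chi}{\fieldCharacter} \cdot \dim \pi \;=\; \trace\!\left( I_{\Pi} \circ \Bigl( q^{-\frac{kc^2}{2}} \sum_{x \in \GL_c(\finiteFieldExtension{k})} \chi(\det x)\,\fieldCharacter_k(\trace x)\, \Pi(x) \Bigr)\right).
\end{equation*}
The inner operator is precisely $\TwistedGaussSum{\Pi}{\chi}{\fieldCharacter_k}$ (observe that the normalizing factor $q^{-kc^2/2}$ equals $q_k^{-c^2/2}$, where $q_k = q^k$ is the size of $\finiteFieldExtension{k}$, as required by the Kondo normalization for $\GL_c(\finiteFieldExtension{k})$). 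By Kondo's theorem applied to $\Pi$, this operator equals $\GKGaussSumScalar{\Pi}{\chi}{\fieldCharacter_k} \cdot \idmap_{\Pi}$.

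Finally I would use the observation made right after the statement of the Shintani lift theorem, namely that $\trace I_{\Pi} = \dim \pi$ (obtained by specializing \eqref{eq:shintani-relation} at $h = \IdentityMatrix{c}$). This gives
\begin{equation*}
\GKGaussSumScalar{\pi}{\chi}{\fieldCharacter} \cdot \dim \pi \;=\; \GKGaussSumScalar{\Pi}{\chi}{\fieldCharacter_k} \cdot \trace I_{\Pi} \;=\; \GKGaussSumScalar{\Pi}{\chi}{\fieldCharacter_k} \cdot \dim \pi,
\end{equation*}
and dividing by $\dim \pi$ yields the desired equality. No step is really the ``hard part''—the entire argument is a bookkeeping exercise once the Shintani relation is applied; the only thing to be careful about is matching the two normalizations $q^{-kc^2/2}$ on the two sides so that the Kondo Gauss sum for $\Pi$ appears with its correct normalization over the base field $\finiteFieldExtension{k}$.
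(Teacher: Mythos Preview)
Your proof is correct and follows essentially the same approach as the paper: take traces in \eqref{eq:exotic-gauss-sum-definition}, apply the Shintani relation \eqref{eq:shintani-relation}, recognize the resulting operator as $\TwistedGaussSum{\Pi}{\chi}{\fieldCharacter_k}$, and cancel $\trace I_{\Pi} = \dim \pi$. The only minor quibble is that the step ``$\TwistedGaussSum{\Pi}{\chi}{\fieldCharacter_k} = \GKGaussSumScalar{\Pi}{\chi}{\fieldCharacter_k}\cdot\idmap_{\Pi}$'' is Schur's lemma (the definition of $\GKGaussSumScalar{\Pi}{\chi}{\fieldCharacter_k}$), not Kondo's explicit computation.
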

\begin{proof}
	By taking the trace of \eqref{eq:exotic-gauss-sum-definition} we get
	$$ \dim \pi \cdot  \GKGaussSumScalar{\pi}{\chi}{\fieldCharacter} = q^{-\frac{k c^2}{2}} \sum_{x \in \GL_c\left(\finiteFieldExtension{k}\right)} \trace \pi\left(\conjugacyClass{\FieldNorm{k}{1}\left(x\right)}\right) \chi\left(\det x\right) \fieldCharacter_{k}\left(\trace x\right).$$
	By the Shintani relation \eqref{eq:shintani-relation}, there exists a unique $I_{\Pi} \in \Hom_{\GL_c\left(\finiteFieldExtension{k}\right)}\left(\Pi, \Pi^{\Frobenius}\right)$, such that $$\GKGaussSumScalar{\Pi}{\chi}{\fieldCharacter} = \frac{q^{-\frac{k c^2}{2}}}{\dim \pi}\sum_{x \in \GL_c\left(\finiteFieldExtension{k}\right)} \trace\left(I_{\Pi} \circ \Pi\left(x\right)\right) \chi\left(\det x\right) \fieldCharacter_{k}\left(\trace x\right).$$
	This can be rewritten as
	$$\GKGaussSumScalar{\pi}{\chi}{\fieldCharacter} = \frac{1}{\dim \pi} \trace\left(I_{\Pi} \circ \TwistedGaussSum{\Pi}{\chi}{\fieldCharacter_{k}} \right) = \GKGaussSumScalar{\Pi}{\chi}{\fieldCharacter_{k}} \frac{\trace I_{\Pi}}{\dim \pi}.$$
	Since $\dim \pi = \trace I_{\Pi}$, the result follows.
\end{proof}

By \Cref{thm:kondo-godement-jacquet-computation} and the discussion in \Cref{subsec:cuspidal-suppoort-of-shintani}, we may express $\GKGaussSumScalar{\pi}{\chi}{\fieldCharacter}$ explicitly as follows.

\begin{theorem}\label{thm:hasse-davenport-for-hall-littlewood-equality}
	Let $\pi$ be an irreducible representation of $\GL_c\left(\finiteField\right)$ with cuspidal support $\left\{\pi_1, \dots, \pi_t\right\}$, where for every $j$, $\pi_j$ is an irreducible cuspidal representation of $\GL_{c_j}\left(\finiteField\right)$ corresponding to the Frobenius orbit of a regular character $\beta_j \colon \multiplicativegroup{\finiteFieldExtension{c_j}} \to \multiplicativegroup{\cComplex}$. Then $$ \GKGaussSumScalar{\pi}{\chi}{\fieldCharacter} = q^{-\frac{kc}{2}} \left(-1\right)^{c} \prod_{j = 1}^t \GaussSumCharacter{c_j,k}{\beta_j}{\chi}{\fieldCharacter}.$$
\end{theorem}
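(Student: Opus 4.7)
The plan is to chain together three ingredients developed earlier in the section: \Cref{prop:exotic-gauss-sum-is-gauss-sum-of-shintani-lift} (reducing the non-abelian exotic Gauss sum to a Kondo Gauss sum of the Shintani lift), \Cref{thm:kondo-godement-jacquet-computation} (Kondo's explicit formula), the Silberger--Zink description of the cuspidal support of a Shintani lift recalled in \Cref{subsec:cuspidal-suppoort-of-shintani}, and the factorization \eqref{eq:exotic-gauss-sum-is-a-product-of-exotic-gauss-sums} of the cuspidal exotic Gauss sum.

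First I would use \Cref{prop:exotic-gauss-sum-is-gauss-sum-of-shintani-lift} to rewrite $\GKGaussSumScalar{\pi}{\chi}{\fieldCharacter} = \GKGaussSumScalar{\Pi}{\chi}{\fieldCharacter_k}$, where $\Pi$ is the Shintani lift of $\pi$ to $\GL_c\left(\finiteFieldExtension{k}\right)$. Then I would apply \Cref{thm:kondo-godement-jacquet-computation} to $\Pi$ over the base field $\finiteFieldExtension{k}$, whose cardinality is $q^k$. This yields the prefactor $\left(-1\right)^c \left(q^k\right)^{-c/2} = \left(-1\right)^c q^{-kc/2}$, which already matches the one in the statement, times a product of twisted Gauss sums, one per cuspidal component of $\Pi$.

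Next I would use the Silberger--Zink description of the cuspidal support of $\Pi$ recalled in \Cref{subsec:cuspidal-suppoort-of-shintani}: for each $j$, the Shintani lift $\Pi_j$ of $\pi_j$ contributes the cuspidal representations of $\GL_{\lcm\left(c_j, k\right)/k}\left(\finiteFieldExtension{k}\right)$ corresponding to the $\Frobenius^k$-orbits of $\beta_j^{q^i} \circ \FieldNorm{\lcm\left(c_j, k\right)}{c_j}$ for $i = 0, \dots, \gcd\left(c_j, k\right) - 1$. Substituting, expanding each twisted Gauss sum using \eqref{eq:twisted-gauss-sum}, and applying transitivity of the trace (so that $\left(\fieldCharacter_k\right)_{\lcm\left(c_j, k\right)/k} = \fieldCharacter_{\lcm\left(c_j, k\right)}$) converts each factor into $\tau\left(\beta_j^{q^i} \circ \FieldNorm{\lcm\left(c_j, k\right)}{c_j} \cdot \chi \circ \FieldNorm{\lcm\left(c_j, k\right)}{k},\, \fieldCharacter_{\lcm\left(c_j, k\right)}\right)$.

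The main obstacle is then to identify the product of these factors for $i = 0, \dots, \gcd\left(c_j, k\right) - 1$ with $\GaussSumCharacter{c_j, k}{\beta_j}{\chi}{\fieldCharacter}$, because in \eqref{eq:exotic-gauss-sum-is-a-product-of-exotic-gauss-sums} the Frobenius twist $q^i$ sits on $\chi$ rather than on $\beta_j$. My plan to handle this is to use the invariance of the classical Gauss sum under $\psi \mapsto \psi^q$: the map $\left(a, b\right) \mapsto \left(a+1, b+1\right)$ gives the Frobenius action on the family of characters $\beta_j^{q^a} \circ \FieldNorm{\lcm\left(c_j, k\right)}{c_j} \cdot \chi^{q^b} \circ \FieldNorm{\lcm\left(c_j, k\right)}{k}$, and two such pairs $\left(a, b\right)$ and $\left(a', b'\right)$ lie in the same orbit iff $b - a \equiv b' - a' \pmod{\gcd\left(c_j, k\right)}$. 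Since both indexings $\left\{\left(i, 0\right)\right\}_{i=0}^{\gcd\left(c_j, k\right) - 1}$ (arising from Kondo and Silberger--Zink) and $\left\{\left(0, i\right)\right\}_{i=0}^{\gcd\left(c_j, k\right) - 1}$ (appearing in \eqref{eq:exotic-gauss-sum-is-a-product-of-exotic-gauss-sums}) traverse a complete set of orbit representatives, the two products agree factor by factor and the claimed formula follows.
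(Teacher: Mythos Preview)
Your proposal is correct and follows essentially the same approach as the paper's proof: invoke \Cref{prop:exotic-gauss-sum-is-gauss-sum-of-shintani-lift}, apply Kondo's formula over $\finiteFieldExtension{k}$ using the Silberger--Zink description of the cuspidal support of the Shintani lift, and then identify the resulting product with $\GaussSumCharacter{c_j,k}{\beta_j}{\chi}{\fieldCharacter}$ via \eqref{eq:exotic-gauss-sum-is-a-product-of-exotic-gauss-sums}. The only difference is that the paper first reduces to the cuspidal case and then cites \eqref{eq:exotic-gauss-sum-is-a-product-of-exotic-gauss-sums} directly (implicitly using the symmetry $\GaussSumCharacter{c,k}{\beta}{\chi}{\fieldCharacter} = \GaussSumCharacter{k,c}{\chi}{\beta}{\fieldCharacter}$ built into the tensor-product definition), whereas you supply the explicit Frobenius-orbit argument to move the twist from $\beta_j$ to $\chi$; both justifications are valid.
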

\begin{proof}
	Since the cuspidal support of the Shintani lift of $\pi$ is the sum of the cuspidal supports of the Shintani lifts of the elements in the cuspidal support of $\pi$, it follows from Theorems \ref{thm:kondo-godement-jacquet-computation} and \ref{prop:exotic-gauss-sum-is-gauss-sum-of-shintani-lift} that it suffices to prove this theorem for the case where $\pi$ is an irreducible cuspidal representation.
	
	Let $\pi$ be an irreducible cuspidal representation of $\GL_c\left(\finiteField\right)$ corresponding to the Frobenius orbit of a regular character $\beta \colon \multiplicativegroup{\finiteFieldExtension{c}} \to \multiplicativegroup{\cComplex}$. Then by \Cref{thm:kondo-godement-jacquet-computation} and the discussion in \Cref{subsec:cuspidal-suppoort-of-shintani}, we have that
	$$ \GKGaussSumScalar{\Pi}{\chi}{\fieldCharacter_k} =  q^{-\frac{kc}{2}} \left(-1\right)^{c} \prod_{j=0}^{\gcd\left(c,k\right)-1} \tau\left( \beta^{q^j} \circ \FieldNorm{\lcm\left(c,k\right)}{c} \cdot \chi \circ \FieldNorm{\lcm\left(c,k\right)}{k}, \fieldCharacter_{\lcm\left(c,k\right)} \right),$$
	which by \eqref{eq:exotic-gauss-sum-is-a-product-of-exotic-gauss-sums} equals $$\GKGaussSumScalar{\Pi}{\chi}{\fieldCharacter_k} =  q^{-\frac{kc}{2}} \left(-1\right)^{c} \GaussSumCharacter{c,k}{\beta}{\chi}{\fieldCharacter}.$$
\end{proof}

\subsubsection{Non-abelian exotic Gauss sums for $\finiteFieldExtension{\lambda}$}
Let $\lambda = \left(k_1,\dots,k_s\right) \vdash k$ and $\alpha \colon \multiplicativegroup{\finiteFieldExtension{\lambda}} \to \multiplicativegroup{\cComplex}$ be a character of the form $\alpha = \alpha_1 \times \dots \times \alpha_s$ as in \Cref{subsec:exotic-gauss-sums-for-composite}.

If $\pi$ is an irreducible representation of $\GL_c\left(\finiteField\right)$, then we define $\TwistedGaussSum{\pi}{\alpha}{\fieldCharacter}$ to be the following composition of scalar operators
$$\TwistedGaussSum{\pi}{\alpha}{\fieldCharacter} = \TwistedGaussSum{\pi}{\alpha_1}{\fieldCharacter} \circ \dots \circ \TwistedGaussSum{\pi}{\alpha_s}{\fieldCharacter}.$$
It is clear that $\TwistedGaussSum{\pi}{\alpha}{\fieldCharacter}$ is invariant under simultaneous permutation of $\alpha_j$ and $k_j$. Moreover, as above, $\TwistedGaussSum{\pi}{\alpha}{\fieldCharacter}$ is invariant under replacing any $\alpha_j$ with any element from its Frobenius orbit.

Let us denote $$\TwistedGaussSum{\pi}{\alpha}{\fieldCharacter} = \GKGaussSumScalar{\pi}{\alpha}{\fieldCharacter} \cdot \idmap_{\pi}.$$
We have the identity \begin{equation*}
	\GKGaussSumScalar{\pi}{\alpha}{\fieldCharacter} = \prod_{j=1}^s \GKGaussSumScalar{\pi}{\alpha_j}{\fieldCharacter},
\end{equation*}
and the following straightforward generalization of \Cref{thm:hasse-davenport-for-hall-littlewood-equality}: \begin{theorem}\label{thm:exotic-gauss-sum-of-composite-character}
	Let be $\pi$ an irreducible representation of $\GL_c\left(\finiteField\right)$ with cuspidal support as in \Cref{thm:hasse-davenport-for-hall-littlewood-equality}. Then
	$$\GKGaussSumScalar{\pi}{\alpha}{\fieldCharacter} = q^{-\frac{kc}{2}} \left(-1\right)^{cs} \cdot \prod_{i=1}^s \prod_{j=1}^t \GaussSumCharacter{c_j, k_i}{\beta_j}{\alpha_i}{\fieldCharacter}.$$
\end{theorem}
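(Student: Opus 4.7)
The plan is that this statement reduces almost mechanically to \Cref{thm:hasse-davenport-for-hall-littlewood-equality} (the single-character case) by unfolding the definition of $\TwistedGaussSum{\pi}{\alpha}{\fieldCharacter}$ as a composition and multiplying the resulting scalars. There is essentially no new content beyond careful bookkeeping of the powers of $q$ and signs.

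Concretely, I would begin by invoking the definition $\TwistedGaussSum{\pi}{\alpha}{\fieldCharacter} = \TwistedGaussSum{\pi}{\alpha_1}{\fieldCharacter} \circ \dots \circ \TwistedGaussSum{\pi}{\alpha_s}{\fieldCharacter}$ and the fact that each factor acts as a scalar on the irreducible $\pi$ by Schur's lemma, so that
\[
\GKGaussSumScalar{\pi}{\alpha}{\fieldCharacter} \;=\; \prod_{i=1}^s \GKGaussSumScalar{\pi}{\alpha_i}{\fieldCharacter}.
\]
This identity was already recorded right above the theorem statement.

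Next, I would apply \Cref{thm:hasse-davenport-for-hall-littlewood-equality} to each factor $\GKGaussSumScalar{\pi}{\alpha_i}{\fieldCharacter}$ individually, using the given cuspidal support $\{\pi_1,\dots,\pi_t\}$ of $\pi$. This yields
\[
\GKGaussSumScalar{\pi}{\alpha_i}{\fieldCharacter} \;=\; q^{-\frac{k_i c}{2}} (-1)^{c} \prod_{j=1}^t \GaussSumCharacter{c_j,k_i}{\beta_j}{\alpha_i}{\fieldCharacter}.
\]
Taking the product over $i=1,\dots,s$ collects the sign into $(-1)^{cs}$ and combines the $q$-powers via $\sum_{i=1}^s k_i = k$ into $q^{-\frac{kc}{2}}$, giving exactly the claimed formula.

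The only potential subtlety, and the one step I would double-check carefully, is that the $q^{-\frac{k_i c}{2}}$ normalization in the definition of each $\TwistedGaussSum{\pi}{\alpha_i}{\fieldCharacter}$ is the one coming from \eqref{eq:exotic-gauss-sum-definition} (with $k$ replaced by $k_i$) and not some other convention, so that the product of normalizations is genuinely $q^{-\frac{kc}{2}}$ with $k = k_1 + \dots + k_s = |\lambda|$. Given this verification, the proof is a one-line calculation and no further tools beyond \Cref{thm:hasse-davenport-for-hall-littlewood-equality} are needed.
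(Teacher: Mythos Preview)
Your proposal is correct and matches the paper's approach exactly: the paper presents this theorem as a ``straightforward generalization'' of \Cref{thm:hasse-davenport-for-hall-littlewood-equality} without giving a separate proof, and your argument is precisely the intended one-line reduction via the identity $\GKGaussSumScalar{\pi}{\alpha}{\fieldCharacter} = \prod_{i=1}^s \GKGaussSumScalar{\pi}{\alpha_i}{\fieldCharacter}$ recorded just before the theorem.
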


The following theorem shows that we may always assume that $\alpha_1$, $\dots$, $\alpha_s$ are regular characters.
\begin{theorem}[Hasse--Davenport type lifting relation]\label{thm:hasse-davenport-kondo-gauss-sums}
	Let $\pi$ be an irreducible representation of $\GL_c\left(\finiteField\right)$ and let $k = m k'$. Suppose that $\chi' \colon \multiplicativegroup{\finiteFieldExtension{k'}} \to \multiplicativegroup{\cComplex}$ is a character such that $\chi = \chi' \circ \FieldNorm{k}{k'}$. Then
	$$ \GKGaussSumScalar{\pi}{\chi}{\fieldCharacter} = \left(-1\right)^{c \left(m-1\right)} \GKGaussSumScalar{\pi}{\chi'}{\fieldCharacter}^{m}.$$
\end{theorem}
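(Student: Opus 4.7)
The plan is to reduce the statement to the abelian Hasse--Davenport relation proved earlier. By \Cref{thm:hasse-davenport-for-hall-littlewood-equality}, if $\pi$ has cuspidal support $\{\pi_1,\dots,\pi_t\}$ with $\pi_j$ corresponding to the Frobenius orbit of a regular character $\beta_j \colon \multiplicativegroup{\finiteFieldExtension{c_j}} \to \multiplicativegroup{\cComplex}$, then
\[
\GKGaussSumScalar{\pi}{\chi}{\fieldCharacter} = q^{-\frac{kc}{2}} (-1)^c \prod_{j=1}^t \GaussSumCharacter{c_j, k}{\beta_j}{\chi}{\fieldCharacter},
\qquad
\GKGaussSumScalar{\pi}{\chi'}{\fieldCharacter} = q^{-\frac{k'c}{2}} (-1)^c \prod_{j=1}^t \GaussSumCharacter{c_j, k'}{\beta_j}{\chi'}{\fieldCharacter}.
\]
So it suffices to compare the two products after raising the second to the $m$-th power.

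Next I would apply \Cref{prop:hasse-davenport-cuspidal} (in its second form, for the $m$-argument) to each factor, using $\chi = \chi' \circ \FieldNorm{k}{k'}$ and $k = mk'$, to obtain
\[
\GaussSumCharacter{c_j, k'}{\beta_j}{\chi'}{\fieldCharacter}^{m} = \GaussSumCharacter{c_j, m k'}{\beta_j}{\chi' \circ \FieldNorm{mk'}{k'}}{\fieldCharacter} = \GaussSumCharacter{c_j, k}{\beta_j}{\chi}{\fieldCharacter}.
\]
Substituting this into the formula for $\GKGaussSumScalar{\pi}{\chi'}{\fieldCharacter}^m$ yields
\[
\GKGaussSumScalar{\pi}{\chi'}{\fieldCharacter}^{m} = q^{-\frac{m k' c}{2}} (-1)^{cm} \prod_{j=1}^t \GaussSumCharacter{c_j, k}{\beta_j}{\chi}{\fieldCharacter} = q^{-\frac{k c}{2}} (-1)^{cm} \prod_{j=1}^t \GaussSumCharacter{c_j, k}{\beta_j}{\chi}{\fieldCharacter}.
\]
Comparing with the expression for $\GKGaussSumScalar{\pi}{\chi}{\fieldCharacter}$ above, the product factors and the power of $q$ match, and the signs differ by $(-1)^{c - cm} = (-1)^{c(m-1)}$, which gives the claimed identity.

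There is essentially no obstacle here: the heavy lifting is already done by the reduction of non-abelian exotic Gauss sums to products of abelian exotic Gauss sums (\Cref{thm:hasse-davenport-for-hall-littlewood-equality}, which rests on Shintani lifting and Kondo's computation) together with the classical Hasse--Davenport relation on the abelian side (\Cref{prop:hasse-davenport-cuspidal}). The only thing that requires any care is bookkeeping of the normalizing factors $q^{-kc/2}$ and the signs $(-1)^c$ coming out of each application of \Cref{thm:hasse-davenport-for-hall-littlewood-equality}; once these are tracked, the desired factor $(-1)^{c(m-1)}$ emerges automatically.
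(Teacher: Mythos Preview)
Your proof is correct and is essentially identical to the paper's own proof: both apply \Cref{thm:hasse-davenport-for-hall-littlewood-equality} to express $\GKGaussSumScalar{\pi}{\chi}{\fieldCharacter}$ and $\GKGaussSumScalar{\pi}{\chi'}{\fieldCharacter}$ as products of abelian exotic Gauss sums, then invoke the abelian Hasse--Davenport relation (the paper cites \Cref{thm:hasse-davenport-for-exotic-gauss-sums}, which in this single-field situation is exactly the second identity of \Cref{prop:hasse-davenport-cuspidal} that you use) on each factor, and compare the normalizing powers of $q$ and signs.
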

\begin{proof}
	Suppose that the cuspidal support of $\pi$ is as in the statement of \Cref{thm:hasse-davenport-for-hall-littlewood-equality}. We have that \begin{equation*}
		\GKGaussSumScalar{\pi}{\chi'}{\fieldCharacter} = q^{-\frac{k' c}{2}} \left(-1\right)^{c} \cdot \prod_{j = 1}^t \GaussSumCharacter{c_j,k'}{\beta_j}{\chi'}{\fieldCharacter}.
	\end{equation*}
	Using \Cref{thm:hasse-davenport-for-exotic-gauss-sums}, we get \begin{equation*}
		\GKGaussSumScalar{\pi}{\chi'}{\fieldCharacter}^m = q^{-\frac{mk'c}{2}} \left(-1\right)^{mc} \cdot \prod_{j = 1}^t \GaussSumCharacter{c_j,mk'}{\beta_j}{\chi' \circ \FieldNorm{mk'}{k'}}{\fieldCharacter},
	\end{equation*}
	which yields the desired equality by \Cref{thm:hasse-davenport-for-hall-littlewood-equality}, since $k = mk'$ and $\chi = \chi' \circ \FieldNorm{k}{k'}$.
\end{proof}
\begin{remark}
	\Cref{thm:hasse-davenport-kondo-gauss-sums} can be seen as a generalization of~\cite[Corollary 5.2]{curtis2004zeta}. To see this, take (in our notation) $k'=1$ (and then $k=m$), $\chi = 1 \colon \multiplicativegroup{\finiteFieldExtension{m}} \to \multiplicativegroup{\cComplex}$ and use \Cref{prop:exotic-gauss-sum-is-gauss-sum-of-shintani-lift} to relate $\GKGaussSumScalar{\Pi}{1}{\fieldCharacter_m}$ to $\left(-1\right)^{m\left(c-1\right)} \GKGaussSumScalar{\pi}{1}{\fieldCharacter}^m$.
\end{remark}

\subsection{Exotic matrix Kloosterman sums}\label{subsec:exotic-matrix-kloosterman-sum}

Using the non-abelian exotic Gauss sums defined above, we define the notion of what we call exotic matrix Kloosterman sums. If $\lambda = \left(k_1,\dots,k_s\right) \vdash k$ and $\alpha \colon \multiplicativegroup{\finiteFieldExtension{\lambda}} \to \multiplicativegroup{\cComplex}$ is given by $\alpha = \alpha_1 \times \dots \times \alpha_s$ as in \Cref{subsec:exotic-gauss-sums-for-composite}, we define the exotic matrix Kloosterman sum $\ExoticKloosterman\left(\alpha, \fieldCharacter, \cdot\right)$ to be the unique class function $\GL_c\left(\finiteField\right) \to \cComplex$, such that for any irreducible representation $\pi$ of $\GL_c\left(\finiteField\right)$,
$$\TwistedGaussSum{\pi}{\alpha}{\fieldCharacter} = q^{-\frac{kc^2}{2}} \sum_{h \in \GL_c \left(\finiteField\right)} \ExoticKloosterman\left(\alpha, \fieldCharacter, h\right) \pi\left(h\right).$$
The existence and uniqueness of this class function follows from the fact that $\left(\trace \pi\right)_{\pi}$ where $\pi$ goes over all (equivalence classes of) irreducible representations of $\GL_c\left(\finiteField\right)$ forms a basis of the space of class functions of $\GL_c\left(\finiteField\right)$ (see \Cref{subsubsec:class-functions}).
We also define a normalized version by
$$\ExoticKloostermanNormalized\left(\chi, \fieldCharacter, h\right) = q^{-\frac{\left(k-1\right)c^2}{2}} \ExoticKloosterman\left(\chi, \fieldCharacter, h\right).$$
In the next two subsections, we describe exotic matrix Kloosterman sums more explicitly.

\subsubsection{Exotic matrix Kloosterman sums for $\finiteFieldExtension{k}$}\label{sec:exotic-kloosterman-sums-cuspidal-sum}

By \eqref{eq:exotic-gauss-sum-as-kloosterman-sum}, for $h \in \GL_c\left(\finiteField\right)$, and a multiplicative character $\chi \colon \multiplicativegroup{\finiteFieldExtension{k}} \to \multiplicativegroup{\cComplex}$, the exotic matrix Kloosterman sum $\ExoticKloosterman\left(\chi, \fieldCharacter, h\right)$ is given as in \Cref{subsec:non-abelian-exotic-gauss-sums-for-f-k}, by the formula $$\ExoticKloosterman\left(\chi, \fieldCharacter, h\right) = \sum_{x} \frac{1}{\#{\conjugacyClass{\FieldNorm{k}{1}\left(x\right)}}} \chi\left(\det x\right) \fieldCharacter_k\left(\trace x\right),$$
where the sum is all over $x \in \GL_c\left(\finiteFieldExtension{k}\right)$ such that $h$ belongs to the conjugacy class $\conjugacyClass{\FieldNorm{k}{1}\left(x\right)}$.

\begin{remark}\label{rem:kloosterman-sum-with-one-variable}
	If $k=1$, we have that $\finiteFieldExtension{k} = \finiteField$ and $\FieldNorm{k}{1}\left(x\right) = x$. Therefore, in this case $x$ is summed over the conjugacy class of $h$. Since the determinant and the trace are constant on conjugacy classes of $\GL_c\left(\finiteField\right)$, we get in this case $\ExoticKloosterman\left(\chi, \fieldCharacter, h\right) = \chi\left(\det h\right) \fieldCharacter\left(\trace h\right)$.
\end{remark}

\begin{remark}
	If $c = 1$, then the conjugacy class $\conjugacyClass{\FieldNorm{k}{1}\left(x\right)}$ consists of the single element $\FieldNorm{k}{1}\left(x\right) \in \multiplicativegroup{\finiteField}$. Hence the sum in this case is $$\ExoticKloosterman\left(\chi,\fieldCharacter,h\right) = \sum_{\substack{x \in \multiplicativegroup{\finiteFieldExtension{k}}\\
	h = \FieldNorm{k}{1}\left(x\right)}} \chi\left(x\right) \fieldCharacter_k\left(x\right),$$
	which coincides with the definition in \Cref{subsec:exotic-kloosterman-sums}.
\end{remark}

\subsubsection{Exotic matrix Kloosterman sums for $\finiteFieldExtension{\lambda}$}\label{subsec:exotic-matrix-kloosterman-for-etale-algebra}
If $\lambda = \left(k_1,\dots,k_s\right)$ and $\alpha = \alpha_1 \times \dots \times \alpha_s$ are as in \Cref{subsec:exotic-gauss-sums-for-composite}, the exotic matrix Kloosterman sum $\ExoticKloosterman\left(\alpha,\fieldCharacter,h\right)$ is given by the convolution 
$$\ExoticKloosterman\left(\alpha,\fieldCharacter,h\right) = \sum_{\substack{h_1, \dots, h_s \in \GL_c\left(\finiteField\right)\\
		h_1 \cdot \dots \cdot h_s = h}} \prod_{i=1}^s \ExoticKloosterman\left(\alpha_i,\fieldCharacter,h_i\right).$$
\begin{remark}\label{rem:twisted-kloosterman-sum-case}
	If $\lambda = \left(1,1,\dots,1\right) \vdash k$, then $\alpha_1, \dots, \alpha_k \colon \multiplicativegroup{\finiteField} \to \multiplicativegroup{\cComplex}$ are characters. Hence, by \Cref{rem:kloosterman-sum-with-one-variable}, we have $\ExoticKloosterman\left(\alpha_i, \fieldCharacter, h_i\right) = \alpha_i\left(\det h_i\right) \fieldCharacter\left(\trace h_i\right)$. It thus follows in this case that $$\ExoticKloosterman\left(\alpha, \fieldCharacter, h\right) = \sum_{\substack{h_1,\dots,h_k \in \GL_c\left(\finiteField\right)\\
	h_1 \cdot \dots \cdot h_k = h}} \alpha_1\left(\det h_1\right) \dots \alpha_k\left(\det h_k\right) \fieldCharacter\left(\trace \left(h_1 + \dots + h_k\right)\right)$$ which is the twisted matrix Kloosterman sum studied in~\cite{Zelingher2023}.
\end{remark}

The next theorem allows us to reduce any exotic matrix Kloosterman sum to an exotic matrix Kloosterman sum associated to a tuple of regular characters.

\begin{theorem}\label{thm:reduction-of-matrix-kloosterman-sums-to-regular-case}
	Let $k = m k'$ and let $\chi' \colon \multiplicativegroup{\finiteFieldExtension{k'}} \to \multiplicativegroup{\cComplex}$ be a character. Suppose that $\chi = \chi' \circ \FieldNorm{k}{k'}$. Then for any $h \in \GL_c\left(\finiteField\right)$, \begin{equation}\label{eq:hasse-davenport-kloosterman-sum-identity}
		\ExoticKloosterman\left(\chi, \fieldCharacter, h\right) = \left(-1\right)^{c\left(m-1\right)} \ExoticKloosterman\left(\left(\chi'\right)^{\times m}, \fieldCharacter, h\right),
	\end{equation}
	where $\left(\chi'\right)^{\times m} \colon \left(\multiplicativegroup{\finiteField}\right)^m \to \multiplicativegroup{\cComplex}$ is the character $\chi' \times \dots \times \chi'$, where $\chi'$ is taken $m$ times.
\end{theorem}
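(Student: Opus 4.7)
The plan is to translate the identity from the Kloosterman sum side to the Gauss sum side using the defining property of exotic matrix Kloosterman sums, and then apply the already-established Hasse--Davenport relation for non-abelian Gauss sums (Theorem~\ref{thm:hasse-davenport-kondo-gauss-sums}).

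First, I unwind the definitions. By the uniqueness statement in \Cref{subsec:exotic-matrix-kloosterman-sum}, the class function $\ExoticKloosterman\left(\chi, \fieldCharacter, \cdot\right)$ is the only class function on $\GL_c\left(\finiteField\right)$ such that, for every irreducible representation $\pi$ of $\GL_c\left(\finiteField\right)$,
$$q^{-\frac{kc^2}{2}} \sum_{h \in \GL_c\left(\finiteField\right)} \ExoticKloosterman\left(\chi, \fieldCharacter, h\right) \pi\left(h\right) = \GKGaussSumScalar{\pi}{\chi}{\fieldCharacter} \cdot \idmap_{\pi},$$
and similarly $\ExoticKloosterman\left(\left(\chi'\right)^{\times m}, \fieldCharacter, \cdot\right)$ is characterized by the corresponding identity with scalar $\GKGaussSumScalar{\pi}{\left(\chi'\right)^{\times m}}{\fieldCharacter}$. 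A crucial bookkeeping point is that the normalizing factor $q^{-\frac{kc^2}{2}}$ matches on both sides, because the partition $\lambda = \left(k', k', \dots, k'\right)$ associated to $\left(\chi'\right)^{\times m}$ has size $\sizeof{\lambda} = mk' = k$. Hence, since the irreducible characters $\left\{\trace \pi\right\}_{\pi}$ span the space of class functions, proving \eqref{eq:hasse-davenport-kloosterman-sum-identity} reduces to showing the scalar identity
$$\GKGaussSumScalar{\pi}{\chi}{\fieldCharacter} = \left(-1\right)^{c\left(m-1\right)} \GKGaussSumScalar{\pi}{\left(\chi'\right)^{\times m}}{\fieldCharacter}$$
for every irreducible $\pi$.

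Next, I handle the right-hand side by the factorization of Gauss sums for composite characters. Directly from the definition in \Cref{subsec:non-abelian-exotic-gauss-sums}, the operator $\TwistedGaussSum{\pi}{\left(\chi'\right)^{\times m}}{\fieldCharacter}$ is the $m$-fold composition of the scalar operator $\TwistedGaussSum{\pi}{\chi'}{\fieldCharacter}$, so
$$\GKGaussSumScalar{\pi}{\left(\chi'\right)^{\times m}}{\fieldCharacter} = \GKGaussSumScalar{\pi}{\chi'}{\fieldCharacter}^{m}.$$
Applying Theorem~\ref{thm:hasse-davenport-kondo-gauss-sums} (Hasse--Davenport for non-abelian Gauss sums), which was already proved by reducing to the abelian Hasse--Davenport relation via Theorem~\ref{thm:hasse-davenport-for-hall-littlewood-equality}, yields
$$\GKGaussSumScalar{\pi}{\chi}{\fieldCharacter} = \left(-1\right)^{c\left(m-1\right)} \GKGaussSumScalar{\pi}{\chi'}{\fieldCharacter}^{m} = \left(-1\right)^{c\left(m-1\right)} \GKGaussSumScalar{\pi}{\left(\chi'\right)^{\times m}}{\fieldCharacter}.$$
Combining the two displays gives the required scalar equality for every $\pi$, which by uniqueness of the class function forces the identity \eqref{eq:hasse-davenport-kloosterman-sum-identity}.

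There is no real obstacle here: the content of the theorem is precisely the Gauss-sum Hasse--Davenport relation viewed through the linear duality between non-abelian Gauss sums and exotic matrix Kloosterman sums. The only points requiring attention are confirming that the $q$-power normalizations match on the two sides (which they do because $\sizeof{\lambda} = k$), and that no spurious sign enters from the factorization $\GKGaussSumScalar{\pi}{\left(\chi'\right)^{\times m}}{\fieldCharacter} = \GKGaussSumScalar{\pi}{\chi'}{\fieldCharacter}^{m}$, which holds because the $(-1)^{cs}$ sign convention in \Cref{thm:exotic-gauss-sum-of-composite-character} matches the purely multiplicative definition used here.
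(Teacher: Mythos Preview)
Your proof is correct and follows essentially the same approach as the paper: apply the Hasse--Davenport relation \Cref{thm:hasse-davenport-kondo-gauss-sums} at the level of Gauss-sum operators, use the factorization $\TwistedGaussSum{\pi}{\left(\chi'\right)^{\times m}}{\fieldCharacter} = \TwistedGaussSum{\pi}{\chi'}{\fieldCharacter}^m$, and conclude via the defining uniqueness of the exotic matrix Kloosterman sum. Your explicit check that the $q$-power normalizations match (since $\sizeof{\lambda} = mk' = k$) is a helpful detail that the paper leaves implicit.
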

\begin{proof}
	By \Cref{thm:hasse-davenport-kondo-gauss-sums}, we have that $$\TwistedGaussSum{\pi}{\chi}{\fieldCharacter} = \left(-1\right)^{c\left(m-1\right)} \TwistedGaussSum{\pi}{\chi'}{\fieldCharacter}^m = \left(-1\right)^{c\left(m-1\right)} \TwistedGaussSum{\pi}{\left(\chi'\right)^{\times m}}{\fieldCharacter}.$$
	By the definition of exotic matrix Kloosterman sums, the equality \eqref{eq:hasse-davenport-kloosterman-sum-identity} must hold for every $h \in \GL_c\left(\finiteField\right)$.
\end{proof}

It follows from \Cref{thm:reduction-of-matrix-kloosterman-sums-to-regular-case} that by replacing every non-regular character $\alpha_j$ with an appropriate number of copies of a regular character (corresponding to a subfield) associated to it, we can always assume that the characters $\alpha_1, \dots, \alpha_s$ are all regular. 

\section{Relation to Bessel--Speh functions}\label{sec:relation-to-bessel-speh-functions}

In this section, we explain a relation between exotic matrix Kloosterman sums and special values of Bessel functions attached to Speh representations associated with irreducible generic representations of $\GL_k\left(\finiteField\right)$. Our method of proof utilizes our results with Carmon about Ginzburg--Kaplan gamma factors~\cite{CarmonZelingher2025}.

\subsection{$\varepsilon_0$-factors}\label{subsec:epsilon-factors}
We recall the notion of tensor product $\varepsilon_0$-factors corresponding to a pair of irreducible representation $\pi$ and $\tau$ of $\GL_c\left(\finiteField\right)$ and $\GL_k\left(\finiteField\right)$, respectively~\cite[Theorem 4.2]{ye2021epsilon}.

Suppose first that $\pi$ and $\tau$ are irreducible cuspidal representations corresponding to the Frobenius orbits of regular characters $\beta \colon \multiplicativegroup{\finiteFieldExtension{c}} \to \multiplicativegroup{\cComplex}$ and $\alpha \colon \multiplicativegroup{\finiteFieldExtension{k}} \to \multiplicativegroup{\cComplex}$, respectively. In this case we have $$\varepsilon_0\left( \pi \times \tau, \fieldCharacter \right) = \left(-1\right)^{kc} q^{-\frac{kc}{2}} \GaussSumCharacter{c,k}{\beta^{-1}}{\alpha^{-1}}{\fieldCharacter}.$$

Next, given arbitrary irreducible representations $\pi$ and $\tau$ with cuspidal supports $\left\{\pi_1,\dots,\pi_t\right\}$ and $\left\{\tau_1,\dots,\tau_s\right\}$, respectively, we have
$$\varepsilon_0\left(\pi \times \tau, \fieldCharacter\right) = \prod_{i=1}^t \prod_{j=1}^s \varepsilon_0\left(\pi_i \times \tau_j, \fieldCharacter\right).$$
We will need the following proposition, which is not difficult to show using the formulas in this section and using \Cref{thm:exotic-gauss-sum-of-composite-character}.
\begin{proposition}\label{prop:non-abelian-exotic-gauss-sum-equals-epsilon-factor}
	Keep the notations as above. Suppose that for every $j$, $\tau_j$ corresponds to the Frobenius orbit of a regular character $\alpha_j \colon \multiplicativegroup{\finiteFieldExtension{k_j}} \to \multiplicativegroup{\cComplex}$. Denote $\alpha = \alpha_1 \times \dots \times \alpha_s$. Then $$\GKGaussSumScalar{\pi}{\alpha}{\fieldCharacter} = \left(-1\right)^{\left(k+s\right)c} \varepsilon_0\left(\pi^{\vee} \times \tau^{\vee}, \fieldCharacter\right).$$
\end{proposition}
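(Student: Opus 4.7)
The plan is a direct computation: expand both sides using the formulas in this section and in \Cref{thm:exotic-gauss-sum-of-composite-character}, and check that the prefactors and products of exotic Gauss sums match.

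First, I would apply \Cref{thm:exotic-gauss-sum-of-composite-character}. Writing the cuspidal support of $\pi$ as $\{\pi_1,\dots,\pi_t\}$, with $\pi_i$ cuspidal on $\GL_{c_i}(\finiteField)$ corresponding to the Frobenius orbit of a regular character $\beta_i\colon\multiplicativegroup{\finiteFieldExtension{c_i}}\to\multiplicativegroup{\cComplex}$, the theorem gives
\[
\GKGaussSumScalar{\pi}{\alpha}{\fieldCharacter}
= q^{-\frac{kc}{2}}(-1)^{cs}\prod_{i=1}^t\prod_{j=1}^s \GaussSumCharacter{c_i,k_j}{\beta_i}{\alpha_j}{\fieldCharacter}.
\]

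Next, I would expand $\varepsilon_0(\pi^\vee\times\tau^\vee,\fieldCharacter)$. By the multiplicativity recalled in \Cref{subsec:epsilon-factors}, this factor splits into the product over the cuspidal supports of $\pi^\vee$ and $\tau^\vee$. The key observation is that an irreducible cuspidal representation corresponding to the Frobenius orbit of a regular character $\beta$ has contragredient corresponding to the orbit of $\beta^{-1}$ (recalled at the end of the parameters subsection). Thus the cuspidal supports of $\pi^\vee$ and $\tau^\vee$ correspond to the orbits of $\beta_i^{-1}$ and $\alpha_j^{-1}$, respectively. Applying the cuspidal $\varepsilon_0$-factor formula to each pair $\pi_i^\vee\times\tau_j^\vee$, the characters appearing inside $\GaussSumCharacter{c_i,k_j}{\,\cdot\,}{\,\cdot\,}{\fieldCharacter}$ are $(\beta_i^{-1})^{-1}=\beta_i$ and $(\alpha_j^{-1})^{-1}=\alpha_j$, giving
\[
\varepsilon_0(\pi^\vee\times\tau^\vee,\fieldCharacter)
= \prod_{i=1}^t\prod_{j=1}^s (-1)^{k_j c_i} q^{-\frac{k_j c_i}{2}} \GaussSumCharacter{c_i,k_j}{\beta_i}{\alpha_j}{\fieldCharacter}.
\]
Using $\sum_{i,j} k_j c_i = \left(\sum_i c_i\right)\left(\sum_j k_j\right) = ck$, this simplifies to
\[
\varepsilon_0(\pi^\vee\times\tau^\vee,\fieldCharacter)
= (-1)^{kc} q^{-\frac{kc}{2}} \prod_{i=1}^t\prod_{j=1}^s \GaussSumCharacter{c_i,k_j}{\beta_i}{\alpha_j}{\fieldCharacter}.
\]

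Finally, I would compare the two expressions. The products of exotic Gauss sums and the $q^{-kc/2}$ factors agree, so the identity reduces to the sign check $(-1)^{cs} = (-1)^{(k+s)c}\cdot(-1)^{kc}$, which holds because $(-1)^{2kc}=1$. There is no real obstacle here; the only thing to be careful about is the bookkeeping of the exponents of $-1$ and of $q$, and the fact that passing to contragredients inverts the regular characters indexing the cuspidal support while leaving the field extensions $\finiteFieldExtension{c_i}$ and $\finiteFieldExtension{k_j}$ (and hence the sizes $c_i$, $k_j$) unchanged.
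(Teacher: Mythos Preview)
Your proof is correct and follows exactly the approach the paper indicates: the paper does not spell out a proof but states that the proposition ``is not difficult to show using the formulas in this section and using \Cref{thm:exotic-gauss-sum-of-composite-character},'' which is precisely the computation you carry out.
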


\subsection{$\kcNotation{k}{c}{\fieldCharacter}$ vectors and Speh representations}

In this section, we recall the notion of $\kcNotation{k}{c}{\fieldCharacter}$ vectors, which generalizes the notion of $\fieldCharacter$-Whittaker vectors. We then discuss the notion of Speh representations attached to irreducible generic representations and explain the uniqueness property of their $\kcNotation{k}{c}{\fieldCharacter}$ vectors.

\subsubsection{$\kcNotation{k}{c}{\fieldCharacter}$ vectors}

Let $k$ and $c$ be positive integers. Define a character $\fieldCharacterkc{k}{c} \colon \UnipotentRadicalForWss{k}{c} \to \multiplicativegroup{\cComplex}$ by the formula
$$\fieldCharacterkc{k}{c} \begin{pmatrix}
	\IdentityMatrix{c} & X_1 & \ast & \ast  & \ast \\
	& \IdentityMatrix{c} & X_2 & \ast & \ast \\
	& & \ddots & \ddots & \ast  \\
	& & & \IdentityMatrix{c} &  X_{k-1} \\
	& & & & \IdentityMatrix{c}
\end{pmatrix} = \fieldCharacter\left( \sum_{j=1}^{k-1} \trace X_j \right).$$
Let $\pi$ be a representation of $\GL_{kc}\left(\finiteField\right)$. A vector $v \in \pi$ is called a \emph{$\kcNotation{k}{c}{\fieldCharacter}$ vector} if $0 \ne v$ and if for any $u \in \UnipotentRadicalForWss{k}{c}$ we have $\pi\left(u\right)v = \fieldCharacterkc{k}{c}\left(u\right) v$.

\subsubsection{Generic representations}\label{subsec:generic-representations}
When $c=1$, we denote $\UnipotentSubgroup_k = \UnipotentRadicalForWss{k}{1}$ and $\fieldCharacter = \fieldCharacterkc{k}{1} \colon \UnipotentSubgroup_k \to \multiplicativegroup{\cComplex}$. We also use the term \emph{$\fieldCharacter$-Whittaker vector} instead of $\kcNotation{k}{1}{\fieldCharacter}$ vector in this case. A representation $\pi$ of $\GL_k\left(\finiteField\right)$ is called \emph{generic} if it admits a $\fieldCharacter$-Whittaker vector. If $\pi$ is irreducible and generic, then by a well-known result of Gelfand--Graev~\cite[Corollary 5.6]{SilbergerZink00}, the $\fieldCharacter$-Whittaker vector of $\pi$ is unique (up to scalar multiplication). 

Another well-known result is that irreducible cuspidal representations of $\GL_k\left(\finiteField\right)$ are generic~\cite[Lemma 5.2]{SilbergerZink00}. More generally, for any choice of irreducible cuspidal representations $\tau_1$, $\dots$, $\tau_r$ of $\GL_{k_1}\left(\finiteField\right)$, $\dots$, $\GL_{k_r}\left(\finiteField\right)$, respectively, such that $k_1 + \dots + k_r = k$, there exists a unique irreducible generic representation $\tau$ of $\GL_k\left(\finiteField\right)$ with cuspidal support $\left\{ \tau_1, \dots, \tau_r \right\}$. The parameter $\varphi_{\tau}$ corresponding to this generic representation can be described as follows. Let $\tau'_1$, $\dots$, $\tau'_{r'}$ be all the different (equivalence classes of) representations among $\tau_1$, $\dots$, $\tau_r$ and for $1 \le j \le r'$, let $m_j$ be the number of times $\tau'_j$ appears among $\tau_1,\dots,\tau_r$. Then $\varphi_{\tau}$ is only supported on $\tau'_1$, $\dots$, $\tau'_{r'}$ and $\varphi_{\tau}\left(\tau'_j\right) = \left(m_j\right)$ for any $1 \le j \le r'$~\cite[Theorem 5.5]{SilbergerZink00}.

\subsubsection{Speh representations}\label{subsec:speh-representations}

Let $\tau$ be an irreducible generic representation of $\GL_k\left(\finiteField\right)$. For any $c \ge 1$, there exists an irreducible representation $\SpehRepresentation{\tau}{c}$ of $\GL_{kc}\left(\finiteField\right)$ which is called a \emph{Speh representation}. These representations were studied by Oded Carmon in his master's thesis~\cite{Carmon2023}. 

The parameter of the Speh representation $\SpehRepresentation{\tau}{c}$ can be described as follows. If $\tau$ is the unique irreducible generic representation with cuspidal support $\left\{\tau_1, \dots, \tau_r\right\}$ as in the end of \Cref{subsec:generic-representations}, then $\SpehRepresentation{\tau}{c}$ is the irreducible representation of $\GL_{kc}\left(\finiteField\right)$ with parameter $\varphi_{\SpehRepresentation{\tau}{c}}$  
supported only on $\tau'_1$, $\dots$, $\tau'_{r'}$, such that $$\varphi_{\SpehRepresentation{\tau}{c}}\left(\tau'_j\right) = \left(m_j^c\right) = \left(m_j,\dots,m_j\right),$$ for any $1 \le j \le r'$, where we keep the notation from the end of \Cref{subsec:generic-representations}. In particular, $\SpehRepresentation{\tau}{1} = \tau$. In the special case where $\tau$ is an irreducible cuspidal representation of $\GL_k\left(\finiteField\right)$, the representation $\SpehRepresentation{\tau}{c}$ is the irreducible representation of $\GL_{kc}\left(\finiteField\right)$ corresponding to the parameter $\varphi_{\SpehRepresentation{\tau}{c}}$ supported on $\tau$ with $\varphi_{\SpehRepresentation{\tau}{c}}\left(\tau\right) = \left(1^c\right) = \left(1,\dots,1\right)$. In the special case $k=1$, we have that $\tau = \chi \colon \multiplicativegroup{\finiteField} \to \multiplicativegroup{\cComplex}$ is a character and $\SpehRepresentation{\tau}{c} = \chi_{\GL_c} \colon \GL_c\left(\finiteField\right) \to \multiplicativegroup{\cComplex}$ is the character given by $$\chi_{\GL_c}\left(h\right) = \chi\left(\det h\right).$$

Carmon proved in~\cite[Theorem 6.18]{Carmon2023} that for any irreducible generic representation $\tau$ of $\GL_{kc}\left(\finiteField\right)$, the Speh representation $\SpehRepresentation{\tau}{c}$ admits a unique (up to scalar) $\kcNotation{k}{c}{\fieldCharacter}$-vector (this is analogous to~\cite[Theorem 4]{CaiFriedbergGourevitchKaplan2023}). Furthermore, he proved in~\cite[Theorem 6.21]{Carmon2023} that if $v \in \SpehRepresentation{\tau}{c}$ is a $\kcNotation{k}{c}{\fieldCharacter}$ vector and $h \in \GL_c\left(\finiteField\right)$, then $\SpehRepresentation{\tau}{c}\left(\diag^k\left(h\right)\right) v = \centralCharacter{\tau}\left(\det h\right) v$, where $\centralCharacter{\tau}$ is the central character of $\tau$ and $$\diag^k\left(h\right) = \diag\left(h,\dots,h\right) \in \GL_{kc}\left(\finiteField\right).$$
This result is analogous to~\cite[Lemma 12]{CaiFriedbergGourevitchKaplan2023}.

\subsection{Bessel functions}\label{subsec:bessel-functions}

Let $\tau$ be an irreducible generic representation of $\GL_k\left(\finiteField\right)$ and let $c \ge 1$. Choose an inner product $\innerproduct{\cdot}{\cdot}$ on the space of $\SpehRepresentation{\tau}{c}$ that is invariant under the action of $\GL_{kc}\left(\finiteField\right)$. Let $0 \ne v \in \SpehRepresentation{\tau}{c}$ be a $\kcNotation{k}{c}{\fieldCharacter}$ vector. The \emph{Bessel--Speh function of $\SpehRepresentation{\tau}{c}$} with respect to the additive character $\fieldCharacter$, which we denote by $\besselSpehFunction{\tau}{c}$, is defined as the following matrix coefficient of $\SpehRepresentation{\tau}{c}$. For any $g \in \GL_{kc}\left(\finiteField\right)$,
$$\besselSpehFunction{\tau}{c}\left(g\right) = \frac{\innerproduct{\SpehRepresentation{\tau}{c}\left(g\right)v}{v}}{\innerproduct{v}{v}}.$$
Since the $\kcNotation{k}{c}{\fieldCharacter}$ vector of $\SpehRepresentation{\tau}{c}$ is unique (up to scalar multiplication), this definition does not depend on the choice $v$. Since $\SpehRepresentation{\tau}{c}$ is irreducible, this definition also does not depend on the choice of the inner product $\innerproduct{\cdot}{\cdot}$.

When $c = 1$, we denote $\besselFunction_{\tau, \fieldCharacter} = \besselSpehFunction{\tau}{1}$ and simply call $\besselFunction_{\tau, \fieldCharacter}$ the \emph{Bessel function of $\tau$} with respect to additive character $\fieldCharacter$.

By its definition and by the last property listed in \Cref{subsec:speh-representations}, the Bessel--Speh function satisfies the following properties:
\begin{enumerate}
	\item $\besselSpehFunction{\tau}{c}\left(\IdentityMatrix{kc}\right) = 1$.
	\item $\besselSpehFunction{\tau}{c}\left(u_1 g u_2\right) = \fieldCharacterkc{k}{c}\left(u_1 u_2\right) \besselSpehFunction{\tau}{c}\left(g\right)$ for any $u_1, u_2 \in \UnipotentRadicalForWss{k}{c}$ and any $g \in \GL_{kc}\left(\finiteField\right)$.
	\item \label{property:diagonal-embedding-action} $\besselSpehFunction{\tau}{c}\left(\diag^k\left(h\right) g\right) = \besselSpehFunction{\tau}{c}\left(g \diag^k\left(h\right)\right) = \centralCharacter{\tau}\left(\det h\right) \besselSpehFunction{\tau}{c}\left(g\right)$ for any $g \in \GL_{kc}\left(\finiteField\right)$ and $h \in \GL_c\left(\finiteField\right)$.
\end{enumerate}

\subsubsection{Character averaging formula}\label{subsec:bessel-character-averaging-formula}
We may express the Bessel--Speh function using the character of $\SpehRepresentation{\tau}{c}$ as follows. Consider the projection operator $\ProjectionOperator_{\kcNotation{k}{c}{\fieldCharacter}}$ from $\SpehRepresentation{\tau}{c}$ to the space spanned by its $\kcNotation{k}{c}{\fieldCharacter}$ vectors. We have that $\ProjectionOperator_{\kcNotation{k}{c}{\fieldCharacter}}$ is given by the following formula. For $w \in \SpehRepresentation{\tau}{c}$, $$\ProjectionOperator_{\kcNotation{k}{c}{\fieldCharacter}} w = \frac{1}{\sizeof{\UnipotentRadicalForWss{k}{c}}}\sum_{u \in \UnipotentRadicalForWss{k}{c}} \fieldCharacterkc{k}{c}^{-1}\left(u\right) \SpehRepresentation{\tau}{c}\left(u\right) w.$$
On the other hand, if $0 \ne v \in \SpehRepresentation{\tau}{c}$ is a $\kcNotation{k}{c}{\fieldCharacter}$ vector then $\ProjectionOperator_{\kcNotation{k}{c}{\fieldCharacter}}$ is given by the formula
$$\ProjectionOperator_{\kcNotation{k}{c}{\fieldCharacter}} w = \frac{\innerproduct{w}{v}}{\innerproduct{v}{v}} v.$$
Completing $v$ to a basis of $\SpehRepresentation{\tau}{c}$ and comparing the traces of the two different expressions for $\ProjectionOperator_{\kcNotation{k}{c}{\fieldCharacter}} \circ \SpehRepresentation{\tau}{c} \left(g\right)$ for $g \in \GL_{kc}\left(\finiteField\right)$, we arrive at the formula
$$\besselSpehFunction{\tau}{c}\left(g\right) = \frac{1}{\sizeof{\UnipotentRadicalForWss{k}{c}}}\sum_{u \in \UnipotentRadicalForWss{k}{c}} \fieldCharacterkc{k}{c}^{-1}\left(u\right) \trace \left(\SpehRepresentation{\tau}{c}\left(ug\right)\right).$$ 

\subsubsection{The values $\specialBesselSpeh{\tau}$}

Let $\tau$ be an irreducible generic representation of $\GL_k\left(\finiteField\right)$. For any $c \ge 1$ and any $h \in \GL_c\left(\finiteField\right)$, we denote $$\specialBesselSpeh{\tau}\left(h\right) = \begin{dcases}
	\tau\left(\det h\right) \fieldCharacter\left(\trace h^{-1}\right) & k = 1, \\
	\besselSpehFunction{\tau}{\fieldCharacter}\begin{pmatrix}
		& \IdentityMatrix{\left(k-1\right)c}\\
		h
	\end{pmatrix} & k \ge 2.
\end{dcases}$$
Note that the notation $\specialBesselSpeh{\tau}\left(h\right)$ does not include the number $c$ nor the number $k$. These should be inferred from $h$ and $\tau$, respectively. It follows from Property (\ref{property:diagonal-embedding-action}) in \Cref{subsec:bessel-functions} that for any $c \ge 1$, the assignment $h \mapsto \specialBesselSpeh{\tau}\left(h\right)$ is a class function of $\GL_c\left(\finiteField\right)$.

We also define for any $h \in \GL_c\left(\finiteField\right)$ the normalized version, $$\specialBesselSpehNormalized{\tau}\left(h\right) = q^{\frac{\left(k-1\right) c^2}{2}} \specialBesselSpeh{\tau}\left(h\right).$$

In~\cite{CarmonZelingher2025}, we proved the following multiplicativity results.

\begin{theorem}\label{thm:multiplicativitiy-of-special-values-of-bessel-functions-with-respect-to-tau}
	Suppose that $k = k_1 + k_2$ and let $\tau_1$ and $\tau_2$ be irreducible generic representations of $\GL_{k_1}\left(\finiteField\right)$ and $\GL_{k_2}\left(\finiteField\right)$, respectively. Suppose that $\tau$ is the unique irreducible generic subrepresentation of the parabolically induced representation $\tau_1 \circ \tau_2$. Then for any $c \ge 1$ and any $h \in \GL_c\left(\finiteField\right)$, we have $$\specialBesselSpeh{\tau}\left(h\right) = q^{-c^2} \sum_{\substack{x,y \in \GL_c\left(\finiteField\right)\\
			xy = -h}} \specialBesselSpeh{\tau_1}\left(x\right) \specialBesselSpeh{\tau_2}\left(y\right).$$
	Equivalently,
	$$\specialBesselSpehNormalized{\tau}\left(h\right) = q^{-\frac{c^2}{2}} \sum_{\substack{x,y \in \GL_c\left(\finiteField\right)\\
			xy = -h}} \specialBesselSpehNormalized{\tau_1}\left(x\right) \specialBesselSpehNormalized{\tau_2}\left(y\right).$$
\end{theorem}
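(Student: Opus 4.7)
The plan is to realize $\SpehRepresentation{\tau}{c}$ as a subrepresentation of the parabolically induced representation $\SpehRepresentation{\tau_1}{c} \circ \SpehRepresentation{\tau_2}{c}$ taken with respect to the parabolic $\ParabolicSubgroup_{(k_1 c,\, k_2 c)}$, and then compute $\besselSpehFunction{\tau}{c}$ in this model. The existence of this embedding follows from the explicit description of Speh parameters in \Cref{subsec:speh-representations}, combined with the uniqueness of the generic subrepresentation of $\tau_1 \circ \tau_2$: the cuspidal support and the shape of the parameter of $\SpehRepresentation{\tau_1}{c} \circ \SpehRepresentation{\tau_2}{c}$ are compatible with that of $\SpehRepresentation{\tau}{c}$, and the genericity-type uniqueness statement for the $\kcNotation{k}{c}{\fieldCharacter}$-vector forces $\SpehRepresentation{\tau}{c}$ to appear as a subrepresentation exactly once.

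Given $\kcNotation{k_i}{c}{\fieldCharacter}$-vectors $v_i$ of $\SpehRepresentation{\tau_i}{c}$, the next step is to construct the unique (up to scalar) $\kcNotation{k}{c}{\fieldCharacter}$-vector of $\SpehRepresentation{\tau}{c}$ as a Jacquet-type average of sections obtained from $v_1 \otimes v_2$ against $\fieldCharacterkc{k}{c}^{-1}$ over the unipotent radical opposite to $\ParabolicSubgroup_{(k_1 c,\, k_2 c)}$, composed with a Weyl element that swaps the two blocks. The non-vanishing of this average on $\SpehRepresentation{\tau}{c}$ must be checked; granted this, uniqueness identifies it as the desired vector. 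Plugging this model into the character-averaging formula of \Cref{subsec:bessel-character-averaging-formula} yields an expression for $\besselSpehFunction{\tau}{c}(g)$ as a sum over the opposite unipotent — which is parameterized by a matrix $z \in \squareMatrix_{c \times c}(\finiteField)$ when one specializes to the Weyl element corresponding to the block swap — of a product $\besselSpehFunction{\tau_1}{c}(g_1) \besselSpehFunction{\tau_2}{c}(g_2)$, where $g_1, g_2 \in \GL_{k_i c}(\finiteField)$ are the Levi block entries extracted from an Iwasawa decomposition.

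The heart of the proof is then specializing $g = \left(\begin{smallmatrix} & \IdentityMatrix{(k-1)c} \\ h & \end{smallmatrix}\right)$ and performing this decomposition. The key computation is that for generic $z \in \GL_c(\finiteField)$, the Levi components of the Bruhat decomposition acquire the form $\left(\begin{smallmatrix} & \IdentityMatrix{(k_1-1)c} \\ x & \end{smallmatrix}\right)$ and $\left(\begin{smallmatrix} & \IdentityMatrix{(k_2-1)c} \\ y & \end{smallmatrix}\right)$ for $x, y \in \GL_c(\finiteField)$ determined by $z$ and satisfying $xy = -h$; non-invertible $z$ contribute to the part of the sum where the Iwasawa decomposition fails and should be absorbed into the ``smaller cell'' Bruhat stratum, which vanishes against $\fieldCharacterkc{k}{c}$ by a standard genericity argument. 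A change of variables converts the sum over $z$ into a sum over pairs $(x, y)$ with $xy = -h$; the sign comes from the block-swap Weyl element, while the factor $q^{-c^2}$ collects the normalization $\sizeof{\squareMatrix_{c \times c}(\finiteField)}^{-1}$ from the averaging.

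The main technical obstacle is this last matrix decomposition: one must track the Iwasawa factorization of the specific anti-diagonal element carefully and show that the resulting Levi components land \emph{exactly} in the form appearing in the definition of $\specialBesselSpeh{\tau_i}$. In particular, the edge cases $k_1 = 1$ or $k_2 = 1$ — where $\specialBesselSpeh{\tau_i}$ is defined by the separate formula $\tau_i(\det(\cdot))\,\fieldCharacter(\trace(\cdot)^{-1})$ — must be shown to interface consistently with the same decomposition, so that the convolution identity holds uniformly across all $(k_1, k_2)$ without splitting into cases.
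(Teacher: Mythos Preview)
The paper does not prove this theorem here; it imports it from \cite{CarmonZelingher2025}. So there is no in-paper proof to compare against. Your plan is the natural one (a Jacquet-integral construction of the $\kcNotation{k}{c}{\fieldCharacter}$ functional on the induced model $\SpehRepresentation{\tau_1}{c}\circ\SpehRepresentation{\tau_2}{c}$, followed by a Bruhat computation at the special element) and is almost certainly the shape of the argument in the cited reference. Two points in your sketch are imprecise and would need to be tightened in an actual proof.

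First, the appeal to the character-averaging formula of \Cref{subsec:bessel-character-averaging-formula} is misplaced. That formula expresses $\besselSpehFunction{\tau}{c}$ as an average of $\trace\SpehRepresentation{\tau}{c}$, and $\SpehRepresentation{\tau}{c}$ is only a constituent of the induced representation, not the whole thing; you cannot substitute the induced character without first knowing that the $\kcNotation{k}{c}{\fieldCharacter}$-space of the full induced representation is one-dimensional. The cleaner route is the one you already set up: build the $\kcNotation{k}{c}{\fieldCharacter}$ vector $v$ in the induced model as your Jacquet average, check it is nonzero, and then compute the matrix coefficient $\innerproduct{g\cdot v}{v}/\innerproduct{v}{v}$ directly by unwinding both sums. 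No character formula is needed.

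Second, the treatment of non-invertible $z$ (``absorbed into the smaller cell, vanishes by a standard genericity argument'') is where real content hides. Over a finite field there is no analytic continuation to save you; one must verify cell by cell that the contribution from each non-open Bruhat stratum carries a free translation by some subgroup on which $\fieldCharacterkc{k_i}{c}$ is nontrivial, hence sums to zero. This is routine but not automatic, and the cases $k_1=1$ or $k_2=1$ (where the target Bessel function degenerates to $\tau_i(\det h)\fieldCharacter(\trace h^{-1})$) interact with exactly this step. Finally, the factor $q^{-c^2}$ is not purely $\abs{\squareMatrix_c}^{-1}$: you also pick up $\innerproduct{v}{v}$, which involves $\abs{\UnipotentRadical_{(k_1c,k_2c)}}$, and these must be tracked together.
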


\begin{theorem}\label{thm:multiplicativitiy-of-special-values-of-bessel-functions-with-respect-to-conjugacy-class}
	Suppose that $c = c_1 + c_2$, and let $h_1 \in \GL_{c_1}\left(\finiteField\right)$ and $h_2 \in \GL_{c_2}\left(\finiteField\right)$. Then for $h = \diag\left(h_1, h_2\right) \in \GL_c\left(\finiteField\right)$, we have
	$$\frac{1}{\sizeof{\UnipotentRadical_{(c_1, c_2)}}} \sum_{n \in \UnipotentRadical_{(c_1, c_2)}} \specialBesselSpeh{\tau}\left(hn\right) = q^{-c_1 c_2 \left(k-1\right)} \specialBesselSpeh{\tau}\left(h_1\right) \specialBesselSpeh{\tau}\left(h_2\right).$$
	Equivalently,
	$$\frac{1}{\sizeof{\UnipotentRadical_{(c_1, c_2)}}} \sum_{n \in \UnipotentRadical_{(c_1, c_2)}} \specialBesselSpehNormalized{\tau}\left(hn\right) =  \specialBesselSpehNormalized{\tau}\left(h_1\right) \specialBesselSpehNormalized{\tau}\left(h_2\right).$$	
\end{theorem}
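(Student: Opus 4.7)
My plan is to reduce to the cuspidal case via \Cref{thm:multiplicativitiy-of-special-values-of-bessel-functions-with-respect-to-tau} and then compute directly using an explicit realization of the $\kcNotation{k}{c}{\fieldCharacter}$-vector of $\SpehRepresentation{\tau}{c}$. For the reduction, I would induct on the length of the cuspidal support of $\tau$: if $\tau$ is the unique irreducible generic subrepresentation of some $\tau_1 \circ \tau_2$, then \Cref{thm:multiplicativitiy-of-special-values-of-bessel-functions-with-respect-to-tau} expresses $\specialBesselSpeh{\tau}$ as a convolution of $\specialBesselSpeh{\tau_1}$ and $\specialBesselSpeh{\tau_2}$. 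Substituting this into the left-hand side of the claimed identity and interchanging the $\UnipotentRadical_{(c_1, c_2)}$-averaging with the convolution, the inductive hypothesis applied to $\tau_1$ and $\tau_2$ separately completes the reduction step, so it suffices to treat the case where $\tau$ is cuspidal.

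For the cuspidal case, I would use the realization of $\SpehRepresentation{\tau}{c}$ as the unique irreducible generic subrepresentation of the parabolic induction $\tau^{\circ c}$ from $P_{(k^c)}$ (see~\cite{Carmon2023}), together with the explicit description of its $\kcNotation{k}{c}{\fieldCharacter}$-vector as a finite-field analog of the construction in \cite{CaiFriedbergGourevitchKaplan2023}. This gives a formula for $\besselSpehFunction{\tau}{c}$ as a sum over a specific unipotent subgroup of $\GL_{kc}\left(\finiteField\right)$, with the Bessel function $\besselFunction_{\tau, \fieldCharacter}$ of $\tau$ appearing as the integrand. Equivalently, one can substitute the character-averaging formula for $\besselSpehFunction{\tau}{c}$ from \Cref{subsec:bessel-character-averaging-formula} and expand the character of $\SpehRepresentation{\tau}{c}$ at $\diag\left(hn, I_{(k-1)c}\right)$ via the induced-character formula on $\tau^{\circ c}$.

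Finally, substituting $h = \diag\left(h_1, h_2\right)$, summing over $n \in \UnipotentRadical_{(c_1, c_2)}$, and analyzing the resulting double sum via the Bruhat decomposition of $\GL_{kc}\left(\finiteField\right)$ with respect to a parabolic whose Levi refines the $k$-fold block structure by the composition $(c_1, c_2)$, one isolates an ``open cell'' whose contribution factorizes along the $(c_1, c_2)$-block decomposition of $h$ and equals $\specialBesselSpeh{\tau}\left(h_1\right) \specialBesselSpeh{\tau}\left(h_2\right)$; the scalar $q^{-c_1 c_2 (k-1)}$ emerges as the measure of the coordinates integrated away on this cell. The main obstacle will be the combinatorial bookkeeping needed to verify that all other Bruhat cells vanish by orthogonality of $\fieldCharacter$ against non-trivial characters of the auxiliary unipotents that arise, and to propagate the normalizations carefully so that the volume computed matches $q^{-c_1 c_2 (k-1)}$ rather than some other power of $q$.
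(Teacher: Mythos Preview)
This theorem is not proved in the present paper; it is quoted from \cite{CarmonZelingher2025}. So there is no ``paper's proof'' to compare against, and your proposal must stand on its own.

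Your reduction step has a genuine gap. Writing $\specialBesselSpeh{\tau}(g)=q^{-c^2}\sum_{xy=-g}\specialBesselSpeh{\tau_1}(x)\specialBesselSpeh{\tau_2}(y)$ via \Cref{thm:multiplicativitiy-of-special-values-of-bessel-functions-with-respect-to-tau} and substituting $g=hn$, the convolution variables $x,y$ range over all of $\GL_c(\finiteField)$; they have no reason to lie in $P_{(c_1,c_2)}$. The inductive hypothesis for $\tau_i$ concerns the $\UnipotentRadical_{(c_1,c_2)}$-average of $\specialBesselSpeh{\tau_i}$ at a \emph{block-diagonal} argument, but after interchanging the sums the inner argument $-x^{-1}hn$ is not of the form $\diag(\ast,\ast)\cdot n'$ with $n'\in\UnipotentRadical_{(c_1,c_2)}$ for generic $x$. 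There is no mechanism in your sketch that localises the convolution to the parabolic, so ``applying the inductive hypothesis to $\tau_1$ and $\tau_2$ separately'' does not go through as written.

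A reduction that does work is spectral rather than geometric: pair both sides against $\trace(\pi_1)\,\trace(\pi_2)$ for arbitrary irreducible $\pi_i$ of $\GL_{c_i}(\finiteField)$. By Frobenius reciprocity (the induced-character formula) the left-hand side becomes a normalisation of $\GKPreGammaFactor{\pi_1\circ\pi_2}{\tau}{\fieldCharacter}$ and the right-hand side becomes $\GKPreGammaFactor{\pi_1}{\tau}{\fieldCharacter}\,\GKPreGammaFactor{\pi_2}{\tau}{\fieldCharacter}$ (with matching powers of $q$); the identity is then equivalent to multiplicativity of the Ginzburg--Kaplan gamma factor in its first argument, which is exactly what \cite{CarmonZelingher2025} establishes. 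Your direct cuspidal computation via explicit $\kcNotation{k}{c}{\fieldCharacter}$-vectors and a Bruhat-cell analysis is plausible in spirit, but as you acknowledge, the vanishing of the non-open cells and the precise power of $q$ are the entire content, and nothing in the proposal indicates how those are to be carried out.
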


\subsubsection{Gamma factors}
Since for any $c \ge 1$, the assignment $\GL_c\left(\finiteField\right) \to \cComplex$ given by $h \mapsto \specialBesselSpeh{\tau}\left(h\right)$ is a class function, then for any representation $\pi$ of $\GL_c\left(\finiteField\right)$, the operator
$$\GKGaussSum{\pi}{\tau}{\fieldCharacter} \coloneq q^{\frac{\left(k-2\right)c^2}{2}} \sum_{h \in \GL_c\left(\finiteField\right)} \specialBesselSpeh{\tau}\left(h\right) \pi\left(h\right) = q^{-\frac{c^2}{2}} \sum_{h \in \GL_c\left(\finiteField\right)} \specialBesselSpehNormalized{\tau}\left(h\right) \pi\left(h\right)$$
is an element of $\Hom_{\GL_c\left(\finiteFieldExtension{k}\right)}\left(\pi, \pi\right)$. Therefore, if $\pi$ is irreducible, by Schur's lemma there exists a complex number $\GKPreGammaFactor{\pi}{\tau}{\fieldCharacter} \in \cComplex$ such that $$\GKGaussSum{\pi}{\tau}{\fieldCharacter} = \GKPreGammaFactor{\pi}{\tau}{\fieldCharacter} \cdot \idmap_\pi.$$
We denote $$\GKGammaFactor{\pi}{\tau}{\fieldCharacter} = \centralCharacter{\pi}\left(-1\right)^{k-1} \GKPreGammaFactor{\pi}{\tau}{\fieldCharacter}.$$
We call $\GKGammaFactor{\pi}{\tau}{\fieldCharacter}$ (and $\GKPreGammaFactor{\pi}{\tau}{\fieldCharacter}$) the \emph{Ginzburg--Kaplan gamma factor}. It is a finite field analog of a new construction of the gamma factor corresponding to the tensor product representation associated to the representations $\pi$ and $\tau$ due to Ginzburg~\cite{ginzburg2019tensor} in the global case, and due to Kaplan~\cite[Appendix A]{kaplan2018},~\cite{Kaplan2023} in the local case.

In~\cite[Theorem 3.24]{CarmonZelingher2025}, using the results of~\cite{Zelingher2024b}, we proved that this gamma factor equals the tensor product $\varepsilon_0$-factor corresponding to $\pi$ and $\tau$.
\begin{theorem}\label{thm:equality-of-GK-factors-and-epsilon-factors}
	For any irreducible representation $\pi$ of $\GL_c\left(\finiteField\right)$ and any irreducible generic representation $\tau$ of $\GL_k\left(\finiteField\right)$, we have the equality
	$$\GKGammaFactor{\pi}{\tau}{\fieldCharacter} = \varepsilon_0\left(\pi \times \tau, \fieldCharacter\right).$$
\end{theorem}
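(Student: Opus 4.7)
The plan is to establish the identity by proving that the Ginzburg--Kaplan gamma factor $\GKGammaFactor{\pi}{\tau}{\fieldCharacter}$ satisfies the same multiplicativity properties in both arguments as the $\varepsilon_0$-factor does, thereby reducing the problem to the case in which both $\pi$ and $\tau$ are irreducible cuspidal. The doubly cuspidal base case is then verified by direct character computation using the results of~\cite{Zelingher2024b}.

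First I would prove multiplicativity in $\tau$. Let $k = k_1 + k_2$ and let $\tau$ be the unique irreducible generic subrepresentation of $\tau_1 \circ \tau_2$, with $\tau_j$ an irreducible generic representation of $\GL_{k_j}(\finiteField)$. Substituting the convolution formula of \Cref{thm:multiplicativitiy-of-special-values-of-bessel-functions-with-respect-to-tau} into the definition of $\GKGaussSum{\pi}{\tau}{\fieldCharacter}$ and performing the change of variables $h = -xy$ turns the sum into the composition $\pi\left(-\IdentityMatrix{c}\right) \circ \GKGaussSum{\pi}{\tau_1}{\fieldCharacter} \circ \GKGaussSum{\pi}{\tau_2}{\fieldCharacter}$. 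The normalizing factor $\centralCharacter{\pi}(-1)^{k-1}$ in the definition of $\GKGammaFactor{\pi}{\tau}{\fieldCharacter}$ is designed precisely to absorb the resulting sign, so that the modified quantity becomes strictly multiplicative, matching the factorization of $\varepsilon_0(\pi \times \tau, \fieldCharacter)$ over the cuspidal support of $\tau$. This reduces matters to $\tau$ cuspidal.

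Second, for multiplicativity in $\pi$: since $\GKPreGammaFactor{\pi}{\tau}{\fieldCharacter}$ is defined by pairing a class function against the character $\trace \pi$, it depends only on the cuspidal support of $\pi$. When $\pi$ is an irreducible constituent of $\pi_1 \circ \pi_2$ with $\pi_j$ an irreducible representation of $\GL_{c_j}(\finiteField)$, one computes $\GKGaussSum{\pi_1 \circ \pi_2}{\tau}{\fieldCharacter}$ via the Harish-Chandra formula for the induced character, which transforms the summation into an integral over $\GL_{c_1+c_2}(\finiteField)$-conjugates of block-diagonal matrices. Applying \Cref{thm:multiplicativitiy-of-special-values-of-bessel-functions-with-respect-to-conjugacy-class}, the $\UnipotentRadical_{(c_1,c_2)}$-averaging present there is compatible with the Bruhat decomposition of the integral, and one obtains $\GKPreGammaFactor{\pi_1 \circ \pi_2}{\tau}{\fieldCharacter} = \GKPreGammaFactor{\pi_1}{\tau}{\fieldCharacter} \GKPreGammaFactor{\pi_2}{\tau}{\fieldCharacter}$, matching the factorization of $\varepsilon_0$. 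This further reduces the problem to the doubly cuspidal case.

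Finally, in the doubly cuspidal case, let $\beta \colon \multiplicativegroup{\finiteFieldExtension{c}} \to \multiplicativegroup{\cComplex}$ and $\alpha \colon \multiplicativegroup{\finiteFieldExtension{k}} \to \multiplicativegroup{\cComplex}$ be the regular characters parameterizing the Frobenius orbits attached to $\pi$ and $\tau$. The goal is to show that $\GKPreGammaFactor{\pi}{\tau}{\fieldCharacter}$ matches the explicit formula involving the exotic Gauss sum $\GaussSumCharacter{c,k}{\beta^{-1}}{\alpha^{-1}}{\fieldCharacter}$ from \Cref{subsec:epsilon-factors}. I would expand $\specialBesselSpeh{\tau}$ via the character-averaging formula of \Cref{subsec:bessel-character-averaging-formula} to recast $\GKGaussSum{\pi}{\tau}{\fieldCharacter}$ as a double sum involving $\trace \SpehRepresentation{\tau}{c}(ug)$, then invoke the explicit Deligne--Lusztig type character formulas of~\cite{Zelingher2024b} to evaluate the resulting expression. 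This base case is the main obstacle: all the reduction steps are essentially formal manipulations, whereas this evaluation rests on a substantive computation of the character of a cuspidal Speh representation at specific Bruhat cell elements, which is precisely the content of~\cite{Zelingher2024b}.
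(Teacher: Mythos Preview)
The paper does not actually prove this theorem; it is quoted from \cite[Theorem 3.24]{CarmonZelingher2025} (with the remark that the case $k=1$ is Kondo's theorem). So there is no in-paper argument to compare against, only the indication that the proof in the cited reference uses the results of \cite{Zelingher2024b}. Your overall strategy---reduce via multiplicativity in each variable to the doubly cuspidal case, and settle that case by the explicit character computations of \cite{Zelingher2024b}---is the natural one and is consistent with what the paper says about the cited proof.

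Your reduction in $\tau$ (Step~1) is correct: substituting \Cref{thm:multiplicativitiy-of-special-values-of-bessel-functions-with-respect-to-tau} into the definition of $\GKGaussSum{\pi}{\tau}{\fieldCharacter}$ gives $\GKPreGammaFactor{\pi}{\tau}{\fieldCharacter}=\centralCharacter{\pi}(-1)\,\GKPreGammaFactor{\pi}{\tau_1}{\fieldCharacter}\,\GKPreGammaFactor{\pi}{\tau_2}{\fieldCharacter}$, and the twist by $\centralCharacter{\pi}(-1)^{k-1}$ makes $\GKGammaFactor{\pi}{\tau}{\fieldCharacter}$ strictly multiplicative, matching $\varepsilon_0$.

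Step~2, however, has a genuine gap. The opening assertion that $\GKPreGammaFactor{\pi}{\tau}{\fieldCharacter}$ ``depends only on the cuspidal support of $\pi$'' is precisely what must be proved; it is not a formal consequence of $\specialBesselSpeh{\tau}$ being a class function. The Harish-Chandra character formula together with \Cref{thm:multiplicativitiy-of-special-values-of-bessel-functions-with-respect-to-conjugacy-class} yields only the trace identity
\[
\frac{1}{\dim(\pi_1\circ\pi_2)}\,\trace\GKGaussSum{\pi_1\circ\pi_2}{\tau}{\fieldCharacter}
=\GKPreGammaFactor{\pi_1}{\tau}{\fieldCharacter}\,\GKPreGammaFactor{\pi_2}{\tau}{\fieldCharacter},
\]
i.e.\ the \emph{weighted average} of $\GKPreGammaFactor{\pi}{\tau}{\fieldCharacter}$ over the irreducible constituents $\pi$ equals the product. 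This does not by itself force every constituent to have the same value when $\pi_1\circ\pi_2$ is reducible. To conclude, you need to show that $\GKGaussSum{\pi_1\circ\pi_2}{\tau}{\fieldCharacter}$ is a \emph{scalar operator} on the full induced space, not merely compute its trace. This is typically done either by an explicit operator calculation in the induced model (showing the convolution operator factors through the Levi), or via the functional equation that \cite{CarmonZelingher2025} establishes; your outline does not supply either ingredient.
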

\begin{remark}
	For $k = 1$, this equality follows from Kondo's theorem~\cite{Kondo1963} (\Cref{thm:kondo-godement-jacquet-computation}). See also~\cite{Macdonald80}.
\end{remark}

\subsection{An identity}
The following identity shows a remarkable relation between special values of Bessel--Speh functions and exotic matrix Kloosterman sums.

\begin{theorem}\label{thm:bessel-speh-is-an-exotic-kloosterman-sum-generic}
	Let $\tau$ be an irreducible generic representation of $\GL_k\left(\finiteField\right)$ with cuspidal support $\left\{\tau_1, \dots, \tau_s\right\}$, where $\tau_j$ is an irreducible cuspidal of $\GL_{k_j}\left(\finiteField\right)$ corresponding to the Frobenius orbit $\left[\alpha_j\right]$, where $\alpha_j \colon \multiplicativegroup{\finiteFieldExtension{k_j}} \to \multiplicativegroup{\cComplex}$ is a regular character. Denote $\lambda = \left(k_1,\dots,k_s\right) \vdash k$ and $\alpha = \alpha_1 \times \dots \times \alpha_s \colon \multiplicativegroup{\finiteFieldExtension{\lambda}} \to \multiplicativegroup{\cComplex}$. Then for any $h \in \GL_c\left(\finiteField\right)$,\\
	\begin{align*}
		\specialBesselSpeh{\tau}\left(h\right) &= \left(-1\right)^{\left(k+s\right)c} q^{-\left(k-1\right)c^2} \ExoticKloosterman\left(\alpha^{-1}, \fieldCharacter,\left(-1\right)^{k-1} h^{-1}\right).
	\end{align*}
	Equivalently,
	$$\specialBesselSpehNormalized{\tau}\left(h\right) = \left(-1\right)^{\left(k+s\right)c} \ExoticKloostermanNormalized\left(\alpha^{-1}, \fieldCharacter, \left(-1\right)^{k-1} h^{-1}\right).$$
\end{theorem}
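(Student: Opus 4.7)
Both sides of the claimed identity are class functions of $\GL_c(\finiteField)$, so it suffices to show that they have the same inner product with $\trace \pi$ for every irreducible representation $\pi$ of $\GL_c(\finiteField)$. Equivalently, I will compute $\sum_{h \in \GL_c(\finiteField)} \specialBesselSpeh{\tau}(h)\, \pi(h)$ and $\sum_{h \in \GL_c(\finiteField)} \ExoticKloosterman(\alpha^{-1},\fieldCharacter,(-1)^{k-1}h^{-1})\,\pi(h)$, and verify that the two resulting scalar operators on the space of $\pi$ differ by exactly the claimed factor $(-1)^{(k+s)c} q^{-(k-1)c^2}$.

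\textbf{Left-hand side.} By the very definition of the Ginzburg--Kaplan gamma factor in \Cref{subsec:bessel-functions},
\[
\sum_{h} \specialBesselSpeh{\tau}(h)\, \pi(h) \;=\; q^{-(k-2)c^2/2}\,\GKPreGammaFactor{\pi}{\tau}{\fieldCharacter}\cdot \idmap_{\pi}.
\]
Using \Cref{thm:equality-of-GK-factors-and-epsilon-factors} together with the relation $\GKGammaFactor{\pi}{\tau}{\fieldCharacter} = \centralCharacter{\pi}(-1)^{k-1}\GKPreGammaFactor{\pi}{\tau}{\fieldCharacter}$ and the fact that $\centralCharacter{\pi}(-1)=\pm 1$, this becomes
\[
\sum_{h} \specialBesselSpeh{\tau}(h)\, \pi(h) \;=\; q^{-(k-2)c^2/2}\, \centralCharacter{\pi}(-1)^{k-1}\, \varepsilon_0(\pi\times\tau,\fieldCharacter)\cdot \idmap_{\pi}.
\]

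\textbf{Right-hand side.} Substitute $g = (-1)^{k-1} h^{-1}$, so that $\pi(h) = \centralCharacter{\pi}((-1)^{k-1})\,\pi(g^{-1})$ and the sum becomes
\[
\centralCharacter{\pi}(-1)^{k-1} \sum_{g} \ExoticKloosterman(\alpha^{-1},\fieldCharacter,g)\, \pi(g^{-1}).
\]
Now $g \mapsto \pi(g^{-1})$ realizes the contragredient $\pi^{\vee}$ (after a choice of basis identifies the dual with the original space), so by the defining property of exotic matrix Kloosterman sums in \Cref{subsec:exotic-matrix-kloosterman-sum},
\[
\sum_{g} \ExoticKloosterman(\alpha^{-1},\fieldCharacter,g)\, \pi^{\vee}(g) \;=\; q^{kc^2/2}\, \GKGaussSumScalar{\pi^{\vee}}{\alpha^{-1}}{\fieldCharacter}\cdot \idmap_{\pi^{\vee}}.
\]
Applying \Cref{prop:non-abelian-exotic-gauss-sum-equals-epsilon-factor} to the pair $(\pi^{\vee}, \tau^{\vee})$ (whose cuspidal support corresponds to the Frobenius orbits of the regular characters $\alpha_j^{-1}$), we get $\GKGaussSumScalar{\pi^{\vee}}{\alpha^{-1}}{\fieldCharacter} = (-1)^{(k+s)c}\, \varepsilon_0(\pi\times\tau,\fieldCharacter)$.

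\textbf{Comparison.} Collecting the above, the right-hand side sum equals
\[
(-1)^{(k+s)c}\, q^{-(k-1)c^2}\cdot \centralCharacter{\pi}(-1)^{k-1}\, q^{kc^2/2}\, (-1)^{(k+s)c}\, \varepsilon_0(\pi\times\tau,\fieldCharacter)\cdot \idmap_{\pi}.
\]
The two factors of $(-1)^{(k+s)c}$ cancel, and $-(k-1)c^2 + kc^2/2 = -(k-2)c^2/2$, matching the power of $q$ from the left-hand side. Thus both sides agree as scalar operators on $\pi$ for every irreducible $\pi$, which yields equality of the two class functions. The case $k=1$ is consistent since then $s=1$, $\alpha=\alpha_1$ is a character of $\multiplicativegroup{\finiteField}$, and \Cref{rem:kloosterman-sum-with-one-variable} reduces the right-hand side to $\alpha(\det h)\,\fieldCharacter(\trace h^{-1})$, which is precisely $\specialBesselSpeh{\tau}(h)$.

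\textbf{Main obstacle.} The substantive content is already packaged in the two inputs \Cref{thm:equality-of-GK-factors-and-epsilon-factors} and \Cref{prop:non-abelian-exotic-gauss-sum-equals-epsilon-factor}; the remaining work is purely bookkeeping. The one place that requires care is tracking the sign $\centralCharacter{\pi}(-1)^{k-1}$ that appears from the change of variables $g=(-1)^{k-1}h^{-1}$ against the same sign that enters through the definition $\GKGammaFactor{\pi}{\tau}{\fieldCharacter}=\centralCharacter{\pi}(-1)^{k-1}\GKPreGammaFactor{\pi}{\tau}{\fieldCharacter}$, since the two must cancel for the identity to hold with the precise normalization $(-1)^{(k+s)c} q^{-(k-1)c^2}$ claimed.
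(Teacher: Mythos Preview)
Your proposal is correct and follows essentially the same approach as the paper: reduce to checking the identity against every irreducible $\pi$ via Schur's lemma, then invoke \Cref{thm:equality-of-GK-factors-and-epsilon-factors} and \Cref{prop:non-abelian-exotic-gauss-sum-equals-epsilon-factor} to match the resulting scalars. The only cosmetic difference is that the paper first replaces $\tau$ by $\tau^{\vee}$ (so as to work with $\alpha$ rather than $\alpha^{-1}$) before unwinding the definitions, whereas you carry $\alpha^{-1}$ and $\pi^{\vee}$ through the computation; the bookkeeping and the two key inputs are identical.
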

\begin{proof}
	Since $\specialBesselSpehNormalized{\tau}$ is a class function, by the definition of exotic matrix Kloosterman sums, it suffices to show that for any $c \ge 1$ and for any irreducible representation $\pi$ of $\GL_c\left(\finiteField\right)$, we have the equality
	$$\left(-1\right)^{\left(k+s\right)c} q^{-\frac{c^2}{2}} \sum_{h \in \GL_c\left(\finiteField\right)} \specialBesselSpehNormalized{\tau^{\vee}}\left( \left(-1\right)^{k-1} h^{-1} \right) \pi\left(h\right) = \TwistedGaussSum{\pi}{\alpha}{\fieldCharacter}.$$
	This is equivalent to $$\left(-1\right)^{\left(k+s\right)c} \centralCharacter{\pi}\left(-1\right)^{k-1} q^{-\frac{c^2}{2}} \sum_{h \in \GL_c\left(\finiteField\right)} \specialBesselSpehNormalized{\tau^{\vee}}\left(h\right) \trace \pi^{\vee}\left(h\right) = \GKGaussSumScalar{\pi}{\alpha}{\fieldCharacter} \cdot \dim \pi,$$
	which is equivalent to $$\left(-1\right)^{\left(k+s\right)c} \centralCharacter{\pi}\left(-1\right)^{k-1} \trace \GKGaussSum{\pi^{\vee}}{\tau^{\vee}}{\fieldCharacter} = \GKGaussSumScalar{\pi}{\alpha}{\fieldCharacter} \cdot \dim \pi.$$
	By \Cref{thm:equality-of-GK-factors-and-epsilon-factors}, we have the identity $$\GKGaussSum{\pi^{\vee}}{\tau^{\vee}}{\fieldCharacter} = \centralCharacter{\pi}\left(-1\right)^{k-1} \varepsilon_0\left(\pi^{\vee} \times \tau^{\vee}, \fieldCharacter\right) \cdot \idmap_\pi,$$
	and therefore by \Cref{prop:non-abelian-exotic-gauss-sum-equals-epsilon-factor} the result follows.
\end{proof}

\begin{remark}
	When $c=1$, this theorem specializes to a result of Curtis and Shinoda~\cite[Lemma 3.5]{curtis2004zeta}. Their method is as follows. They use the formula from \Cref{subsec:bessel-character-averaging-formula} (for $c=1$). In order to do so, they show that for any $x \in \multiplicativegroup{\finiteField}$, and any $u \in \UnipotentSubgroup_k$, the element \begin{equation}\label{eq:element-with-characteristic-polynomial}
		\begin{pmatrix}
			& \IdentityMatrix{k-1}\\
			x
		\end{pmatrix} u
	\end{equation} is a regular element (see \Cref{subsec:conjugacy-classes-of-gl-n}), and therefore its conjugacy class is determined by its characteristic polynomial. They show that for any such $u$, the value $\fieldCharacter\left(u\right)$ is determined by a coefficient of the characteristic polynomial of \eqref{eq:element-with-characteristic-polynomial}. They then reduce the computation of $\besselFunction_{\pi, \fieldCharacter}\left(\begin{smallmatrix}
		& \IdentityMatrix{c-1}\\
		x
	\end{smallmatrix}\right)$ to counting for each polynomial of degree $k$ over $\finiteField$, how many $u \in \UnipotentSubgroup_k$ exist such that \eqref{eq:element-with-characteristic-polynomial} has this given polynomial as its characteristic polynomial. Finally, they use the formula for Deligne--Lusztig characters to complete the computation. See \Cref{example:curtis-shinoda-trick} for a similar computation.
	
	This sort of computation does not generalize well for $c \ge 2$. The first difficulty is that determining the conjugacy class of $$\begin{pmatrix}
		& \IdentityMatrix{\left(k-1\right)c}\\
		h
	\end{pmatrix} u$$
	for $h \in \GL_c\left(\finiteField\right)$ and $u \in \UnipotentRadicalForWss{k}{c}$ seems much more difficult. The second difficulty is that it is hard to write down the character of the Speh representation $\SpehRepresentation{\tau}{c}$ explicitly, see Appendix \ref{appendix:character-of-speh-representation}.
	
	The proof we gave here is based on the relation to gamma factors, and is similar to a proof we gave (with Soudry) of the result of Curtis--Shinoda using gamma factors~\cite[Theorem 4.10]{SoudryZelingher2023}.
\end{remark}

\subsection{Multiplicativity property}
Let $\lambda = \left(k_1,\dots,k_s\right) \vdash k$ and let $\alpha = \alpha_1 \times \dots \times \alpha_s \colon \multiplicativegroup{\finiteFieldExtension{\lambda}} \to \multiplicativegroup{\cComplex}$ be a multiplicative character such that $\alpha_j \colon \multiplicativegroup{\finiteFieldExtension{k_j}} \to \multiplicativegroup{\cComplex}$ is a regular character for every $j$. Using the relation from \Cref{thm:bessel-speh-is-an-exotic-kloosterman-sum-generic} and \Cref{thm:multiplicativitiy-of-special-values-of-bessel-functions-with-respect-to-conjugacy-class}, we establish the following multiplicativity property of exotic matrix Kloosterman sums.

\begin{theorem}\label{thm:multiplicativity-of-exotic-kloosterman-sums-without-proof}
	Let $c = c_1 + c_2$, and let $h_1 \in \GL_{c_1}\left(\finiteField\right)$ and $h_2 \in \GL_{c_2}\left(\finiteField\right)$.
	\begin{enumerate}
		\item We have the identity
		$$\frac{1}{\sizeof{\UnipotentRadical_{(c_1, c_2)}}} \sum_{n \in \UnipotentRadical_{(c_1, c_2)}} \ExoticKloosterman\left(\alpha, \fieldCharacter, \diag\left(h_1,h_2\right)n\right) = q^{\left(k-1\right) c_1 c_2} \ExoticKloosterman\left(\alpha,\fieldCharacter, h_1\right) \ExoticKloosterman\left(\alpha,\fieldCharacter, h_2\right).$$
		Equivalently,
		$$\frac{1}{\sizeof{\UnipotentRadical_{(c_1, c_2)}}} \sum_{n \in \UnipotentRadical_{(c_1, c_2)}} \ExoticKloostermanNormalized\left(\alpha, \fieldCharacter, \diag\left(h_1,h_2\right)n\right) = \ExoticKloostermanNormalized\left(\alpha, \fieldCharacter, h_1\right) \ExoticKloostermanNormalized\left(\alpha, \fieldCharacter, h_2\right).$$
		\item Suppose that $h_1$ and $h_2$ do not have any common eigenvalues in the algebraic closure $\algebraicClosure{\finiteField}$, then
		$$\ExoticKloosterman\left( \alpha, \fieldCharacter, \diag\left(h_1, h_2\right) \right) = q^{\left(k - 1\right) c_1 c_2} \ExoticKloosterman\left( \alpha, \fieldCharacter, h_1 \right) \ExoticKloosterman\left( \alpha, \fieldCharacter, h_2 \right).$$
		Equivalently,
		$$\ExoticKloostermanNormalized\left( \alpha, \fieldCharacter, \diag\left(h_1, h_2\right) \right) = \ExoticKloostermanNormalized\left( \alpha, \fieldCharacter, h_1 \right) \ExoticKloostermanNormalized\left( \alpha, \fieldCharacter, h_2 \right).$$	
	\end{enumerate}
\end{theorem}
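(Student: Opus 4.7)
The plan is to transfer both statements to the Bessel--Speh side via \Cref{thm:bessel-speh-is-an-exotic-kloosterman-sum-generic} and then invoke \Cref{thm:multiplicativitiy-of-special-values-of-bessel-functions-with-respect-to-conjugacy-class}. Concretely, let $\tau$ denote the unique irreducible generic representation of $\GL_k(\finiteField)$ whose cuspidal support is parametrized by the Frobenius orbits of $\alpha_1^{-1}, \dots, \alpha_s^{-1}$ (each $\alpha_j^{-1}$ is regular since $\alpha_j$ is). Applying \Cref{thm:bessel-speh-is-an-exotic-kloosterman-sum-generic} to $\tau$ and inverting yields, for every $g \in \GL_c(\finiteField)$, the identity
$$\ExoticKloostermanNormalized(\alpha, \fieldCharacter, g) = (-1)^{(k+s)c}\,\specialBesselSpehNormalized{\tau}\left((-1)^{k-1}g^{-1}\right).$$
Part~(1) will be obtained by substituting this identity into its own left-hand side, and part~(2) will follow from part~(1) by a class-function argument.

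For part~(1), I would substitute $g = hn$ with $h = \diag(h_1,h_2)$ and $n \in \UnipotentRadical_{(c_1,c_2)}$. The key manipulation is $(hn)^{-1} = h^{-1} n'$, where $n' = h n^{-1} h^{-1}$ lies in $\UnipotentRadical_{(c_1,c_2)}$ because the Levi normalizes the unipotent radical, and the assignment $n \mapsto n'$ is a bijection of $\UnipotentRadical_{(c_1,c_2)}$ onto itself. After this change of variables, the averaged sum on the left of part~(1) matches the left-hand side of \Cref{thm:multiplicativitiy-of-special-values-of-bessel-functions-with-respect-to-conjugacy-class} applied to the Levi blocks $(-1)^{k-1}h_1^{-1}$ and $(-1)^{k-1}h_2^{-1}$. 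Applying that theorem and translating each of the two resulting factors back through the displayed identity produces the product $\ExoticKloostermanNormalized(\alpha,\fieldCharacter,h_1)\,\ExoticKloostermanNormalized(\alpha,\fieldCharacter,h_2)$; the accumulated sign $(-1)^{(k+s)(c+c_1+c_2)}$ equals $+1$ since $c = c_1 + c_2$.

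For part~(2), I would deduce the result from part~(1) once one shows that every summand of the average on the left of part~(1) equals $\ExoticKloostermanNormalized(\alpha,\fieldCharacter,\diag(h_1,h_2))$. Since exotic matrix Kloosterman sums are class functions, it suffices to verify that $\diag(h_1,h_2)\,n$ is $\GL_c(\finiteField)$-conjugate to $\diag(h_1,h_2)$ for every $n \in \UnipotentRadical_{(c_1,c_2)}$. This is the classical consequence of Sylvester's theorem: when the characteristic polynomials of $h_1$ and $h_2$ are coprime---equivalently, when $h_1$ and $h_2$ share no eigenvalue over $\algebraicClosure{\finiteField}$---the equation $h_1 Y - Y h_2 = X$ has a unique solution $Y$ for each $X$, giving an explicit upper block-unitriangular conjugator. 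No step in this argument poses a real obstacle; the only care required is the Levi-normalization bijection and the sign bookkeeping in part~(1).
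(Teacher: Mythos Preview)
Your proposal is correct and follows essentially the same approach as the paper: transfer to the Bessel--Speh side via \Cref{thm:bessel-speh-is-an-exotic-kloosterman-sum-generic}, apply \Cref{thm:multiplicativitiy-of-special-values-of-bessel-functions-with-respect-to-conjugacy-class}, and deduce part~(2) from part~(1) using the class-function property together with the fact that $hn$ is conjugate to $h$ when the blocks have disjoint eigenvalues. Your write-up is in fact slightly more detailed than the paper's, which simply declares part~(1) ``straightforward'' from the two cited theorems and handles part~(2) in the same way you do.
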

\begin{remark}
	The second part of this theorem was first given by \Erdelyi{}--\Toth{} in~\cite[Theorem 1.1]{erdelyi2021matrix} for the case $\lambda = \left(1,1\right)$ and $\alpha = 1$. Later with Carmon we generalized it to $\lambda = \left(1^k\right) = \left(1,\dots,1\right)$ and arbitrary $\alpha \colon \left(\multiplicativegroup{\finiteField}\right)^k \to \multiplicativegroup{\cComplex}$ in~\cite[Theorem 4.1]{CarmonZelingher2025}. Our proof in~\cite{CarmonZelingher2025} relies on a special case of \Cref{thm:bessel-speh-is-an-exotic-kloosterman-sum-generic}. We give a similar proof here.
	
	We also remark that the second part of this theorem will follow from our results in the next section (specifically, from \Cref{thm:general-formula-for-exotic-kloosterman-sum-of-conjugacy-class}).
\end{remark}
\begin{proof}
	For every $1 \le j \le s$, let $\tau_j$ be the irreducible cuspidal representation corresponding to the Frobenius orbit $\left[\alpha_j\right]$. Let $\tau$ be the unique irreducible generic representation of $\GL_k\left(\finiteField\right)$ with cuspidal support $\left\{\Contragradient{\tau_1},\dots,\Contragradient{\tau_s}\right\}$.
	Let $h = \diag\left(h_1, h_2\right)$. The first part of the theorem is straightforward by combining \Cref{thm:bessel-speh-is-an-exotic-kloosterman-sum-generic} and \Cref{thm:multiplicativitiy-of-special-values-of-bessel-functions-with-respect-to-conjugacy-class}.
	
	Regarding the second part, suppose that $h_1$ and $h_2$ have no common eigenvalues in $\algebraicClosure{\finiteField}$. Then $nh$ is conjugate to $h$ for any $n \in \UnipotentRadical_{\left(c_1,c_2\right)}$, and therefore, using the fact that $h' \mapsto \ExoticKloosterman\left(\alpha, \fieldCharacter, h'\right)$ is a class function, we get $$\ExoticKloosterman\left(\alpha, \fieldCharacter, h\right) = q^{-c_1 c_2} \sum_{n \in \UnipotentRadical_{\left(c_1, c_2\right)}} \ExoticKloosterman\left(\alpha, \fieldCharacter,n h \right).$$
	The second part now follows from the first part.
\end{proof}

\section{Relation to Hall--Littlewood polynomials}\label{sec:relation-to-hall-littlewood-polynomials}

In this section, we give an identity that expresses exotic matrix Kloosterman sums in terms of modified Hall--Littlewood polynomials corresponding to the conjugacy class structure of a given matrix. This is a generalization of~\cite{Zelingher2023}.

The proof we give here is different than the one of~\cite{Zelingher2023}. We utilize Macdonald's characteristic maps and use the results about non-abelian exotic Gauss sums from \Cref{subsec:non-abelian-exotic-gauss-sums} to establish our identity. This proof also gives us the image of the global exotic matrix Kloosterman sum function under both Macdonald characteristic maps.

We begin with reviewing the well known classification of conjugacy classes of $\GL_n\left(\finiteField\right)$. We then discuss Hall--Littlewood polynomials and Macdonald's characteristic maps. Using these, we prove our main result (\Cref{thm:general-formula-for-exotic-kloosterman-sum-of-conjugacy-class}).

\subsection{Conjugacy classes of $\GL_{n}\left(\finiteField\right)$}\label{subsec:conjugacy-classes-of-gl-n}
In this section, we review the well-known classification of conjugacy classes of $\GL_n\left(\finiteField\right)$.

Recall that if $x,x' \in \GL_n\left(\finiteField\right)$ are such that there exists an element $y \in \GL_n\left(\algebraicClosure{\finiteField}\right)$ such that $x' = yxy^{-1}$, then there exists $y' \in \GL_n\left(\finiteField\right)$ such that $x' = y' x (y')^{-1}$. See~\cite[Corollary 18 on Page 477]{DummitFoote2004}. Henceforth, when discussing conjugation of matrices in $\GL_n\left(\finiteField\right)$, we will never specify over which field, since if two elements are conjugate over a field extension, then they are also conjugate over $\finiteField$.

If $\xi \in \multiplicativegroup{\finiteFieldExtension{k}}$, we call the set $\left[\xi\right] \coloneq \{ \xi^{q^j} \mid j \ge 0 \}$ the \emph{Frobenius orbit} of $\xi$. Given such $\xi$, its \emph{degree}, denoted $\deg \xi$, is the size of its Frobenius orbit $\left[\xi\right]$. It is a number dividing $k$. An element $\xi$ has degree $d = \deg \xi$ if and only if $\xi$ is an element of $\finiteFieldExtension{d}$, but not of any proper subfield of $\finiteFieldExtension{d}$. Note that $\xi \in \multiplicativegroup{\finiteFieldExtension{k}}$ is of degree $k$ if and only if $\xi$ has an irreducible $\finiteField$-characteristic polynomial.

For any $\xi \in \multiplicativegroup{\finiteFieldExtension{d}}$ of degree $d$, let $h_{\xi} \in \GL_{d}\left(\finiteField\right)$ be a matrix whose eigenvalues over $\algebraicClosure{\finiteField}$ are $\xi, \xi^q, \dots, \xi^{q^{d-1}}$. For example, if the $\finiteField$-characteristic polynomial of $\xi$ is $\sum_{i=0}^d a_i X^i$, where $a_d = 1$ and $a_0,\dots,a_{d-1} \in \finiteField$, then the companion matrix of $\xi$ $$\begin{pmatrix}
	0 & 0 & 0 & \dots & 0 & -a_0 \\
	1 & 0 & 0 & \dots & 0 & -a_1 \\
	0 & 1 & 0 & \dots & 0 & -a_2 \\
	\vdots & \vdots & \ddots & \ddots & \vdots & \vdots \\ 
	0 & 0 & \dots &  1 & 0 & -a_{d-2} \\
	0 & 0 & 0 & \dots & 1 & -a_{d-1}
\end{pmatrix}$$
can be a taken as a choice of $h_{\xi}$. An element $h \in \GL_d\left(\finiteField\right)$ is called \emph{regular elliptic} if it is conjugate to a matrix of the form $h_{\xi}$ as above. Equivalently, $h \in \GL_d\left(\finiteField\right)$ is regular elliptic if its characteristic polynomial is irreducible over $\finiteField$.

Given $\xi$ and $h_{\xi}$ as above and an integer $m$, we define the generalized Jordan block $$J_{(m)}\left(h_{\xi}\right) = \begin{pmatrix}
	h_{\xi} & \IdentityMatrix{d}\\
	& h_{\xi} & \IdentityMatrix{d}\\
	& & \ddots & \ddots\\
	& & & h_{\xi} & \IdentityMatrix{d}\\
	& & & & h_{\xi}
\end{pmatrix} \in \GL_{md}\left(\finiteField\right).$$
This matrix is conjugate over the algebraic closure to $$\diag\left(J_{(m)}\left(\xi\right), J_{(m)}\left(\xi^q\right), \dots, J_{(m)}(\xi^{q^{d-1}}) \right),$$ where for $\xi \in \multiplicativegroup{\finiteFieldExtension{d}}$, $$J_{(m)}\left(\xi\right) = \begin{pmatrix}
	\xi & 1\\
	& \xi & 1\\
	& & \ddots & \ddots\\
	& & & \xi & 1\\
	& & & & \xi
\end{pmatrix} \in \GL_m\left(\finiteFieldExtension{d}\right).$$

Let $\mu = \left(m_1,\dots,m_t\right) \vdash m$ be a partition. We define the generalized Jordan matrix $$J_{\mu}\left(h_{\xi}\right) = \diag\left(J_{\left(m_1\right)}\left(h_{\xi}\right), \dots, J_{\left(m_t\right)}\left(h_{\xi}\right)\right).$$

Using these Jordan matrices we can state the result regarding classification of conjugacy classes of $\GL_n\left(\finiteField\right)$.

\begin{theorem}[{\cite[IV.2, Pages 270-271]{macdonald1998symmetric}}]\label{thm:conjugacy-class-classification}
	For every $h \in \GL_n\left(\finiteField\right)$ there exist tuples of positive integers $\left(a_1, b_1\right),\dots, \left(a_r, b_r\right)$ with $a_1b_1 + \dots + a_r b_r = n$, partitions $\mu_1 \vdash b_1$, $\dots$, $\mu_r \vdash b_r$ and elements $\xi_1 \in \multiplicativegroup{\finiteFieldExtension{a_1}}$, $\dots$, $\xi_r \in \multiplicativegroup{\finiteFieldExtension{a_r}}$, such that $h$ is conjugate to $$\diag\left( J_{\mu_1}\left(h_{\xi_1}\right), \dots, J_{\mu_r}\left(h_{\xi_r}\right) \right),$$
	where for every $i$, $\xi_i$ is of degree $a_i$, and for $i \ne j$, the Frobenius orbits of $\xi_i$ and $\xi_j$ are different.
	
	Moreover, for every $h \in \GL_n\left(\finiteField\right)$, the sequence of tuples $\left(\left(\left[\xi_i\right], \mu_i\right) \right)_{i=1}^r$
	is unique, up to reordering.
\end{theorem}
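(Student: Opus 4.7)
The plan is to reduce the theorem to the structure theorem for finitely generated modules over a PID, applied to the $\finiteField[X]$-module arising from $h$, and then to translate the resulting primary rational canonical form into the generalized Jordan form appearing in the statement. First I would equip $V = \finiteField^n$ with the structure of a finitely generated $\finiteField[X]$-module by letting $X$ act as $h$. Two matrices in $\GL_n(\finiteField)$ are conjugate exactly when the associated $\finiteField[X]$-modules are isomorphic, so the classification of conjugacy classes reduces to the classification of such modules.

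Next I would invoke the primary decomposition: $V$ decomposes as a direct sum $\bigoplus_{p} V_{p}$, where $p$ runs over monic irreducible polynomials in $\finiteField[X]$ and $V_{p}$ is the $p$-primary component, and each $V_{p}$ further decomposes as $\bigoplus_{j} \finiteField[X]/(p(X)^{m_{p,j}})$ for a unique partition $\mu_p = (m_{p,1} \ge m_{p,2} \ge \dots)$ attached to $p$. Since $h$ is invertible, the polynomial $p(X) = X$ does not appear, so every $p$ occurring has nonzero constant term. Now use the standard correspondence between monic irreducibles $p$ of degree $a$ over $\finiteField$ and Frobenius orbits $[\xi]$ of elements $\xi \in \multiplicativegroup{\finiteFieldExtension{a}}$ of degree exactly $a$: to $p$ of degree $a$ we attach any root $\xi \in \algebraicClosure{\finiteField}$, which lives in $\finiteFieldExtension{a}$ and whose Frobenius orbit has size $a$; this is a bijection between monic irreducibles of degree $a$ in $\finiteField[X]$ (with nonzero constant term) and Frobenius orbits of elements of $\multiplicativegroup{\finiteFieldExtension{a}}$ of degree $a$.

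The core identification to perform is that for $\xi$ of degree $a$ and $m \ge 1$, the $\finiteField[X]$-module $\finiteField[X]/(p(X)^m)$ (with $p$ the minimal polynomial of $\xi$) is isomorphic to the module attached to $J_{(m)}(h_\xi) \in \GL_{am}(\finiteField)$. I would verify this by exhibiting a cyclic generator: the block upper-triangular shape of $J_{(m)}(h_\xi)$, with $h_\xi$ on the diagonal and $\IdentityMatrix{a}$ on the superdiagonal, shows that its characteristic polynomial is $p(X)^m$ and that the top-right standard basis vector generates the module; since an $am$-dimensional cyclic module with annihilator $(p^m)$ is isomorphic to $\finiteField[X]/(p^m)$, we are done. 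Taking direct sums over the parts of each $\mu_p$ and over the different primes $p$ (equivalently, over the different Frobenius orbits $[\xi_i]$) produces a conjugate of $h$ of exactly the form $\diag(J_{\mu_1}(h_{\xi_1}), \dots, J_{\mu_r}(h_{\xi_r}))$ demanded in the theorem.

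Finally, uniqueness follows by running the argument backwards: any matrix of this form has $\finiteField[X]$-module isomorphic to $\bigoplus_i \bigoplus_j \finiteField[X]/(p_{\xi_i}(X)^{m_{i,j}})$, and the uniqueness clause of the structure theorem over the PID $\finiteField[X]$ recovers the multiset of pairs $([\xi_i], \mu_i)$ from the conjugacy class of $h$. The only subtle step in the whole proof, and the one I expect to occupy most of the write-up, is the module-theoretic identification of $J_{(m)}(h_\xi)$ with $\finiteField[X]/(p(X)^m)$; everything else is a direct application of the structure theorem and the elementary dictionary between monic $\finiteField$-irreducibles and Frobenius orbits.
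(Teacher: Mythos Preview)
Your proposal is correct and follows the standard route via the structure theorem for finitely generated modules over the PID $\finiteField[X]$. The paper itself does not prove this theorem at all: it is stated in \Cref{subsec:conjugacy-classes-of-gl-n} as a review of the well-known classification of conjugacy classes, with no proof supplied, so there is nothing to compare against beyond noting that your argument is exactly the textbook one.
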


A matrix $h \in \GL_{n}\left(\finiteField\right)$ is called \emph{regular} if it is conjugate to a matrix as in the theorem with $\mu_1 = \left(b_1\right)$, $\dots$, $\mu_t = \left(b_t\right)$, i.e., $h$ is conjugate to $\diag\left(J_{\left(b_1\right)}\left(h_{\xi_1}\right),\dots,J_{\left(b_t\right)}\left(h_{\xi_t}\right)\right)$ (where $\left[\xi_i\right] \ne \left[\xi_j\right]$ for $i \ne j$). A matrix $h \in \GL_{n}\left(\finiteField\right)$ is called \emph{semi-simple} if it is conjugate to a matrix as in the theorem with $\mu_1 = \left(1^{b_1}\right)$, $\dots$, $\mu_t = \left(1^{b_t}\right)$, that is, if $h$ is conjugate to $\diag\left(h_{\xi_1},\dots,h_{\xi_1}, \dots, h_{\xi_t},\dots,h_{\xi_t}\right)$, where for every $j$, the element $h_{\xi_j}$ appears $b_j$ times.

\subsection{Macdonald's characteristic maps}

In this section, we review Macdonald's characteristic maps~\cite[Pages 276-286]{macdonald1998symmetric}. Our discussion is based on~\cite[Section 5.3.1]{Thiem2004} but we use different notations. In particular, we use Frobenius orbits instead of irreducible polynomials. We follow the convention of \cite{macdonald1998symmetric}, commonly used in the local Langlands correspondence, in which the partition $(n)$ corresponds to the Steinberg representation, while the partition $(1^n)$ corresponds to the Speh representation. We take this opportunity to mention that in~\cite{Zelingher2023} we followed Green's convention, except for the appendix, in which we followed Langlands's convention.

\subsubsection{Class functions}\label{subsec:class-functions}

Recall that for any $n \ge 0$ we set $\ClassFunctionsRing\left(\GL_n\left(\finiteField\right)\right)$ to be the ring of class functions on $\GL_n\left(\finiteField\right)$, see \Cref{subsubsec:class-functions}.

Suppose that  $n_1 + n_2 = n$. Following the notation of Green~\cite{Green55} and Macdonald~\cite[Page 274]{macdonald1998symmetric}, given $f_i \in \ClassFunctionsRing\left(\GL_{n_i}\left(\finiteField\right)\right)$ for $i = 1,2$, we define a class function $f_1 \circ f_2 \in \ClassFunctionsRing\left(\GL_n\left(\finiteField\right)\right)$ by the formula
$$\left(f_1 \circ f_2\right)\left(g\right) = \frac{1}{\sizeof{\ParabolicSubgroup_{(n_1, n_2)}}}\sum_{\substack{x \in \GL_n\left(\finiteField\right)\\
		h_1 \in \GL_{n_1}\left(\finiteField\right),\, h_2 \in \GL_{n_2}\left(\finiteField\right)\\
		x g x^{-1} \in \diag\left(h_1, h_2\right) \UnipotentRadical_{(n_1, n_2)}}} f_1\left(h_1\right) f_2\left(h_2\right).$$
The operation $\circ$ is commutative and associative. If $\pi_1$ and $\pi_2$ are representations of $\GL_{n_1}\left(\finiteField\right)$ and $\GL_{n_2}\left(\finiteField\right)$, respectively, then we have the identity
$$\trace\left(\pi_1\right) \circ \trace\left(\pi_2\right) = \trace\left(\pi_1 \circ \pi_2\right),$$
where $\pi_1 \circ \pi_2$ denotes the parabolic induction of $\pi_1$ and $\pi_2$, see \Cref{subsec:parabolic-indudction}.

Consider the direct sum $\ClassFunctionsGLRing = \bigoplus_{n = 0}^{\infty} \ClassFunctionsRing\left(\GL_n\left(\finiteField\right)\right)$, equipped with the inner product $\innerproduct{\cdot}{\cdot}$, extended so that the spaces in the direct sum are orthogonal. Then $\ClassFunctionsGLRing$ is an algebra over $\cComplex$, with respect to the multiplication operation $\circ$. By the discussion above, $\ClassFunctionsGLRing$ has two bases. One consists of characteristic functions $\left(\delta_C\right)_C$ and the other consists of characters of irreducible representations $\left(\trace \pi\right)_{\pi}$. Here, $C$ (respectively, $\pi$) runs over conjugacy classes (respectively, over equivalence classes of irreducible representations) of $\GL_n\left(\finiteField\right)$, for all $n$. Macdonald proved that $\ClassFunctionsGLRing$ is isomorphic to a ring of symmetric functions, by an isomorphism under which the images of these two bases are explicit polynomials. Characteristic functions of conjugacy classes are mapped under this isomorphism to Hall--Littlewood polynomials, which we describe in the next sections.

\subsubsection{Hall--Littlewood polynomials}

Let $X = \left(X_1,X_2,\dots\right)$ be an infinite set of variables. For every $n \ge 0$, the symmetric group $\SymmetricGroup_n$ acts on the ring of polynomials $\cComplex\left[X_1,\dots,X_n\right]$ by permuting the variables. We denote for every $n$ the ring of \emph{symmetric polynomials with $n$ variables}, consisting of polynomials that are invariant under the action of $\SymmetricGroup_n$, by $$\Lambda_n \coloneq \cComplex\left[X_1,\dots,X_n\right]^{\SymmetricGroup_n}.$$

Let $\lambda = \left(\lambda_1,\dots,\lambda_{\ell}\right)$ be a partition. For any $n \ge \ell$ and any $t \in \cComplex$, we consider the \emph{Hall--Littlewood polynomial} $\pHallLittlewood_{\lambda}\left(X_1,\dots,X_n; t\right) \in \Lambda_n$, defined by the formula~\cite[Definition 3.4.1]{Haiman2003},~\cite[Page 208]{macdonald1998symmetric} $$\pHallLittlewood_{\lambda}\left(X_1,\dots,X_n; t\right) = \frac{1}{\prod_{j=0}^{\infty} \left[\lambda\left(i\right)\right]_t !} \sum_{w \in \SymmetricGroup_n} w\left( X^{\lambda} \frac{\prod_{1 \le i < j \le n} \left(1 - t \frac{X_j}{X_i}\right)}{\prod_{1 \le i < j \le n} \left(1 - \frac{X_j}{X_i}\right)} \right),$$
where $X^{\lambda}$ is the monomial $X_1^{\lambda_1} \dots X_{\ell}^{\lambda_{\ell}}$, and where for $i \ge 1$, $\lambda\left(i\right)$ is the number of times that $i$ appears in the partition $\lambda$ and $\lambda\left(0\right)$ is defined so that $\sum_{j=0}^{\infty} \lambda\left(j\right) = n$. Here, $$\left[k\right]_t! = \prod_{j=1}^k \left[j\right]_t \,\,\,\, \text{ where }\,\,\,\, \left[k\right]_t = \prod_{j=1}^k \frac{1 - t^j}{1-t}.$$
We also denote $$\qHallLittlewood_{\lambda}\left(X_1,\dots,X_n; t\right) = \left(\left(1-t\right)^{\ell} \prod_{i=1}^{\infty} \left[\lambda\left(i\right)\right]_t!\right) \pHallLittlewood_{\lambda}\left(X_1,\dots,X_n;t\right).$$

The space $\Lambda_n$ has a basis consisting of polynomials called \emph{Schur polynomials}~\cite[Page 40]{macdonald1998symmetric} and denoted $s_{\lambda}\left(X_1,\dots,X_n\right)$ where $\lambda$ runs over all partitions of length $\le n$. For any partition $\lambda$ of length $\le n$, the Schur polynomial corresponding to $\lambda$ is given by $s_{\lambda}\left(X_1,\dots,X_n\right) = \pHallLittlewood_{\lambda}\left(X_1,\dots,X_n;0\right)$. We equip $\Lambda_n$ with an inner product $\innerproduct{\cdot}{\cdot}$ with respect to which the Schur polynomials $\left(s_{\lambda} \mid \lengthof\left(\lambda\right) \le n\right)$ form an orthonormal basis, i.e.,
$$\innerproduct{s_{\lambda}\left(X_1,\dots,X_n\right)}{s_{\mu}\left(X_1,\dots,X_n\right)} = \delta_{\lambda, \mu} = \begin{dcases}
	0 & \lambda \ne \mu,\\
	1 & \lambda = \mu,
\end{dcases}$$
where $\delta_{\lambda, \mu}$ is Kronecker's delta function.

We will also need the \emph{transformed Hall--Littlewood polynomials}~\cite[Corollary 3.4.12]{Haiman2003}. For any $k \ge 1$, let $$p_k\left(X_1,\dots,X_n\right) = \sum_{j=1}^n X_j^k \in \Lambda_n$$ be the $k$-th power sum polynomial. For a partition $\mu = \left(k_1,\dots,k_r\right)$ set $$p_{\mu}\left(X_1,\dots,X_n\right) = \prod_{j=1}^r p_{k_j}\left(X_1,\dots,X_n\right).$$ Then $\left(p_{\mu}\left(X_1,\dots,X_n\right)\right)_{\mu}$ forms a basis of the ring $\Lambda_n$. Thus the algebra $\Lambda_n$ is a free algebra over $\cComplex$ with generators $\left(p_j\left(X_1,\dots,X_n\right)\right)_{j=1}^{\infty}$. Consider the isomorphism of rings $\Lambda_n \to \Lambda_n$ given by mapping $$p_j\left(X_1,\dots,X_n\right) \mapsto \left(1-t^j\right)^{-1} p_j\left(X_1,\dots,X_n\right).$$
The image of $\qHallLittlewood_\lambda\left(X_1,\dots,X_n; t\right)$ under this isomorphism is called \emph{the transformed Hall--Littlewood polynomial}, which we denote by $\hHallLittlewood_{\lambda}\left(X_1,\dots,X_n;t\right)$. We have that for any $t \in \cComplex$ that is not a root of unity, the bases $\left(\pHallLittlewood_{\lambda}\left(X_1,\dots,X_n;t\right) \mid \lengthof\left(\lambda\right) \le n\right)$ and $\left(\hHallLittlewood_{\mu}\left(X_1,\dots,X_n;t\right) \mid \lengthof\left(\lambda\right) \le n\right)$ are dual with respect to the inner product $\innerproduct{\cdot}{\cdot}$, i.e., $$\innerproduct{\pHallLittlewood_{\lambda}\left(X_1,\dots,X_n;t\right)}{\hHallLittlewood_{\mu}\left(X_1,\dots,X_n;t\right)} = \delta_{\lambda, \mu}.$$

Finally, we also need the \emph{modified Hall--Littlewood polynomial}, defined by the formula
$$\htHallLittlewood_{\lambda}\left(X_1,\dots,X_n;t\right)= t^{\nOfPartition\left(\lambda\right)} \hHallLittlewood_{\lambda}\left(X_1,\dots,X_n;t^{-1}\right),$$
where for a partition $\lambda = \left(\lambda_1,\dots,\lambda_{\ell}\right)$, $$\nOfPartition\left(\lambda\right) = \sum_{j=1}^{\ell} \left(j-1\right)\lambda_j.$$
We also define $$\ptHallLittlewood_{\lambda}\left(X_1,\dots,X_n;t\right)= t^{-\nOfPartition\left(\lambda\right)} \pHallLittlewood_{\lambda}\left(X_1,\dots,X_n;t^{-1}\right).$$ It is clear that $(\ptHallLittlewood_{\lambda}\left(X_1,\dots,X_n;t\right))_{\lambda}$ and $(\htHallLittlewood_{\mu}\left(X_1,\dots,X_n;t\right))_{\mu}$ are dual with respect to the inner product $\innerproduct{\cdot}{\cdot}$.

The polynomials $\pHallLittlewood_{\lambda}\left(X_1,\dots,X_n;t\right)$, $\qHallLittlewood_{\lambda}\left(X_1,\dots,X_n;t\right)$, $\hHallLittlewood_{\lambda}\left(X_1,\dots,X_n;t\right)$ and their variants are all homogeneous polynomials of degree $\sizeof{\lambda}$.

\subsubsection{Flag formula for modified Hall--Littlewood polynomials}\label{subsec:flag-formula-for-modified-hall-littlewood-polynomials}

A \emph{weak composition} of length $\ell$ of a positive integer $b$ is a sequence $\left(b_1,\dots,b_{\ell}\right)$ of non-negative integers such that $b_1 + \dots + b_{\ell} = b$.

Let $a$ and $b$ be positive integers. Given a weak composition $\lambda = \left(b_1,\dots,b_{\ell}\right)$ of $b$, a \emph{weak flag} $\mathcal{F}$ in $\finiteFieldExtension{a}^b$ of type $\lambda$ is a sequence of $\finiteFieldExtension{a}$-linear subspaces
$$\mathcal{F} \colon 0 \subset V_1 \subset \dots \subset V_{\ell} = \finiteFieldExtension{a}^b$$ such that for every $1 \le j \le \ell$,
$$\dim_{\finiteFieldExtension{a}} V_j = b_1 + \dots + b_j.$$
The group $\GL_b\left(\finiteFieldExtension{a}\right)$ acts on the set of weak flags of type $\lambda$ in $\finiteFieldExtension{a}^b$ by the rule
$$h \mathcal{F} \colon 0 \subset h V_1 \subset \dots \subset h V_{\ell} = \finiteFieldExtension{a}^b,$$
where $h \in \GL_b\left(\finiteFieldExtension{a}\right)$.

We have the following formula for modified Hall--Littlewood polynomials evaluated at $t = q^a$ (see~\cite[Proposition 2.3]{Ram2023}). For any partition $\mu$ with $\lengthof\left(\mu\right) \le n$, we have
$$\htHallLittlewood_{\mu}\left(X_1,\dots,X_n;q^a\right) = \sum_{\lambda = \left(b_1,\dots,b_{n}\right)} \#\left\{ \mathcal{F} \text{ weak flag in } \finiteFieldExtension{a}^b \text{ of type } \lambda \mid J_{\mu}\left(1\right) \mathcal{F} = \mathcal{F} \right\} X_1^{b_1} \dots X_{n}^{b_{n}},$$
where the sum is over all weak compositions $\lambda = \left(b_1,\dots,b_{n}\right)$ of $b$ of length $n$.

\begin{example}\label{example:modified-hall-littlewood-for-k-is-h_k}
	If $\mu = \left(b\right)$ then for any $0 \le j \le b$ there exists a unique subspace of $\finiteFieldExtension{a}^b$ of dimension $j$ that is invariant under the action of $J_{\mu}\left(1\right) = J_{\left(b\right)}\left(1\right)$. Thus the modified Hall--Littlewood corresponding to $\mu = \left(b\right)$ is the complete homogeneous polynomial of degree $b$, i.e., $$\htHallLittlewood_{\left(b\right)}\left(X_1,\dots,X_n; t\right) = h_b\left(X_1,\dots,X_n\right) = \sum_{\substack{i_1, \dots, i_n \ge 0 \\
			i_1 + \dots + i_n = b}} X_1^{i_i} \dots X_n^{i_n}.$$
\end{example}
\begin{remark}\label{rem:hall-littlewood-in-terms-of-green-polynomials}
	Modified Hall--Littlewood polynomials are closely related to Green polynomials~\cite[Pages 246-248]{macdonald1998symmetric} via the expansion
	$$\tilde{\hHallLittlewood}_{\mu}\left(X_1,\dots,X_n;t\right) = \sum_{\lambda \vdash \sizeof{\mu}} \frac{Q_{\lambda}^{\mu}\left(t\right)}{z_{\lambda}} p_{\lambda}\left(X_1,\dots,X_k\right).$$
	See~\cite[Section 4.3]{Zelingher2023}
\end{remark}

\subsubsection{Symmetric functions}
We form the ring of symmetric functions $\Lambda$ as in~\cite[Pages 17-19]{macdonald1998symmetric}. For any $k \ge 0$, let $\Lambda_n^k$ be the $\cComplex$-linear subspace of $\Lambda_n$ spanned by homogeneous polynomials of degree $k$. Then $\Lambda_n$ is a graded ring with respect to the grading $\Lambda_n = \bigoplus_{k \ge 0} \Lambda_n^k$. Let $\Lambda^k = \displaystyle \lim_{\longleftarrow} \Lambda^k_n$ with respect to the transition maps $\Lambda^k_n \to \Lambda^k_m$ for $n > m$ that evaluate elements of $\Lambda_n^k$ at $X_{m+1} = \dots = X_n = 0$. We denote $\Lambda = \bigoplus_{k \ge 0} \Lambda^k$ and call $\Lambda$ the \emph{ring of symmetric functions in $X = \left(X_1,X_2,\dots\right)$}. The space $\Lambda$ is equipped with an inner product $\innerproduct{\cdot}{\cdot}$ compatible with the transition maps.

For any partition $\lambda$, and any $n > m \ge \lengthof\left(\lambda\right)$, we have that the families of polynomials from the previous section are mapped to themselves under the transition maps, i.e., under the transition map $$s_\lambda\left(X_1,\dots,X_n\right) \mapsto s_\lambda\left(X_1,\dots,X_m\right),$$ and similarly for the other polynomials. Hence these families define elements in the ring $\Lambda$. We denote the corresponding elements in $\Lambda$ by $p_{\mu} = p_{\mu}\left(X\right)$, $s_{\lambda} = s_{\lambda}\left(X\right)$, $\pHallLittlewood_{\lambda}\left(X; t\right)$, $\hHallLittlewood_{\lambda}\left(X;t\right)$, etc. We have that $\left(s_{\lambda}\left(X\right)\right)_{\lambda}$ forms a Hamel basis of $\Lambda$ that is orthonormal with respect to the inner product $\innerproduct{\cdot}{\cdot}$. Similarly, for any $t \in \cComplex$ that is not a root of unity, each of $\left(\pHallLittlewood_{\lambda}\left(X; t\right)\right)_{\lambda}$ and $\left(\hHallLittlewood_{\mu}\left(X; t\right)\right)_{\mu}$ forms a basis for $\Lambda$, and these bases are dual with respect to the inner product $\innerproduct{\cdot}{\cdot}$.

For our purposes, we will also need the completion $\hat{\Lambda}$ of $\Lambda$ with respect to filtration by degree. The ring $\hat{\Lambda}$ can be thought of as the ring of symmetric formal power series with infinitely many variables.

\subsubsection{Macdonald's correspondence}
In this section, we review Macdonald's correspondence. Our reference is~\cite{Macdonald80}.

For every $k \ge 1$, let $\charactergroup{k}$ be the character group of $\multiplicativegroup{\finiteFieldExtension{k}}$, consisting of all characters $\alpha \colon \multiplicativegroup{\finiteFieldExtension{k}} \to \multiplicativegroup{\cComplex}$. Consider the direct limit $$\Gamma = \lim_{\longrightarrow} \charactergroup{k}$$ with respect to the transition maps $\alpha_m \mapsto \alpha_m \circ \FieldNorm{n}{m}$ for any $m \mid n$ and any $\alpha_m \in \charactergroup{m}$. Since the norm maps $\FieldNorm{n}{m}$ are surjective, these transition maps are injective. If $\alpha \in \multiplicativegroup{\finiteFieldExtension{n}}$ for some $n \ge 1$, let $\alpha_{\Gamma} \in \Gamma$ denote the image of $\alpha$ in $\Gamma$. Since the transition maps are injective, the degree of any character is preserved under the transition maps, and therefore we have the notion of the degree of an element $\theta \in \Gamma$, which is simply the size of its Frobenius orbit, which by definition is the set $\left[\theta\right] \coloneq \{ \Frobenius^j \theta \mid j \ge 0 \}$, where $\Frobenius \theta = \theta^q$.

A \emph{Macdonald parameter} is an assignment $\phi \colon \Gamma \to \Partitions$, such that:
\begin{enumerate}
	\item For all but finitely many $\theta \in \Gamma$, $\phi\left(\theta\right) = \left(\right)$.
	\item $\phi$ is constant on Frobenius orbits, i.e., $\phi\left(\Frobenius \theta\right) = \phi\left(\theta\right)$ for every $\theta \in \Gamma$.
\end{enumerate}
Given a Macdonald parameter $\phi$, we define its size by
$$\sizeof{\phi} = \sum_{\theta \in \Gamma} \sizeof{\phi\left(\theta\right)} = \sum_{\left[\theta\right] \in \Frobenius \backslash \Gamma} \sizeof{\phi\left(\left[\theta\right]\right)} \cdot \deg \left[\theta\right].$$
Here, $\Frobenius \backslash \Gamma$ is the set consisting of all Frobenius orbits in $\Gamma$.

Macdonald established a bijection between equivalence classes of irreducible representations of $\GL_n\left(\finiteField\right)$ and Macdonald parameters of size $n$. This bijection is known as \emph{Macdonald's correspondence}.

Given an irreducible representation of $\GL_n\left(\finiteField\right)$ with parameter $\varphi$ and Macdonald parameter $\phi$, the parameters are related as follows. If $\theta \in \Gamma$ is of degree $d$, then $\theta$ is the image in $\Gamma$ of some regular character $\alpha \in \charactergroup{d}$, i.e., $\theta = \alpha_{\Gamma}$. Since $\alpha$ is a regular character, its Frobenius orbit $\left[\alpha\right]$ corresponds to an irreducible cuspidal representation $\sigma_{\left[\alpha\right]}$ of $\GL_d\left(\finiteField\right)$. We have the relation $$\varphi\left(\sigma_{\left[\alpha\right]}\right) = \phi\left(\left[\alpha_{\Gamma}\right]\right).$$
Hence the translation between $\varphi$ and $\phi$ is simply by replacing each irreducible cuspidal representation $\sigma$ with (the image in $\Gamma$ of) the Frobenius orbit it corresponds to.

\subsubsection{Macdonald's formulation}\label{subsec:macdonald-formulation}

For any Frobenius orbit $\left[\xi\right]$ of $\multiplicativegroup{\algebraicClosure{\finiteField}}$, let $\Lambda^{\left[\xi\right]}$ be a copy of $\Lambda$. This can be thought of as follows: $\Lambda^{\left[\xi\right]}$ is the ring of symmetric functions in other variables, labeled $(X_j^{\left[\xi\right]})_{j=1}^{\infty}$. We will occasionally use this realization of $\Lambda^{\left[\xi\right]}$. If $f \in \Lambda$, we denote by $f^{\left[\xi\right]}$ its image in $\Lambda^{\left[\xi\right]}$.

Consider the ring $\Lambda^{\finiteField} = \bigotimes_{\left[\xi\right]} \Lambda^{\left[\xi\right]}$, where $\left[\xi\right]$ runs over all Frobenius orbits of $\multiplicativegroup{\algebraicClosure{\finiteField}}$, equipped with the inner product defined on pure tensors by taking the product of inner products for each component.

Let $\CharacteristicMapF \colon \ClassFunctionsGLRing \to \Lambda^{\finiteField}$ be the following linear map. Let $C$ be the conjugacy class of $h$ given in \Cref{thm:conjugacy-class-classification}. Define \begin{equation}\label{eq:definition-of-characteristic-map-for-conjugacy-classes}
	\CharacteristicMapF \left(\delta_C\right) = \prod_{j = 1}^r \ptHallLittlewood^{\left[\xi_j\right]}_{\mu_j}\left(X; q^{a_j}\right).
\end{equation}
Macdonald proved that $\CharacteristicMapF$ is an isomorphism of $\cComplex$-algebras that is also an isometry with respect to the inner products.

This isomorphism can be used to describe Green's formula for the characters of the irreducible representations of $\GL_n\left(\finiteField\right)$ in a compact way. For any Frobenius orbit $\left[\theta\right]$ of $\theta 
\in \Gamma$, define a copy $\Lambda^{\left[\theta\right]}$ of $\Lambda$. Consider the tensor product $\Lambda^{\widehat{\finiteField}} = \bigotimes_{\left[\theta\right]} \Lambda^{\left[\theta\right]}$, equipped with an inner product similarly to above. We have an isomorphism of rings $\Lambda^{\widehat{\finiteField}} \to \Lambda^{\finiteField}$ defined on the generators $p_k^{\left[\theta\right]}$ by sending \begin{equation}\label{eq:homomomorphism-from-character-group-to-to-multiplicative-group}
	p_k^{\left[\theta\right]} \mapsto \left(-1\right)^{k \deg \theta - 1} \sum_{\xi \in \multiplicativegroup{\finiteFieldExtension{k \deg \theta}}} \alpha\left(\FieldNorm{k \deg \theta}{\deg \theta}\left(\xi\right)\right) p_{\frac{k \deg \theta}{\deg \xi}}^{\left[\xi\right]},
\end{equation}
where $\alpha \in \charactergroup{\deg \theta}$ is such that $\theta = \alpha_{\Gamma}$. Note that this map is independent of the representative of the Frobenius orbit $\left[\theta\right]$. It turns out that this map is also an isometry with respect to the inner products.

The inverse isomorphism is given as follows. It is defined on the generators $p^{[\xi]}_k$ by sending
\begin{equation}\label{eq:inverse-homomomorphism-from-character-group-to-to-multiplicative-group}
	p_k^{\left[\xi\right]} \mapsto 
	\frac{\left(-1\right)^{k \deg \xi-1}}{q^{k \deg \xi} - 1}\sum_{\alpha \in \charactergroup{k \deg \xi}} \alpha^{-1}\left(\xi\right) p_{\frac{k \deg \xi}{\deg \alpha}}^{\left[\alpha\right]},
\end{equation}
where for $f \in \Lambda$ and $\alpha \in \charactergroup{k}$ we denote $f^{\left[\alpha\right]} = f^{\left[\alpha_{\Gamma}\right]}$, i.e., we replace $\left[\alpha\right]$ with the Frobenius orbit $\left[\alpha_{\Gamma}\right]$ of its image in $\Gamma$. Notice again that this is independent of the choice of the representative for the Frobenius orbit $\left[\xi\right]$. We will denote the composition $$\ClassFunctionsGLRing \to \Lambda^{\finiteField} \to \Lambda^{\widehat{\finiteField}}$$ by $\CharacteristicMapFHat$.

Suppose that $\phi \colon \Gamma \to \Partitions$ is a Macdonald parameter for an irreducible representation $\pi_{\phi}$. Then the character $\trace \pi_{\phi}$ can be computed as follows. It is the image of the element \begin{equation}\label{eq:macdonald-parameter-schur-polynomial}
	\prod_{\left[\theta\right] \in \Frobenius \backslash \Gamma} s_{\phi\left(\left[\theta\right]\right)}^{\left[\theta\right]}
\end{equation} under the composition of maps $$\Lambda^{\widehat{\finiteField}} \to \Lambda^{\finiteField} \to \ClassFunctionsGLRing.$$
In order to compute $\trace \pi_{\chi}\left(C\right)$ for a conjugacy class $C$, one needs to expand the image of \eqref{eq:macdonald-parameter-schur-polynomial} under the isomorphism $\Lambda^{\widehat{\finiteField}} \to \Lambda^{\finiteField}$ with respect to the basis $\left(\CharacteristicMapF\left(\delta_{C'}\right)\right)_{C'}$ and to compute the coefficient of $\CharacteristicMapF\left(\delta_C\right)$ (given by \eqref{eq:definition-of-characteristic-map-for-conjugacy-classes}) in this expansion. To do so, one needs to expand the Schur polynomials $s^{\left[\theta\right]}_{\phi\left(\left[\theta\right]\right)}$ with respect to the generators $p_k^{\left[\theta\right]}$, to apply the isomorphism $\Lambda^{\widehat{\finiteField}} \to \Lambda^{\finiteField}$, then to express the products of $p_{k}^{\left[\xi\right]}$ in terms of products of polynomials $p_{\mu}^{[\xi]}$ (this is the most difficult step), and then, finally, to pair with the product of modified Hall--Littlewood polynomials $$\prod_{j=1}^r \tilde{\mathrm{H}}_{\mu_j}^{\left[\xi_j\right]}\left(X; q^{a_j}\right),$$
where one uses \Cref{rem:hall-littlewood-in-terms-of-green-polynomials} to express the modified Hall--Littlewood polynomials in terms of products of $p_{\lambda_i}^{\left[\xi_i\right]}$'s. The result is Green's formula.

\subsubsection{Dual space and Macdonald's formulation}
The space $\ClassFunctionsGLRing$ can be identified with the space $\CompactlyClassFunctionsGLRing$ consisting of functions $f \colon \bigcup_{n=0}^\infty \GL_n\left(\finiteField\right) \to \cComplex$ invariant under conjugation, supported on finitely many elements. Using the inner product $\innerproduct{\cdot}{\cdot}$ on $\ClassFunctionsGLRing$, the dual space $\DualClassFunctionsGLRing$ can be identified with the space $\FullClassFunctionsGL$ consisting of all functions $f \colon \bigcup_{n=0}^\infty \GL_n\left(\finiteField\right) \to \cComplex$ invariant under conjugation. Using this identification, the characteristic map $\CharacteristicMapF \colon \ClassFunctionsGLRing \to \Lambda^{\finiteField}$ extends to a map from the dual space $\DualClassFunctionsGLRing$ to the completion $\hat{\Lambda}^{\finiteField}$ of $\Lambda^{\finiteField}$ with respect to filtration by degree. We will denote this map by $\CharacteristicMapF$, and think about it as a map from $\FullClassFunctionsGL$ to the power series analog of $\Lambda^{\finiteField}$. Similarly, the map $\CharacteristicMapFHat \colon \ClassFunctionsGLRing \to \Lambda^{\widehat{\finiteField}}$ extends to a map $\DualClassFunctionsGLRing \to \hat{\Lambda}^{\widehat{\finiteField}}$, which we keep denoting by $\CharacteristicMapFHat$.

\subsection{Expression of exotic matrix Kloosterman sums in terms of Hall--Littlewood polynomials} \label{subsec:hall-littlewood-jordan-matrices}
Let $\lambda \vdash k$ and $\alpha \colon \multiplicativegroup{\finiteFieldExtension{\lambda}} \to \multiplicativegroup{\cComplex}$ be a character. The following theorem allows us to express exotic matrix Kloosterman sums using modified Hall--Littlewood polynomials and the roots of $\ExoticKloosterman\left(\alpha, \fieldCharacter\right)$.
\begin{theorem}\label{thm:general-formula-for-exotic-kloosterman-sum-of-conjugacy-class}
	Let $n = c$, and suppose that $h$ is as in \Cref{thm:conjugacy-class-classification}. Then $$\ExoticKloosterman\left(\alpha, \fieldCharacter, h\right) = \left(-1\right)^{\left(k-1\right) c} q^{\left(k-1\right) \binom{c}{2}} \prod_{j=1}^r \tilde{\mathrm{H}}_{\mu_j}\left(\omega_{1,\left[\xi_j\right]},\dots, \omega_{k,\left[\xi_j\right]}; q^{a_j}\right),$$
	where for every $1 \le j \le r$, the complex numbers $\omega_{1, \left[\xi_j\right]},\dots,\omega_{k,\left[\xi_j\right]}$ are the roots of $\ExoticKloosterman_{\finiteFieldExtension{a_j}}\left(\alpha, \fieldCharacter\right)$ at $\xi_j$. Equivalently, $$\ExoticKloostermanNormalized\left(\alpha, \fieldCharacter, h\right) = \prod_{j=1}^r \tilde{\mathrm{H}}_{\mu_j}\left(\left(-1\right)^{\left(k-1\right)a_j} \omega_{1, \left[\xi_j\right]}^{\ast},\dots, \left(-1\right)^{\left(k-1\right)a_j} \omega_{k, \left[\xi_j\right]}^{\ast}; q^{a_j}\right),$$
	where for every $1 \le j \le r$, the complex numbers $\omega_{1,\left[\xi_j\right]}^{\ast}, \dots, \omega_{k,\left[\xi_j\right]}^{\ast}$ are the normalized roots of $\ExoticKloosterman_{\finiteFieldExtension{a_j}}\left(\alpha, \fieldCharacter\right)$ at $\xi_j$.
\end{theorem}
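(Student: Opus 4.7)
My plan is to prove the identity via Macdonald's characteristic maps, organizing the computation around the global class function
$$\KloostermanGlobalClassFunction \in \FullClassFunctionsGL, \qquad \KloostermanGlobalClassFunction(h) = \ExoticKloostermanNormalized(\alpha, \fieldCharacter, h),$$
and identifying its image in the ring $\Lambda^{\finiteField}$ explicitly. The target identity then amounts to expanding this image in the Hall--Littlewood basis.

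The first step is to compute $\CharacteristicMapFHat(\KloostermanGlobalClassFunction) \in \hat{\Lambda}^{\widehat{\finiteField}}$. The defining relation $\TwistedGaussSum{\pi}{\alpha}{\fieldCharacter} = \GKGaussSumScalar{\pi}{\alpha}{\fieldCharacter} \cdot \idmap_\pi$ translates, after taking traces, into the pairing
$$\sum_{h \in \GL_c\left(\finiteField\right)} \KloostermanGlobalClassFunction(h) \, \trace \pi(h) \; = \; q^{\frac{(k-1)c^{2}}{2}} \cdot q^{\frac{kc^{2}}{2}} \cdot \dim \pi \cdot \GKGaussSumScalar{\pi}{\alpha}{\fieldCharacter}.$$
By \Cref{thm:exotic-gauss-sum-of-composite-character}, the scalar $\GKGaussSumScalar{\pi}{\alpha}{\fieldCharacter}$ factors as a product over the cuspidal support of $\pi$. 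Under Macdonald's correspondence, the cuspidal support corresponds precisely to the support of the Macdonald parameter $\phi$ of $\pi$, and the variables $p_k^{[\theta]}$ in $\Lambda^{\widehat{\finiteField}}$ are naturally indexed by Frobenius orbits in $\Gamma$. Hence the factorization forces $\CharacteristicMapFHat(\KloostermanGlobalClassFunction)$ to take the shape of a product $\prod_{[\theta]} F_{[\theta]}$, with $F_{[\theta]} \in \hat{\Lambda}^{[\theta]}$ depending only on the Frobenius orbit, and with $F_{[\theta]}$ expressible as an explicit Cauchy-type generating series whose coefficients involve exotic Gauss sums $\GaussSumCharacter{\lambda,m}{\alpha}{\theta}{\fieldCharacter}$.

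The second step transports this product across the ring isomorphism $\Lambda^{\widehat{\finiteField}} \to \Lambda^{\finiteField}$ given by \eqref{eq:homomomorphism-from-character-group-to-to-multiplicative-group}, yielding a product $\prod_{[\xi]} G_{[\xi]}$ with $G_{[\xi]} \in \hat{\Lambda}^{[\xi]}$. Here the key input is the identity
$$(\omega^{\ast}_{1,[\xi]})^{m} + \cdots + (\omega^{\ast}_{k,[\xi]})^{m} \; = \; (-1)^{k-1}\,\ExoticKloostermanNormalized_{m, \finiteFieldExtension{a}}(\alpha, \fieldCharacter, \xi)$$
from \eqref{eq:power-sums-of-roots-at-xi}, combined with \eqref{eq:gauss-sum-as-kloosterman-sum-transform} relating exotic Gauss sums to Mellin transforms of exotic Kloosterman sums. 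Together these expansions pin down $G_{[\xi]}$ as the generating series
$$G_{[\xi]}\left(X\right) \;=\; \prod_{j=1}^{k} \frac{1}{\prod_{i} \bigl(1 - (-1)^{(k-1)a}\,\omega^{\ast}_{j,[\xi]}\, X_{i}^{[\xi]}\bigr)},$$
the standard Cauchy kernel whose coefficients in the $\htHallLittlewood$-basis are the modified Hall--Littlewood polynomials evaluated at the $\omega^{\ast}_{j,[\xi]}$'s.

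The third step extracts values at conjugacy classes. By the definition \eqref{eq:definition-of-characteristic-map-for-conjugacy-classes} of $\CharacteristicMapF$, $\delta_{\mathrm{Cl}(h)}$ maps to $\prod_{j}\ptHallLittlewood^{[\xi_{j}]}_{\mu_{j}}(X;q^{a_{j}})$; by duality this family is paired with the $\htHallLittlewood$-basis, so $\KloostermanGlobalClassFunction(h)$ is read off as the coefficient of $\prod_{j}\ptHallLittlewood^{[\xi_{j}]}_{\mu_{j}}$ in $\prod_{[\xi]}G_{[\xi]}$, and this coefficient is precisely $\prod_{j}\htHallLittlewood_{\mu_{j}}\bigl((-1)^{(k-1)a_{j}}\omega^{\ast}_{1,[\xi_{j}]},\dots,(-1)^{(k-1)a_{j}}\omega^{\ast}_{k,[\xi_{j}]};q^{a_{j}}\bigr)$. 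Unnormalizing via $\ExoticKloosterman = q^{(k-1)c^{2}/2}\ExoticKloostermanNormalized$ and homogeneity of $\htHallLittlewood_{\mu_{j}}$ in degree $|\mu_j|$, the prefactor $(-1)^{(k-1)c}q^{(k-1)\binom{c}{2}}$ materializes from collecting the normalizations.

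\emph{Main obstacle.} The hard part is Step 2: chasing the product $\prod_{[\theta]}F_{[\theta]}$ through the isomorphism \eqref{eq:homomomorphism-from-character-group-to-to-multiplicative-group}, which mixes Frobenius orbits of differing degrees and carries the sign $(-1)^{k\deg\theta-1}$. One must verify that after summing over $\xi \in \multiplicativegroup{\finiteFieldExtension{k\deg\theta}}$ the exotic Gauss sum coefficients assemble into the clean Cauchy kernel above; this is essentially a Fourier-inversion identity between characters and elements of $\multiplicativegroup{\algebraicClosure{\finiteField}}$, and delivers the precise sign $(-1)^{(k-1)a_{j}}$ appearing in the statement. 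Everything else is formal bookkeeping around the two characteristic maps, the duality of $\ptHallLittlewood$ and $\htHallLittlewood$, and homogeneity of the modified Hall--Littlewood polynomials.
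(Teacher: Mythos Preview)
Your plan is essentially the paper's proof run in the opposite direction, and the key ingredients you identify (the Cauchy-type identity, the duality of $\ptHallLittlewood$ and $\htHallLittlewood$, the power-sum relation \eqref{eq:power-sums-of-roots-at-xi}, the Mellin relation \eqref{eq:gauss-sum-as-kloosterman-sum-transform}, and the transport through \eqref{eq:homomomorphism-from-character-group-to-to-multiplicative-group}) are exactly the ones used. The paper, however, defines $\KloostermanGlobalClassFunction$ to be the \emph{right-hand side} of the claimed identity (the Hall--Littlewood product) rather than $\ExoticKloostermanNormalized$, and then verifies that this candidate satisfies the defining relation of $\ExoticKloostermanNormalized$ against every irreducible $\pi$. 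This ordering makes your ``Step 3'' trivial (it is the definition of $\KloostermanGlobalClassFunction$), makes your ``Step 2'' the content of \Cref{thm:macdonald-exotic-kloosterman-sum-geometric-basis}, and pushes all the work into computing $\CharacteristicMapFHat\KloostermanGlobalClassFunction$ (\Cref{thm:global-matrix-kloosterman-sum-in-character-basis}) and then expanding in the Schur basis to compare with $\GKGaussSumScalar{\pi}{\alpha}{\fieldCharacter}$.

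There is one genuine gap in your Step 1. You write that the factorization of $\GKGaussSumScalar{\pi}{\alpha}{\fieldCharacter}$ over the cuspidal support ``forces $\CharacteristicMapFHat(\KloostermanGlobalClassFunction)$ to take the shape of a product $\prod_{[\theta]} F_{[\theta]}$.'' But the pairing $\innerproduct{\KloostermanGlobalClassFunction}{\trace\pi^{\vee}}$ is not just the Gauss sum: it is $\frac{q^{c^{2}/2}}{\sizeof{\GL_{c}(\finiteField)}}\dim\pi\cdot\GKGaussSumScalar{\pi}{\alpha}{\fieldCharacter}$, and the factor $\dim\pi$ depends on the full Macdonald parameter, not only on its support. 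To obtain the product form you need the dimension formula $\dim\pi = \Phi_{c}(q)\prod_{j} s_{\mu_{j}}(q^{-c_{j}},q^{-2c_{j}},\dots)$ (principal specialization of Schur polynomials, \cite[p.~286]{macdonald1998symmetric}); this is precisely what lets the paper recognize the Schur expansion of $\CharacteristicMapFHat\KloostermanGlobalClassFunction$ as a Cauchy kernel in the auxiliary variables $Y_{j}^{[\beta]}=q^{-j\deg\beta}$. Without this ingredient your Step 1 does not close, and it is worth flagging explicitly since it is independent of the Hasse--Davenport and Fourier-inversion bookkeeping you correctly identify as the main obstacle.
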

We will prove this theorem in the next section using Macdonald's characteristic maps. Using \Cref{example:modified-hall-littlewood-for-k-is-h_k}, we obtain a formula for regular elements, expressing their exotic matrix Kloosterman sums in terms of symmetric powers of an exotic Kloosterman sheaf.
\begin{corollary}\label{cor:formula-for-regular-elements}
	If $h = \diag\left(J_{\left(b_1\right)}\left(h_{\xi_1}\right), \dots, J_{\left(b_r\right)}\left(h_{\xi_r}\right)\right) \in \GL_c\left(\finiteField\right)$ is a regular element, then
	$$\ExoticKloosterman^{\ast}\left(\finiteFieldExtension{\lambda}, \alpha, \fieldCharacter, h\right) = \left(-1\right)^{\left(k-1\right) c} q^{-\frac{\left(k-1\right)c}{2}} \prod_{j=1}^r \trace\left(\Frobenius\mid_{\xi_j}, \SymmetricPower^{b_j} \ExoticKloosterman_{\finiteFieldExtension{a_j}}\left(\finiteFieldExtension{\lambda}, \alpha, \fieldCharacter\right)_{\xi_j}\right),$$
	where $\SymmetricPower^{b_j} \ExoticKloosterman\left(\alpha, \fieldCharacter\right)_{\xi_j}$ is the geometric stalk at $\xi_j$ of the sheaf $\SymmetricPower^{b_j} \ExoticKloosterman_{\finiteFieldExtension{a_j}}\left(\finiteFieldExtension{\lambda}, \alpha, \fieldCharacter\right)$.
\end{corollary}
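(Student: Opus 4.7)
The plan is to obtain this formula as a direct specialization of \Cref{thm:general-formula-for-exotic-kloosterman-sum-of-conjugacy-class} to the regular case, where each partition satisfies $\mu_j = \left(b_j\right)$. First, by \Cref{example:modified-hall-littlewood-for-k-is-h_k}, the modified Hall--Littlewood polynomial $\htHallLittlewood_{(b_j)}\left(X_1,\dots,X_k; q^{a_j}\right)$ coincides with the complete homogeneous symmetric polynomial $h_{b_j}\left(X_1,\dots,X_k\right)$, independently of the parameter $q^{a_j}$. This is the only special feature of regular elements that will be used.

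Next, I would invoke the classical identity from symmetric function theory stating that, for any endomorphism $F$ of a finite dimensional vector space over $\cComplex$ with eigenvalues $\omega_1, \dots, \omega_k$,
$$h_{b_j}\left(\omega_1, \dots, \omega_k\right) = \trace\left(\SymmetricPower^{b_j} F\right),$$
both sides being the character of the $b_j$-th symmetric power of the standard representation of $\GL_k\left(\cComplex\right)$ evaluated at $F$. Specializing to the Frobenius operator $F = \Frobenius_{\xi_j}$ acting on the geometric stalk $\mathcal{K}_{\xi_j}$ of $\mathcal{K} = \ExoticKloosterman_{\finiteFieldExtension{a_j}}\left(\finiteFieldExtension{\lambda}, \alpha, \fieldCharacter\right)$ at $\xi_j$, whose eigenvalues on this rank-$k$ local system are the roots $\omega_{1,\left[\xi_j\right]}, \dots, \omega_{k,\left[\xi_j\right]}$ of $\ExoticKloosterman_{\finiteFieldExtension{a_j}}\left(\alpha, \fieldCharacter\right)$ at $\xi_j$ (see \Cref{subsec:roots-of-kloosterman-sum-at-xi}), this yields
$$\htHallLittlewood_{(b_j)}\left(\omega_{1,\left[\xi_j\right]},\dots,\omega_{k,\left[\xi_j\right]};q^{a_j}\right) = \trace\left(\Frobenius_{\xi_j} \mid \SymmetricPower^{b_j} \ExoticKloosterman_{\finiteFieldExtension{a_j}}\left(\finiteFieldExtension{\lambda},\alpha,\fieldCharacter\right)_{\xi_j}\right).$$

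To conclude, I would substitute this identity into \Cref{thm:general-formula-for-exotic-kloosterman-sum-of-conjugacy-class}, divide by $q^{\left(k-1\right)c^2/2}$ to pass to the normalized version $\ExoticKloostermanNormalized$, and simplify using
$$q^{-\frac{\left(k-1\right)c^2}{2}} \cdot q^{\left(k-1\right)\binom{c}{2}} = q^{-\frac{\left(k-1\right)c}{2}},$$
obtaining exactly the stated formula. There is no substantive obstacle here: all of the hard work is already in the theorem, and the corollary amounts to unpacking \Cref{example:modified-hall-littlewood-for-k-is-h_k} together with the elementary identification of $h_b$ with the character of a symmetric power.
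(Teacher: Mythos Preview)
Your proposal is correct and matches the paper's own argument: the corollary is obtained precisely by specializing \Cref{thm:general-formula-for-exotic-kloosterman-sum-of-conjugacy-class} to $\mu_j=(b_j)$, invoking \Cref{example:modified-hall-littlewood-for-k-is-h_k} to replace $\htHallLittlewood_{(b_j)}$ by $h_{b_j}$, and identifying $h_{b_j}$ of the Frobenius eigenvalues with the trace on the $b_j$-th symmetric power. The normalization step you carry out is exactly the passage from $\ExoticKloosterman$ to $\ExoticKloostermanNormalized$.
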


Combining Theorems \ref{thm:bessel-speh-is-an-exotic-kloosterman-sum-generic} and \ref{thm:general-formula-for-exotic-kloosterman-sum-of-conjugacy-class}, we arrive at the following explicit formula for the special values $\specialBesselSpeh{\tau}$.
\begin{theorem}\label{thm:explicit-formula-for-special-bessel-speh-function-as-hall-littlewood-polynomial}
	Let $\tau$ be an irreducible generic representation of $\GL_k\left(\finiteField\right)$ with cuspidal support $\left\{\tau_1,\dots,\tau_s\right\}$, where $k_1 + \dots + k_s = k$ and for every $1 \le j \le s$, $\tau_j$ is an irreducible cuspidal representation of $\GL_{k_j}\left(\finiteField\right)$ corresponding to the Frobenius orbit of a regular character $\alpha_j \colon \multiplicativegroup{\finiteFieldExtension{k_j}} \to \multiplicativegroup{\cComplex}$. Denote $\alpha = \alpha_1 \times \dots \times \alpha_s$. Then for $h = \diag\left(J_{\mu_1}\left(h_{\xi_1}\right),\dots,J_{\mu_r}\left(h_{\xi_r}\right)\right)$ where for every $j$, $\xi_j \in \multiplicativegroup{\finiteFieldExtension{a_j}}$ is of degree $a_j$, and $\mu_j \vdash b_j$ such that $a_1b_1 + \dots + a_rb_r = c$ and such that the Frobenius orbits $\left(\left[\xi_i\right]\right)_{i=1}^r$ are mutually disjoint, we have
	$$\specialBesselSpehNormalized{\tau}\left(h\right) = \prod_{j=1}^r \htHallLittlewood_{\mu_j}\left(\left(-1\right)^{\left(s-1\right)a_j} \omega_{1,\left[\xi_j\right]}^{\ast}, \dots, \left(-1\right)^{\left(s-1\right)a_j} \omega_{k,\left[\xi_j\right]}^{\ast}; q^{a_j}\right),$$
	where for every $1 \le j \le r$, $\omega_{1,\left[\xi_j\right]}^{\ast}$, $\dots$, $\omega_{k,\left[\xi_j\right]}^{\ast}$ are the normalized roots of $\ExoticKloosterman_{\finiteFieldExtension{a}}\left(\alpha, \fieldCharacter\right)$ at $\xi_j$.
\end{theorem}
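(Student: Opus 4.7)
The plan is to derive the formula by directly chaining Theorems \ref{thm:bessel-speh-is-an-exotic-kloosterman-sum-generic} and \ref{thm:general-formula-for-exotic-kloosterman-sum-of-conjugacy-class}, and then reconciling the resulting expression using the homogeneity of modified Hall--Littlewood polynomials. First, I would apply Theorem \ref{thm:bessel-speh-is-an-exotic-kloosterman-sum-generic} to rewrite
\[
	\specialBesselSpehNormalized{\tau}(h) = \left(-1\right)^{\left(k+s\right)c}\,\ExoticKloostermanNormalized\!\left(\alpha^{-1},\fieldCharacter,\left(-1\right)^{k-1}h^{-1}\right).
\]
Next, I would identify the conjugacy data of $\left(-1\right)^{k-1}h^{-1}$: since $h$ is conjugate to $\diag\!\left(J_{\mu_j}(h_{\xi_j})\right)_j$, the matrix $\left(-1\right)^{k-1}h^{-1}$ is conjugate to $\diag\!\bigl(J_{\mu_j}(h_{\eta_j})\bigr)_j$, where $\eta_j = \left(-1\right)^{k-1}\xi_j^{-1}$. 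Each $\eta_j$ still has degree $a_j$, and the Frobenius orbits $\left[\eta_j\right]$ remain mutually disjoint, so Theorem \ref{thm:general-formula-for-exotic-kloosterman-sum-of-conjugacy-class} applies (with $\alpha$ replaced by $\alpha^{-1}$).

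Applying that theorem then gives
\[
	\ExoticKloostermanNormalized\!\left(\alpha^{-1},\fieldCharacter,\left(-1\right)^{k-1}h^{-1}\right)
	= \prod_{j=1}^{r}\htHallLittlewood_{\mu_j}\!\left(\left(-1\right)^{(k-1)a_j}\tilde\omega_{1,[\eta_j]}^{\ast},\dots,\left(-1\right)^{(k-1)a_j}\tilde\omega_{k,[\eta_j]}^{\ast};q^{a_j}\right),
\]
where $\tilde\omega_{i,[\eta_j]}^{\ast}$ are the normalized roots of $\ExoticKloosterman_{\finiteFieldExtension{a_j}}(\alpha^{-1},\fieldCharacter)$ at $\eta_j$. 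Since $\htHallLittlewood_{\mu_j}$ is homogeneous of degree $b_j$, I can pull each factor $\left(-1\right)^{(k-1)a_j}$ out as $\left(-1\right)^{(k-1)a_j b_j}$; taking the product over $j$ and using $\sum a_j b_j=c$ yields an overall scalar $\left(-1\right)^{(k-1)c}$. Combining with the prefactor from Theorem \ref{thm:bessel-speh-is-an-exotic-kloosterman-sum-generic} gives
\[
	\specialBesselSpehNormalized{\tau}(h) = \left(-1\right)^{(k+s)c+(k-1)c}\prod_{j=1}^{r}\htHallLittlewood_{\mu_j}\!\left(\tilde\omega_{1,[\eta_j]}^{\ast},\dots,\tilde\omega_{k,[\eta_j]}^{\ast};q^{a_j}\right) = \left(-1\right)^{(s-1)c}\prod_{j=1}^{r}\htHallLittlewood_{\mu_j}\!\left(\tilde\omega_{i,[\eta_j]}^{\ast};q^{a_j}\right),
\]
where I used $2k\equiv 0\pmod 2$. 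Using homogeneity once more distributes the sign $\left(-1\right)^{(s-1)c}$ back into the variables as $\left(-1\right)^{(s-1)a_j}$ per Hall--Littlewood factor, matching the shape of the desired formula.

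The remaining step is to replace $\tilde\omega_{i,[\eta_j]}^{\ast}$ (the normalized roots of $\ExoticKloosterman_{\finiteFieldExtension{a_j}}(\alpha^{-1},\fieldCharacter)$ at $\left(-1\right)^{k-1}\xi_j^{-1}$) by $\omega_{i,[\xi_j]}^{\ast}$ (the normalized roots of $\ExoticKloosterman_{\finiteFieldExtension{a_j}}(\alpha,\fieldCharacter)$ at $\xi_j$). This is the main obstacle; by the power-sum relation \eqref{eq:power-sums-of-roots-at-xi}, the equality of these multisets is equivalent to the identity $\ExoticKloostermanNormalized_{m,\finiteFieldExtension{a_j}}(\alpha^{-1},\fieldCharacter,\left(-1\right)^{k-1}\xi_j^{-1}) = \ExoticKloostermanNormalized_{m,\finiteFieldExtension{a_j}}(\alpha,\fieldCharacter,\xi_j)$ for all $m\ge 1$. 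I would establish this by invoking a Verdier-duality statement for the exotic Kloosterman sheaf of \Cref{subsec:roots-of-kloosterman-sum-at-xi}, of the form $\mathcal{D}\,\ExoticKloosterman_{\finiteFieldExtension{a_j}}(\alpha,\fieldCharacter)(k-1) \cong \left[(-1)^{k-1}(\cdot)^{-1}\right]^{\ast}\ExoticKloosterman_{\finiteFieldExtension{a_j}}(\alpha^{-1},\fieldCharacter)$ up to an appropriate shift, which follows by combining the standard duality $\mathcal{D}\operatorname{AS}_{\fieldCharacter}\cong \operatorname{AS}_{\fieldCharacter^{-1}}$ (and the change-of-variable $\operatorname{AS}_{\fieldCharacter^{-1}}\cong[-1]^{\ast}\operatorname{AS}_{\fieldCharacter}$), $\mathcal{D}\mathcal{L}_{\alpha}\cong \mathcal{L}_{\alpha^{-1}}$ and proper base change through the norm map. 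Once this identification of normalized roots is in place, the formula in the statement follows.
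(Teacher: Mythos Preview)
Your approach—apply \Cref{thm:bessel-speh-is-an-exotic-kloosterman-sum-generic}, identify the conjugacy type of $(-1)^{k-1}h^{-1}$, then apply \Cref{thm:general-formula-for-exotic-kloosterman-sum-of-conjugacy-class} and track signs via homogeneity—is exactly what the paper does; its entire proof is the single sentence ``Combining Theorems \ref{thm:bessel-speh-is-an-exotic-kloosterman-sum-generic} and \ref{thm:general-formula-for-exotic-kloosterman-sum-of-conjugacy-class}.'' Your sign bookkeeping is correct.

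Where you go beyond the paper is in noticing a genuine discrepancy: the combination yields the normalized roots of $\ExoticKloosterman_{\finiteFieldExtension{a_j}}(\alpha^{-1},\fieldCharacter)$ at $\eta_j=(-1)^{k-1}\xi_j^{-1}$, not of $\ExoticKloosterman_{\finiteFieldExtension{a_j}}(\alpha,\fieldCharacter)$ at $\xi_j$ as the statement is written. The paper does not address this, and in fact its later uses of the theorem (the proof of \Cref{thm:generating-series-identity} and the discussion in \Cref{subsec:formulas-for-spherical-elements}) take the roots at $(-1)^{k-1}\xi^{-1}$, consistent with what your derivation produces rather than with the literal statement. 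So the simplest reading is that the phrase ``at $\xi_j$'' in the statement is a slip, and your proof is already complete once you stop at the formula involving~$\tilde\omega_{i,[\eta_j]}^{\ast}$.

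Your proposed final step via Verdier duality, however, would not establish \emph{equality} of the two multisets of normalized roots. Already for $k=s=1$ one has
\[
\ExoticKloosterman_m(\alpha,\fieldCharacter,\xi)=\alpha(\FieldNorm{m}{1}\xi)\,\fieldCharacter_m(\xi),
\qquad
\ExoticKloosterman_m(\alpha^{-1},\fieldCharacter,\xi^{-1})=\alpha(\FieldNorm{m}{1}\xi)\,\fieldCharacter_m(\xi^{-1}),
\]
which differ whenever $\fieldCharacter_m(\xi)\neq\fieldCharacter_m(\xi^{-1})$. Duality relates the two sheaves, but on Frobenius eigenvalues it gives $\omega\mapsto q^{a(k-1)}\omega^{-1}$ (i.e.\ $\omega^\ast\mapsto (\omega^\ast)^{-1}$), not $\omega^\ast\mapsto\omega^\ast$. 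So that step should be dropped; the remaining argument is the intended one.
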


\subsection{Realization under Macdonald's characteristic maps}
For any Frobenius orbit $\left[\xi\right]$, where $\xi \in \multiplicativegroup{\algebraicClosure{\finiteField}}$, let $\omega_{1, \left[\xi\right]}^{\ast}$, $\dots$, $\omega_{k, \left[\xi\right]}^{\ast}$ be the normalized roots of $\ExoticKloosterman_{\finiteFieldExtension{\deg \left[\xi\right]}}\left(\alpha, \fieldCharacter\right)$ at $\xi$. Denote $\omega_{j, \left[\xi\right]}^{\ast \ast} = \left(-1\right)^{\left(k-1\right)\deg\left[\xi\right]} \omega_{j, \left[\xi\right]}^{\ast}$.

Let $\KloostermanGlobalClassFunction \colon \bigcup_{c=0}^{\infty} \GL_c\left(\finiteField\right) \to \cComplex$ be the function  defined by 
$$\KloostermanGlobalClassFunction = \sum_{r = 0}^{\infty} \sum_{\left\{\left(\left[\xi_1\right], \mu_1\right), \dots, \left(\left[\xi_r\right], \mu_r\right)\right\}} \left(\prod_{j = 1}^{r} \htHallLittlewood_{\mu_j}\left(\omega_{1, \left[\xi_j\right]}^{\ast \ast}, \dots, \omega_{k, \left[\xi_j\right]}^{\ast \ast} ; q^{\deg \left[\xi_j\right]}\right)\right) \delta_{\diag\left(J_{\mu_1}\left(h_{\xi_1}\right), \dots, J_{\mu_r}\left(h_{\xi_r}\right)\right)},$$
where the sum goes over all choices of sets $\left\{\left(\left[\xi_1\right], \mu_1\right), \dots, \left(\left[\xi_r\right], \mu_r\right)\right\}$ where $\left[\xi_1\right]$,$\dots$,$\left[\xi_r\right]$ are different Frobenius orbits and $\mu_1$, $\dots$, $\mu_r$ are non-empty partitions, for every $r \ge 0$. Then $\KloostermanGlobalClassFunction \in \FullClassFunctionsGL$. Our goal is compute the elements that $\KloostermanGlobalClassFunction$ corresponds to in $\hat{\Lambda}^{\finiteField}$ and in $\hat{\Lambda}^{\widehat{\finiteField}}$.

\begin{theorem}\label{thm:macdonald-exotic-kloosterman-sum-geometric-basis}
	We have the identity
	$$\CharacteristicMapF \KloostermanGlobalClassFunction = \prod_{\left[\xi\right]} \prod_{i=1}^{\infty} L^{\ast}\left( \left(-1\right)^{\left(k-1\right) \deg \left[\xi\right]}  X_i^{\left[\xi\right]}, \ExoticKloosterman_{\finiteFieldExtension{\deg \left[\xi\right]}}\left(\alpha, \fieldCharacter, \xi\right)\right)^{-1},$$
	where $\left[\xi\right]$ runs over all the Frobenius orbits in $\multiplicativegroup{\algebraicClosure{\finiteField}}$.
\end{theorem}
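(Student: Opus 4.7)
The plan is to apply the definition of $\CharacteristicMapF$ term by term on the expansion defining $\KloostermanGlobalClassFunction$, then factor the resulting sum over Frobenius orbits, and finally recognize each factor as a Cauchy-type generating function for the dual bases $\ptHallLittlewood$ and $\htHallLittlewood$ of $\Lambda$.

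First, recall that for a conjugacy class represented by $h = \diag(J_{\mu_1}(h_{\xi_1}), \dots, J_{\mu_r}(h_{\xi_r}))$ as in \Cref{thm:conjugacy-class-classification}, the characteristic map \eqref{eq:definition-of-characteristic-map-for-conjugacy-classes} gives
$$\CharacteristicMapF(\delta_{\conjugacyClass{h}}) = \prod_{j=1}^{r} \ptHallLittlewood^{[\xi_j]}_{\mu_j}(X; q^{a_j}).$$
Applying $\CharacteristicMapF$ to the defining series of $\KloostermanGlobalClassFunction$ and grouping the contributions to each orbit $[\xi]$ (note that in the sum, different $[\xi_j]$ contribute independently and any partition may appear, with the empty partition contributing $1$), we obtain
$$\CharacteristicMapF \KloostermanGlobalClassFunction = \prod_{[\xi]} \Biggl( \sum_{\mu \in \Partitions} \htHallLittlewood_{\mu}(\omega_{1,[\xi]}^{\ast\ast},\dots,\omega_{k,[\xi]}^{\ast\ast}; q^{\deg[\xi]}) \cdot \ptHallLittlewood^{[\xi]}_{\mu}(X; q^{\deg[\xi]}) \Biggr),$$
where $[\xi]$ ranges over all Frobenius orbits of $\multiplicativegroup{\algebraicClosure{\finiteField}}$.

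Next, I would invoke the Cauchy identity for dual bases. Since $\left(\ptHallLittlewood_{\lambda}(X; t)\right)_{\lambda}$ and $\left(\htHallLittlewood_{\mu}(X; t)\right)_{\mu}$ are dual bases of $\Lambda$ with respect to the Hall inner product $\innerproduct{\cdot}{\cdot}$, the standard general principle that dual bases are characterized by reproducing the Cauchy kernel $\prod_{i,j}(1 - X_i Y_j)^{-1}$ (see \cite[I.(4.6)]{macdonald1998symmetric}) yields
$$\sum_{\mu \in \Partitions} \htHallLittlewood_{\mu}(Y_1,\dots,Y_k; t) \ptHallLittlewood_{\mu}(X; t) = \prod_{i=1}^{\infty} \prod_{j=1}^{k} \frac{1}{1 - Y_j X_i}.$$
Specializing $t = q^{\deg[\xi]}$, $Y_j = \omega_{j,[\xi]}^{\ast\ast}$ and substituting the variables $X_i^{[\xi]}$ in the copy $\Lambda^{[\xi]}$, each factor in our product becomes
$$\prod_{i=1}^{\infty} \prod_{j=1}^{k} \frac{1}{1 - \omega_{j,[\xi]}^{\ast\ast} X_i^{[\xi]}}.$$

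Finally, I would match this product with the claimed $L$-function expression. By the definition of the normalized $L$-function in \Cref{subsec:roots-of-kloosterman-sum-at-xi}, we have
$$L^{\ast}\bigl(T, \ExoticKloosterman_{\finiteFieldExtension{a}}(\alpha, \fieldCharacter, \xi)\bigr) = \prod_{j=1}^{k}(1 - \omega_{j,[\xi]}^{\ast} T),$$
and therefore, setting $T = (-1)^{(k-1)a} X_i^{[\xi]}$ with $a = \deg[\xi]$ and using $\omega_{j,[\xi]}^{\ast\ast} = (-1)^{(k-1)a} \omega_{j,[\xi]}^{\ast}$,
$$L^{\ast}\bigl((-1)^{(k-1)a} X_i^{[\xi]}, \ExoticKloosterman_{\finiteFieldExtension{a}}(\alpha, \fieldCharacter, \xi)\bigr)^{-1} = \prod_{j=1}^{k} \frac{1}{1 - \omega_{j,[\xi]}^{\ast\ast} X_i^{[\xi]}}.$$
Taking the product over $i \ge 1$ and over all Frobenius orbits $[\xi]$ gives precisely the right-hand side of the theorem.

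The only nontrivial step is the Cauchy identity for $(\ptHallLittlewood, \htHallLittlewood)$; this is not the main obstacle since it is a formal consequence of the duality already recorded in the excerpt, but one must be careful that the sum over $\mu$ genuinely includes the empty partition (so the whole infinite product converges in the completion $\hat\Lambda^{\finiteField}$) and that the reindexing by Frobenius orbits does not lose the constraint that the $[\xi_j]$ be pairwise distinct---this is automatic because the empty partition contributes $1$ and absorbs the orbits that do not appear in a given conjugacy class.
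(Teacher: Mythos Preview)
Your proof is correct and follows essentially the same approach as the paper's: apply $\CharacteristicMapF$ term by term to factor over Frobenius orbits, then use the Cauchy identity for the dual bases $(\ptHallLittlewood_\mu, \htHallLittlewood_\mu)$ to collapse each factor into the product $\prod_{i,j}(1-\omega_{j,[\xi]}^{\ast\ast}X_i^{[\xi]})^{-1}$, and finally identify this with the inverse of the normalized $L^\ast$-function. The paper derives the Cauchy identity by expanding the Schur version $\sum_\lambda s_\lambda(X)s_\lambda(Y)=\prod(1-X_iY_j)^{-1}$ via the change of basis from $s_\lambda$ to $\ptHallLittlewood_\mu$, whereas you invoke the general dual-basis principle directly---these are the same identity.
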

\begin{proof}
	By the definitions of the map $\CharacteristicMapF$ and of the function $\KloostermanGlobalClassFunction$, we have that
	\begin{equation}\label{eq:infinite-product-for-global-matrix-kloosterman-sum}
		\CharacteristicMapF \KloostermanGlobalClassFunction = \prod_{\left[\xi\right]}\left(\sum_{\mu}  \htHallLittlewood_{\mu}\left(\omega_{1, \left[\xi\right]}^{\ast \ast}, \dots, \omega_{k, \left[\xi\right]}^{\ast \ast}; q^{\deg \left[\xi\right]}\right)  \ptHallLittlewood_{\mu}^{\left[\xi\right]}\left(X;  q^{\deg \left[\xi\right]}\right) \right),
	\end{equation}
	where $\left[\xi\right]$ runs over all the Frobenius orbits in $\multiplicativegroup{\algebraicClosure{\finiteField}}$.
	
	Recall the following variant of the Cauchy identity (this follows from~\cite[(4.6) on Page 63]{macdonald1998symmetric} and the fact that $\ptHallLittlewood_{\mu}\left(X;t\right)$ and $\htHallLittlewood_{\mu}\left(X;t\right)$ are dual):
	$$\sum_{\mu} \ptHallLittlewood_{\mu}\left(X; t\right)\htHallLittlewood_{\mu}\left(Y ; t\right) = \prod_{i,j \ge 1} \frac{1}{1 - X_i Y_j},$$
	where $X$ and $Y$ represent sets of infinite variables $\left(X_1,X_2,\dots\right)$ and $\left(Y_1,Y_2,\dots\right)$, respectively, and the sum goes over all the partitions $\mu$. It follows that \eqref{eq:infinite-product-for-global-matrix-kloosterman-sum} equals
	\begin{equation}\label{eq:geometric-product-for-global-modified-hall-littlewood-function}
		\prod_{\left[\xi\right]} \prod_{j=1}^k \prod_{i=1}^{\infty} \frac{1}{1 - X^{\left[\xi\right]}_i \omega_{j, \left[\xi\right]}^{\ast \ast}} = \prod_{\left[\xi\right]} \prod_{i=1}^{\infty} L^{\ast}\left( \left(-1\right)^{\left(k-1\right) \deg \left[\xi\right]}  X_i^{\left[\xi\right]}, \ExoticKloosterman_{\finiteFieldExtension{\deg \left[\xi\right]}}\left(\alpha, \fieldCharacter, \xi\right)\right)^{-1}.
	\end{equation}
	where $\left[\xi\right]$ runs over all the Frobenius orbits in $\multiplicativegroup{\algebraicClosure{\finiteField}}$.
\end{proof}

We now move to compute the element that $\KloostermanGlobalClassFunction$ corresponds to in $\hat{\Lambda}^{\widehat{\finiteField}}$. 
\begin{theorem}\label{thm:global-matrix-kloosterman-sum-in-character-basis}
	We have the identity
	$$\CharacteristicMapFHat \KloostermanGlobalClassFunction = \prod_{\left[\beta\right]} \prod_{i=1}^{\infty} \prod_{j=1}^{\infty} \left(1 - \left(\left(-1\right)^{s} q^{-\left(\frac{k-1}{2} + j\right)}\right)^{\deg \beta} \GaussSumCharacter{\lambda, \deg \beta}{\alpha}{\beta^{-1}}{\fieldCharacter} X_i^{\left[\beta\right]}\right)^{-1},$$
	where $\left[\beta\right]$ runs over all Frobenius orbits in $\Gamma$.
\end{theorem}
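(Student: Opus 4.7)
My proof proposal is to deduce this formula directly from Theorem~\ref{thm:macdonald-exotic-kloosterman-sum-geometric-basis} by applying the isomorphism $\iota\colon\Lambda^{\finiteField}\to\Lambda^{\widehat{\finiteField}}$ defined on generators by \eqref{eq:inverse-homomomorphism-from-character-group-to-to-multiplicative-group}. Since $\CharacteristicMapFHat = \iota \circ \CharacteristicMapF$ and $\iota$ is a ring homomorphism, it commutes with taking logarithms of elements of the form $\prod (1 - Xc)^{-1}$. The plan is therefore to take $\log$ of both sides of Theorem~\ref{thm:macdonald-exotic-kloosterman-sum-geometric-basis}, rewrite the result in terms of the power-sum generators $p_m^{[\xi]}$, apply $\iota$, and then reassemble the resulting series into an infinite product.

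Concretely, the first step is to expand
$$\log \CharacteristicMapF \KloostermanGlobalClassFunction = \sum_{[\xi]} \sum_{m=1}^{\infty} \frac{1}{m} p_m^{[\xi]} \sum_{j=1}^k (\omega_{j,[\xi]}^{\ast\ast})^m,$$
and then substitute $\sum_j (\omega_{j,[\xi]}^{\ast\ast})^m = (-1)^{(k-1)(am+1) + k - 1}\,\ExoticKloostermanNormalized_{ma}(\alpha,\fieldCharacter,\xi)$ with $a = \deg[\xi]$, using \eqref{eq:power-sums-of-roots-at-xi}. Next I would apply $\iota$ to each $p_m^{[\xi]}$ via \eqref{eq:inverse-homomomorphism-from-character-group-to-to-multiplicative-group} and reindex: after grouping the outer sum by $n = ma$, the sum over $[\xi]$ of degree dividing $n$ with weight $\deg[\xi]$ unfolds to a sum over $\xi \in \multiplicativegroup{\finiteFieldExtension{n}}$ with weight $\frac{1}{n}$, since both $\ExoticKloostermanNormalized_n(\alpha,\fieldCharacter,\xi)$ and the weighted character sum $\sum_{\beta} \beta^{-1}(\xi)\, p_{n/\deg\beta}^{[\beta]}$ are invariant under $\xi\mapsto \xi^q$.

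At this stage the key identity \eqref{eq:gauss-sum-as-kloosterman-sum-transform} converts the inner sum $\sum_{\xi\in\multiplicativegroup{\finiteFieldExtension{n}}}\ExoticKloosterman_n(\alpha,\fieldCharacter,\xi)\beta^{-1}(\xi)$ into the exotic Gauss sum $\GaussSumCharacter{\lambda,n}{\alpha}{\beta^{-1}}{\fieldCharacter}$, up to an explicit sign. I then regroup the sum over $\beta \in \widehat{\multiplicativegroup{\finiteFieldExtension{n}}}$ by Frobenius orbit $[\beta]$ of degree $d\mid n$; each orbit contributes $d$ terms with identical $p^{[\beta]}_{n/d}$, and the Hasse--Davenport relation (Theorem~\ref{thm:hasse-davenport-for-exotic-gauss-sums}) gives
$$\GaussSumCharacter{\lambda, n}{\alpha}{\beta^{-1}}{\fieldCharacter} = \GaussSumCharacter{\lambda, d}{\alpha}{\beta^{-1}}{\fieldCharacter}^{n/d}.$$
After substituting $m = n/d$, all the signs collapse to $(-1)^{sn} = (-1)^{smd}$ and the prefactor becomes $\frac{q^{-md(k-1)/2}}{m(q^{md}-1)}$.

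The last move is to expand $(q^{md}-1)^{-1} = \sum_{j=1}^{\infty} q^{-jmd}$, which yields
$$\log \CharacteristicMapFHat \KloostermanGlobalClassFunction = \sum_{[\beta]} \sum_{j=1}^{\infty} \sum_{m=1}^{\infty} \frac{p_m^{[\beta]}}{m}\left[ \bigl((-1)^s q^{-(\frac{k-1}{2}+j)}\bigr)^{d}\GaussSumCharacter{\lambda, d}{\alpha}{\beta^{-1}}{\fieldCharacter}\right]^m,$$
with $d = \deg[\beta]$. Recognizing $\sum_m p_m^{[\beta]} c^m/m = -\sum_i \log(1 - c X_i^{[\beta]})$ and exponentiating gives exactly the claimed product. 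The only real care is bookkeeping: the main obstacles are (i) checking that the reindexing from $(\sum_{[\xi]}, m)$ to $(\sum_{\xi \in \multiplicativegroup{\finiteFieldExtension{n}}})$ is valid (i.e.\ $\iota(p_m^{[\xi]})$ is orbit-invariant in $\xi$, as it is, since reindexing $\beta\mapsto\beta^q$ in the defining sum absorbs the Frobenius on $\xi$), and (ii) handling the various signs, where one verifies $(k-1)(am+1) + (k-1) + (n-1) + k + sn + nk \equiv sn \pmod 2$ to obtain the correct $(-1)^s$ appearing in the statement.
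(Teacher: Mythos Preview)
Your proposal is correct and follows essentially the same route as the paper's own proof: take $\log$ of the product in Theorem~\ref{thm:macdonald-exotic-kloosterman-sum-geometric-basis}, rewrite in power sums, apply \eqref{eq:inverse-homomomorphism-from-character-group-to-to-multiplicative-group}, unfold the orbit sum to $\xi\in\multiplicativegroup{\finiteFieldExtension{n}}$, invoke \eqref{eq:gauss-sum-as-kloosterman-sum-transform} and Hasse--Davenport (Theorem~\ref{thm:hasse-davenport-for-exotic-gauss-sums}), and expand $(q^{md}-1)^{-1}$ geometrically before exponentiating. One small slip: the sign in $\sum_j(\omega_{j,[\xi]}^{\ast\ast})^m$ should be $(-1)^{(k-1)(am+1)}$, not $(-1)^{(k-1)(am+1)+k-1}$; with that correction your parity check $(k-1)(am+1)+(n-1)+k+sn+nk\equiv sn\pmod 2$ goes through.
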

\begin{proof}
	Recall the formula~\cite[Page 279]{macdonald1998symmetric}
	\begin{equation}\label{eq:logarithm-of-geometric-series}
		\sum_{i,j \ge 1} \log\left(1 - X_i Y_j\right)^{-1} = \sum_{m = 1}^{\infty} \frac{1}{m} p_m\left(X\right) p_m\left(Y\right).
	\end{equation}
	Let $\xi \in \finiteFieldExtension{a}$ be an element of degree $a$. We substitute $Y_j = \omega_{j, \left[\xi\right]}^{\ast \ast}$ for $1 \le j \le k$ and $Y_j = 0$ for $j > k$ in \eqref{eq:logarithm-of-geometric-series} to get
	$$\sum_{j=1}^k \sum_{i \ge 1} \log\left(1 - X_i^{\left[\xi\right]} \omega_{j, \left[\xi\right]}^{\ast \ast}\right)^{-1} = \sum_{m = 1}^{\infty} \frac{1}{m} p_m^{\left[\xi\right]}\left(X\right) p_m\left(\omega_{1, \left[\xi\right]}^{\ast \ast}, \dots, \omega_{k, \left[\xi\right]}^{\ast \ast}\right).$$ Thus by \eqref{eq:geometric-product-for-global-modified-hall-littlewood-function},
	$$\log \CharacteristicMapF\left(\KloostermanGlobalClassFunction\right) = \sum_{\left[\xi\right]} \sum_{m = 1}^{\infty} \frac{1}{m} p_m^{\left[\xi\right]}\left(X\right) p_m\left(\omega_{1, \left[\xi\right]}^{\ast \ast}, \dots, \omega_{k, \left[\xi\right]}^{\ast \ast}\right).$$
	By \eqref{eq:inverse-homomomorphism-from-character-group-to-to-multiplicative-group} and \eqref{eq:power-sums-of-roots-at-xi} (recall that $\omega_{j, \left[\xi\right]}^{\ast \ast} = \left(-1\right)^{\left(k-1\right) \deg\left[\xi\right]} \omega_{j, \left[\xi\right]}^{\ast}$), $$\log \CharacteristicMapFHat \KloostermanGlobalClassFunction = \sum_{\left[\xi\right]} \sum_{m = 1}^{\infty} \frac{\left(-1\right)^{k\left(m \deg \left[\xi\right] - 1\right)}}{m \left({q^{m \deg \left[\xi\right]} - 1}\right)} \ExoticKloostermanNormalized_{m, \finiteFieldExtension{\deg \left[\xi\right]}}\left(\alpha, \fieldCharacter, \xi\right) \sum_{\beta \in \charactergroup{m \deg \xi}} \beta^{-1}\left(\xi\right) p_{\frac{m \deg \xi}{\deg \beta}}^{\left[\beta\right]}.$$
	We can replace the sum over the Frobenius orbits $\left[\xi\right]$ with a sum over $\xi \in \multiplicativegroup{\algebraicClosure{\finiteField}}$ by adding a factor of $\frac{1}{\deg \xi}$. We obtain the formula
	$$\log \CharacteristicMapFHat \KloostermanGlobalClassFunction = \sum_{\xi \in \multiplicativegroup{\algebraicClosure{\finiteField}}} \sum_{m = 1}^{\infty} \sum_{\beta \in \charactergroup{m \deg \xi}} \frac{\left(-1\right)^{k\left(m \deg \left[\xi\right] - 1\right)}}{m \deg \xi \left({q^{m \deg \xi} - 1}\right)} \ExoticKloostermanNormalized_{m, \finiteFieldExtension{\deg \xi}}\left(\alpha, \fieldCharacter, \xi\right)  \beta^{-1}\left(\xi\right) p_{\frac{m \deg \xi}{\deg \beta}}^{\left[\beta\right]}.$$
	Next, we can first sum over all $d$ and then over $\xi \in \multiplicativegroup{\algebraicClosure{\finiteField}}$ with $\deg \xi = d$. Introducing a variable $n = m d$ and changing order of summation, we arrive at the formula $$\log \CharacteristicMapFHat \KloostermanGlobalClassFunction = \sum_{n = 1}^{\infty} \sum_{\beta \in \charactergroup{n}} \sum_{d \mid n} \sum_{\substack{\xi \in \multiplicativegroup{\algebraicClosure{\finiteField}}\\
			\deg \xi = d}}  \frac{\left(-1\right)^{k\left(n - 1\right)}}{n \left({q^{n} - 1}\right)} \ExoticKloostermanNormalized_{n}\left(\alpha, \fieldCharacter, \xi\right)  \beta^{-1}\left(\xi\right) p_{\frac{n}{\deg \beta}}^{\left[\beta\right]}.$$
	This can be rewritten as
	$$\log \CharacteristicMapFHat \KloostermanGlobalClassFunction = \sum_{n = 1}^{\infty} \sum_{\beta \in \charactergroup{n}} \sum_{\xi \in \multiplicativegroup{\finiteFieldExtension{n}}} \frac{\left(-1\right)^{k\left(n - 1\right)}}{n \left({q^{n} - 1}\right)} \ExoticKloostermanNormalized_{n}\left(\alpha, \fieldCharacter, \xi\right)  \beta^{-1}\left(\xi\right) p_{\frac{n}{\deg \beta}}^{\left[\beta\right]}.$$
	By \eqref{eq:gauss-sum-as-kloosterman-sum-transform}, we have that this equals $$\sum_{n = 1}^{\infty} \sum_{\beta \in \charactergroup{n}} \frac{\left(-1\right)^{sn}}{n \left({q^{n} - 1}\right)} q^{-\frac{n\left(k-1\right)}{2}} \GaussSumCharacter{\lambda, n}{\alpha}{\beta^{-1}}{\fieldCharacter} p_{\frac{n}{\deg \beta}}^{\left[\beta\right]}.$$
	We replace the summation as follows. We sum over all the regular characters $\beta$ and then over $\beta \circ \FieldNorm{m \deg \beta}{\deg \beta}$ for every $m \ge 1$. We get that $\log \CharacteristicMapFHat \KloostermanGlobalClassFunction$ is given by
	$$\sum_{d=1}^{\infty} \sum_{\substack{\beta\\
			\deg \beta = d}} \sum_{m = 1}^{\infty} \frac{\left(-1\right)^{smd}}{m d \left({q^{m d} - 1}\right)} q^{-\frac{md\left(k-1\right)}{2}} \GaussSumCharacter{\lambda, md}{\alpha}{\beta^{-1} \circ \FieldNorm{md}{d}}{\fieldCharacter} p_{m}^{\left[\beta\right]}.$$
	Using the Hasse--Davenport lifting relation (\Cref{thm:hasse-davenport-for-exotic-gauss-sums}), this becomes
	$$\sum_{d=1}^{\infty} \sum_{\substack{\beta\\
			\deg \beta = d}} \sum_{m = 1}^{\infty} \frac{1}{md \left({q^{m d} - 1}\right)} \left(\left(-1\right)^{sd} q^{-\frac{d\left(k-1\right)}{2}} \GaussSumCharacter{\lambda, d}{\alpha}{\beta^{-1}}{\fieldCharacter}\right)^m p_{m}^{\left[\beta\right]}.$$
	Using the geometric expansion $$\frac{1}{q^{md}-1} = \sum_{j=1}^{\infty} q^{-m d j},$$
	we get the series
	$$\log \CharacteristicMapFHat \KloostermanGlobalClassFunction = \sum_{\left[\beta\right]} \sum_{m = 1}^{\infty} \frac{1}{m} p_{m}^{\left[\beta\right]} \sum_{j=1}^{\infty}  \left(\left(\left(-1\right)^{s} q^{-\frac{\left(k-1\right)}{2} -j}\right)^{\deg \beta} \GaussSumCharacter{\lambda, \deg \beta}{\alpha}{\beta^{-1}}{\fieldCharacter}\right)^m,$$
	where the sum over $\left[\beta\right]$ goes over all the Frobenius orbits in $\Gamma$. It follows now from \eqref{eq:logarithm-of-geometric-series} that $\CharacteristicMapFHat \KloostermanGlobalClassFunction$ equals \begin{equation*}
		\prod_{\left[\beta\right]} \prod_{i=1}^{\infty} \prod_{j=1}^{\infty} \left(1 - \left(\left(-1\right)^{s} q^{-\left(\frac{k-1}{2} + j\right)}\right)^{\deg \beta} \GaussSumCharacter{\lambda, \deg \beta}{\alpha}{\beta^{-1}}{\fieldCharacter} X_i^{\left[\beta\right]}\right)^{-1}.
	\end{equation*}
\end{proof}
We are now ready to prove \Cref{thm:general-formula-for-exotic-kloosterman-sum-of-conjugacy-class}.
\begin{proof}
	Recall that for any $c \ge 1$, the assignment $\GL_c\left(\finiteField\right) \to \cComplex$ given by $h \mapsto \ExoticKloostermanNormalized\left(\alpha, \fieldCharacter, h\right)$ is the unique class function such that for any irreducible representation $\pi$ of $\GL_c\left(\finiteField\right)$, the following equality holds:
	$$\TwistedGaussSum{\pi}{\alpha}{\fieldCharacter} = q^{-\frac{c^2}{2}} \sum_{h \in \GL_c\left(\finiteField\right)} \ExoticKloostermanNormalized\left(\alpha, \fieldCharacter, h\right) \pi\left(h\right).$$
	Since $\KloostermanGlobalClassFunction$ is a class function by definition, it suffices to prove that for any $c \ge 1$ and any irreducible representation $\pi$ of $\GL_c\left(\finiteField\right)$, $$\TwistedGaussSum{\pi}{\alpha}{\fieldCharacter} = q^{-\frac{c^2}{2}} \sum_{h \in \GL_c\left(\finiteField\right)} \KloostermanGlobalClassFunction\left(h\right) \pi\left(h\right).$$
	This is equivalent to showing that \begin{equation}\label{eq:inner-product-statement-for-hall-littlewood-theorem}
		\innerproduct{\KloostermanGlobalClassFunction}{\trace \pi^{\vee}} = \frac{q^{\frac{c^2}{2}}}{\sizeof{\GL_c\left(\finiteField\right)}} \cdot \dim \pi \cdot \GKGaussSumScalar{\pi}{\alpha}{\fieldCharacter}.
	\end{equation}
	
	Recall the Cauchy identity
	\begin{equation}\label{eq:cauchy-identity}
		\prod_{i,j \ge 1} \frac{1}{1 - X_i Y_j} = \sum_{\mu} s_{\mu}\left(X\right) s_{\mu}\left(Y\right),
	\end{equation}
	where $\mu$ runs over all the partitions.
	
	It follows from \Cref{thm:global-matrix-kloosterman-sum-in-character-basis} combined with the Cauchy identity and the fact that $s_{\mu}$ is homogeneous of degree $\sizeof{\mu}$ that
	$$\CharacteristicMapFHat \KloostermanGlobalClassFunction = \prod_{\left[\beta\right]} \left(\sum_{\mu} \left(\left(-1\right)^{s} q^{-\frac{k-1}{2}}\right)^{\sizeof{\mu}\deg \beta} \GaussSumCharacter{\lambda, \deg \beta}{\alpha}{\beta^{-1}}{\fieldCharacter}^{\sizeof{\mu}} s_{\mu}^{\left[\beta\right]}\left(X\right) s_{\mu}^{\left[\beta\right]}\left(Y\right)\right),$$
	where $Y^{\left[\beta\right]}_j = q^{-j \deg \beta}$ for $\left[\beta\right]$ and any $j \ge 1$.
	Expanding the sum, we see that $\CharacteristicMapFHat \KloostermanGlobalClassFunction$ is given by
	$$\sum_{r=0}^{\infty} \sum_{\left\{\left(\left[\beta_1\right], \mu_1\right), \dots, \left(\left[\beta_r\right], \mu_r\right)\right\}} \prod_{j=1}^r \left(\left(\left(-1\right)^{s} q^{-\frac{k-1}{2}}\right)^{\deg \beta_j} \GaussSumCharacter{\lambda, \deg \beta_j}{\alpha}{\beta_j^{-1}}{\fieldCharacter}\right)^{\sizeof{\mu_j}} s_{\mu_j}^{\left[\beta_j\right]}\left(X\right) s_{\mu_j}^{\left[\beta_j\right]}\left(Y\right),$$
	where the sum is over all sets $\left\{ \left(\left[\beta_1\right], \mu_1\right), \dots, \left(\left[\beta_r\right], \mu_r\right) \right\},$
	where $\left[\beta_1\right]$, $\dots$, $\left[\beta_r\right]$ are different Frobenius orbits of $\Gamma$ and $\mu_1$, $\dots$, $\mu_r$ are non-empty partitions.
	
	If $\pi$ is an irreducible representation of $\GL_c\left(\finiteField\right)$ corresponding to a Macdonald parameter $\phi$, then $\pi^{\vee}$ corresponds to the Macdonald parameter $\phi^{\vee}$ given by $$\phi^{\vee}\left(\theta\right) = \phi^{\vee}\left(\theta^{-1}\right)$$
	for any $\theta \in \Gamma$. Suppose that $\phi$ is supported on the different Frobenius orbits $\left[\beta_1\right]$, $\dots$, $\left[\beta_r\right]$ and that $\phi\left(\left[\beta_j\right]\right) = \mu_j$ and $\deg \beta_j = c_j$ for any $j$. Then $$\CharacteristicMapFHat \trace \pi^{\vee} = \prod_{j=1}^r s_{\mu_j}^{\left[\beta_j^{-1}\right]}\left(X\right).$$
	Since $\CharacteristicMapFHat$ preserves the inner product and since $\left(s_{\lambda}^{\left[\beta\right]}\right)_{\lambda}$ is an orthonormal basis for every $\left[\beta\right]$, we have
	$$\innerproduct{\KloostermanGlobalClassFunction}{\trace \pi^{\vee}} = \prod_{j=1}^r \left(\left(-1\right)^{s} q^{-\frac{k-1}{2}}\right)^{c_j \sizeof{\mu_j}} \GaussSumCharacter{\lambda, c_j}{\alpha}{\beta_j}{\fieldCharacter}^{\sizeof{\mu_j}} s_{\mu_j}^{\left[\beta_j^{-1}\right]}\left(Y\right).$$
	Recall that $\sum_{j=1}^r c_j \sizeof{\mu_j} = c$. Hence \begin{equation}\label{eq:inner-product-of-kloosterman-candidiate-and-trace-pi}
		\innerproduct{\KloostermanGlobalClassFunction}{\trace \pi^{\vee}} = \left(\left(-1\right)^{sc} q^{-\frac{kc}{2}} \prod_{j=1}^r \GaussSumCharacter{\lambda, c_j}{\alpha}{\beta_j}{\fieldCharacter}^{\sizeof{\mu_j}}\right) \cdot q^{\frac{c}{2}}  \cdot \prod_{j=1}^r s_{\mu_j}^{\left[\beta_j^{-1}\right]}\left(Y\right).
	\end{equation}
	Recall that $\sizeof{\mu_j}$ is the number of times that the cuspidal representation corresponding to the Frobenius orbit $\left[\beta_j\right]$ appears in the cuspidal support of $\pi$. It follows from \Cref{thm:exotic-gauss-sum-of-composite-character} that $$\innerproduct{\KloostermanGlobalClassFunction}{\trace \pi^{\vee}} = \GKGaussSumScalar{\pi}{\alpha}{\fieldCharacter} \cdot q^{\frac{c}{2}} \cdot \prod_{j=1}^r s_{\mu_j}\left(q^{-c_j}, q^{-2c_j}, q^{-3c_j}\dots \right).$$
	By~\cite[Page 286]{macdonald1998symmetric}, $$\dim \pi = \Phi_c\left(q\right) \cdot \prod_{j=1}^r s_{\mu_j}\left(q^{-c_j}, q^{-2c_j}, q^{-3c_j}\dots \right),$$ where
	$$\Phi_c\left(q\right) = \prod_{j=1}^c \left(q^j - 1\right) = q^{-\frac{c^2}{2} + \frac{c}{2}} \cdot \sizeof{\GL_c\left(\finiteField\right)}.$$
	Hence we showed that \eqref{eq:inner-product-statement-for-hall-littlewood-theorem} holds, as required.
\end{proof}
\begin{remark}
	When $k=1$, this proof can be modified to give an independent proof of Kondo's theorem. One needs to check that in this case $\KloostermanGlobalClassFunction$ is the function $\KloostermanGlobalClassFunction\left(h\right) = \alpha\left(\det h\right)\fieldCharacter\left(\trace h\right)$, and then Kondo's theorem follows eventually from \eqref{eq:inner-product-of-kloosterman-candidiate-and-trace-pi} and from the dimension formula discussed at the end of the proof. Such proof was given by Macdonald in~\cite[Page 289]{macdonald1998symmetric}.
\end{remark}

\section{Applications}\label{sec:applications}

\subsection{Identities for Bessel functions}\label{sec:identities-for-bessel-functions}
In this section, we use our results to deduce identities for exotic matrix Kloosterman sums and special values of the Bessel function.

We start by recalling~\cite[Theorem 2.14]{zelingher2022values}.
\begin{theorem}\label{thm:recursive-bessel-function-theorem}
	Let $\tau$ be an irreducible generic representation of $\GL_k\left(\finiteField\right)$. Let $h \in \GL_c\left(\finiteField\right)$. Denote $$\mathcal{F}_{\tau,c,\fieldCharacter}\left(h\right) = \frac{q^{-\frac{c \left(k-c-1\right)}{2}}}{\grpIndex{\GL_c\left(\finiteField\right)}{\UnipotentSubgroup_c}} \sum_{\pi} \dim \pi \cdot  \centralCharacter{\pi}\left(-1\right)^{k-1} \varepsilon_0\left( \Contragradient{\pi} \times \tau, \fieldCharacter\right) \cdot \besselFunction_{\pi, \fieldCharacter}\left(h\right),$$
	where $\pi$ runs over all the (equivalence classes of) irreducible generic representations of $\GL_c\left(\finiteField\right)$ and $\besselFunction_{\pi, \fieldCharacter}$ is the Bessel function of $\pi$ with respect to $\fieldCharacter$ (see \Cref{subsec:bessel-functions}). Then
	\begin{enumerate}
		\item If $c < k$,
		$$\mathcal{F}_{\tau,c,\fieldCharacter}\left(h\right) = \besselFunction_{\tau, \fieldCharacter}\begin{pmatrix}
			& \IdentityMatrix{k-c}\\
			h
		\end{pmatrix}.$$
		\item If $c = k$,
		$$ \mathcal{F}_{\tau,c,\fieldCharacter}\left(h\right) = \besselFunction_{\tau, \fieldCharacter}\left(h\right)\fieldCharacter\begin{pmatrix}
			\IdentityMatrix{k} & h^{-1}\\
			& \IdentityMatrix{k}
		\end{pmatrix}.$$
		\item If $c > k$ then $\mathcal{F}_{\tau,c,\fieldCharacter}\left(h\right) = 0$ unless $h = u_1 \left(\begin{smallmatrix}
			& -\IdentityMatrix{c-k}\\
			h
		\end{smallmatrix}\right) u_2$ for some $u_1, u_2 \in \UnipotentSubgroup_c$, in which case
		$$ \mathcal{F}_{\tau,c,\fieldCharacter}\left(h\right) = q^{c^2-k^2-\binom{c-k}{2}} \fieldCharacter\left(u_1 u_2\right) \besselFunction_{\tau, \fieldCharacter}\left(h\right).$$
	\end{enumerate}
\end{theorem}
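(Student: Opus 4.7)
The plan is to prove the identity by expanding the right-hand side of each case in the basis of Bessel functions of irreducible generic representations of $\GL_c(\finiteField)$ and identifying the expansion coefficients via a finite-field Jacquet--Piatetski-Shapiro--Shalika (JPSS) zeta integral functional equation. The key orthogonality is
$$\frac{1}{\grpIndex{\GL_c\left(\finiteField\right)}{\UnipotentSubgroup_c}} \sum_{g \in \UnipotentSubgroup_c \backslash \GL_c\left(\finiteField\right)} \besselFunction_{\pi,\fieldCharacter}\left(g\right) \besselFunction_{\pi', \fieldCharacter^{-1}}\left(g^{-1}\right) = \frac{\delta_{\pi, \pi'}}{\dim \pi},$$
which follows from Schur orthogonality for matrix coefficients together with the characterization of $\besselFunction_{\pi,\fieldCharacter}$ as the unique matrix coefficient of $\pi$ lying in its $\fieldCharacter$-Whittaker model and normalized so that $\besselFunction_{\pi,\fieldCharacter}\left(\IdentityMatrix{c}\right) = 1$.

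For case (1), with $c < k$, I would compute the JPSS zeta integral
$$Z\left(\pi, \tau\right) = \sum_{g \in \UnipotentSubgroup_c \backslash \GL_c\left(\finiteField\right)} \besselFunction_{\pi, \fieldCharacter^{-1}}\left(g^{-1}\right) \besselFunction_{\tau, \fieldCharacter}\begin{pmatrix} & \IdentityMatrix{k-c} \\ g \end{pmatrix}$$
and show, via the finite-field Shahidi functional equation relating $Z$ to a dual JPSS integral, that $Z\left(\pi, \tau\right) = q^{c\left(k-c-1\right)/2} \centralCharacter{\pi}\left(-1\right)^{k-1} \varepsilon_0\left(\Contragradient{\pi} \times \tau, \fieldCharacter\right)$. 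The dual zeta integral evaluates trivially to $1$ by the normalization $\besselFunction_{\tau,\fieldCharacter}\left(\IdentityMatrix{k}\right) = 1$, which is precisely what produces a closed form. Substituting this evaluation and inverting the Bessel expansion using the orthogonality above then identifies $\besselFunction_{\tau,\fieldCharacter}\bigl(\begin{smallmatrix} & \IdentityMatrix{k-c}\\ h \end{smallmatrix}\bigr)$ with the stated sum $\mathcal{F}_{\tau,c,\fieldCharacter}\left(h\right)$.

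Cases (2) and (3) follow the same template but with progressively degenerate zeta integrals. In the boundary case $c = k$, the Bessel embedding lies in $\GL_k$ itself, and the dual JPSS integral evaluates to a single nontrivial unipotent contribution, producing the extra phase $\fieldCharacter\bigl(\begin{smallmatrix} \IdentityMatrix{k} & h^{-1}\\ & \IdentityMatrix{k}\end{smallmatrix}\bigr) = \fieldCharacter\left(\trace h^{-1}\right)$. In case $c > k$, the dual integral involves a partial Fourier transform over $\UnipotentSubgroup_c$; this transform vanishes except on conjugates of $\bigl(\begin{smallmatrix} & -\IdentityMatrix{c-k}\\ h\end{smallmatrix}\bigr)$, producing both the support condition and the multiplicative factor $q^{c^2 - k^2 - \binom{c-k}{2}}$. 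The main obstacle will be the $q$-power bookkeeping in the Shahidi functional equation for each range of $c$, and in particular carrying out the Fourier analysis on $\UnipotentSubgroup_c$ in case (3) cleanly enough to read off both the precise support condition on $h$ and the exact constant --- this is where most of the technical work sits.
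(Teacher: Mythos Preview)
The paper does not prove this theorem; it merely recalls it from \cite[Theorem 2.14]{zelingher2022values}. So there is no proof in the present paper to compare against.

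That said, your outline is a plausible route and is in the spirit of what is done in the cited reference: the result there is indeed obtained by analyzing finite-field Rankin--Selberg/Shahidi-type zeta integrals, using the functional equation to identify the Bessel expansion coefficients with $\varepsilon_0$-factors, and then inverting via Bessel orthogonality. Your description of case (1) captures the mechanism correctly. For cases (2) and (3) your sketch is thinner; in particular, in case (3) the ``support condition plus constant'' does not drop out of a single Fourier transform on $\UnipotentSubgroup_c$ as cleanly as you suggest --- one has to unfold the zeta integral against the Bruhat decomposition and track which double cosets $\UnipotentSubgroup_c \backslash \GL_c(\finiteField) / \UnipotentSubgroup_c$ contribute, which is where the element $\left(\begin{smallmatrix} & -\IdentityMatrix{c-k}\\ h' \end{smallmatrix}\right)$ and the power $q^{c^2-k^2-\binom{c-k}{2}}$ actually arise. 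If you intend to write this up, that unfolding in case (3) is the part that needs a genuine argument rather than a one-line description.
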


We have the following simple proposition.
\begin{proposition}\label{prop:expression-of-special-value-of-bessel-speh-as-fourier-transform}
	Let $\tau$ be an irreducible generic representation of $\GL_k\left(\finiteField\right)$. Then for any $h \in \GL_c\left(\finiteField\right)$,
	$$ \specialBesselSpeh{\tau}\left(h\right) = \frac{q^{-\frac{\left(k-2\right)c^2}{2}}}{\sizeof{\GL_c\left(\finiteField\right)}} \sum_{\pi \in \Irr\left(\GL_c\left(\finiteField\right)\right)} \dim \pi \cdot \GKPreGammaFactor{\pi}{\tau}{\fieldCharacter} \cdot \trace \pi\left(h^{-1}\right).$$
\end{proposition}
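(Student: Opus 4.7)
The plan is to recognize this as a direct application of Fourier inversion for class functions on a finite group. Since $\specialBesselSpeh{\tau}$ is a class function on $\GL_c(\finiteField)$ (by property (\ref{property:diagonal-embedding-action}) of \Cref{subsec:bessel-functions} applied to $h' = \IdentityMatrix{c}$, so that it reduces to conjugation by diagonally embedded elements), the whole statement follows immediately from Schur orthogonality once we identify the Fourier coefficients in terms of the Ginzburg--Kaplan gamma factor.

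More precisely, I would first recall the Fourier inversion formula on a finite group $G$: for any function $f \colon G \to \cComplex$, setting $\hat{f}(\pi) = \sum_{g \in G} f(g) \pi(g)$, one has
$$f(g) = \frac{1}{\sizeof{G}} \sum_{\pi \in \Irr(G)} \dim \pi \cdot \trace\bigl(\pi(g^{-1}) \hat{f}(\pi)\bigr).$$
Specializing to $G = \GL_c(\finiteField)$ and $f = \specialBesselSpeh{\tau}$, the operator $\hat{f}(\pi)$ intertwines $\pi$ with itself, so it is a scalar by Schur's lemma whenever $\pi$ is irreducible.

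The key step is then to identify this scalar. By the definition of $\GKGaussSum{\pi}{\tau}{\fieldCharacter}$ in \Cref{subsec:bessel-functions} (under ``Gamma factors''), we have
$$\hat{f}(\pi) = \sum_{h \in \GL_c(\finiteField)} \specialBesselSpeh{\tau}(h) \pi(h) = q^{-\frac{(k-2)c^2}{2}} \GKGaussSum{\pi}{\tau}{\fieldCharacter} = q^{-\frac{(k-2)c^2}{2}} \GKPreGammaFactor{\pi}{\tau}{\fieldCharacter} \cdot \idmap_{\pi}.$$
Substituting this back into the Fourier inversion formula yields
$$\specialBesselSpeh{\tau}(h) = \frac{q^{-\frac{(k-2)c^2}{2}}}{\sizeof{\GL_c(\finiteField)}} \sum_{\pi \in \Irr(\GL_c(\finiteField))} \dim \pi \cdot \GKPreGammaFactor{\pi}{\tau}{\fieldCharacter} \cdot \trace \pi(h^{-1}),$$
which is exactly the desired identity. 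There is no real obstacle here — the entire content is that the scalar expressing $\hat{f}(\pi)$ is (up to the normalization factor $q^{-(k-2)c^2/2}$) precisely $\GKPreGammaFactor{\pi}{\tau}{\fieldCharacter}$, and this is just the definition.
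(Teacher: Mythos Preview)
Your proposal is correct and is essentially the same argument as the paper's: both compute the scalar $\hat{f}(\pi)=q^{-\frac{(k-2)c^2}{2}}\GKPreGammaFactor{\pi}{\tau}{\fieldCharacter}\cdot\idmap_\pi$ directly from the definition of $\GKGaussSum{\pi}{\tau}{\fieldCharacter}$ and then invert. The only cosmetic difference is that the paper phrases the inversion as expanding the class function $\specialBesselSpeh{\tau}$ in the orthonormal basis $(\trace\pi)_\pi$ via the inner product, whereas you invoke the general group Fourier inversion formula; for class functions these are the same computation.
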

\begin{proof}
	By taking the trace of $\GKGaussSum{\pi}{\tau}{\fieldCharacter}$, we have \begin{equation}\label{eq:fourier-transform-of-bessel-speh}
		\dim \pi \cdot \GKPreGammaFactor{\pi}{\tau}{\fieldCharacter} = q^{\frac{\left(k-2\right) c^2}{2}} \sum_{x \in \GL_c\left(\finiteField\right)} \specialBesselSpeh{\tau}\left(x\right) \trace \pi\left(x\right).
	\end{equation}
	Recall that the assignment $h \mapsto \specialBesselSpeh{\tau}\left(h\right)$ is a class function and that $\left(\trace \pi\right)_{\pi}$ is an orthonormal basis of the space of class functions $\ClassFunctionsRing\left(\GL_c\left(\finiteField\right)\right)$. Using $\trace \pi^{\vee} = \conjugate{\trace \pi}$, we see that \eqref{eq:fourier-transform-of-bessel-speh} gives an expression for the inner product $\innerproduct{\specialBesselSpeh{\tau}}{\trace \pi^{\vee}}$. The result follows since $\trace \pi^{\vee}\left(h\right) = \trace \pi\left(h^{-1}\right)$ for $h \in \GL_c\left(\finiteField\right)$.
\end{proof}

Our results yield the following formula, which allows one to express the Bessel function as a Whittaker transform of an exotic matrix Kloosterman sum.
\begin{theorem}\label{thm:whittaker-transform-of-exotic-matrix-kloosterman-sum}
	Let $\tau$ be an irreducible generic representation of $\GL_k\left(\finiteField\right)$. Let $h \in \GL_c\left(\finiteField\right)$. Then
	$$\mathcal{F}_{\tau,c,\fieldCharacter}\left(h\right) = q^{\left(k-1\right) \binom{c}{2}} \sum_{u \in \UnipotentSubgroup_c} \specialBesselSpeh{\tau}\left(hu\right) \fieldCharacter^{-1}\left(u\right).$$
\end{theorem}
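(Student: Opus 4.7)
The plan is to start from the right-hand side, insert the character–averaging expression for $\specialBesselSpeh{\tau}$ provided by \Cref{prop:expression-of-special-value-of-bessel-speh-as-fourier-transform}, interchange the sums, and identify the resulting inner sum as a constant multiple of a Bessel function. Concretely, substituting
$$\specialBesselSpeh{\tau}(hu) = \frac{q^{-\frac{(k-2)c^2}{2}}}{|\GL_c(\finiteField)|}\sum_{\pi\in\Irr(\GL_c(\finiteField))}\dim\pi\cdot\GKPreGammaFactor{\pi}{\tau}{\fieldCharacter}\cdot\trace\pi((hu)^{-1})$$
into the right-hand side of the theorem and swapping the summations reduces matters to computing
$$S_\pi(h)\;\coloneq\;\sum_{u\in\UnipotentSubgroup_c}\fieldCharacter^{-1}(u)\,\trace\pi(u^{-1}h^{-1})$$
for each irreducible $\pi$ of $\GL_c(\finiteField)$.

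Next I would manipulate $S_\pi(h)$. Using the substitution $v=u^{-1}$ (noting $\fieldCharacter^{-1}(u)=\fieldCharacter(v)$ since $\fieldCharacter$ is a character of $\UnipotentSubgroup_c$), the identity $\trace\pi(x)=\trace\pi^{\vee}(x^{-1})$, and the trace cyclicity, one obtains
$$S_\pi(h)=\sum_{v\in\UnipotentSubgroup_c}\fieldCharacter^{-1}(v)\,\trace\pi^{\vee}(vh).$$
By the standard Gelfand--Graev averaging formula (which is the character-averaging expression of the Bessel function recalled in \Cref{subsec:bessel-character-averaging-formula} for $c=1$ applied to $\pi^{\vee}$), this equals $|\UnipotentSubgroup_c|\cdot\besselFunction_{\pi^{\vee},\fieldCharacter}(h)$ when $\pi^{\vee}$ (equivalently $\pi$) is generic, and vanishes otherwise because the $\fieldCharacter$-isotypic component of $\pi^{\vee}|_{\UnipotentSubgroup_c}$ is zero.

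The final step is a clean reindexing and bookkeeping of constants. Replacing $\pi$ by $\pi^{\vee}$ in the summation (a bijection on the set of generic irreducible representations) and using $\dim\pi=\dim\pi^{\vee}$, $\centralCharacter{\pi^{\vee}}(-1)=\centralCharacter{\pi}(-1)$, and the equality $\GKPreGammaFactor{\pi^{\vee}}{\tau}{\fieldCharacter}=\centralCharacter{\pi}(-1)^{k-1}\varepsilon_0(\pi^{\vee}\times\tau,\fieldCharacter)$ coming from \Cref{thm:equality-of-GK-factors-and-epsilon-factors} together with the definition $\GKGammaFactor{\pi^{\vee}}{\tau}{\fieldCharacter}=\centralCharacter{\pi^{\vee}}(-1)^{k-1}\GKPreGammaFactor{\pi^{\vee}}{\tau}{\fieldCharacter}$, the right-hand side becomes
$$\frac{q^{-\frac{(k-2)c^2}{2}}\,|\UnipotentSubgroup_c|}{|\GL_c(\finiteField)|}\sum_{\pi\text{ generic}}\dim\pi\cdot\centralCharacter{\pi}(-1)^{k-1}\varepsilon_0(\pi^{\vee}\times\tau,\fieldCharacter)\besselFunction_{\pi,\fieldCharacter}(h).$$
Multiplying by $q^{(k-1)\binom{c}{2}}$ and using the arithmetic identity
$$(k-1)\binom{c}{2}-\tfrac{(k-2)c^2}{2}=-\tfrac{c(k-c-1)}{2}$$
matches the normalization in the definition of $\mathcal{F}_{\tau,c,\fieldCharacter}(h)$.

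The main obstacle is the bookkeeping in step two: I need to confirm that flipping between $\fieldCharacter$ and $\fieldCharacter^{-1}$ and between $\pi$ and $\pi^{\vee}$ lands exactly on $\besselFunction_{\pi^{\vee},\fieldCharacter}(h)$ (as opposed to some conjugate or inverse-argument variant), so that after reindexing $\pi\mapsto\pi^{\vee}$ the surviving Bessel function is $\besselFunction_{\pi,\fieldCharacter}(h)$ as required. Once this is pinned down, the remainder is essentially the orthogonality of characters to isolate the generic contributions and the elementary exponent computation.
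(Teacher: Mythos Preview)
Your proposal is correct and follows essentially the same argument as the paper. The only cosmetic difference is the order of operations: the paper reindexes $\pi\mapsto\pi^{\vee}$ in \Cref{prop:expression-of-special-value-of-bessel-speh-as-fourier-transform} at the outset, so that the summand already carries $\trace\pi(h)$ and the averaging identity $\sum_{u}\fieldCharacter^{-1}(u)\trace\pi(hu)=|\UnipotentSubgroup_c|\,\besselFunction_{\pi,\fieldCharacter}(h)$ applies directly, whereas you keep $\trace\pi((hu)^{-1})$, pass to $\pi^{\vee}$ via $\trace\pi(x)=\trace\pi^{\vee}(x^{-1})$ and the substitution $v=u^{-1}$, and reindex at the end. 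Your bookkeeping in that step is fine: the chain $\sum_u\fieldCharacter^{-1}(u)\trace\pi(u^{-1}h^{-1})=\sum_v\fieldCharacter(v)\trace\pi^{\vee}(hv^{-1})=\sum_w\fieldCharacter^{-1}(w)\trace\pi^{\vee}(hw)$ indeed lands on $|\UnipotentSubgroup_c|\,\besselFunction_{\pi^{\vee},\fieldCharacter}(h)$, and the exponent identity you quote is correct.
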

\begin{proof}
	By \Cref{prop:expression-of-special-value-of-bessel-speh-as-fourier-transform}, $$ \specialBesselSpeh{\tau}\left(h\right) = \frac{q^{-\frac{\left(k-2\right)c^2}{2}}}{\sizeof{\GL_c\left(\finiteField\right)}} \sum_{\pi \in \Irr\left(\GL_c\left(\finiteField\right)\right)} \dim \pi \cdot \GKPreGammaFactor{\Contragradient{\pi}}{\tau}{\fieldCharacter} \cdot \trace \pi\left(h\right).$$
	Replacing $h$ with $hu$, averaging with $\fieldCharacter^{-1}\left(u\right)$ over $u \in \UnipotentSubgroup_c$, and using the identity $$ \sum_{u \in \UnipotentSubgroup_c} \trace \pi\left(hu\right) \fieldCharacter^{-1}\left(u\right) = \begin{cases}
		\sizeof{\UnipotentSubgroup_c} \besselFunction_{\pi, \fieldCharacter}\left(h\right) & \pi \text{ is generic,}\\
		0 & \text{otherwise},
	\end{cases} $$
	and the fact that $$\GKPreGammaFactor{\Contragradient{\pi}}{\tau}{\fieldCharacter} = \centralCharacter{\pi}\left(-1\right)^{k-1} \GKGammaFactor{\Contragradient{\pi}}{\tau}{\fieldCharacter} = \centralCharacter{\pi}\left(-1\right)^{k-1} \varepsilon_0\left(\Contragradient{\pi} \times \tau, \fieldCharacter\right),$$
	we get the result.
\end{proof}

\begin{example}\label{example:curtis-shinoda-trick}
	We show an application of \Cref{thm:whittaker-transform-of-exotic-matrix-kloosterman-sum} and the other results of the paper. Let $2 \le c \le k - 1$. We use \Cref{thm:whittaker-transform-of-exotic-matrix-kloosterman-sum} to find an expression for \begin{equation}\label{eq:bessel-function-of-triple}
		\besselFunction_{\pi, \fieldCharacter}\begin{pmatrix}
			& & \IdentityMatrix{k-c}\\
			& t_2 \IdentityMatrix{c-1}\\
			t_1
		\end{pmatrix} 
	\end{equation}
	where $\pi$ is an irreducible generic representation of $\GL_k\left(\finiteField\right)$ and where $t_1, t_2 \in \multiplicativegroup{\finiteField}$. By \Cref{thm:recursive-bessel-function-theorem} combined with Theorems \ref{thm:whittaker-transform-of-exotic-matrix-kloosterman-sum} and \ref{thm:bessel-speh-is-an-exotic-kloosterman-sum-generic}, the value \eqref{eq:bessel-function-of-triple} is given by
	$$\left(-1\right)^{\left(k+s\right) c} q^{-\frac{c\left(k-1\right)}{2}} \sum_{u \in \UnipotentSubgroup_{c}} \ExoticKloosterman^{\ast}\left(\alpha^{-1}, \fieldCharacter, \left(-1\right)^{k-1} u^{-1} \begin{pmatrix}
		& t_2 \IdentityMatrix{c-1}\\
		t_1
	\end{pmatrix}^{-1}\right) \fieldCharacter^{-1}\left(u\right),$$
	where $\alpha$ is as in \Cref{thm:bessel-speh-is-an-exotic-kloosterman-sum-generic}.
	By~\cite[Lemma 3.1]{curtis2004zeta}, for any $t \in \multiplicativegroup{\finiteField}$ and any $u \in \UnipotentSubgroup_{c}$, the matrix \begin{equation}\label{eq:voronoi-element-time-unipotent}
		u \begin{pmatrix}
			& \IdentityMatrix{c-1}\\
			t
		\end{pmatrix}^{-1}
	\end{equation} is a regular element of $\GL_c\left(\finiteField\right)$ and thus its conjugacy class is determined by its characteristic polynomial. Moreover by~\cite[Proof of Theorem 3.3]{curtis2004zeta}, for any regular element $h$ with determinant $\left(-1\right)^{c-1} t^{-1}$ there are exactly $q^{\binom{c-1}{2}}$ options for $u \in \UnipotentSubgroup_c$ such that \eqref{eq:voronoi-element-time-unipotent} is conjugate to $h$. 
	
	We get from these results that for any $u \in \UnipotentSubgroup_c$ the element \begin{equation}\label{eq:twisted-voronoi-element-time-unipotent}
		\left(-1\right)^{k-1} u^{-1} \begin{pmatrix}
			& t_2 \IdentityMatrix{c-1}\\
			t_1
		\end{pmatrix}^{-1}
	\end{equation} is a regular element, and that for any regular element $h \in \GL_c\left(\finiteField\right)$ with determinant $\left(-1\right)^{ck-1} t_1^{-1} t_2^{-\left(c-1\right)}$ there are exactly $q^{\binom{c-1}{2}}$ options for $u$ such that \eqref{eq:twisted-voronoi-element-time-unipotent} is conjugate to $h$. Moreover, it is easy to check that if $u$ is of the form \begin{equation*}
	u = \begin{pmatrix}
		1 & x_1 & \ast & \ast  & \ast \\
		& 1 & x_2 & \ast & \ast \\
		& & \ddots & \ddots & \ast  \\
		& & & 1 &  x_{c-1} \\
		& & & & 1
	\end{pmatrix}
	\end{equation*} then $$-\sum_{i=1}^{c-1} x_i = \left(-1\right)^{k-1} t_2 \trace \left( \left(-1\right)^{k-1} u^{-1} \left(\begin{smallmatrix}
	& t_2\IdentityMatrix{c-1}\\
	t_1
	\end{smallmatrix}\right)^{-1}\right).$$
	
	Recall that an \emph{effective $\finiteField$ zero-cycle of degree $d$} is a $\zIntegers$-linear combination of the form
	$$\mathfrak{c} = \sum_{\left[\xi\right]} m_{\left[\xi\right]} \left[\xi\right],$$
	where $\left[\xi\right]$ runs over all Frobenius orbits in $\multiplicativegroup{\algebraicClosure{\finiteField}}$ and where $m_{\left[\xi\right]}$ is a non-negative integer for any $\xi$ such that $$d = \deg \mathfrak{c} \coloneq \sum_{\left[\xi\right]} m_{\left[\xi\right]} \deg \left[\xi\right].$$ We define $$\trace \mathfrak{c} = \sum_{\left[\xi\right]} m_{\left[\xi\right]} \trace \left[\xi\right] \in \finiteField$$
	and $$\aFieldNorm\left(\mathfrak{c}\right) = \prod_{\left[\xi\right]} \FieldNorm{\deg \left[\xi\right]}{1}\left(\left[\xi\right]\right)^{m_{\left[\xi\right]}} \in \multiplicativegroup{\finiteField}.$$
	
	Given a zero-cycle $\mathfrak{c}$ as above, we define $$\ExoticKloosterman^{\ast}\left(\alpha^{-1}, \fieldCharacter, \mathfrak{c}\right) = \left(-1\right)^{\left(k-1\right) \deg \mathfrak{c}} q^{-\frac{\left(k-1\right) \deg \mathfrak{c}}{2}} \prod_{\left[\xi\right]} \trace \left(\Frobenius \mid_{\xi}, \SymmetricPower^{m_{\left[\xi\right]}} \ExoticKloosterman_{\finiteFieldExtension{\deg \xi}}\left(\finiteFieldExtension{\lambda}, \alpha^{-1}, \fieldCharacter\right)_{\xi}\right).$$
	
	Since regular conjugacy classes of $\GL_c\left(\finiteField\right)$ are in bijection with effective $\finiteField$ zero-cycles of degree $c$, it follows from the work above combined with \Cref{cor:formula-for-regular-elements} that
	\begin{align*}
		&q^{\frac{1}{2}\left( \left(c-1\right) + \left(k - c\right) + \left(c-1\right)\left(k-c\right) \right)} \besselFunction_{\pi, \fieldCharacter}\begin{pmatrix}
			& & \IdentityMatrix{k-c}\\
			& t_2 \IdentityMatrix{c-1}\\
			t_1
		\end{pmatrix} \\
		=&\left(-1\right)^{\left(k+s\right)c}  q^{-\frac{c-1}{2}} \sum_{\substack{\mathfrak{c}\\
				\aFieldNorm\left(\mathfrak{c}\right) = \left(-1\right)^{ck-1} t_1^{-1} t_2^{-\left(c-1\right)}}} \ExoticKloosterman^{\ast}\left(\alpha^{-1}, \fieldCharacter, \mathfrak{c}\right) \fieldCharacter\left(\left(-1\right)^{k-1} t_2 \trace \mathfrak{c}\right),
	\end{align*}
	where the sum is over all effective $\finiteField$ zero-cycles $\mathfrak{c}$ of degree $c$. The coefficient of the Bessel function on the left hand side is the square root of the size of the Bruhat cell corresponding to the present argument of the Bessel function. This formula shows that these values of the Bessel function are more involved and have more knowledge about the corresponding exotic Kloosterman sheaf.
\end{example}

Using the results of~\cite{zelingher2022values}, we obtain the following identity.
\begin{theorem}\label{thm:generating-series-identity}
	Let $\tau$ be an irreducible generic representation of $\GL_k\left(\finiteField\right)$. Then for any $x \in \multiplicativegroup{\finiteField}$, we have the following identity of generating functions \begin{align*}
		\left(\sum_{r = 0}^k q^{\frac{r\left(k-r\right)}{2}} \besselFunctionOfFiniteFieldRepresentation{\tau}\begin{pmatrix}
			& \IdentityMatrix{k - r}\\
			x \IdentityMatrix{r} &
		\end{pmatrix} T^r\right)^{-1} &= 1 + \sum_{r = 1}^{\infty} \left(-1\right)^{r} q^{\frac{(k-1)}{2}r^2} \specialBesselSpeh{\tau}\left(J_{\left(r\right)}\left(x\right)\right) T^r,
	\end{align*}
	where $J_{\left(r\right)}\left(x\right)$ is the Jordan block of size $r$ corresponding to $x$ (see \Cref{subsec:conjugacy-classes-of-gl-n}).
\end{theorem}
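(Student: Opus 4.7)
The plan is to reduce both sides of the claimed identity to explicit expressions in the normalized roots $\omega_1^{\ast}, \dots, \omega_k^{\ast}$ of $\ExoticKloosterman_{\finiteField}\left(\alpha, \fieldCharacter\right)$ at $x$, and then recognize the resulting statement as the classical duality $\prod_j(1-y_jT)\cdot\prod_j(1-y_jT)^{-1}=1$ between elementary and complete homogeneous symmetric polynomials.

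First I would compute the right-hand side via \Cref{thm:explicit-formula-for-special-bessel-speh-function-as-hall-littlewood-polynomial} applied to the conjugacy class of $J_{(r)}(x)$, which corresponds to a single pair $(\left[\xi\right],\mu)=(\left[x\right],(r))$ with $a=\deg[x]=1$. By \Cref{example:modified-hall-littlewood-for-k-is-h_k}, $\htHallLittlewood_{(r)}$ is the complete homogeneous symmetric polynomial $h_r$, so
$$\specialBesselSpehNormalized{\tau}\left(J_{(r)}(x)\right)=h_r\left((-1)^{s-1}\omega_1^{\ast},\dots,(-1)^{s-1}\omega_k^{\ast}\right)=(-1)^{(s-1)r}\,h_r\!\left(\omega_1^{\ast},\dots,\omega_k^{\ast}\right).$$
Since $\specialBesselSpeh{\tau}(J_{(r)}(x))=q^{-(k-1)r^2/2}\specialBesselSpehNormalized{\tau}(J_{(r)}(x))$, the $r$-th summand on the RHS simplifies to $(-1)^{sr}h_r(\omega^{\ast})T^r$, and the generating-function identity $\sum_{r\ge 0}h_r(Y)T^r=\prod_j(1-Y_jT)^{-1}$ yields
$$\text{RHS}=\prod_{j=1}^{k}\frac{1}{1-(-1)^s\omega_j^{\ast}T}.$$

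Second, I would invoke the author's previous results from~\cite{zelingher2022values} expressing the Bessel values $\besselFunctionOfFiniteFieldRepresentation{\tau}\bigl(\begin{smallmatrix} & \IdentityMatrix{k-r} \\ x\IdentityMatrix{r} & \end{smallmatrix}\bigr)$ in terms of traces of the Frobenius action on the exterior powers $\wedge^r\ExoticKloosterman_{\finiteField}(\alpha,\fieldCharacter)_x$. Using $\operatorname{tr}(\Frobenius\mid\wedge^r\mathcal{K}_x)=e_r(\omega_1,\dots,\omega_k)$ and the normalization $\omega_j^{\ast}=q^{-(k-1)/2}\omega_j$, this identifies
$$q^{r(k-r)/2}\,\besselFunctionOfFiniteFieldRepresentation{\tau}\begin{pmatrix} & \IdentityMatrix{k-r} \\ x\IdentityMatrix{r} & \end{pmatrix}=(-1)^{(s+1)r}\,e_r\!\left(\omega_1^{\ast},\dots,\omega_k^{\ast}\right),$$
and the LHS becomes
$$\sum_{r=0}^{k}(-1)^{(s+1)r}e_r(\omega^{\ast})T^r=\prod_{j=1}^{k}\left(1-(-1)^s\omega_j^{\ast}T\right).$$
Combining the two computations gives $(\text{LHS})\cdot(\text{RHS})=1$, which is exactly the content of the theorem.

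The main obstacle is the sign/normalization bookkeeping: one must verify that the power $q^{r(k-r)/2}$ in the LHS is precisely what converts $e_r(\omega_1,\dots,\omega_k)$ into $e_r(\omega_1^{\ast},\dots,\omega_k^{\ast})$ given that $e_r$ is homogeneous of degree $r$ (so the factor is $q^{-r(k-1)/2}$, which matches only after accounting for an additional $q^{r(k-r)/2}\cdot q^{r(k-1)/2}=q^{r(2k-r-1)/2}$ coming from the Bruhat-cell volume in the zelingher2022values formula), and that the sign $(-1)^{(s+1)r}$ on the LHS is compatible with the sign $(-1)^{sr}$ on the RHS once multiplied by the outer $(-1)^r$. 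Once these parity and $q$-power matches are confirmed, everything else is routine.
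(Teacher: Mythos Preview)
Your proposal is correct and follows essentially the same route as the paper's proof: both sides are expressed in terms of the roots $\omega_j$ (via \Cref{thm:explicit-formula-for-special-bessel-speh-function-as-hall-littlewood-polynomial} and \Cref{example:modified-hall-littlewood-for-k-is-h_k} for the right-hand side, via~\cite[Theorem~4.7]{zelingher2022values} for the left-hand side), and the identity then reduces to the classical generating-function duality $\bigl(\sum_r(-1)^r e_r S^r\bigr)^{-1}=\sum_r h_r S^r$. The only point to watch is that the paper evaluates the Kloosterman roots at $(-1)^{k-1}x^{-1}$ rather than at $x$, in line with how~\cite{zelingher2022values} is stated; once you align this choice, your sign and $q$-power bookkeeping goes through exactly as you describe.
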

\begin{proof}
	We use the notations of \Cref{thm:explicit-formula-for-special-bessel-speh-function-as-hall-littlewood-polynomial}. Let $\omega_1, \dots, \omega_k$ be the roots of $\ExoticKloosterman_{\finiteField}\left(\alpha, \fieldCharacter\right)$ at $\left(-1\right)^{k-1} x^{-1}$.
	By~\cite[Theorem 4.7]{zelingher2022values}, \begin{equation}\label{eq:bessel-function-special-value-as-elementary-symmetric-polynomial}
		\left(-1\right)^{sr} q^{-\frac{\left(k-1\right)r}{2}} e_r \left(\omega_1,\dots,\omega_k\right) = \begin{dcases}
			\left(-1\right)^r q^{\frac{r\left(k-r\right)}{2}} \besselFunctionOfFiniteFieldRepresentation{\tau}\begin{pmatrix}
				& \IdentityMatrix{k - r}\\
				x \IdentityMatrix{r} &
			\end{pmatrix} & 0 \le r \le k\\
			0 & r > k
		\end{dcases},
	\end{equation}
	where $$ e_r\left(\omega_1,\dots,\omega_k\right) =  \sum_{1 \le i_1 < \dots < i_r \le k} \omega_{i_1} \dots \omega_{i_r} $$
	is the elementary symmetric polynomial of degree $r$.
	By \Cref{thm:explicit-formula-for-special-bessel-speh-function-as-hall-littlewood-polynomial} and \Cref{example:modified-hall-littlewood-for-k-is-h_k}, we have
	$$ q^{\frac{\left(k-1\right)r^2}{2}} \specialBesselSpeh{\tau}\left(J_{\left(r\right)}\left(x\right)\right) = \left(-1\right)^{\left(s-1\right)r} q^{-\frac{\left(k-1\right)r}{2}} h_r\left(\omega_1,\dots,\omega_k\right),$$
	where $$h_r\left(\omega_1,\dots,\omega_k\right) = \sum_{\substack{i_1, \dots, i_k \ge 0\\
			i_1 + \dots + i_k = r}} \omega_1^{i_1} \dots \omega_k^{i_k}$$ is the complete homogeneous symmetric polynomial of degree $r$.
	The result now follows from the identity $$\left(\sum_{r=0}^k \left(-1\right)^r e_r\left(\omega_1,\dots,\omega_k\right) S^r\right)^{-1} = \prod_{i=1}^{k} \frac{1}{1 - \omega_i S} = \sum_{r=0}^{\infty} h_r\left(\omega_1,\dots,\omega_k\right) S^r,$$
	with $S = \left(-1\right)^{s-1} q^{-\frac{k-1}{2}} T$.
\end{proof}
As an immediate corollary of the previous theorem and of~\cite[Theorem 4.7]{zelingher2022values}, we obtain the following result.
\begin{corollary}\label{cor:l-function-identity}
	Let $\tau$ be the unique irreducible generic representation of $\GL_k\left(\finiteField\right)$ with cuspidal support $\left\{\tau_1,\dots,\tau_s\right\}$. Then
	$$L^{\ast}\left(\left(-1\right)^s T, \ExoticKloosterman_{\finiteField}\left(\alpha^{-1}, \fieldCharacter, \left(-1\right)^{k-1} x^{-1}\right) \right)^{-1} = 1 + \sum_{c = 1}^{\infty} \left(-1\right)^{c} \specialBesselSpehNormalized{\tau}\left(J_{\left(c\right)}\left(x\right)\right) T^c,$$
	where $L^{\ast}$ is the normalized L-function described in \Cref{subsec:roots-of-kloosterman-sum-at-xi}.
\end{corollary}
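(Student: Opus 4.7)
The plan is to derive the corollary directly from Theorem \ref{thm:generating-series-identity}, whose right-hand side is already exactly what appears on the right-hand side of the claimed identity. All that remains is to identify the polynomial appearing on the left-hand side of Theorem \ref{thm:generating-series-identity} with $L^{\ast}((-1)^s T, \ExoticKloosterman_{\finiteField}(\alpha^{-1}, \fieldCharacter, (-1)^{k-1}x^{-1}))$.

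For this I will invoke the identity \eqref{eq:bessel-function-special-value-as-elementary-symmetric-polynomial} (coming from \cite[Theorem 4.7]{zelingher2022values}) already used in the proof of Theorem \ref{thm:generating-series-identity}: for $0\le r\le k$,
$$q^{r(k-r)/2}\,\besselFunctionOfFiniteFieldRepresentation{\tau}\begin{pmatrix}&\IdentityMatrix{k-r}\\x\IdentityMatrix{r}&\end{pmatrix} = (-1)^{(s-1)r}\, e_r(\omega_1^{\ast},\ldots,\omega_k^{\ast}),$$
where $\omega_1^{\ast},\ldots,\omega_k^{\ast}$ are the normalized roots of $\ExoticKloosterman_{\finiteField}(\alpha^{-1}, \fieldCharacter)$ at $(-1)^{k-1}x^{-1}$, $e_r$ denotes the elementary symmetric polynomial, and the left-hand side vanishes for $r>k$. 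Multiplying by $T^r$, summing over $r\ge 0$, and using the standard identity $\sum_{r\ge 0} e_r(\omega^{\ast}) S^r = \prod_{j=1}^k(1+\omega_j^{\ast}S)$ with $S=(-1)^{s-1}T$ yields
$$\sum_{r=0}^k q^{r(k-r)/2}\,\besselFunctionOfFiniteFieldRepresentation{\tau}\begin{pmatrix}&\IdentityMatrix{k-r}\\x\IdentityMatrix{r}&\end{pmatrix} T^r = \prod_{j=1}^k \bigl(1-(-1)^s\omega_j^{\ast}T\bigr),$$
which by the definition of the normalized $L$-function in \Cref{subsec:roots-of-kloosterman-sum-at-xi} is precisely $L^{\ast}((-1)^s T,\, \ExoticKloosterman_{\finiteField}(\alpha^{-1}, \fieldCharacter, (-1)^{k-1}x^{-1}))$.

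Taking reciprocals of both sides of Theorem \ref{thm:generating-series-identity} now gives the corollary. There is no substantive obstacle: the result is a cosmetic $L$-function repackaging of Theorem \ref{thm:generating-series-identity} via the elementary-symmetric-to-product generating function identity, once the expression for the Bessel function values at Weyl elements in terms of elementary symmetric polynomials of the normalized roots (the formula of \cite[Theorem 4.7]{zelingher2022values}) is in hand.
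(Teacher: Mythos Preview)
Your proposal is correct and follows exactly the approach indicated by the paper, which simply says the corollary is immediate from \Cref{thm:generating-series-identity} together with \cite[Theorem 4.7]{zelingher2022values}. One small wording slip: in your final step you should not ``take reciprocals of both sides'' of \Cref{thm:generating-series-identity}; since the left-hand side of that theorem is already $\bigl(\sum_r \cdots\bigr)^{-1}$, substituting your identification of the polynomial $\sum_r \cdots$ with $L^{\ast}\bigl((-1)^s T,\ExoticKloosterman_{\finiteField}(\alpha^{-1},\fieldCharacter,(-1)^{k-1}x^{-1})\bigr)$ directly yields the corollary.
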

\begin{remark}
	The $L$-factor in \Cref{cor:l-function-identity} is closely related to a standard local $L$-factor of an automorphic representation of $\GL_k$ over a function field, see \Cref{rem:kloosterman-eigenvalues-are-satake-parameters-for-a-function-field} for more details.
\end{remark}

\subsection{Inequalities}\label{subsec:inequalities}

Similarly to~\cite[Section 4.3.4]{Zelingher2023}, using the fact that the normalized roots of $\ExoticKloosterman\left(\alpha, \fieldCharacter\right)$ at $\xi$ all have absolute value $1$ (\Cref{subsec:roots-of-kloosterman-sum-at-xi}), we obtain from Theorems \ref{thm:general-formula-for-exotic-kloosterman-sum-of-conjugacy-class} and \ref{thm:explicit-formula-for-special-bessel-speh-function-as-hall-littlewood-polynomial} and from the formula in \Cref{subsec:flag-formula-for-modified-hall-littlewood-polynomials} the following inequalities.

\begin{corollary}
	Using the notations of \Cref{thm:explicit-formula-for-special-bessel-speh-function-as-hall-littlewood-polynomial}, we have $$\abs{\specialBesselSpehNormalized{\tau}\left(h\right)} = \abs{\ExoticKloostermanNormalized\left(\alpha, \fieldCharacter, h\right)} \le \prod_{i=1}^r \#\left\{\mathcal{F} \text{ is a weak flag in } \finiteFieldExtension{a_i}^{b_i} \text{ of length } k \mid J_{\mu_i}\left(1\right) \mathcal{F} = \mathcal{F}\right\}.$$
	In the special case where $\mu_i = \left(b_i\right)$ for every $i$, i.e., where $h$ is a regular element, we have
	$$\abs{\specialBesselSpehNormalized{\tau}\left(h\right)} = \abs{\ExoticKloostermanNormalized\left(\alpha, \fieldCharacter, h\right)} \le \prod_{i=1}^r \binom{b_i + k - 1}{b_i}.$$
\end{corollary}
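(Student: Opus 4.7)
The equality of absolute values is immediate from \Cref{thm:bessel-speh-is-an-exotic-kloosterman-sum-generic}, so the task reduces to bounding $\abs{\ExoticKloostermanNormalized\left(\alpha,\fieldCharacter,h\right)}$. The plan is to apply \Cref{thm:general-formula-for-exotic-kloosterman-sum-of-conjugacy-class} (equivalently, \Cref{thm:explicit-formula-for-special-bessel-speh-function-as-hall-littlewood-polynomial}) to express the normalized exotic matrix Kloosterman sum as a product of modified Hall--Littlewood polynomials evaluated at the normalized roots of $\ExoticKloosterman_{\finiteFieldExtension{a_j}}\left(\alpha,\fieldCharacter\right)$ at $\xi_j$, and then combine the purity statement from \Cref{subsec:roots-of-kloosterman-sum-at-xi} (which gives $\abs{\omega_{j,\left[\xi_i\right]}^{\ast}} = 1$) with the flag-counting formula for modified Hall--Littlewood polynomials from \Cref{subsec:flag-formula-for-modified-hall-littlewood-polynomials}.

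More precisely, by \Cref{thm:explicit-formula-for-special-bessel-speh-function-as-hall-littlewood-polynomial},
$$\abs{\ExoticKloostermanNormalized\left(\alpha,\fieldCharacter,h\right)} = \prod_{i=1}^r \abs{\htHallLittlewood_{\mu_i}\left(\left(-1\right)^{\left(k-1\right)a_i}\omega_{1,\left[\xi_i\right]}^{\ast},\dots,\left(-1\right)^{\left(k-1\right)a_i}\omega_{k,\left[\xi_i\right]}^{\ast};q^{a_i}\right)}.$$
The key observation is that the flag formula in \Cref{subsec:flag-formula-for-modified-hall-littlewood-polynomials} exhibits each $\htHallLittlewood_{\mu_i}\left(X_1,\dots,X_k;q^{a_i}\right)$ as a polynomial in $X_1,\dots,X_k$ with \emph{non-negative integer} coefficients, namely the number of weak flags in $\finiteFieldExtension{a_i}^{b_i}$ of a given type that are fixed by $J_{\mu_i}\left(1\right)$. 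Applying the triangle inequality and using $\abs{\left(-1\right)^{\left(k-1\right)a_i}\omega_{j,\left[\xi_i\right]}^{\ast}} = 1$, each such factor is bounded above by the sum of its coefficients, which equals its value at $X_1 = \dots = X_k = 1$, i.e., the total number of weak flags in $\finiteFieldExtension{a_i}^{b_i}$ of length $k$ fixed by $J_{\mu_i}\left(1\right)$. This gives the first inequality.

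For the regular case, specialize $\mu_i = \left(b_i\right)$ and invoke \Cref{example:modified-hall-littlewood-for-k-is-h_k}, which identifies $\htHallLittlewood_{\left(b_i\right)}\left(X_1,\dots,X_k;t\right)$ with the complete homogeneous symmetric polynomial $h_{b_i}\left(X_1,\dots,X_k\right)$. Its value at $X_1 = \dots = X_k = 1$ is the number of weak compositions of $b_i$ into $k$ non-negative parts, namely $\binom{b_i+k-1}{b_i}$, which yields the stated bound. There is no real obstacle in this argument; the whole content of the proof is already packaged in the Hall--Littlewood formula of \Cref{thm:general-formula-for-exotic-kloosterman-sum-of-conjugacy-class} and in Deligne's purity of the exotic Kloosterman sheaf recorded in \Cref{subsec:roots-of-kloosterman-sum-at-xi}.
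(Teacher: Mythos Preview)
Your proposal is correct and follows exactly the approach the paper indicates: combine the Hall--Littlewood expression from \Cref{thm:general-formula-for-exotic-kloosterman-sum-of-conjugacy-class} (or \Cref{thm:explicit-formula-for-special-bessel-speh-function-as-hall-littlewood-polynomial}) with the purity statement $\abs{\omega_{j,[\xi_i]}^{\ast}}=1$ from \Cref{subsec:roots-of-kloosterman-sum-at-xi} and the non-negative-coefficient flag formula of \Cref{subsec:flag-formula-for-modified-hall-littlewood-polynomials}, then apply the triangle inequality. The paper gives no proof beyond pointing to precisely these ingredients, so your write-up is in fact more detailed than the paper's own treatment.
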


\subsection{Comparison with spherical vector formulas}\label{subsec:formulas-for-spherical-elements}

We mention a similarity between formulas for special values of Bessel--Speh functions and formulas for special values of spherical vectors in $\kcNotation{k}{c}{\fieldCharacter}$ models associated to unramified Speh representations.

Let $\localField$ be a non-archimedean local field with ring of integers $\ringOfIntegers \subset \localField$ and maximal idea $\maximalIdeal$ and residue field $\finiteField$. Let $\uniformizer \in \maximalIdeal \setminus \maximalIdeal^2$ be a uniformizer, and normalize $\abs{\cdot}$ such that $\abs{\uniformizer} = q^{-1}$. Consider the (normalized) parabolically induced representation $$\depthZeroRepresentation^{\left(z_1,\dots,z_k\right)} \coloneq \depthZeroRepresentation_{1,1}^{\left(z_1,\dots,z_k\right)} = \Ind{\ParabolicSubgroup_{\left(1^k\right)}\left(\localField\right)}{\GL_k\left(\localField\right)}{\abs{\cdot}^{z_1} \boxtimes \dots \boxtimes \abs{\cdot}^{z_k}}.$$ This is the unramified representation of $\GL_k\left(\localField\right)$ with Satake parameters $q^{-z_1}, \dots, q^{-z_k}$, where $z_1,\dots,z_k \in \cComplex$. For $c \ge 1$, let $$\SpehRepresentation{\depthZeroRepresentation^{\left(z_1,\dots,z_k\right)}}{c} = \Ind{\ParabolicSubgroup_{\left(c^k\right)}\left(\localField\right)}{\GL_{kc}\left(\localField\right)}{\abs{{\det}_{\GL_c}}^{z_1} \boxtimes \dots \boxtimes \abs{{\det}_{\GL_c}}^{z_k}}.$$
Note that $\SpehRepresentation{\depthZeroRepresentation^{\left(z_1,\dots,z_k\right)}}{1} = \depthZeroRepresentation^{\left(z_1,\dots,z_k\right)}$. For any $c \ge 1$, the representation $\SpehRepresentation{\depthZeroRepresentation^{\left(z_1,\dots,z_k\right)}}{c}$ admits a unique (up to scalar multiplication) vector which is invariant under the action of the maximal compact subgroup $\GL_{kc}\left(\ringOfIntegers\right)$.

Let $\fieldCharacter \colon \localField \to \multiplicativegroup{\cComplex}$ be a non-trivial character with conductor $\ringOfIntegers$, i.e., $\fieldCharacter$ is trivial on $\ringOfIntegers$ but not on $\uniformizer^{-1} \ringOfIntegers$. For $\left(z_1,\dots,z_k\right)$ in generic position (depending on $c$), the representations $\depthZeroRepresentation^{\left(z_1,\dots,z_k\right)}$ and $\SpehRepresentation{\depthZeroRepresentation^{\left(z_1,\dots,z_k\right)}}{c}$ are irreducible. See~\cite[Section 2.4]{Zelingher2025} for more details. In this case we say that $\SpehRepresentation{\depthZeroRepresentation^{\left(z_1,\dots,z_k\right)}}{c}$ is an \emph{unramified Speh representation}~\cite{CaiFriedbergGourevitchKaplan2023, LapidMao2020}. For such choice of $\left(z_1,\dots,z_k\right)$, the representation $\depthZeroRepresentation^{\left(z_1,\dots,z_k\right)}$ is generic and the representation $\SpehRepresentation{\depthZeroRepresentation^{\left(z_1,\dots,z_k\right)}}{c}$ is a $\kcNotation{k}{c}{\fieldCharacter}$ representation~\cite{CaiFriedbergGourevitchKaplan2023}. In particular, this means that $$\dim \Hom_{\UnipotentRadicalForWss{k}{c}\left(\localField\right)}\left(\SpehRepresentation{\depthZeroRepresentation^{\left(z_1,\dots,z_k\right)}}{c}, \fieldCharacterkc{k}{c}\right) = 1,$$
where $\UnipotentRadicalForWss{k}{c}\left(\localField\right)$ and $\fieldCharacterkc{k}{c}$ are defined analogously to the definition in the finite field case. Thus from Frobenius reciprocity there exists a unique subspace of $\Ind{\UnipotentRadicalForWss{k}{c}}{\GL_{kc}\left(\localField\right)}{\fieldCharacterkc{k}{c}}$ which is isomorphic to $\SpehRepresentation{\depthZeroRepresentation^{\left(z_1,\dots,z_k\right)}}{c}$. We denote this subspace by $\Whittaker\left(\SpehRepresentation{\depthZeroRepresentation^{\left(z_1,\dots,z_k\right)}}{c},\fieldCharacterkc{k}{c}\right)$ and call it the \emph{$\kcNotation{k}{c}{\fieldCharacter}$ model} of $\depthZeroRepresentation^{\left(z_1,\dots,z_k\right)}$. This space contains a unique element $W_{c,\circ}^{\left(z_1,\dots,z_k\right)}$ that is invariant under the action of $\GL_{kc}\left(\ringOfIntegers\right)$ and such that $W_{c,\circ}^{\left(z_1,\dots,z_k\right)}\left(\IdentityMatrix{kc}\right) = 1$. In~\cite{Zelingher2025} we studied the assignment $\mathcal{S}^{\circ}_{{\left(z_1,\dots,z_k\right)}, \fieldCharacter} \colon \GL_c\left(\localField\right) \to \cComplex$ given by $$\mathcal{S}^{\circ}_{{\left(z_1,\dots,z_k\right)},c,\fieldCharacter}\left(h\right) = W_{c,\circ}^{\left(z_1,\dots,z_k\right)}\begin{pmatrix}
h\\
& \IdentityMatrix{\left(k-1\right)c}
\end{pmatrix}.$$
This assignment is the local field analog of $\specialBesselSpeh{\tau}$. We have that $\mathcal{S}^{\circ}_{{\left(z_1,\dots,z_k\right)},c,\fieldCharacter}$ is bi-invariant under $\GL_c\left(\ringOfIntegers\right)$ translations. In~\cite[Theorem 3.4]{Zelingher2025} we showed that for $\mu = \left(\mu_1,\dots,\mu_{\ell}\right)$ a sequence of weakly decreasing integers with $\ell \le c$,
$$\delta^{-1 \slash 2}_{\ParabolicForSpeh{c}{k}}\left(\uniformizer^{\mu, kc}\right) \mathcal{S}^{\circ}_{{\left(z_1,\dots,z_k\right)},c,\fieldCharacter}\left(\uniformizer^{\mu,c}\right) = \begin{dcases}
	0 & \mu_{\ell} < 0,\\
	\htHallLittlewood_{\mu}\left(q^{-z_1},\dots,q^{-z_k};q\right) & \mu_{\ell} \ge 1.
\end{dcases}$$
Here $\uniformizer^{\mu, c} = \diag\left(\uniformizer^{\mu_1},\dots,\uniformizer^{\mu_{\ell}},\IdentityMatrix{c-\ell}\right)$ and $\delta^{-1 \slash 2}_{\ParabolicForSpeh{c}{k}}\left(\uniformizer^{\mu, kc}\right) = q^{\frac{c \left(k-1\right) \sizeof{\mu}}{2}}$ where $\sizeof{\mu} = \mu_1 + \dots + \mu_{\ell}$.

We see a resemblance between this formula and the formula obtained in \Cref{thm:explicit-formula-for-special-bessel-speh-function-as-hall-littlewood-polynomial}, where the Satake parameters $q^{-z_1},\dots,q^{-z_k}$ are replaced with $\left(-1\right)^{\left(s-1\right)a} \omega^{\ast}_{1}, \dots, \left(-1\right)^{\left(s-1\right)a} \omega^{\ast}_{k}$, where $\omega^{\ast}_{1}$, $\dots$, $\omega^{\ast}_{k}$ are the normalized roots of $\ExoticKloosterman_{\finiteFieldExtension{a}}\left(\alpha^{-1}, \fieldCharacter\right)$ at $\left(-1\right)^{k-1} \xi^{-1} \in \multiplicativegroup{\finiteFieldExtension{a}}$. In fact, the results in the present paper and our previous results with Carmon~\cite{CarmonZelingher2025} led us to find the formula in~\cite{Zelingher2025}.

A similar resemblance occurs with special values of Bessel functions. The equality \eqref{eq:bessel-function-special-value-as-elementary-symmetric-polynomial} (given by~\cite[Theorem 4.7]{zelingher2022values}) can be rewritten as
$$q^{\frac{r\left(k-r\right)}{2}} \besselFunctionOfFiniteFieldRepresentation{\tau}\begin{pmatrix}
	& \IdentityMatrix{k - r}\\
	x \IdentityMatrix{r} &
\end{pmatrix} = e_r \left(\left(-1\right)^{s-1} \omega_1^{\ast},\dots,\left(-1\right)^{s-1} \omega^{\ast}_k\right),$$ where $r \le k$ and $\omega^{\ast}_{1}$, $\dots$, $\omega^{\ast}_{k}$ are the normalized roots of $\ExoticKloosterman_{\finiteField}\left(\alpha^{-1}, \fieldCharacter\right)$ at $\left(-1\right)^{k-1} x^{-1} \in \multiplicativegroup{\finiteField}$. This is similar to the following special case of the Shintani--Casselman--Shalika formula~\cite{shintani1976explicit, CasselmanShalika1980}
$$q^{\frac{r\left(k-r\right)}{2}} W_{1,\circ}^{\left(z_1,\dots,z_k\right)}\begin{pmatrix}
	\uniformizer \IdentityMatrix{r}\\
	& \IdentityMatrix{k-r} \end{pmatrix} = e_r\left(q^{-z_1},\dots,q^{-z_k}\right).$$

These resemblances suggest that there is a deep connection between unramified principal series representations of $\GL_n\left(\localField\right)$ and generic representations of $\GL_n\left(\finiteField\right)$, where the Satake parameters match corresponding the roots of $\ExoticKloosterman\left(\alpha^{-1}, \fieldCharacter\right)$. It would be very interesting to make this connection precise.

\begin{remark}\label{rem:kloosterman-eigenvalues-are-satake-parameters-for-a-function-field}
	The exotic Kloosterman sheaf $\ExoticKloosterman\left(\alpha^{-1}, \fieldCharacter\right)$ gives rise to an automorphic representation $\Pi$ of $\GL_{k}\left(\mathbb{A}\right)$, where $\mathbb{A}$ are the adeles of the function field $\finiteField\left(T\right)$. Any Frobenius orbit of $\left[\xi\right]$ for $\xi \in \multiplicativegroup{\algebraicClosure{\finiteField}}$ defines a place of the function field $\finiteField\left(T\right)$. The Satake parameter of $\Pi$ at $\left[\xi\right]$ is the conjugacy class of $\diag\left(\omega^{\ast}_1, \dots, \omega^{\ast}_k\right)$, where $\omega^{\ast}_1, \dots, \omega^{\ast}_k$ are the roots of $\ExoticKloosterman\left(\alpha^{-1}, \fieldCharacter\right)$ at $\xi$. See~\cite[Section 1.2]{Yun2017}.
\end{remark}

\appendix

\section{Properties of gamma factors over finite fields}
In this appendix, we list properties similar to the ten commandments of Lapid--Rallis~\cite{LapidRallis2005} that tensor product gamma factors attached to finite general linear groups are expected to satisfy. This list is similar to~\cite[Theorem 27]{Kaplan2023}.

By a tensor product gamma factor we mean an assignment $\left(\pi, \tau\right) \mapsto \gamma\left(\pi \times \tau, \fieldCharacter\right) \in \cComplex$ which is defined for a non-trivial character $\fieldCharacter \colon \finiteField \to \multiplicativegroup{\cComplex}$ and for irreducible representations $\pi$ and $\tau$ of $\GL_c\left(\finiteField\right)$ and $\GL_k\left(\finiteField\right)$, respectively, for any $k, c \ge 1$. The value $\gamma\left(\pi \times \tau, \fieldCharacter\right)$ might not be defined for all inputs $\pi$ and $\tau$. For example, the Shahidi gamma factor from~\cite{SoudryZelingher2023} is only defined when $\pi$ and $\tau$ are irreducible generic representations and the Ginzburg--Kaplan gamma factor from~\cite{CarmonZelingher2025} is defined for all irreducible representations $\pi$ but only for irreducible generic representations $\tau$.
\begin{theorem}
	Suppose that $\pi$ and $\tau$ are irreducible representations of $\GL_c\left(\finiteField\right)$ and $\GL_k\left(\finiteField\right)$, respectively.
	\begin{enumerate}
		\item (Dependence on $\fieldCharacter$) For any $a \in \multiplicativegroup{\finiteField}$ let $\fieldCharacter^a \colon \finiteField \to \multiplicativegroup{\cComplex}$ be the character $\fieldCharacter^a\left(x\right) = \fieldCharacter\left(ax\right)$. Suppose that $\gamma\left(\pi \times \tau, \fieldCharacter\right)$ is defined. Then $\gamma\left(\pi \times \tau, \fieldCharacter^a\right)$ is defined and $$\gamma\left(\pi \times \tau, \fieldCharacter^a\right) = \centralCharacter{\pi}\left(a\right)^k \centralCharacter{\tau}\left(a\right)^c \gamma\left(\pi \times \tau, \fieldCharacter\right),$$
		where $\centralCharacter{\pi}$ and $\centralCharacter{\tau}$ are the central characters of $\pi$ and $\tau$, respectively.
		\item (Contragredient) If $\gamma\left(\pi \times \tau, \fieldCharacter\right)$ is defined then so is $\gamma\left(\Contragradient{\pi} \times \Contragradient{\tau}, \fieldCharacter^{-1}\right)$ and we have the equality
		$$\gamma\left(\Contragradient{\pi} \times \Contragradient{\tau}, \fieldCharacter^{-1}\right) = \conjugate{\gamma\left(\pi \times \tau, \fieldCharacter\right)},$$
		where the right hand side is the complex conjugate of $\gamma\left(\pi \times \tau, \fieldCharacter\right)$.
		\item \label{item:commandements:symmetry} (Symmetry) If $\gamma\left(\pi \times \tau, \fieldCharacter\right)$ and $\gamma\left(\tau \times \pi, \fieldCharacter\right)$ are both defined then 
		$$\gamma\left(\pi \times \tau, \fieldCharacter\right) = \gamma\left(\tau \times \pi, \fieldCharacter\right).$$
		\item \label{item:commandements:multiplicativity} (Multiplicativity) Suppose that $\pi_1$, $\dots$, $\pi_t$ are irreducible representations of $\GL_{c_1}\left(\finiteField\right)$, $\dots$, $\GL_{c_t}\left(\finiteField\right)$, respectively, and suppose that $\tau_1$, $\dots$, $\tau_s$ are irreducible representations of $\GL_{k_1}\left(\finiteField\right)$, $\dots$, $\GL_{k_s}\left(\finiteField\right)$ such that for every $1 \le i \le t$ and every $1 \le j \le s$ the gamma factor $\gamma\left(\pi_i \times \tau_j, \fieldCharacter\right)$ is defined. Suppose that $\pi$ is an irreducible subrepresentation of the parabolically induced representation $\pi_1 \circ \dots \circ \pi_t$ and that $\tau$ is an irreducible subrepresentation of the parabolically induced representation $\tau_1 \circ \dots \circ \tau_s$ such that $\gamma\left(\pi \times \tau, \fieldCharacter\right)$ is defined. Then $$\gamma\left(\pi \times \tau, \fieldCharacter\right) = \prod_{i=1}^t \prod_{j=1}^s \gamma\left(\pi_i \times \tau_j, \fieldCharacter\right).$$
		\item\label{item:commandements:functional-equation} (Functional equation in the regular case) Suppose that $\gamma\left(\pi \times \tau, \fieldCharacter\right)$ is defined and that $\pi$ and $\Contragradient{\tau}$ do not have common elements in their cuspidal supports. Then $\gamma\left(\Contragradient{\pi} \times \Contragradient{\tau}, \fieldCharacter\right)$ is also defined and $$\gamma\left(\pi \times \tau, \fieldCharacter\right) \gamma\left(\pi^{\vee} \times \tau^{\vee}, \fieldCharacter^{-1}\right) = 1.$$
		\item (Cuspidal $\times$ character) Suppose that $\pi$ is an irreducible cuspidal representation and that $\chi \colon \multiplicativegroup{\finiteField} \to \multiplicativegroup{\cComplex}$ is a character. Then $\gamma\left(\pi \times \chi, \fieldCharacter\right)$ and $\gamma\left(\chi \times \pi, \fieldCharacter\right)$ are defined and $$\gamma\left(\pi \times \chi, \fieldCharacter\right) = \gamma\left(\chi \times \pi, \fieldCharacter\right) = \GaussSumScalar{\Contragradient{\pi} \times \chi^{-1}}{\fieldCharacter},$$
		where $\GaussSumScalar{\Contragradient{\pi} \times \chi^{-1}}{\fieldCharacter}$ is Kondo's Gauss sum (see \Cref{subsec:kondo-gauss-sum}).
		\item \label{item:commandements:cuspdial-exceptional}(Cuspidal exceptional case) Suppose that $\pi$ and $\tau$ are irreducible cuspidal representations. Then $\gamma\left(\pi \times \tau, \fieldCharacter\right)$ is defined. In the special case where $k=c$ and $\tau = \Contragradient{\pi}$ we have
		$$\gamma\left(\pi \times \Contragradient{\pi}, \fieldCharacter\right) = -\centralCharacter{\pi}\left(-1\right)^{c-1} q^{-\frac{c}{2}}.$$
	\end{enumerate}
\end{theorem}
\begin{remark}
	For the Shahidi gamma factor, proofs of all of these properties without appealing to local fields are known, see~\cite{SoudryZelingher2023}. It is worth mentioning that the proofs of the functional equation in the regular case (\ref{item:commandements:functional-equation}) and of the exceptional case in (\ref{item:commandements:cuspdial-exceptional}) rely on the equality with the Jacquet--Piatetski-Shapiro--Shalika gamma factor established in~\cite{SoudryZelingher2023}. For the Ginzburg--Kaplan gamma factor, we do not know how to prove the symmetry (\ref{item:commandements:symmetry}) and the formula for the exceptional case (\ref{item:commandements:cuspdial-exceptional}) without appealing to local fields.
\end{remark}

\begin{remark}
	More generally we expect that whenever $\gamma\left(\pi \times \tau, \fieldCharacter\right)$ is defined it equals $\varepsilon_0\left(\pi \times \tau, \fieldCharacter\right)$ (see \Cref{subsec:epsilon-factors}). In~\cite{zelingher2022values}, using the results of~\cite{Ye18},~\cite{ye2021epsilon} and~\cite{SoudryZelingher2023}, we confirmed that this is true for the Shahidi gamma factor. In~\cite{Zelingher2024b}, using the results of~\cite{CarmonZelingher2025}, we confirmed that this is also true for the Ginzburg--Kaplan gamma factor. These works rely on~\cite{Ye18} and~\cite{Zelingher2024b} which go through local fields.	
\end{remark}

Invoking properties (\ref{item:commandements:multiplicativity}),  (\ref{item:commandements:functional-equation}) and (\ref{item:commandements:cuspdial-exceptional}) we obtain the following corollary~\cite[Theorem 4.3]{SoudryZelingher2023}.

\begin{corollary}\label{cor:cuspidal-support-and-absolute-value}
	Suppose that $\pi$ and $\tau$ are irreducible representations of $\GL_c\left(\finiteField\right)$ and $\GL_k\left(\finiteField\right)$, respectively. Assume that $\tau$ is cuspidal and that $\gamma\left(\pi \times \Contragradient{\tau}, \fieldCharacter\right)$ is defined. Then $$\abs{\gamma\left(\pi \times \Contragradient{\tau}, \fieldCharacter\right)} = q^{-\frac{k d_{\pi}\left(\tau\right)}{2}},$$
	where $d_{\pi}\left(\tau\right)$ is the number of times that $\tau$ appears in the cuspidal support of $\pi$.
\end{corollary}

\newpage
\section{Characters of Speh representations}\label{appendix:character-of-speh-representation}
The purpose of this appendix is to describe the formula for the character of $\SpehRepresentation{\tau}{c}$ where $\tau$ is an irreducible cuspidal representation of $\GL_k\left(\finiteField\right)$.

Suppose that $\tau$ is the irreducible cuspidal representation of $\GL_k\left(\finiteField\right)$ corresponding to the Frobenius orbit $\left[\alpha\right]$ where $\alpha \colon \multiplicativegroup{\finiteFieldExtension{k}} \to \multiplicativegroup{\cComplex}$ is a regular character. For any $m \ge 1$ let $\jordanSupportedCharacter{\left(km\right)} \colon \GL_{km}\left(\finiteField\right) \to \cComplex$ be the following Jordan block supported class function. It is supported only on conjugacy classes of the form $J_{\mu}\left(h_{\xi}\right)$ where $\mu \vdash b$ and $\xi \in \multiplicativegroup{\finiteFieldExtension{a}}$ is of degree $a$, where $a,b \ge 1$ satisfy $ab = km$, and is defined on these conjugacy classes by the formula $$\jordanSupportedCharacter{\left(km\right)}\left(J_{\mu}\left(h_{\xi}\right)\right) = \left(-1\right)^{\lengthof\left(\mu\right) - 1} \prod_{j=1}^{\lengthof\left(\mu\right) - 1}\left(q^{aj} - 1\right) \cdot \sum_{j=0}^{a-1} \alpha\left(\FieldNorm{km}{k}\left(\xi^{q^j}\right)\right).$$

For any partition $\lambda = \left(\lambda_1,\dots,\lambda_{\ell}\right)$, let $k \lambda = \left(k\lambda_1,\dots,k \lambda_{\ell}\right)$. We define $$\jordanSupportedCharacter{k \lambda} = \jordanSupportedCharacter{\left(k \lambda_1\right)} \circ \dots \circ \jordanSupportedCharacter{\left(k \lambda_{\ell}\right)},$$
where $\circ$ is parabolic induction (see \Cref{subsec:class-functions}). The character of $\SpehRepresentation{\tau}{c}$ is given by the following formula, see~\cite[Definition 7.3]{Green55} or~\cite[Section 3]{SilbergerZink00}:
$$\trace \SpehRepresentation{\tau}{c} = \left(-1\right)^{\left(k-1\right)c} \sum_{\lambda \vdash c} \frac{\jordanSupportedCharacter{k \lambda}}{z_{\lambda}},$$
where $z_{\lambda} = \prod_{j=1}^{\infty} \left(\lambda\left(j\right)! \cdot j^{\lambda\left(j\right)}\right),$
where $\lambda\left(j\right)$ is the number of times $j$ appears in the partition $\lambda$.

In~\cite{Green55}, Green gives a formula for the computation of the basic character $\jordanSupportedCharacter{k \lambda}$. However, the computation of $\trace \SpehRepresentation{\tau}{c}$ using this formula is difficult. We provide here the table for the case $k = c = 2$. This example shows that $\trace \SpehRepresentation{\tau}{2}$ is supported on many different types of conjugacy classes.

\begin{table}[H]
	\begin{center}
		\caption{The character table of $\trace \SpehRepresentation{\tau}{2}$ for an irreducible cuspidal representation of $\GL_2\left(\finiteField\right)$ corresponding to a regular character $\theta_2 \colon \multiplicativegroup{\finiteFieldExtension{2}} \to \multiplicativegroup{\cComplex}$.} 
		\begin{tabular}{|l|l|}
			\hline $C$ & $\trace \SpehRepresentation{\tau}{2}\left(C\right)$ \\
			\hline $h_{\xi_4}$ & $\theta_2\left(\FieldNorm{4}{2}\left(\xi_4\right)\right)+\theta_2^q\left(\FieldNorm{4}{2}\left(\xi_4\right)\right)$ \\
			\hline $\operatorname{diag}\left(h_{\xi_{2,1}}, h_{\xi_{2,2}}\right)$ & $\left(\theta_2\left(\xi_{2,1}\right) + \theta^q_2\left(\xi_{2,1}\right)\right)\left(\theta_2\left(\xi_{2,2}\right) + \theta^q_2\left(\xi_{2,2}\right)\right)$ \\
			\hline $\operatorname{diag}\left(h_{\xi_2}, J_{(2)}\left(\xi_1\right)\right)$ & $\theta_2\left(\xi_1\right) \theta_2\left(\xi_2\right)+\theta_2\left(\xi_1\right) \theta_2^q\left(\xi_2\right)$ \\
			\hline $\operatorname{diag}\left(h_{\xi_2}, J_{\left(1^2\right)}\left(\xi_1\right)\right)$ & $(-q+1)\left(\theta_2\left(\xi_1\right) \theta_2\left(\xi_2\right)+\theta_2\left(\xi_1\right) \theta_2^q\left(\xi_2\right)\right)$ \\
			\hline $J_{(2)}\left(h_{\xi_2}\right)$ & $\theta_2\left(\xi_2\right)^2+\theta_2^q\left(\xi_2\right)^2+\theta_2\left(\xi_2\right) \theta_2^q\left(\xi_2\right)$ \\
			\hline $J_{\left(1^2\right)}\left(h_{\xi_2}\right)$ & $\left(q^2+1\right) \theta_2\left(\xi_2\right) \theta_2^q\left(\xi_2\right)+\theta_2\left(\xi_2\right)^2+\theta_2^q\left(\xi_2\right)^2$ \\
			\hline $\operatorname{diag}\left(J_{(2)}\left(\xi_{1,1}\right), J_{(2)}\left(\xi_{1,2}\right)\right)$ & $\theta_2\left(\xi_{1,1}\right) \theta_2\left(\xi_{1,2}\right)$ \\
			\hline $\operatorname{diag}\left(J_{(2)}\left(\xi_{1,1}\right), J_{\left(1^2\right)}\left(\xi_{1,2}\right)\right)$ & $(-q+1) \theta_2\left(\xi_{1,1}\right) \theta_2\left(\xi_{1,2}\right)$ \\
			\hline $\operatorname{diag}\left(J_{\left(1^2\right)}\left(\xi_{1,1}\right), J_{\left(1^2\right)}\left(\xi_{1,2}\right)\right)$ & $\left(q^2-2 q+1\right) \theta_2\left(\xi_{1,1}\right) \theta_2\left(\xi_{1,2}\right)$ \\
			\hline $J_{(4)}\left(\xi_1\right)$ & $\theta_2\left(\xi_1\right)^2$ \\
			\hline $J_{(1,3)}\left(\xi_1\right)$ & $(-q+1) \theta_2\left(\xi_1\right)^2$ \\
			\hline $J_{\left(2^2\right)}\left(\xi_1\right)$ & $\left(q^2-q+1\right) \theta_2\left(\xi_1\right)^2$ \\
			\hline $J_{\left(1^2, 2\right)}\left(\xi_1\right)$ & $(-q+1) \theta_2\left(\xi_1\right)^2$ \\
			\hline $J_{\left(1^4\right)}\left(\xi_1\right)$ & $\left(q^4-q^3-q+1\right) \theta_2\left(\xi_1\right)^2$ \\
			\hline
		\end{tabular}
	\end{center}
\end{table}
In the table, $\xi_m$ represents an element of $\multiplicativegroup{\finiteFieldExtension{m}}$ of degree $m$. The elements $\xi_{m,1}, \xi_{m,2}$ represent elements of  $\multiplicativegroup{\finiteFieldExtension{m}}$ of degree $m$ lying in different Frobenius orbits. The character of $\SpehRepresentation{\tau}{2}$ vanishes on conjugacy classes that do not appear in the table.

\bibliographystyle{abbrv}
\bibliography{references}
\end{document}